\crefname{equation}{}{}
\numberwithin{equation}{section}
\newtheorem{theorem}[equation]{Theorem}
\newtheorem{lemma}[equation]{Lemma}
\newtheorem{cor}[equation]{Corollary}
\Crefname{cor}{Corollary}{Corollaries}
\newtheorem{proposition}[equation]{Proposition}
\theoremstyle{remark}
\newtheorem{remark}[equation]{Remark}
\theoremstyle{definition}
\newtheorem{example}[equation]{Example}
\theoremstyle{definition}
\newtheorem{construction}[equation]{Construction}
\theoremstyle{definition}
\newtheorem{defi}[equation]{Definition}
\theoremstyle{definition}
\newtheorem{notation}[equation]{Notation}
\theoremstyle{definition}
\theoremstyle{definition}
\newtheorem{assumption}[equation]{Assumption}
\DeclareSymbolFont{cyrletters}{OT2}{wncyr}{m}{n}
\DeclareMathSymbol{\Sha}{\mathalpha}{cyrletters}{"58}
\def\A{\ensuremath\mathcal{A}}
\def\P{\ensuremath P}
\address{Department of Mathematics, King's College London, Strand, London, WC2R 2LS.}
\email{adam.morgan@kcl.ac.uk}
\begin{document}

\title{Quadratic twists of abelian varieties and disparity in Selmer ranks}
\author{Adam Morgan}

\maketitle

\begin{abstract}
We study the parity of 2-Selmer ranks in the family of quadratic twists of a fixed principally polarised abelian variety over a number field. Specifically, we determine the proportion of twists having odd (resp. even) 2-Selmer rank. This generalises work of Klagsbrun--Mazur--Rubin for elliptic curves and Yu for Jacobians of hyperelliptic curves. Several differences in the statistics arise due to the possibility that the Shafarevich--Tate group (if finite) may have order twice a square. In particular, the statistics for parities of $2$-Selmer ranks and $2$-infinity Selmer ranks need no longer agree and we describe both.
\end{abstract}
  
\setcounter{tocdepth}{1}
\tableofcontents

\section{Introduction}

In this paper we study how various invariants of principally polarised abelian varieties behave under quadratic twist. 

Our first result determines the distribution of the parities of $2$-Selmer ranks in the quadratic twist family of an arbitrary principally polarised abelian variety. Specifically, for a number field $K$ and real number $X>0$ set 
\[\mathcal{C}(K,X)=\{\chi\in \textup{Hom}_{\textup{cnts}}(\textup{Gal}(\bar{K}/K),\{\pm 1\})~~:~~\textup{Norm}(\mathfrak{p})<X \textup{ for all primes }\mathfrak{p}\textup{ at which }\chi\textup{ ramifies}\}.\]

\begin{theorem} \label{theorem}
Let $A/K$ be a principally polarised abelian variety and let $\epsilon:\textup{Gal}(K(A[2])/K)\rightarrow \{\pm 1\}$ be the map $\sigma\mapsto (-1)^{\dim A[2]^\sigma}$. 
\begin{itemize}
\item[(i)]If $\epsilon$ is a homomorphism then for all sufficiently large $X$, \[\frac{|\{\chi \in \mathcal{C}(K,X)~:~\textup{dim}_{\mathbb{F}_2}\textup{Sel}^2(A^\chi/K)~~\textup{is even}\}|}{|\mathcal{C}(K,X)|}=\frac{1+(-1)^{\textup{dim}_{\mathbb{F}_2}\textup{Sel}^2(A/K)}\cdot\delta}{2}\] where $\delta$ is a finite product of explicit local terms $\delta_v$ (see the statement of \Cref{main selmer ranks explicit theorem} for their definition).
\item[(ii)]   If $\epsilon$ fails to be a homomorphism then for all sufficiently large $X$, \[\frac{|\{\chi \in \mathcal{C}(K,X)~:~\textup{dim}_{\mathbb{F}_2}\textup{Sel}^2(A^\chi/K)~~\textup{is even}\}|}{|\mathcal{C}(K,X)|}=1/2.\]
\end{itemize}
(Here for $\chi \in \textup{Hom}_{\textup{cnts}}(\textup{Gal}(\bar{K}/K),\{\pm 1\})$, $A^\chi/K$ denotes the quadratic twist of $A$ by $\chi$.)
\end{theorem}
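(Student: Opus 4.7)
The plan is to reduce the theorem to a character-sum computation via a parity formula for the 2-Selmer rank under quadratic twist. The first and most substantial step is to establish a congruence of the form
\[\dim_{\mathbb{F}_2} \textup{Sel}^2(A^\chi/K) \equiv \dim_{\mathbb{F}_2} \textup{Sel}^2(A/K) + \sum_v \delta_v(\chi) \pmod{2},\]
summed over places of $K$, where each $\delta_v(\chi)$ depends only on $\chi|_{G_{K_v}}$ and vanishes outside a finite set of $v$ (depending on $A$ and on where $\chi$ ramifies). The strategy, adapting Klagsbrun--Mazur--Rubin, is to use the identification $A[2]=A^\chi[2]$ as Galois modules (valid because $\chi$ takes values in $\{\pm 1\}$ and $-1$ acts trivially on 2-torsion) to realise both Selmer groups inside a common $H^1(K,A[2])$, with local conditions that are Lagrangian for the local Tate pairing---self-duality of $A[2]$ being provided by the principal polarisation. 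A Lagrangian-comparison computation then produces the $\delta_v$ explicitly.

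Second, the average of $(-1)^{\dim \textup{Sel}^2(A^\chi/K)}$ over $\chi \in \mathcal{C}(K,X)$ becomes a character sum. Using an equidistribution statement (Cebotarev-type) for the restrictions $\chi|_{G_{K_v}}$ as $\chi$ ranges over $\mathcal{C}(K,X)$, the question reduces to whether the local signs $(-1)^{\delta_v(\chi)}$ combine into a product of independent local characters. When $\epsilon$ is a homomorphism, the contributions at unramified good places are governed by $\epsilon(\textup{Frob}_v)$ and one obtains a factorisation
\[(-1)^{\sum_v \delta_v(\chi)} = \prod_v \phi_v(\chi|_{G_{K_v}})\]
for local characters $\phi_v$; the averages then multiply, and I would identify $\prod_v \textup{avg}(\phi_v)$ with the product $\delta$ in the statement, combining with the constant $(-1)^{\dim \textup{Sel}^2(A/K)}$ to give part~(i). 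When $\epsilon$ fails to be a homomorphism, no such factorisation exists and I would argue that the residual global sign defines a nontrivial character on the group of twists, forcing the average to vanish by orthogonality and yielding part~(ii).

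The hardest part will be Step~1, pinning down the local terms $\delta_v$---especially at primes of bad reduction---sufficiently explicitly to see exactly when and how the homomorphism condition on $\epsilon$ enters. The subtle new phenomenon, absent from the elliptic-curve setting, is that $|\Sha|$ may be twice a square; this decouples the parity of $\dim \textup{Sel}^2(A/K)$ from that of the $2^\infty$-Selmer rank and is precisely what allows $\epsilon$ to fail to be a homomorphism. Tracking this feature through the local computations so that the dichotomy ``$\epsilon$ is a homomorphism'' versus ``it is not'' emerges cleanly as the dichotomy ``local signs factor'' versus ``they do not'' is where the real work lies; once Step~1 is secured, Steps~2 and 3 should follow the KMR template with technical modifications.
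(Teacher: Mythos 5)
Your overall architecture matches the paper's: identify $A[2]\cong A^\chi[2]$ as Galois modules, realise both Selmer groups as subgroups of $H^1(K,A[2])$ cut out by Lagrangian local conditions, obtain a local parity formula
\[\dim_{\mathbb{F}_2}\textup{Sel}^2(A^\chi/K)-\dim_{\mathbb{F}_2}\textup{Sel}^2(A/K)\equiv \sum_v \delta_v(\chi)\pmod 2,\]
and then average over $\chi\in\mathcal{C}(K,X)$. The place where you split the contribution at good unramified places according to $\epsilon(\mathrm{Frob}_v)$, apply quadratic reciprocity when $\epsilon$ is a homomorphism, and observe that the local averages factor, is exactly what the paper does (Lemma 7.5 and Theorem 7.6). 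So Steps 2 and 3, in the case that $\epsilon$ is a homomorphism, are essentially correct.

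There are two genuine gaps. First, in Step 1 you say the local conditions are ``Lagrangian for the local Tate pairing,'' with self-duality furnished by the polarisation. For $p=2$ this is not enough: the Lagrangian-comparison congruence that compares $\dim\cap$ of two Lagrangians modulo $2$ requires the two subspaces to be Lagrangian for a single \emph{quadratic form} refining the (bilinear) Tate pairing --- what KMR and the paper call a metabolic structure. The Tate pairing alone does not pin down such a quadratic form, and for higher-dimensional $A$ the form genuinely takes values in $\tfrac14\mathbb{Z}/\mathbb{Z}$, not just $\tfrac12\mathbb{Z}/\mathbb{Z}$. Constructing these local forms from the Theta group of $(1,\lambda)^*\mathcal P$ and, crucially, showing they are compatible under quadratic twist (so that $\alpha_v(\chi)$ is Lagrangian for the same form for every $\chi$) is the paper's main technical input at the abelian-variety level (\Cref{defi of tate quadratic form}, \Cref{props of local form}, \Cref{main twist of theta groups lemma}, \Cref{identification of quadratic forms under twisting}). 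Your proposal leaves this entirely implicit, but without it the parity formula of Step~1 is not justified.

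Second, the case where $\epsilon$ fails to be a homomorphism (the genuinely new case) is not an orthogonality argument in the usual sense. For $p=2$ the residual sign $w(\chi)=\prod_{v\notin\Sigma,\,\chi_v\text{ ram}}\epsilon(\mathrm{Frob}_v)$ is \emph{always} a homomorphism on $\mathcal{C}(K)$, whether or not $\epsilon$ is one; so ``$w$ defines a nontrivial character, apply orthogonality'' does not by itself distinguish the two cases. What actually has to be proved is that the homomorphism $\theta(\chi)=(f(\chi|_\Gamma),\,w(\chi))$ from $\mathcal{C}(K)$ to $\boldsymbol\mu_2^r\times\{\pm1\}$ (where $f$ records the restrictions to $\Sigma$ via the local pairings) is surjective \emph{precisely} when $\epsilon$ is not a homomorphism, and then deduce the $\tfrac12$-$\tfrac12$ split via equal fibres. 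The ``only if'' is easy; the ``if'' requires Chebotarev plus a group-theoretic argument in $G/G^2\times\{\pm1\}$ (\Cref{surjectivity of alpha}). Your sketch does not contain this; the dichotomy does not fall out of a generic orthogonality consideration, and the claim in your last paragraph that the non-square-$\Sha$ phenomenon ``is precisely what allows $\epsilon$ to fail to be a homomorphism'' conflates two logically distinct things: the failure of $\epsilon$ to be a homomorphism is a purely Galois-theoretic property of the image in $\textup{Sp}(A[2])$, whereas the $\Sha$ phenomenon is an arithmetic one. They interact but neither implies the other.
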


\Cref{theorem} is known for elliptic curves by work of Klagsbrun--Mazur--Rubin \cite[Theorem A]{MR3043582} and, more generally, for Jacobians of odd degree hyperelliptic curves by work of Yu \cite[Theorem 1]{YU15}. These previous results both fall into Case (i) of \Cref{theorem}, thus the failure of $\epsilon$ to be a homomorphism forcing parity in the distribution is a phenomenon new to this work. Despite this, Case (ii) of  \Cref{theorem} is in some sense the `generic' case since if $\textup{Gal}(K(A[2])/K)$ is the full symplectic group $\textup{Sp}_{2g}(\mathbb{F}_2)$ for $g=\textup{dim}A\geq 3$ then the simplicity of $\textup{Sp}_{2g}(\mathbb{F}_2)$ prevents $\epsilon$ from being a homomorphism. For a discussion of when $\epsilon$ is or is not a homomorphism for various families of abelian varieties, see  \Cref{Main theorems for $2$-Selmer ranks}.

In the two previously known cases above, finiteness of the $2$-part of the Shafarevich--Tate group is known to imply that the parity of the $2$-Selmer rank agrees with that of the Mordell--Weil rank, so that \Cref{theorem} is conjecturally satisfied by Mordell--Weil ranks also. For general principally polarised abelian varieties however this need not be true due to a phenomenon first observed by Poonen--Stoll (see \cite{MR1740984})  that the $2$-part of the Shafarevich--Tate group, if finite, need not have square order. Thus to see how one expects the parity of Mordell--Weil ranks to behave in quadratic twist families  we also prove a version of \Cref{theorem} for $2^{\infty}$-Selmer ranks (by definition, the $2^{\infty}$-Selmer rank is equal to the sum of the Mordell--Weil rank and the (conjecturally trivial) number of copies of $\mathbb{Q}_2/\mathbb{Z}_2$ in the Shafarevich--Tate group). 

\begin{theorem} \label{2-infinity selmer rank main theorem}
Let  $A/K$ be a principally polarised abelian variety. Then for all sufficiently large $X>0$, 
\[\frac{|\{\chi \in \mathcal{C}(K,X)~:~\textup{rk}_2(A^\chi/K)~~\textup{is even}\}|}{|\mathcal{C}(K,X)|}=\frac{1+(-1)^{\textup{rk}_2(A/K)}\cdot\kappa}{2}\]
where $\kappa$ is an explicit finite product of local terms $\kappa_v$ given in \Cref{delta for 2-infinity}.
\end{theorem}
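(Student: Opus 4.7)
The plan is to proceed in parallel with \Cref{theorem}, establishing a local factorisation of $(-1)^{\textup{rk}_2(A^\chi/K)}$ and then averaging over $\mathcal{C}(K,X)$. Concretely, one aims for an identity
\[(-1)^{\textup{rk}_2(A^\chi/K)}=(-1)^{\textup{rk}_2(A/K)}\prod_v \kappa_v(\chi_v),\]
in which $\kappa_v$ is a $\pm 1$-valued function of the local character $\chi_v$, with $\kappa_v\equiv 1$ outside a finite set $S$ of places (the archimedean ones, those above $2$, and those of bad reduction for $A$). Given such an identity, equidistribution of the localisations $(\chi_v)_{v\in S}$ as $\chi$ runs over $\mathcal{C}(K,X)$ with $X\to\infty$ reduces the average of the left-hand side to a product of local averages, which by definition yield $\kappa$.

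To produce the factorisation, the natural starting point is the exact sequence
\[0\to A(K)\otimes\Q_2/\mathbb{Z}_2\to \textup{Sel}^{2^\infty}(A/K)\to \Sha(A/K)[2^\infty]\to 0,\]
which gives $\dim_{\F_2}\textup{Sel}^2(A/K) - \textup{rk}_2(A/K) = \dim_{\F_2}A(K)[2] + \dim_{\F_2}F_A[2]$, where $F_A$ denotes the non-divisible part of $\Sha(A/K)[2^\infty]$. The first summand is independent of $\chi$, since $A^\chi[2]=A[2]$ as Galois modules. The second is governed by the Cassels--Tate pairing on $F_A$; for a principally polarised abelian variety, Poonen--Stoll~\cite{MR1740984} expressed the obstruction to $|F_A|$ being a square as a sum of explicit local invariants, and a related local decomposition can be extracted for the parity of $\dim_{\F_2}F_A[2]$. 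Combining this with the local-to-global machinery underlying \Cref{theorem} for the parity of $\dim_{\F_2}\textup{Sel}^2(A^\chi/K)$ and rearranging should produce the unified factorisation of $(-1)^{\textup{rk}_2(A^\chi/K)}$.

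The main obstacle will be extending this factorisation uniformly across the dichotomy of \Cref{theorem}. In case~(ii), the parity of $\dim_{\F_2}\textup{Sel}^2(A^\chi/K)$ is equidistributed and does \emph{not} admit a clean local factorisation of the required form on its own, yet the parity of $\textup{rk}_2(A^\chi/K)$ is still to be governed by a product of local symbols. So the `non-homomorphism' obstruction on the Selmer side must be precisely compensated by a parallel defect in the Poonen--Stoll contribution, so that the combined local functions $\kappa_v$ really do factor through $\chi_v$. Verifying this cancellation — especially at places of additive reduction above $2$, where the interplay between inertia, the Néron component group, and the $2$-torsion of $A$ is most delicate — is where the bulk of the work will lie. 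Once the factorisation is in hand, the averaging step is routine.
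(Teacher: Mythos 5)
Your overall strategy is exactly the paper's: establish a local factorisation
\[(-1)^{\textup{rk}_2(A^\chi/K)}=(-1)^{\textup{rk}_2(A/K)}\prod_{v\in\Sigma}\Omega_v(\chi_v),\]
with each $\Omega_v$ depending only on the local restriction $\chi_v$ and equal to $1$ outside a finite set $\Sigma$, and then average over $\mathcal{C}(K,X)$ exactly as in the proof of \Cref{epsilon homomorphism thm}. Your starting identity $\dim_{\mathbb{F}_2}\textup{Sel}^2(A/K)=\textup{rk}_2(A/K)+\dim_{\mathbb{F}_2}A(K)[2]+\dim_{\mathbb{F}_2}\Sha_{\textup{nd}}(A/K)[2]$, applied to both $A$ and $A^\chi$, is also the right one. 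Two points in the proposal are imprecise, though, and one of them hides the real technical content.

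First, when you write that Poonen--Stoll give, or can be massaged into, a local decomposition for the parity of $\dim_{\mathbb{F}_2}\Sha_{\textup{nd}}(A/K)[2]$, be careful: Poonen--Stoll prove such a decomposition only when $A$ is the Jacobian of a curve, and whether one exists for arbitrary principally polarised $A$ is still open (the paper says so explicitly). What \emph{is} true in general, and is a genuine contribution of this paper, is the local decomposition of the \emph{difference} $\dim_{\mathbb{F}_2}\Sha_{\textup{nd}}(A/K)[2]+\dim_{\mathbb{F}_2}\Sha_{\textup{nd}}(A^\chi/K)[2]\pmod 2$ in terms of the local invariants $\mathfrak{g}(A/K_v,\lambda_v,\chi_v)$; this is \Cref{local sha twist decomp}, and proving it is exactly where Sections 3--5 (the quadratic-form and theta-group machinery) are spent. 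The twist-difference formulation is precisely what you need, since $\textup{rk}_2(A^\chi/K)$ is likewise compared to $\textup{rk}_2(A/K)$.

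Second, the cancellation you correctly anticipate between the Selmer-rank contribution and the Shafarevich--Tate contribution is what reduces the sum to a finite one over $\Sigma$, but it occurs at \emph{good} places outside $\Sigma$, and it is cheap once \Cref{local sha twist decomp} is in hand: by \Cref{local terms in sha computation}(iii) and \Cref{first cases of norm}(i), for $v\notin\Sigma$ the quantity $2\,\textup{inv}_v\,\mathfrak{g}(A/K_v,\lambda_v,\chi_v)+\dim_{\mathbb{F}_2}A(K_v)/N_{L_w/K_v}A(L_w)$ equals $2\dim_{\mathbb{F}_2}A(K_v)[2]\equiv 0\pmod 2$ when $\chi_v$ ramifies and $0$ when it is unramified. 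Nothing delicate happens at bad places above $2$: those local terms are simply accepted as the definition of $\kappa_v$ and are never computed. You do not need to worry about the dichotomy in \Cref{theorem} either, since the local formula for $\textup{rk}_2$ (via \Cref{difference of selmer groups for abelian varieties} together with \Cref{local sha twist decomp}) holds unconditionally; the homomorphism/non-homomorphism split only concerned the $2$-Selmer rank on its own.
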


We remark that if $\textup{dim}A$ is odd and $K$ has a real place then $\kappa=0$. In general however, $\kappa$ is often non-zero: see \cite[Example 7.11]{MR3043582} for an example of an elliptic curve for which $\kappa$ is dense in $[-1,1]$ as the base field $K$ is varied, and \cite[Proposition 8.1]{YU15} for an example of an abelian surface over $\mathbb{Q}$ for which $\kappa=1$. 

Combining \Cref{theorem,2-infinity selmer rank main theorem} we see that the distribution of parities of $2$-Selmer ranks and $2^{\infty}$-Selmer ranks in general behave quite differently, as the following example illustrates.

\begin{example}[See \Cref{main example of parities}]
Let $J/\mathbb{Q}$ be the Jacobian of the genus $2$ hyperelliptic curve $C:y^2=x^6+x^4+x+3$. Then the function $\epsilon$ is not a homomorphism for $J/\mathbb{Q}$ so that half of the $2$-Selmer ranks of the quadratic twists of $J$ are even and half odd. On the other hand, $J$ has $\kappa=\frac{3}{16}$ and odd $2^\infty$-Selmer rank, so that $19/32$ of the twists of $J$ have even $2^\infty$-Selmer rank and $13/32$ odd. 
\end{example}

In fact, in the case where $\epsilon$ fails to be a homomorphism we show that the parity of the $2^\infty$-Selmer ranks behave in some sense independently of the parity of the $2$-Selmer ranks (see \Cref{independent variables remark}).  

A key step in passing between \Cref{theorem} and \Cref{2-infinity selmer rank main theorem} is the study of how the `non-square order Shafarevich--Tate group' phenomenon behaves under quadratic twist. Our main result here is: 

 \begin{theorem}  \label{local sha twist decomp}
Let $A/K$ be an abelian variety with principal polarisation $\lambda$ defined over $K$ and let $\chi\in\textup{Hom}_{\textup{cnts}}(\textup{Gal}(\bar{K}/K),\{\pm 1\})$ be a quadratic character corresponding to the quadratic extension $L/K$. 

Then  $\textup{dim}_{\mathbb{F}_2}\Sha_\textup{nd}(A/K)[2]+\textup{dim}_{\mathbb{F}_2}\Sha_\textup{nd}(A^\chi/K)[2]\equiv 0 ~~\textup{ (mod 2)}$ if and only if 
\[\sum_{v ~~\textup{non-split in }L/K} \textup{inv}_v~\mathfrak{g}(A/K_v,\lambda,\chi_v) =0\in \mathbb{Q}/\mathbb{Z}\]
where the local terms $\mathfrak{g}(A/K_v,\lambda,\chi_v)\in \textup{Br}(K_v)[2]$ are given in \Cref{the local terms part 2}.
(Here $\chi_v$ denotes the restriction of $\chi$ to the completion $K_v$ and $\Sha_{\textup{nd}}(A/K)$ denotes the quotient of the Shafarevich--Tate group of $A/K$ by its maximal divisible subgroup.) 
\end{theorem}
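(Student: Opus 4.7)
The plan is to invoke the formula of Poonen--Stoll expressing the parity of $\textup{dim}_{\mathbb{F}_2}\Sha_\textup{nd}(A/K)[2]$ as a sum of purely local invariants, apply it in parallel to $(A,\lambda)$ and to the quadratic twist $(A^\chi,\lambda^\chi)$, and show that the local contributions cancel at places where $\chi_v$ is trivial while recombining into the asserted Brauer classes at the remaining places.

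More precisely, first I would recall that Poonen--Stoll \cite{MR1740984} attach to each completion $v$ a local invariant $c_v(A,\lambda)\in\tfrac12\mathbb{Z}/\mathbb{Z}$, measuring the local obstruction to $\lambda$ being induced by a symmetric line bundle, and prove
\[\textup{dim}_{\mathbb{F}_2}\Sha_\textup{nd}(A/K)[2]\equiv \sum_v c_v(A,\lambda) \pmod 2.\]
Applying the same formula to $(A^\chi,\lambda^\chi)$ and adding reduces the theorem to the place-by-place identity
\[c_v(A,\lambda)+c_v(A^\chi,\lambda^\chi) \equiv \textup{inv}_v\,\mathfrak{g}(A/K_v,\lambda,\chi_v) \pmod{\mathbb{Z}}.\]
At a place $v$ that splits in $L/K$ the character $\chi_v$ is trivial, so $(A^\chi/K_v,\lambda^\chi)$ is canonically isomorphic to $(A/K_v,\lambda)$, and the two $c_v$ terms cancel mod $2$; this matches the fact that $\mathfrak{g}$ only enters the asserted sum at non-split places.

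The heart of the proof is therefore establishing the displayed identity at non-split $v$. Here I would express the Poonen--Stoll local invariant for the twist using an explicit cocycle $\xi\in Z^1(K_v,\mathbb{Z}/2)$ representing $\chi_v$: twisting $\lambda$ by $\xi$ (acting through $[-1]$ on $A$) shifts its symmetric-line-bundle obstruction by a cup product of $\chi_v$ with a natural class in $H^1(K_v,A[2])$ attached to the polarisation, and this cup product lives in $H^2(K_v,\mu_2)=\textup{Br}(K_v)[2]$. The claim is that this Brauer class coincides with the explicit $\mathfrak{g}(A/K_v,\lambda,\chi_v)$ of \Cref{the local terms part 2}, which should by design be built out of precisely the same cocycle data.

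The main obstacle will be this last local identification. The Poonen--Stoll obstruction is usually described via a choice of symmetric line bundle (or equivalently a $K_v$-rational theta characteristic structure) representing $\lambda$, and tracking how this choice transforms under quadratic twist requires a careful cocycle calculation, most naturally via the Mumford theta group attached to $\lambda$. Once one has written both $c_v(A^\chi,\lambda^\chi)-c_v(A,\lambda)$ and $\textup{inv}_v\,\mathfrak{g}(A/K_v,\lambda,\chi_v)$ as cup products of $\chi_v$ with a common class in $H^1(K_v,A[2])$, the identity should follow; the delicate points are making the cocycle presentations compatible at archimedean and bad-reduction places, where the standard descriptions of $c_v$ can degenerate.
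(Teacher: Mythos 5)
Your proposal hinges on a claimed formula of Poonen--Stoll expressing the parity of $\dim_{\mathbb{F}_2}\Sha_\textup{nd}(A/K)[2]$ itself as a sum of well-defined local invariants $c_v(A,\lambda)$. No such formula is known for general principally polarised abelian varieties, and the paper explicitly flags this: Poonen--Stoll's local decomposition (their Corollary~12) is established only for Jacobians of curves, and whether it holds for arbitrary principally polarised $A/K$ is stated as an open problem in the discussion following \Cref{local sha twist decomp}. What Poonen--Stoll do prove in general is the criterion of \Cref{poonen stoll}: $\dim_{\mathbb{F}_2}\Sha_\textup{nd}(A/K)[2]$ is even if and only if $\langle \mathfrak{c}_\lambda,\mathfrak{c}_\lambda\rangle_{\textup{CT},\lambda}=0$, but the local classes appearing in the definition of the Cassels--Tate pairing (\Cref{defi of ct pairing}) depend on global choices --- the $1$-cochain $\sigma\in C^1(K,A[4])$ with $2\sigma=a$, and the $2$-cochain $\epsilon\in C^2(K,\bar K^\times)$ with $d\epsilon = d\sigma\cup b$ --- so $\textup{inv}_v(\mathfrak{c}_v)$ is not intrinsic to the completion $K_v$. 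Once this premise goes, your proposed ``place-by-place identity'' $c_v(A,\lambda)+c_v(A^\chi,\lambda^\chi)\equiv\textup{inv}_v\,\mathfrak{g}(\cdots)$ has no left-hand side to speak of, and the argument collapses.

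The actual route is different in an essential way: one never decomposes $\langle\mathfrak{c}_\lambda,\mathfrak{c}_\lambda\rangle_{\textup{CT},\lambda}$ on its own, but only the \emph{sum} $\langle\mathfrak{c}_\lambda,\mathfrak{c}_\lambda\rangle_{\textup{CT},\lambda}+\langle\mathfrak{c}_{\lambda_\chi},\mathfrak{c}_{\lambda_\chi}\rangle_{\textup{CT},\lambda_\chi}$. Using $\mathfrak{c}_{\lambda_\chi}=\mathfrak{c}_\lambda$ (\Cref{twisting the torsor}), this sum is shown (\Cref{identification of pairings}) to equal a new pairing $\langle~,~\rangle_{S_\chi}$ whose global inputs are far milder: a cocycle $c_q$ representing $\mathfrak{c}_\lambda$ (equivalently a quadratic refinement $q$ of $(~,~)_\lambda$), and a $1$-cochain $F$ with $dF = c_q\cup c_q$. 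Section~3 constructs a canonical such $F=F_q$ via the extension of the Dickson homomorphism (\Cref{extension of dickson invariant}), and \Cref{change of form} tracks its dependence on $q$; \Cref{independence of refinement} then shows the resulting local Brauer classes are independent of the choice of $q$. That independence is what makes $\mathfrak{g}(A/K_v,\lambda,\chi_v)$ a genuinely local object, and it has no analogue for a single abelian variety. Your proposal's recognition that theta groups and cup products of $\chi_v$ with a class in $H^1(K_v,A[2])$ will enter is directionally sound, but the key structural move --- pairing $A$ with its twist before attempting any local decomposition, rather than decomposing each separately --- is missing, and without it the argument cannot get off the ground.
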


In particular, \Cref{local sha twist decomp} shows that the sum 
\[\textup{dim}_{\mathbb{F}_2}\Sha_\textup{nd}(A/K)[2]+\textup{dim}_{\mathbb{F}_2}\Sha_\textup{nd}(A^\chi/K)[2]~\phantom{hi}~\textup{(mod }2)\] is controlled by purely local behaviour. For $A/K$ the Jacobian of a curve it is a result of Poonen--Stoll  \cite[Corollary 12]{MR1740984} that this is in fact true for the parity of  $\textup{dim}_{\mathbb{F}_2}\Sha_\textup{nd}(A/K)[2]$ itself, but whether or not this holds for an arbitrary principally polarised abelian variety remains open. 


In general, the definition of the local terms $\mathfrak{g}(A/K_v,\lambda,\chi_v)$ appearing in \Cref{local sha twist decomp} is quite involved but if the principal polarisation $\lambda$ on $A/K_v$ is induced by a $K_v$-rational symmetric line bundle $\mathcal{L}_v$ then they take a simple form. Specifically, associated to $\mathcal{L}_v$ is a $\textup{Gal}(\bar{K}_v/K_v)$-invariant quadratic refinement $q$ of the Weil pairing on $A[2]$ (we review this classical construction in \Cref{quad refinements of pairing subsection}). As a consequence, $\textup{Gal}(\bar{K}_v/K_v)$ acts on $A[2]$ through the orthogonal group $O(q)$. In particular, we obtain a quadratic character $\psi_v$ of $K_v$ as the composition
\[\psi_v:\textup{Gal}(\bar{K}_v/K_v)\rightarrow O(q)/SO(q)\cong \{\pm 1\}.\]
We then have 
\[\mathfrak{g}(A/K_v,\lambda,\chi_v)=\chi_v \cup \psi_v \in \textup{Br}(K_v).\]
This allows the explicit evaluation of $\mathfrak{g}(A/K_v,\lambda,\chi_v)$ for archimedean places and for nonarchimedean places $v\nmid 2$ at which $A$ has good reduction (\Cref{local terms in sha computation}). The implications for arithmetic of the difference in characteristic $2$ between quadratic forms and symmetric bilinear pairings will be a recurring theme throughout this paper. 

\Cref{local sha twist decomp} may also be used to prove the analogue of \Cref{theorem} for the parity of the dimension of the $2$-torsion in the Shafarevich--Tate group amongst the family of quadratic twists of an arbitrary principally polarised abelian variety, quantifying the failure of the Shafarevich--Tate group to have square order amongst quadratic twists. See \Cref{sha statistics theorem}  for the precise statement.

 To expain the remaining results of the paper we briefly indicate how we prove \Cref{theorem}. As in  \cite{MR3043582} which proves the elliptic curve case, we deduce \Cref{theorem} from a more general theorem determining the distribution of parities of ranks of certain Selmer groups $\textup{Sel}(T,\chi)$ associated to a finite dimensional $\mathbb{F}_p$-vector space $T$ equipped with a $\textup{Gal}(\bar{K}/K)$-action, an alternating pairing, and abstract `twisting data'. Taking $T=A[2]$ along with the Weil pairing and the twisting data detailed in \Cref{twisting abelian varieties section} recovers \Cref{theorem}. The general result, the case $\textup{dim}T=2$ of which combines Theorems 7.6 and 8.2 of op. cit., is as follows. In the statement, the group $\mathcal{C}(K,X)$ for $p>2$ is defined in the identical way to $p=2$, replacing $\textup{Hom}_{\textup{cnts}}(\textup{Gal}(\bar{K}/K),\{\pm 1\})$ (the group of quadratic characters) with the group $\textup{Hom}_{\textup{cnts}}(\textup{Gal}(\bar{K}/K),\boldsymbol \mu_p)$ (of $p$-cyclic characters).
 
 \begin{theorem} \label{main twisting theorem combined intro}
Let $T$ be a finite dimensional $\mathbb{F}_p$-vector space equipped with a continuous $\textup{Gal}(\bar{K}/K)$-action and a non-degenerate $\textup{Gal}(\bar{K}/K)$-equivariant alternating pairing $T\times T\rightarrow \boldsymbol \mu_p.$
Suppose additionally that $T$ is equipped with a global metabolic structure $\textbf{q}$ and twisting data $\boldsymbol \alpha$ (Definitions \ref{global metabolic structure defi} and \ref{twisting data}). Let $\epsilon:\textup{Gal}(K(T)/K)\rightarrow \{\pm 1\}$ be the map $\sigma \mapsto (-1)^{\dim T^\sigma}$ (here $K(T)/K$ is the fixed field of the kernel of the $\textup{Gal}(\bar{K}/K)$-action on $T$).

\begin{itemize}
\item[(i)] If either $p=2$ and $\epsilon$ fails to be a homomorphism, or $p>2$ and $\epsilon$ is non-trivial when restricted to $\textup{Gal}(K(T)/K(\boldsymbol \mu_p))$, then  
\[\lim_{X\rightarrow \infty}\frac{|\{\chi \in \mathcal{C}(K,X)~:~\textup{dim}_{\mathbb{F}_p}\textup{Sel}(T,\chi)~~\textup{is even}\}|}{|\mathcal{C}(K,X)|}=1/2.\]
Moreover, if $p=2$ then it suffices to take $X$ sufficiently large as opposed to taking the limit $X\rightarrow \infty$. 
\item[(ii)] If either $p=2$ and $\epsilon$ is a homomorphism, or $p>2$ and $\epsilon$ is trivial when restricted to $\textup{Gal}(K(T)/K(\boldsymbol \mu_p))$, then for all sufficiently large $X$ we have  \[\frac{|\{\chi \in \mathcal{C}(K,X)~:~\textup{dim}_{\mathbb{F}_p}\textup{Sel}(T,\chi)~~\textup{is even}\}|}{|\mathcal{C}(K,X)|}=\frac{1+(-1)^{\textup{dim}_{\mathbb{F}_p}\textup{Sel}(T,\mathbbm{1})}\cdot\delta}{2}\] where $\delta=\prod_{v}\delta_v$ is an explicit finite product of local terms $\delta_v$ given in \Cref{definition of delta} and $\mathbbm{1}$ denotes the trivial character.
\end{itemize}
(For the definition of the Selmer groups $\textup{Sel}(T,\chi)$ associated to $\textbf{q}$ and $\boldsymbol \alpha$, see \Cref{twisted selmer groups defi}.)
\end{theorem}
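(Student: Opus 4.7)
The plan is to adapt the local-to-global averaging strategy of Klagsbrun--Mazur--Rubin to arbitrary dimensional $T$. The central ingredient will be a parity comparison formula of the form
\[\textup{dim}_{\mathbb{F}_p}\textup{Sel}(T,\chi) \equiv \textup{dim}_{\mathbb{F}_p}\textup{Sel}(T,\mathbbm{1}) + \sum_v h_v(\chi_v) \pmod{2},\]
where $h_v$ is a local invariant attached to $\mathbf{q}$, $\boldsymbol \alpha$, and $\chi_v$. I would derive this by exploiting the metabolic structure: each $H^1(K_v,T)$ becomes a symplectic $\mathbb{F}_p$-space in which the two local conditions cutting out $\textup{Sel}(T,\chi)$ and $\textup{Sel}(T,\mathbbm{1})$ sit as Lagrangian subspaces. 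The parity of the difference in Selmer ranks is then controlled by local codimensions, summed over $v$, via Poitou--Tate global duality; only finitely many places contribute, namely the ramification primes of $\chi$ together with a fixed finite bad set.

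Given such a formula, the theorem reduces to a counting problem. Writing
\[\sum_{\chi \in \mathcal{C}(K,X)} (-1)^{\textup{dim}_{\mathbb{F}_p}\textup{Sel}(T,\chi)} = (-1)^{\textup{dim}_{\mathbb{F}_p}\textup{Sel}(T,\mathbbm{1})}\sum_{\chi \in \mathcal{C}(K,X)} \prod_v (-1)^{h_v(\chi_v)},\]
the inner sum factorises into local character sums over places $v$ of $K$, each producing the candidate local average $\delta_v$ appearing in the statement. For nonarchimedean $v$ outside a fixed finite set, $\delta_v$ will depend only on the conjugacy class of $\textup{Frob}_v$ in $\textup{Gal}(K(T)/K)$, so the average of $\delta_\mathfrak{p}$ over primes $\mathfrak{p}$ of increasing norm is computable by Chebotarev's theorem, and the resulting expression is governed precisely by the function $\epsilon$.

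For the case distinction, when $\epsilon$ is a homomorphism (respectively, when $p>2$ and $\epsilon$ is trivial on $\textup{Gal}(K(T)/K(\boldsymbol \mu_p))$), Chebotarev should yield a nonzero limiting Euler product $\delta$ agreeing with the stated value. When instead $\epsilon$ fails this property, one exhibits an element $\sigma_0 \in \textup{Gal}(K(T)/K)$ witnessing the non-multiplicativity; the primes $\mathfrak{p}$ whose Frobenius lies in a suitable coset determined by $\sigma_0$ should satisfy $\delta_\mathfrak{p} = 0$ by a direct local computation, which alone forces the full Euler product, and hence the density, to equal exactly $1/2$. The strengthening for $p=2$ to "all sufficiently large $X$" (rather than only in the limit) comes from the vanishing of individual $\delta_\mathfrak{p}$ rather than cancellation on average, so that the parity bias can be killed by finitely many local factors.

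The hardest step will be establishing the parity comparison formula in full generality. For $\textup{dim}\,T=2$ Klagsbrun--Mazur--Rubin work with an explicit two-dimensional quadratic space, a luxury unavailable here; for general $T$ one must track carefully how the twisting data $\boldsymbol \alpha$ interacts with the metabolic pairing at each place, and at places dividing $p$ and at archimedean places one must grapple with the failure of naive unramified local conditions to be self-orthogonal. The $p>2$ case introduces the further subtlety that $\boldsymbol \mu_p$ is a non-trivial Galois module, which is responsible for the modified trigger condition on $\epsilon$ in that setting and requires the Chebotarev averaging to be carried out over $\textup{Gal}(K(T,\boldsymbol \mu_p)/K)$ rather than $\textup{Gal}(K(T)/K)$.
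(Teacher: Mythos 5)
Your sketch gets the first step right: the parity comparison formula
\[\dim_{\mathbb{F}_p}\textup{Sel}(T,\chi)-\dim_{\mathbb{F}_p}\textup{Sel}(T,\mathbbm{1})\equiv\sum_{v\in\Sigma}h_v(\mathbbm{1}_{K_v},\chi_v)+\sum_{v\notin\Sigma,\,\chi_v\,\mathrm{ram}}\dim_{\mathbb{F}_p}T^{G_{K_v}}\pmod 2\]
is indeed the paper's Theorem \ref{local decomposition}, and it does come out of the metabolic/Lagrangian comparison you describe. However, the passage from this formula to the density statement is where your argument has a genuine gap: the sum $\sum_{\chi\in\mathcal{C}(K,X)}\prod_v(-1)^{h_v(\chi_v)}$ does \emph{not} factor as a product of local character sums, because reciprocity constrains which tuples of local characters arise from a global $\chi$; equivalently, the restriction map $\mathcal{C}(K,X)\to\prod_v\mathcal{C}(K_v)$ is very far from surjective once you include all $v$. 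The paper's $\delta$ is a \emph{finite} product over $v\in\Sigma$, not an Euler product over all primes evaluated by Chebotarev, and there is no $\delta_{\mathfrak p}$ for $\mathfrak p\notin\Sigma$.

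The way the two cases actually get resolved is not the one you propose. In case (ii) the crucial move is that $\epsilon$ descends to a character of $K$ (for $p=2$, quadratic, cut out by some $K(\sqrt\Delta)/K$; for $p>2$, trivial on $\textup{Gal}(K(T)/K(\boldsymbol\mu_p))$), so that by the product formula of class field theory $\prod_v\chi_v(\Delta)=1$, the entire contribution from $v\notin\Sigma$ is rewritten as $\prod_{v\in\Sigma}\chi_v(\Delta)$. Thus the parity of $\dim\textup{Sel}(T,\chi)$ depends only on $\chi|_\Gamma$ with $\Gamma=\prod_{v\in\Sigma}\mathcal{C}(K_v)$; since the restriction $\mathcal{C}(K,X)\to\Gamma$ is a surjective homomorphism for $X$ large (equal-sized fibers), the average factors as a product of the $\delta_v$ for $v\in\Sigma$. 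Your ``factorise into local sums'' heuristic is only valid after this reduction to $\Sigma$, and the reduction is exactly what uses the homomorphism hypothesis on $\epsilon$.

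In case (i) the quantity $w(\chi):=\prod_{v\notin\Sigma,\,\chi_v\,\mathrm{ram}}\epsilon(\textup{Frob}_v)$ cannot be pushed into $\Sigma$, and the paper instead shows that $w(\chi)$ is ``independent'' of $\chi|_\Gamma$ in the needed sense. Your suggestion that a witness to non-multiplicativity gives $\delta_{\mathfrak p}=0$ at a positive density of primes doesn't directly yield the result, because again there is no infinite Euler product to kill. What actually happens: for $p=2$ the map $\theta(\chi)=(f(\chi|_\Gamma),w(\chi))$ into $\boldsymbol\mu_2^r\times\{\pm1\}$ is a \emph{homomorphism} (this uses that $w_v$ is a homomorphism on $\mathcal{C}(K_v)$ when the residue characteristic is odd), and its surjectivity is shown to be equivalent to $\epsilon$ not being a homomorphism; surjectivity gives equal fiber sizes and hence the exact $1/2$ for $X$ large. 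For $p>2$ the map $w$ is not a homomorphism, so the paper introduces a matrix recurrence $\hat{\mathbf t}_{n+1}=M(\textup{Frob}_{\mathfrak p_{n+1}})\hat{\mathbf t}_n$, shows $M(\sigma)$ is a strict contraction precisely when $\epsilon(\sigma)=-1$, and appeals to Chebotarev to guarantee infinitely many such $\sigma$, giving only a limit. This is also the true source of the $p=2$/$p>2$ dichotomy (exact equality vs.\ limit), rather than vanishing of local factors as you suggest.

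So: your plan is sound through the parity formula, but the global-to-local counting step needs the class-field-theoretic reduction to $\Sigma$ in case (ii) and either the $\theta$-homomorphism/surjectivity argument or the contraction-mapping argument in case (i); none of these is present in the proposal, and the naive Euler product framing will not carry the argument.
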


When $p=2$ and $\textup{dim}T=2$ then $\epsilon$ is necessarily a homomorphism so that, as with \Cref{theorem}, the case where $\epsilon$ fails to be a homomorphism exhibits behaviour new to this work. On the other hand, when $\textup{dim}T=2$ yet $p>2$, $\epsilon$ is non-trivial when restricted to $\textup{Gal}(K(T)/K(\boldsymbol \mu_p))$ if and only if $p$ divides $[K(T):K]$ (see \cite[Lemma 4.3]{MR3043582}), so that now both cases (i) and (ii) of  \Cref{main twisting theorem combined intro} occur.  In particular, by allowing the dimension of $T$ to be arbitrary one obtains a more uniform picture between $p=2$ and $p>2$. 
For a discussion on conditions on the  $\textup{Gal}(\bar{K}/K)$-action on $T$ which result in Case (i) (resp. (ii)) of \Cref{main twisting theorem combined intro} see \Cref{remark on the function epsilon}. 

Taking $p>2$ and $T=A[p]$ for a principally polarised abelian variety $A/K$, along with the metabolic structure and twisting data detailed in \Cref{odd p twisting data}, enables us to prove an analogue of \Cref{theorem} which applies to Selmer groups of certain $p$-cyclic twists of $A^{p-1}$. See \Cref{p-cyclic abelian variety cor} for the precise statement (again, the case where $A$ is an elliptic curve is shown by Klagsbrun--Mazur--Rubin in \cite{MR3043582}).

 Finally, we remark that a key step in proving \Cref{main twisting theorem combined intro} is, for a character $\chi$, to describe the quantity
 \[\dim_{\mathbb{F}_p}\textup{Sel}(T,\chi)-\dim_{\mathbb{F}_p}\textup{Sel}(T,\mathbbm{1}) \phantom{hello} \textup{(mod }2)\]
as a sum of local terms (see \Cref{local decomposition}). Upon taking $T=A[2]$ for a principally polarised abelian variety $A/K$ one obtains (\Cref{difference of selmer groups for abelian varieties}) a local formula for the difference between the parity of the $2$-Selmer rank of $A/K$ and the $2$-Selmer rank of the quadratic twist $A^\chi/K$. This generalises a theorem of Kramer \cite[Theorem 1]{MR597871} for elliptic curves, and Yu \cite[Theorem 5.11]{YU15}  for Jacobians of odd degree hyperelliptic curves. Combining this with \Cref{local sha twist decomp}, one obtains (\Cref{difference of selmer groups for abelian varieties}) a purely local formula for the parity of the $2^\infty$-Selmer rank of $A$ over the quadratic extension cut out by $\chi$. Such local formulas for (the parity of) $2^\infty$-Selmer ranks  have applications to  the $2$-parity conjecture and these will be examined in a forthcoming paper.

\subsection*{Layout of the paper}

In \S2 we review some standard results in group cohomology that will be used in the seqeuel. In \S3 we review and study quadratic forms on finite dimensional $\mathbb{F}_2$-vector spaces. The main result is \Cref{extension of dickson invariant} which forms a key technical step in the proof of \Cref{local sha twist decomp}. Section 4 recalls the constructions of certain quadratic forms associated to abelian varieties and examines how these behave under quadratic twist. Of particular importance is \Cref{main twist of theta groups lemma} which plays a crucial role in associating twisting data to the group of $2$-torsion points of a principally polarised abelian variety. \Cref{local sha twist decomp} is proven in \S4. Sections 6-9 contain the proof of \Cref{main twisting theorem combined intro} and broadly follow the layout and strategy of \cite[Sections 3-4 and 6-8]{MR597871}. Specifically, in \S6 we recall the notions of metabolic structure and twisting data from op. cit. and generalise them to arbitrary (finite) dimensional $\mathbb{F}_p$-vector spaces, as well as defining the associated Selmer groups. \Cref{main twisting theorem combined intro} (i) is proven in \S7, whilst \S8 uses class field theory to produce certain global characters with specified local behaviour and is a more or less direct generalisation of \cite[Section 6]{MR597871}, albeit with different proofs. The results of \S8 are then applied in \S9 to prove \Cref{main twisting theorem combined intro} (ii). Section 10 associates a metabolic structure and twisting data to the $2$-torsion in a principally polarised abelian variety and deduces  \Cref{theorem,2-infinity selmer rank main theorem}. Finally, \S11 associates a metabolic structure and twisting data to the $p$-torsion in a principally polarised abelian variety for $p$ odd. 

\subsection*{Notation} 
For a group $G$ acting on an abelian group $M$, for $\sigma \in G$ we write
\[M^\sigma:=\{m\in M~~:~~\sigma(m)=m\}.\]
For a field $F$ we denote its separable closure by $\bar{F}$, its absolute Galois group by $G_F$ and, for $p$ different from the characteristic of $F$, we denote by $\boldsymbol \mu_p$ the $G_F$-module of $p$-th roots of unity in $\bar{F}$. We denote by $\textup{Br}(F)$ the Brauer group of $F$. 

For an abelian variety $A/F$ we write $A^{\vee}/F$ for the dual of $A$. A \emph{principally polarised abelian variety} over $F$ is a pair $(A/F,\lambda)$ consisting of an abelian variety $A/F$ and a principal polarisation  $\lambda:A\rightarrow A^{\vee}$ defined over $F$. We often omit $\lambda$ from the notation.  For a quadratic character $\chi\in \textup{Hom}_{\textup{cnts}}(G_F,\{\pm 1\})$ the \emph{quadratic twist} of $A$ by $\chi$ is the pair $(A^\chi,\psi)$ consisting of an abelian variety $A^\chi/F$ and an isomorphism $\psi:A\rightarrow A^\chi$ such that $\psi^{-1}\psi^\sigma=[\chi(\sigma)]$ for all $\sigma \in G_F$. We often denote this by $A^\chi$, the map $\psi$ being understood. 

 For a number field $K$ we denote by $M_K$  the set of places of $K$ and write $K_v$ for the completion of $K$ at $v\in M_K$.  We denote by $\textup{inv}_v:\textup{Br}(K_v)\rightarrow \mathbb{Q}/\mathbb{Z}$ the local invariant map and, if $v$ is nonarchimedean, denote by $K_v^{\textup{ur}}$ the maximal unramified extension of $K_v$. We implicitly fix embeddings $\bar{K}\hookrightarrow \bar{K}_v$ for each $v\in M_K$ and thus view $G_{K_v}$ as a subgroup of $G_K$ for each $v$. In particular, for a (finite) Galois extension $L/K$ of number fields and a non-archimedean place $v\in M_K$ unramified in $L/K$ we have a well defined Frobenius element $\textup{Frob}_v$ in $\textup{Gal}(L/K)$. 
 
 For a $G_K$-module $M$, the injections $G_{K_v}\hookrightarrow G_K$ induce restriction maps on cohomology $H^i(K,M)\rightarrow H^i(K_v,M)$ for each $i\geq 0$ and $v\in M_K$. For a cocycle $\xi$ we write $\xi_v$ for its restriction to $K_v$ (see  \Cref{cohomology section} for our notation and conventions concerning group cohomology). We define, for $v$ a non-archimedean place of $K$,
\[H^i_{\textup{ur}}(K_v,M):=\ker\left(H^i(K_v,M)\stackrel{\textup{res}}{\longrightarrow}H^i(K_v^{\textup{ur}},M)\right).\]

%

\subsection{Acknowledgements}
We thank K\k{e}stutis \v{C}esnavi\v{c}ius for many useful conversations and comments, and for correspondence regarding the material in Section 4.  
We thank Tim Dokchitser, Vladimir Dokchitser, C\'{e}line Maistret and Jeremy Rickard for helpful conversations.

\section{Group cohomology and group extensions} \label{cohomology section}

In the following sections we'll make several computations involving group cohomology. Here we set up the relevant notation and review some basic results. All material in this section is standard: see e.g. \cite{MR0219512}. 

\subsection{Group cohomology}
Let $G$ be a finite group and $M$ a $G$-module. For $i\geq 0$ we write $C^i(G,M)$ for the group of $i$-cochains with values in $M$ and $d:C^i(G,M)\rightarrow C^{i+1}(G,M)$ for the usual differential. When $i=0$ we have $(dm)(g)=gm-m$ for $m\in M=C^0(G,M)$ and $g\in G$, and when $i=1$ we have \[(df)(g,h)=f(g)+gf(h)-f(gh)\] for $f\in C^1(G,M)$ and $g,h\in G$. We write $Z^i(G,M)$ (resp. $B^i(G,M)$) for the group of $i$-cocycles (resp. $i$-coboundaries) with values in $M$. We'll always think of the $i$'th-cohomology group $H^i(G,M)$ as the quotient $Z^i(G,M)/B^i(G,M)$.When making computations involving group cohomology, we'll make the convention that Fraktur letters such as $\mathfrak{a},\mathfrak{b}$ etc. denote cohomology classes and that the corresponding lower case Roman letters $a,b$ etc., denote cocycles representing these cohomology classes. More generally, if $G$ is a profinite group we consider continuous cochains, cocycles and coboundaries, using the same notation and conventions to talk about them. 

\subsection{Cup product on cochains} \label{cup product of cochains} Let $G$ be a finite (or profinite) group and let $M$ and $N$ be $G$-modules. Then for $i,j\geq 0$ the \emph{cup-product} map
\[\cup:C^i(G,M)\times C^j(G,N)\longrightarrow C^{i+j}(G,M\otimes N)\]
is defined by
\[(a\cup b)(g_1,...,g_{i+j})=a(g_1,...,g_i)\otimes g_1...g_ib(g_{i+1},...,g_{i+j}).\]
For $a\in C^i(G,M)$ and $b\in C^j(G,N)$ we have the equality 
\begin{equation} \label{cup product on cochains}
d(a\cup b)=da\cup b+(-1)^i a\cup db
\end{equation}
inside $C^{i+j+1}(G,M\otimes N)$. 

For $i,j\geq 0$ the cup product map above induces a cup product map on cohomology
\[\cup:H^i(G,M)\times H^j(G,N)\rightarrow H^{i+j}(G,M\otimes N)\]
which satisfies $\mathfrak{a}\cup \mathfrak{b}=(-1)^{ij}\mathfrak{b}\cup \mathfrak{a}$.

\subsection{Group extensions} \label{group extensions subsection}

Let $G$ be a finite group and $M$ an abelian group with trivial $G$-action. In what follows we write the group law on $G$ multiplicatively and the group law on $M$ additively. Let $\mathfrak{a}\in H^2(G,M)$ and $a$ be a $2$-cocycle representing $\mathfrak{a}$. Define a group structure on the set $G\times M$ by the rule
\[(g,m)\cdot (g',m')=(gg',m+m'+a(g,g'))\]
and let $E_a$ denote the resulting group. 
The maps $\alpha:M\rightarrow E_a$ and $\beta:E_a\rightarrow G$ defined by  $m\mapsto (1,m-a(1,1))$ and $(g,m)\mapsto g$ respectively give rise to the short exact sequence
\[0\rightarrow M\stackrel{\alpha}{\longrightarrow}E_a \stackrel{\beta}{\longrightarrow}G\rightarrow 0\]
realising $E_a$ as a central extension of $G$ by $M$. The isomorphism class of this extension is independent of the choice of cocycle representing $\mathfrak{a}$ and the sequence splits if and only if $\mathfrak{a}$ is the trivial class in $H^2(G,M)$. More specifically, let $s:G\rightarrow E_a$ denote the set section $g\mapsto (g,0)$ to $\beta$. Then if $\phi:E_a\rightarrow M$ is a homomorphism splitting the exact sequence (i.e. giving a section to $\alpha$) then the function $f=\phi\circ s\in C^1(G,M)$ is a 1-cochain satisfying $df=a$. 

\begin{remark}
The above correspondence in fact gives rise to a bijection between elements of $H^2(G,M)$ and the set of isomorphism classes of central extensions of $G$ by $M$, and one can generalise this correspondence to include the case where the action of $G$ on $M$ is non-trivial (though now the relevant extensions are, in general, no longer central). See \cite[Section 2]{MR0219512} for more details. 
 \end{remark} 
 
\section{Quadratic forms on finite dimensional $\mathbb{F}_2$-vector spaces} \label{f_2 vector spaces section}

The aim of this section is to prove \Cref{extension of dickson invariant,change of form} which are needed for the proof of \Cref{local sha twist decomp}. In Sections 3.1, 3.2 and 3.3 we review the theory of quadratic forms on finite dimensional $\mathbb{F}_2$-vector spaces. The material in 3.1 and 3.2 is standard, see e.g. \cite[Section 9.4]{MR770063}. In 3.3 we review a construction due to Pollatsek (given in the discussion preceeding \cite[Theorem 1.11]{MR0280596}) which we use in the proof of \Cref{extension of dickson invariant}.

For the rest of this section fix a finite dimensional $\mathbb{F}_2$-vector space $V$ equipped with a non-degenerate alternating pairing 
\[\left \langle ~,~\right\rangle:V\times V\longrightarrow \mathbb{F}_2\]
(so in particular, $\text{dim}V$ is necessarily even). We denote by $\text{Sp}(V)$ the symplectic group  of linear automorphisms of $V$ preserving the pairing. 
 
\subsection{Quadratic refinements and the class $\mathfrak{c}\in H^1(\mathrm{Sp}(V),V)$} \label{quadratic refinements subsection}

\begin{defi}[Quadratic refinement]
A function $q:V\rightarrow \mathbb{F}_2$ is called a \emph{quadratic refinement} of $\left \langle~,~\right\rangle$ if we have
\[q(v+v')+q(v)+q(v')=\left \langle v,v'\right \rangle\]
for all $v,v'\in V$.
\end{defi}

Let $\mathcal{Q}$ denote the set of all quadratic refinements of $\left \langle~,~\right \rangle$. It is a principal homogeneous space for $V$ where, for $v\in V$, we define $q+v\in \mathcal{Q}$ by setting
\[(q+v)(v')=q(v')+\left \langle v,v'\right \rangle\] 
for $v'\in V$.
The symplectic group $\textup{Sp}(V)$ acts on the set of quadratic refinements via $q\mapsto q\circ \sigma ^{-1}$ (for $\sigma \in \textup{Sp}(V)$). This action is compatible with addition by elements of $V$ and so associated to $\mathcal{Q}$ is a class
\[\mathfrak{c}\in H^1(\textup{Sp}(V),V).\]
Explicitly, picking a quadratic refinement $q$ and defining $\lambda :V\rightarrow V^*:=\text{Hom}(V,\mathbb{F}_2)$ to be the map $v\mapsto \left \langle v,- \right\rangle$, the function $c_q:\textup{Sp}(V)\rightarrow V$ given by setting
\[c_q(\sigma)= \lambda^{-1}(q\circ \sigma ^{-1}-q)\] is a $1$-cocycle representing $\mathfrak{c}$.   

\begin{remark} \label{exact sequence quadratic refinement remark}
Let $\mathcal{A}\textit{lt}$ denote the group of (possibly degenerate) alternating pairings on $V$ under addition. It has an action of $\textup{Sp}(V)$ given by $\sigma \cdot \left \langle \left \langle ~,~\right \rangle \right \rangle=\left \langle \left \langle \sigma^{-1}(~),\sigma^{-1}(~)\right \rangle \right \rangle$. Similarly, let $\mathcal{Q}\textit{uad}$ denote the group of quadratic forms on $V$ under addition which also carries an action of $\textup{Sp}(V)$ via $\sigma \cdot q=q\circ \sigma^{-1}$. Then we have a short exact sequence of $\textup{Sp}(V)$-modules
\begin{equation} \label{construction of torsor of quadratic refinements}
0\rightarrow V^* \longrightarrow \mathcal{Q}\textit{uad} \longrightarrow \mathcal{A}\textit{lt} \longrightarrow 0
\end{equation}
where the map $V^*\rightarrow  \mathcal{Q}\textit{uad}$ is inclusion and the map $\mathcal{Q}\textit{uad} \rightarrow \mathcal{A}\textit{lt}$ sends a quadratic form to its associated pairing. The associated long exact sequence for cohomology gives a map
\[\delta:H^0(\textup{Sp}(V),\mathcal{A}\textit{lt})\rightarrow H^1(\textup{Sp}(V),V^*).\] Our pairing $\left \langle ~,~\right \rangle$ is an element of $H^0(\textup{Sp}(V),\mathcal{A}\textit{lt})$ and the class $\mathfrak{c}\in H^1(\textup{Sp}(V),V)$ constructed above is the image of $\left \langle ~,~\right \rangle$ under $\delta$, once we use the map $\lambda$ above to identify $H^1(\textup{Sp}(V),V)$ with $H^1(\textup{Sp}(V),V^*)$. 
\end{remark}

\begin{remark}
It is shown in \cite[Theorems 4.1 and 4.4]{MR0280596} that if $\textup{dim}(V)\geq 4$ then $H^1(\textup{Sp}(V),V)\cong \mathbb{Z}/2\mathbb{Z}$, generated by  $\mathfrak{c}$.
\end{remark}

\subsection{Orthogonal groups, Special orthogonal groups and the Dickson homomorphism}

For a given quadratic refinement $q$, denote by $O(q)$ the corresponding orthogonal group of linear automorphisms preserving $q$ rather than just the pairing. The orthogonal group $O(q)$ has an index $2$ subgroup $SO(q)$ which is by definition the kernel of the Dickson homomorphism, whose definition we now recall. Let $C(q)$ denote the Clifford algebra associated to $q$ (see \cite[Definition 9.2.1]{MR770063}), $C^0(q)$  its even graded sub-algebra  and $Z(q)$ the centre of $C^0(q)$. Then  $Z(q)$ is a rank $2$ \'{e}tale algebra over $\mathbb{F}_2$ (see Theorem 9.4.8 of op. cit.). Since $O(q)$ acts naturally on $C(q)$ and preserves the grading, it acts on $Z(q)$ by $\mathbb{F}_2$-algebra homomorphisms. Noting that the automorphism group of any rank $2$  \'{e}tale algebra over $\mathbb{F}_2$ (or indeed any field) is canonically isomorphic to $\mathbb{Z}/2\mathbb{Z}$, we obtain a homomorphism $d_q:O(q)\rightarrow \mathbb{Z}/2\mathbb{Z}$, the \emph{Dickson homomorphism}. 

We will also need the following alternative characterisation of the Dickson homomorphism. 

\begin{proposition} \label{dimension proposition}
Let $q$ be a quadratic refinement of $\left\langle~,~ \right\rangle$ and $\sigma \in O(q)$. Then 
\[d_q(\sigma)= \textup{dim}V^{\sigma} \textup{ (mod 2)}.\]
\end{proposition}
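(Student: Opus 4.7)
The plan is to reduce to a computation on a generating set of $O(q)$. By the Cartan--Dieudonn\'e theorem, valid for non-degenerate quadratic forms in characteristic $2$, every element of $O(q)$ is a product of reflections $r_v$ with $v \in V$ anisotropic (i.e.\ $q(v) \neq 0$). Since $d_q$ is a homomorphism by construction, it suffices to verify that $\sigma \mapsto \dim V^\sigma \pmod 2$ is also a homomorphism and that the two functions agree on reflections.

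The reflection case is direct. For $\sigma = r_v$ the fixed space is the hyperplane $v^\perp$, so $\dim V^{r_v} = \dim V - 1$, which is odd because the non-degenerate alternating pairing forces $\dim V$ to be even. On the other hand $d_q(r_v) = 1$ follows from the standard computation in the Clifford algebra, where the class of $v$ (a unit since $v^2 = q(v) \neq 0$) conjugates the centre $Z(q)$ of $C^0(q)$ nontrivially.

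The principal obstacle is showing that $\sigma \mapsto \dim V^\sigma \pmod 2$ is a homomorphism. Here I would exploit the symplectic structure to obtain a cleaner reformulation. Since $\sigma$ preserves $\langle\,,\,\rangle$, a short calculation gives $(\sigma - 1)V \subseteq (V^\sigma)^\perp$, and rank--nullity combined with non-degeneracy of the pairing forces the equality $(\sigma - 1)V = (V^\sigma)^\perp$. Consequently the restriction of $\langle\,,\,\rangle$ to $(\sigma - 1)V$ has radical $(\sigma - 1)V \cap V^\sigma$, and descends to a non-degenerate alternating form on the quotient, whose dimension must therefore be even. Using that $\dim V$ is even, this yields the reformulation
\[\dim V^\sigma \equiv \dim\bigl(V^\sigma \cap (\sigma - 1)V\bigr) \pmod 2.\]

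From here I would complete the argument by induction on the reflection length of $\sigma$. Writing $\sigma = r_{v_1}\cdots r_{v_k}$, one analyses how the quantity on the right of the displayed equation changes under left-multiplication by a single reflection $r_v$. The decomposition of $r_v\tau - 1$ into $r_v(\tau - 1) + (r_v - 1)$ lets one compare the images, kernels, and their intersection for $\sigma = r_v\tau$ versus $\tau$; combined with the base case $k=1$ and the symplectic reformulation just established, this forces the parity to flip at each step. The delicate bookkeeping of how $V^\sigma \cap (\sigma - 1)V$ changes under multiplication by a reflection is the main technical hurdle, but only basic linear algebra over $\mathbb{F}_2$ and the orthogonality identities above should be needed.
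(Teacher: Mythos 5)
The paper's own "proof" is a one-line citation to Dye \cite[Theorem 3]{MR0453639}, so there is no internal argument to compare against; the question is whether your outline actually closes, and it does not yet do so at two points.

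First, your invocation of the Cartan--Dieudonn\'e theorem as "valid for non-degenerate quadratic forms in characteristic $2$" is not correct without qualification: over $\mathbb{F}_2$, when $\dim V = 4$ and $q$ has Arf invariant $0$, the orthogonal transvections generate only an index-$2$ subgroup of $O(q)\cong S_3\wr S_2$. (Concretely, products of two transvections $r_ur_v$ land in pairs $(\sigma,\tau)\in S_3\times S_3$ with $\operatorname{sgn}\sigma=\operatorname{sgn}\tau$, and the element swapping the two hyperbolic planes of $V$ already lies outside the subgroup they generate, even though it is in $SO(q)$.) This case is within the scope of the proposition --- both Arf types occur as refinements of a fixed $\langle\,,\,\rangle$ when $\dim V=4$ --- and for it your argument only pins down $d_q$ and $\dim V^\sigma\bmod 2$ on a proper subgroup. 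Since a homomorphism $O(q)\to\mathbb{Z}/2$ is not determined by its restriction to an index-$2$ subgroup (there are two extensions), a separate direct verification is needed here.

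Second, and more centrally, the claim you identify as "the main technical hurdle" --- that $\dim V^{r_v\tau}\equiv \dim V^\tau + 1\pmod 2$ for every anisotropic $v$ and every $\tau\in O(q)$, so that the parity of $\dim V^\sigma$ equals the parity of reflection length --- is precisely the content of the theorem, and you do not prove it. Your reformulation $\dim V^\sigma\equiv \dim\bigl(V^\sigma\cap(\sigma-1)V\bigr)\pmod 2$ is correct (it follows from $(\sigma-1)V=(V^\sigma)^\perp$ and the evenness of a non-degenerate alternating form on the quotient by the radical), but the subsequent "delicate bookkeeping" of how $V^{r_v\tau}\cap(r_v\tau-1)V$ compares to $V^\tau\cap(\tau-1)V$ is where all the work lives, and as written it is only gestured at, not carried out. (Likewise the assertion $d_q(r_v)=1$ via conjugation on the even Clifford centre is stated rather than checked, though that part is routine.) Until both points are filled in, the proposal is a plan rather than a proof.
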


\begin{proof}
This is \cite[Theorem 3]{MR0453639}. 
\end{proof}

\subsection{An extension of the Dickson homomorphism to the full symplectic group}

The following is a version of a construction due to Pollatsek (\cite{MR0280596}) which gives an extension of the Dickson homomorpism to the whole of $\textup{Sp}(V)$. We caution however that the resulting function $\textup{Sp}(V)\rightarrow \mathbb{Z}/2\mathbb{Z}$ is not a homomorphism (we cannot ask for this since for $\textup{dim}V\geq 6$, $\textup{Sp}(V)$ is simple).

\begin{construction}[Pollatsek]  \label{construction of phi_q}
Fix a quadratic refinement $q$ of $\left \langle ~,~\right \rangle$. 
Set $U=\mathbb{F}_2^2$ equipped with its unique non-degenerate alternating form $\left \langle ~,~\right \rangle_U$. Further, let $q_U$ denote the unique quadratic refinement of $\left \langle ~,~\right \rangle_U$ with Arf invariant 1. Thus for $(\lambda,\lambda')\in U$ we have
\[q_U((\lambda,\lambda'))=\lambda +\lambda'+\lambda \lambda'.\]
Let $x=(1,0)$ and $y=(0,1)$ so that $q_U(x)=1=q_U(y)$ and $\left \langle x,y\right \rangle_U=1$. Now let $W:=V\oplus U$ be the orthogonal direct sum of $V$ and $U$, so that $W$ comes equipped with the quadratic form $q_W:=q+q_U$, whose associated (non-degenerate, alternating) pairing is $\left \langle ~,~\right \rangle_W:=\left \langle ~,~\right \rangle +\left \langle ~,~\right \rangle _U$. 


Now given $g=(\sigma,\alpha)\in \textup{Sp}(V)\times \mathbb{F}_2$, define the linear automorphism $\phi_q(g)$ of $W$ by setting
\[\phi_q(g)(x)=x,\]
\[\phi_q(g)(y)=\alpha x+c_q(\sigma)+y,\]
and for $v\in V$,
\[\phi_q(g)(v)=\sigma(v)+\left \langle  c_q(\sigma),\sigma(v) \right \rangle x\]
and extending linearly.
\end{construction}

A key property of this construction, as shown in the discussion preceeding \cite[Theorem 1.11]{MR0280596}, is that for each $g\in \textup{Sp}(V)\times \mathbb{F}_2$ we have $\phi_q(g)\in O(q_W)$. Moreover, Pollatsek shows in loc. cit. that for each $\sigma\in \textup{Sp}(V)$, there is a unique $\alpha(\sigma)\in \mathbb{F}_2$ such that $\phi_q((\sigma,\alpha(\sigma)))\in SO(q_W)$. One has $\alpha(\sigma)=d_q(\sigma)$ for all $\sigma \in O(q)$, so the map $\sigma \mapsto \alpha(\sigma)$ gives an extension of the Dickson homomorphism to the full symplectic group $\textup{Sp}(V)$.

\subsection{Triviality of $\mathfrak{c}\cup \mathfrak{c}$}

The pairing $\left\langle ~,~\right\rangle$ induces a cup-product map
\[\cup:H^1(\textup{Sp}(V),V)\times H^1(\textup{Sp}(V),V) \longrightarrow H^2(\textup{Sp}(V),\mathbb{F}_2).\] We now use the construction of the previous subsection to analyse the element $\mathfrak{c}\cup \mathfrak{c}\in H^2(\textup{Sp}(V),\mathbb{F}_2)$.

\begin{notation}
Given a quadratic refinement $q\in \mathcal{Q}$, let $E_q$ denote the central extension of $\textup{Sp}(V)$ by $\mathbb{F}_2$ corresponding to the $2$-cocycle $c_q\cup c_q$, so that as a set $E_q=\textup{Sp}(V)\times \mathbb{F}_2$, and is equipped with the group structure
\[(\sigma,\alpha)\cdot (\sigma',\alpha')=(\sigma \sigma',\alpha+\alpha'+(c_q\cup c_q)(\sigma,\sigma')).\]
\end{notation}

We then have:

\begin{lemma} \label{construction of hom}  
The function $\phi_q$ of \Cref{construction of phi_q} is a homomorphism $E_q\rightarrow O(q_W)$. 
\end{lemma}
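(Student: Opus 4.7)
The plan is a direct verification that $\phi_q(gg') = \phi_q(g)\circ\phi_q(g')$ for $g=(\sigma,\alpha),\,g'=(\sigma',\alpha')\in E_q$, by evaluating both sides on the three types of generating elements of $W$: the vector $x$, the vector $y$, and arbitrary $v\in V$. Since $\phi_q(g)\in O(q_W)$ has already been established by Pollatsek, the only thing outstanding is the multiplicative identity, and by linearity it suffices to check on these generators.

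The check on $x$ is immediate since $\phi_q(g)(x)=x$ for every $g$. For $v\in V$, one computes
\[
\phi_q(g)\phi_q(g')(v)=\sigma\sigma'(v)+\langle c_q(\sigma),\sigma\sigma'(v)\rangle\,x+\langle c_q(\sigma'),\sigma'(v)\rangle\,x,
\]
and then uses $\mathrm{Sp}(V)$-equivariance of $\langle\,,\,\rangle$ to rewrite $\langle c_q(\sigma'),\sigma'(v)\rangle=\langle \sigma c_q(\sigma'),\sigma\sigma'(v)\rangle$, followed by the $1$-cocycle identity $c_q(\sigma\sigma')=c_q(\sigma)+\sigma c_q(\sigma')$ to collapse the $x$-coefficient to $\langle c_q(\sigma\sigma'),\sigma\sigma'(v)\rangle$. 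This matches $\phi_q(gg')(v)$ on the nose (no Arf/cup-product correction appears on $V$).

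The key computation is on $y$. Expanding,
\[
\phi_q(g)\phi_q(g')(y)=\alpha'x+\sigma c_q(\sigma')+\langle c_q(\sigma),\sigma c_q(\sigma')\rangle\,x+\alpha x+c_q(\sigma)+y.
\]
Here the $V$-components sum to $c_q(\sigma)+\sigma c_q(\sigma')=c_q(\sigma\sigma')$ by the cocycle identity, while the $x$-coefficient becomes
\[
\alpha+\alpha'+\langle c_q(\sigma),\sigma c_q(\sigma')\rangle.
\]
Now the definition of the cup product on cochains from \S\ref{cup product of cochains} gives $(c_q\cup c_q)(\sigma,\sigma')=\langle c_q(\sigma),\sigma c_q(\sigma')\rangle$ (where the pairing $\langle\,,\,\rangle:V\otimes V\to\mathbb{F}_2$ is used as the coefficient map). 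Therefore the $x$-coefficient equals $\alpha+\alpha'+(c_q\cup c_q)(\sigma,\sigma')$, which by the group law in $E_q$ is precisely the $\mathbb{F}_2$-component of $gg'$. Hence $\phi_q(g)\phi_q(g')(y)=\phi_q(gg')(y)$.

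The main obstacle, if any, is purely bookkeeping: one must verify that the sole ``twist'' in $\phi_q$ beyond a naive diagonal action — the extra $x$-correction when applying $\phi_q(g)$ to vectors in $V$ — is what produces exactly the cup-product cocycle when two such applications are composed on $y$. Once the identifications $(c_q\cup c_q)(\sigma,\sigma')=\langle c_q(\sigma),\sigma c_q(\sigma')\rangle$ and $c_q(\sigma\sigma')=c_q(\sigma)+\sigma c_q(\sigma')$ are made explicit, the calculation is mechanical. No further ingredients (neither properties of $q_W$ nor the Dickson invariant) are needed for this lemma; those enter only in the subsequent application.
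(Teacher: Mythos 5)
Your proof is correct and takes essentially the same approach as the paper: the paper's proof simply cites Pollatsek for $\phi_q(g)\in O(q_W)$ and asserts that an "easy computation" gives multiplicativity, while you carry out that computation explicitly on the generators $x$, $y$, $v\in V$, using $\mathrm{Sp}(V)$-invariance of the pairing, the $1$-cocycle identity for $c_q$, and the cochain-level cup-product formula.
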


\begin{proof}
As above, $\phi_q$ gives a map from $E_q$ into $O(q_W)$. An easy computation shows additionally that it is a homomorphism. 
\end{proof}


We may now prove the main result of the section. 

\begin{proposition} \label{extension of dickson invariant}
For each quadratic refinement $q\in \mathcal{Q}$, there is a unique function
$f_q:Sp(V)\rightarrow \mathbb{F}_2$
such that $df_q=c_q\cup c_q \in Z^2(Sp(V),\mathbb{F}_2)$ and such that the restriction of $f_q$ to the orthogonal group $O(q)$ is the Dickson homomorphism. In particular, we have
\[\mathfrak{c}\cup \mathfrak{c}=0\in H^2(Sp(V),\mathbb{F}_2).\]  
\end{proposition}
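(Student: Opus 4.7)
The natural strategy is to exploit the homomorphism $\phi_q : E_q \to O(q_W)$ produced in \Cref{construction of hom} and compose it with the Dickson homomorphism $d_{q_W}: O(q_W) \to \mathbb{F}_2$ to split the central extension $E_q$. Once that splitting is in hand, the general correspondence between splittings and cochain primitives (reviewed in \Cref{group extensions subsection}) produces the desired $f_q$ automatically.

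More concretely, I would set $\Phi_q := d_{q_W} \circ \phi_q : E_q \to \mathbb{F}_2$; this is a homomorphism by \Cref{construction of hom}. The first step is to verify that $\Phi_q$ restricted to the central copy $\{1\}\times \mathbb{F}_2 \subset E_q$ is the identity. Using that $c_q(1)=0$ (as $c_q$ is a $1$-cocycle) and \Cref{construction of phi_q}, the element $\phi_q((1,\alpha))$ fixes $x$ and all of $V$ while sending $y \mapsto \alpha x + y$, so its fixed subspace has dimension $\dim W$ if $\alpha=0$ and $\dim W -1$ if $\alpha=1$. By \Cref{dimension proposition}, $d_{q_W}(\phi_q((1,\alpha)))=\alpha$, so $\Phi_q$ does split the extension $E_q$. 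Applying the recipe of \Cref{group extensions subsection} to the splitting $\Phi_q$ and the set section $s:\sigma\mapsto (\sigma,0)$, the function $f_q := \Phi_q \circ s : \textup{Sp}(V)\to \mathbb{F}_2$ satisfies $df_q = c_q\cup c_q$. In particular, $c_q\cup c_q$ is a coboundary, proving the ``in particular'' assertion.

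To check the Dickson restriction property, let $\sigma \in O(q)$, so that $c_q(\sigma)=0$ by the very definition of $c_q$. Then \Cref{construction of phi_q} gives $\phi_q((\sigma,0))(x)=x$, $\phi_q((\sigma,0))(y)=y$ and $\phi_q((\sigma,0))(v)=\sigma(v)$ for $v\in V$, so its fixed subspace has dimension $\dim V^\sigma + 2$. Another application of \Cref{dimension proposition} gives $f_q(\sigma)=d_{q_W}(\phi_q((\sigma,0)))\equiv \dim V^\sigma \equiv d_q(\sigma)\pmod 2$, as required.

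For uniqueness, any two candidates $f_q, f_q'$ have the same coboundary $c_q\cup c_q$, so their difference is a $1$-cocycle with values in the trivial module $\mathbb{F}_2$, i.e.\ a homomorphism $\textup{Sp}(V)\to \mathbb{F}_2$, and it vanishes on $O(q)$ by hypothesis. One then checks case by case (treating the low-dimensional $\dim V=2$ case directly from $\textup{Sp}(V)\cong S_3$, and appealing to perfectness of $\textup{Sp}(V)$ or an analysis of its abelianisation together with the image of $O(q)$ in larger dimensions) that no such nonzero homomorphism exists, forcing $f_q=f_q'$. The main technical obstacle throughout is really \Cref{construction of hom} together with the identification of $d_{q_W}$ in terms of fixed-point dimensions; once these are available, the rest of the argument is essentially a bookkeeping exercise translating between the cocycle $c_q\cup c_q$ and the central extension it classifies.
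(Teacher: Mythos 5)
Your proof is correct and follows essentially the same route as the paper: both split the central extension $E_q$ using $d_{q_W}\circ\phi_q$, verify the splitting via the fixed-point dimension criterion of \Cref{dimension proposition}, read off $df_q = c_q\cup c_q$ from the group-extension formalism of \Cref{group extensions subsection}, and check the Dickson restriction property by computing $\phi_q((\sigma,0))=\sigma\oplus\mathrm{id}_U$ for $\sigma\in O(q)$. The uniqueness argument is likewise the same in spirit, though the paper pins down the low-dimensional cases more concretely (uniqueness of the index-$2$ subgroup of $\textup{Sp}(V)$ for $\dim V\in\{2,4\}$ and its nontrivial restriction to $O(q)$, plus simplicity for $\dim V\geq 6$) where you leave it at a somewhat vaguer "perfectness or abelianisation" level.
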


\begin{proof}
We first show uniqueness. If $f_q'$ is another function with $df_q'=c_q\cup c_q$ then the difference $f_q-f_q'$ is a homomorphism from $Sp(V)$ to $\mathbb{F}_2$. If $\text{dim}V\geq 6$ then $Sp(V)$ is simple and hence $f_q=f_q'$. If $\text{dim}V$ (which is necessarily even) is $2$ or $4$, then $Sp(V)$ has a unique index $2$ subgroup and hence a unique non-trivial homomorphism to $\mathbb{F}_2$. In each case this homomorphism is non-trivial when restricted to $O(q)$ for each quadratic refinement $q$, whence the result.   

In the notation of \Cref{construction of phi_q}, associated to $q_W$ is the Dickson homomorphism  \[d_{q_W}:O(q_W)\longrightarrow \mathbb{F}_2.\]
We claim that $d_{q_W}\circ \phi_q:E_q\rightarrow \mathbb{F}_2$ gives a section to the map $\mathbb{F}_2\rightarrow E_q$ sending $\alpha$ to $(1,\alpha)$, thus splitting the extension $E_q$. Indeed, let $\alpha \in \mathbb{F}_2$. Then $\phi_q((1,\alpha))=\text{id}_V\oplus m_\alpha$ where $m_\alpha\in O(q_U)$ is defined by $m_\alpha(x)=x$, $m_\alpha(y)=\alpha x+y.$ One sees (either using the definition in terms of  Clifford algebras, or by applying \Cref{dimension proposition}) that $\text{id}_V\oplus m_\alpha$ is in $SO(q_W)$ if and only if $\alpha=0$, whence $d_{q_W}((1,\alpha))=\alpha$ as desired. 

It now follows that the function $f_q:Sp(V)\rightarrow \mathbb{F}_2$ defined by $f_q(\sigma)=(d_{q_W}\circ \phi_q)((\sigma,0))$ satisfies $df_q=c_q\cup c_q$ (see \Cref{group extensions subsection} and note that $(c_q\cup c_q)(1,1)=0$). 

It remains to show that the restriction of $f_q$ to $O(q)$ is the Dickson homomorphism $d_q$. To see this, note that for any $\sigma\in O(q)$ we have $c_q(\sigma)=0$ and so  
\[\phi_q((\sigma,0))=\sigma \oplus \text{id}_U.\]
Since this is in $SO(q_W)$ if and only if $\sigma$ is in $SO(q)$ (again by looking at Clifford algebras or using \Cref{dimension proposition}), we have the claim. 
\end{proof}

We now describe how $f_q$ changes upon changing the quadratic refinement $q$.

\begin{proposition} \label{change of form}
Let $q$ and $q'$ be two quadratic refinements of $\left\langle~,~ \right\rangle$ and let $v\in V$ be such that $q'=q+v$, so that $c_{q'}=c_q+dv$. Then we have an equality of cochains in $C^1(\textup{Sp}(V),\mathbb{F}_2)$:
\[f_{q'}=f_q+c_q\cup v+v\cup c_q+v\cup dv.\]
\end{proposition}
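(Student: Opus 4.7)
The plan is to construct an explicit candidate for $f_{q'}$ by modifying $f_q$, verify it has coboundary $c_{q'}\cup c_{q'}$, and then invoke the uniqueness statement from \Cref{extension of dickson invariant}.

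First I would check the asserted relation $c_{q'}=c_q+dv$: unwinding the definitions and using that $\sigma$ preserves the pairing gives $(q'\circ\sigma^{-1}-q')(w)=(q\circ\sigma^{-1}-q)(w)+\langle \sigma v-v,w\rangle$, which after applying $\lambda^{-1}$ yields $c_{q'}(\sigma)=c_q(\sigma)+(\sigma v-v)=(c_q+dv)(\sigma)$. Expanding bilinearly therefore gives
\[c_{q'}\cup c_{q'}=c_q\cup c_q+c_q\cup dv+dv\cup c_q+dv\cup dv.\]
Now \Cref{cup product on cochains} combined with $dc_q=0$ and the fact that we are in characteristic $2$ identifies each of the last three terms with a coboundary: $d(c_q\cup v)=c_q\cup dv$, $d(v\cup c_q)=dv\cup c_q$, and $d(v\cup dv)=dv\cup dv$. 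Thus setting $\tilde f:=f_q+c_q\cup v+v\cup c_q+v\cup dv$ gives $d\tilde f=c_{q'}\cup c_{q'}$, which is the coboundary condition characterising $f_{q'}$.

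Next I would invoke the uniqueness half of \Cref{extension of dickson invariant}: since $\tilde f$ and $f_{q'}$ have the same coboundary, it suffices to show $\tilde f|_{O(q')}=d_{q'}$. Fix $\sigma\in O(q')$; then $c_{q'}(\sigma)=0$, so $c_q(\sigma)=\sigma v-v$. Substituting this into each cup-product term and using $\langle \sigma v,\sigma v\rangle=0$ gives $(c_q\cup v)(\sigma)=(v\cup c_q)(\sigma)=(v\cup dv)(\sigma)=\langle v,\sigma v\rangle$, so the three cochain terms add to $3\langle v,\sigma v\rangle=\langle v,\sigma v\rangle$ in $\mathbb{F}_2$. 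Combined with $d_{q'}(\sigma)=\dim V^\sigma\pmod 2$ (\Cref{dimension proposition}), the desired equality reduces to
\[f_q(\sigma)\equiv \dim_{\mathbb{F}_2}V^\sigma+\langle v,\sigma v\rangle \pmod 2.\]

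The main obstacle, and the last step, is verifying this congruence, since $\sigma$ need not lie in $O(q)$ so $f_q(\sigma)$ must be computed through the definition $f_q(\sigma)=d_{q_W}(\phi_q(\sigma,0))=\dim_{\mathbb{F}_2}W^{\phi_q(\sigma,0)}\pmod 2$. Writing a general element of $W$ as $z=w+\alpha x+\beta y$ and using the explicit formulas of \Cref{construction of phi_q} together with $c_q(\sigma)=\sigma v-v$, the fixed-point equation $\phi_q(\sigma,0)(z)=z$ splits into a $V$-component $(\sigma-1)(w+\beta v)=0$ and an $x$-component $\langle v,(\sigma-1)w\rangle=0$, with $\alpha$ free. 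The substitution $w':=w+\beta v\in V^\sigma$ reduces the second condition to $\beta\langle v,\sigma v\rangle=0$, and a case-split on the value of $\langle v,\sigma v\rangle\in\mathbb{F}_2$ gives
\[\dim_{\mathbb{F}_2}W^{\phi_q(\sigma,0)}=\dim_{\mathbb{F}_2}V^\sigma+2-\langle v,\sigma v\rangle,\]
whose parity is $\dim V^\sigma+\langle v,\sigma v\rangle$, completing the proof. The delicate part of this final computation is correctly handling the cross-term $\langle c_q(\sigma),\sigma w\rangle x$ in $\phi_q$, where the assumption $\sigma\in O(q')$ (not $O(q)$) is used precisely to obtain the clean form $\langle v,(\sigma-1)w\rangle$.
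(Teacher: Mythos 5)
Your proof is correct and follows essentially the same route as the paper's: verify that $\tilde f:=f_q+c_q\cup v+v\cup c_q+v\cup dv$ has coboundary $c_{q'}\cup c_{q'}$ using the Leibniz rule and $dc_q=0$, then invoke the uniqueness clause of \Cref{extension of dickson invariant} by checking the restriction to $O(q')$, reducing to the fixed-point computation of $\dim_{\mathbb{F}_2}W^{\phi_q((\sigma,0))}$ which matches the paper's calculation line by line (the paper phrases it as computing $f_q|_{O(q')}=d_{q'}+v\cup dv$ rather than directly computing $\tilde f|_{O(q')}=d_{q'}$, but this is a cosmetic reorganisation of the same argument).
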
 

\begin{proof}
One readily computes
\[d(f_q+c_q\cup v+v\cup c_q+v\cup dv)=c_{q'}\cup c_{q'},\]
so it remains to show that the restriction of $f_q+c_q\cup v+v\cup c_q+v\cup dv$ to $O(q')$ is the Dickson homomorphism $d_{q'}$. To do this we'll use the characterisation of the Dickson homomorphism given in \Cref{dimension proposition}.

Fix $\sigma \in O(q')$. Then $c_{q}(\sigma)=(dv)(\sigma)$. In the notation of \Cref{construction of phi_q}, 
given $w\in W$, writing $w=z+\epsilon_1x+\epsilon_2 y$ with $z\in V$ and $\epsilon_1,\epsilon_2\in\mathbb{F}_2$,  one sees that $w$ is fixed by $\phi_q((\sigma,0))$ if and only if 
\begin{equation} \label{first eq}
\sigma(z)-z=\epsilon_2 (dv)(\sigma)
\end{equation}
and
\begin{equation} \label{second eq}
\left\langle (dv)(\sigma),\sigma(z)\right\rangle =0.
\end{equation}
Now \Cref{first eq} is equivalent to $z=z'+\epsilon_2 v$ for some $z'\in V^{\sigma}$. If $z$ has this form, then using invariance of $z'$ under $\sigma$ one computes
\[\left\langle (dv)(\sigma),\sigma(z)\right\rangle = \epsilon_2 \left\langle \sigma(v),v\right\rangle.\]
Thus if $\left\langle\sigma(v),v\right\rangle=0$ then the second condition \cref{second eq} is redundant, whilst if $\left \langle\sigma(v),v\right\rangle=1$ then it may be replaced with the condition $\epsilon_2=0$. We conclude that
\[\text{dim}W^{\phi_q((\sigma,0))}\equiv \text{dim}V^{\sigma}+\left\langle \sigma(v),v\right \rangle~~\text{ (mod 2)}\]
and hence (using \Cref{dimension proposition})
\[f_q(\sigma)=d_{q'}(\sigma)+\left\langle \sigma(v),v\right\rangle=d_{q'}(\sigma)+(v\cup dv)(\sigma).\]
Thus the restriction of $f_q$ to $O(q')$ is equal to $d_{q'}+v\cup dv$. Noting also that the restriction of $c_q$ to $O(q')$ is equal to $dv$, the result follows easily.
\end{proof}


\begin{remark}
Let $\tilde{V}$ denote the group whose underlying set is $V\times \mathbb{F}_2$, endowed with the group law
\[(v,\alpha)\cdot (v',\alpha')=(v+v',\alpha+\alpha'+\left \langle v,v'\right \rangle).\]
Then $\tilde{V}$ sits in a short exact sequence
\begin{equation} \label{sequence cup product}
0\rightarrow \mathbb{F}_2\longrightarrow \tilde{V}\longrightarrow V \rightarrow 0,
\end{equation}
the map $\mathbb{F}_2\rightarrow \tilde{V}$ sending $\alpha$ to $(0,\alpha)$ and the map $\tilde{V}\rightarrow V$ being projection onto the first factor. Making $\textup{Sp}(V)$ act trivially on $\mathbb{F}_2$ and diagonally on $\tilde{V}$ this sequence becomes an exact sequence of $\textup{Sp}(V)$-modules. Using the relation $df_q=c_q\cup c_q$, one can show that for each quadratic refinement $q$, the function $\tilde{c}_q:\textup{Sp}(V)\rightarrow \tilde{V}$ defined by
\[\tilde{c}_q(\sigma)=(c_q(\sigma),f_q(\sigma))\]
is a $1$-cocycle. One may then use the relationship between $f_q$ and $f_q'$ given in \Cref{change of form} to show that the class $\tilde{\mathfrak{c}}$  of $\tilde{c}_q$ in $H^1(\textup{Sp}(V),\tilde{V})$ does not depend on $q$ so that the results of this section prove that $\mathfrak{c}\in H^1(\textup{Sp}(V),V)$ admits a canonical lift to $H^1(\textup{Sp}(V),\tilde{V})$. (It is shown in \cite[Corollary 2.8(b)]{MR2915483} that the connecting homomorphism $H^1(\textup{Sp}(V),V)\rightarrow H^2(\textup{Sp}(V),\mathbb{F}_2)$ arising from \cref{sequence cup product} sends $\mathfrak{a}\in H^1(\textup{Sp}(V),V)$ to $\mathfrak{a}\cup \mathfrak{a}$ so that the triviality of $\mathfrak{c}\cup \mathfrak{c}$ is equivalent to the existence of some lift of $\mathfrak{c}$ to  $H^1(\textup{Sp}(V),\tilde{V})$.) 
\end{remark}


\section{Quadratic forms associated to abelian varieties} \label{Quadratic forms associated to abelian varieties}

In this section we study the behaviour under quadratic twist of certain quadratic forms associated to abelian varieties. Though several results in this section will be used in what follows, the most important is \Cref{main twist of theta groups lemma} which provides the technical input required to generalise \cite[Theorem 5.10]{YU15} to the case of arbitrary principally polarised abelian varieties (this is done in \Cref{is a lagrangian subspace}). Sections 4.1-4.3 review some standard results in the theory of abelian varieties as can be found, for example, in \cite{MUMFORD1966}.

For the rest of this section, fix a field $F$ of characteristic $0$ (which for applications will be either a number field or the completion of one). Let $A/F$ be an abelian variety. For $x\in A(\bar{F})$ denote by $\tau_x$ the translation-by-$x$ map $\tau_x:A\rightarrow A$. 

\subsection{Line bundles and self-dual homomorphisms}

Let $\mathcal{L}$ be a (not necessarily $F$-rational) line bundle on $A$. We denote by $\phi_{\mathcal{L}}$ the homomorphism $A\rightarrow A^{\vee}$ sending $x\in A(\bar{F})$ to the element of $A^{\vee}(\bar{F})$ corresponding to the line bundle $\tau_{x}^*\mathcal{L}\otimes \mathcal{L}^{-1}$. We write $K(\mathcal{L})$ for the kernel of $\phi_{\mathcal{L}}$. If $\mathcal{L}$ is ample then $K(\mathcal{L})$ is a finite subgroup of $A$.

 We have a short exact sequence of $G_F$-modules
 \begin{equation} \label{neron severi sequence}
 0\longrightarrow A^{\vee}(\bar{F})\longrightarrow \textup{Pic}A_{\bar{F}}\longrightarrow \textup{Hom}_{\textup{self-dual}}(A_{\bar{F}},A^{\vee}_{\bar{F}})\longrightarrow 0,
 \end{equation}
the map $A^{\vee}(\bar{F})\rightarrow \textup{Pic}A_{\bar{F}}$ being the natural inclusion and the map $\textup{Pic}A_{\bar{F}}\rightarrow \textup{Hom}_{\textup{self-dual}}(A_{\bar{F}},A^{\vee}_{\bar{F}})$ sending a line bundle $\mathcal{L}$ to $\phi_\mathcal{L}$. 
As in \cite[Section 3.2]{MR2915483},  \cref{neron severi sequence} induces a short exact sequence of $G_F$-modules
 \begin{equation} \label{neron severi refinement}
 0\longrightarrow A^{\vee}[2]\longrightarrow \textup{Pic}^{\textup{sym}}A_{\bar{F}}\longrightarrow \textup{Hom}_{\textup{self-dual}}(A_{\bar{F}},A^{\vee}_{\bar{F}})\longrightarrow 0,
 \end{equation}
 where here $\textup{Pic}^{\textup{sym}}A_{\bar{F}}$ denotes the group of symmetric line bundles on $A$ (i.e. those satisfying $[-1]^*\mathcal{L}\cong \mathcal{L}$).

 \subsection{Quadratic refinements of the Weil pairing on $A[2]$} \label{quad refinements of pairing subsection}

 Let $\left(~,~\right)_{e_2}:A[2]\times A^{\vee}[2]\longrightarrow \boldsymbol \mu_2$ denote the Weil pairing. It's bilinear, non-degenerate and $G_F$-equivariant. If $\lambda:A\rightarrow A^{\vee}$ is a self-dual homomorphism  then it induces an alternating pairing \[\left(~,~\right)_\lambda:A[2]\times A[2]\longrightarrow \boldsymbol \mu_2\]
 defined by $\left(a,b\right)_\lambda=\left(a,\lambda(b)\right)_{e_2}$ for $a,b\in A[2]$. If $\lambda$ is defined over $F$ then  $(~,~)_\lambda$ is $G_F$-invariant. In general, for a line bundle $\mathcal{L}$ on $A$ set $\left(~,~\right)_\mathcal{L}:=\left(~,~\right)_{\phi_\mathcal{L}}$. 

%

\begin{defi} \label{defi of quadratic refinement cor line bundle}
Let $\mathcal{L}$ be a symmetric line bundle on $A$. Define the map $q_\mathcal{L}:A[2]\rightarrow \boldsymbol \mu_2$ as follows. Given $x\in A[2]$, we have $x^*[-1]^*\mathcal{L}=x^*\mathcal{L}$. In particular, the restriction of the normalised\footnote{Writing $e\in A(\bar{F})$ for the identity section, an isomorphism $\tau:\mathcal{L}\stackrel{\sim}{\longrightarrow} [-1]^*\mathcal{L}$ is called \emph{normalised} if \[e^*(\tau):e^*\mathcal{L}\stackrel{\sim}{\longrightarrow}e^*[-1]^*\mathcal{L}=e^*\mathcal{L}\] is the identity. There is a unique such $\tau$ for each symmetric line bundle (see \cite[Section 2]{MUMFORD1966}).} isomorphism $\tau:\mathcal{L}\stackrel{\sim}{\longrightarrow}[-1]^*\mathcal{L}$ to $x$ is multiplication by an element $\eta_x\in \bar{F}^{\times}$ on $x^*\mathcal{L}$. One in fact has $\eta_x\in \boldsymbol \mu_2$ and we set $q_\mathcal{L}(x):=\eta_x$.
\end{defi}

\begin{remark}
The map $q_\mathcal{L}$ defined above is denoted $e_*^\mathcal{L}$ in \cite{MUMFORD1966} (fourth definition in Section 2).
\end{remark}

The following well known lemma summarises the properties of ${q}_\mathcal{L}$. 

\begin{lemma} \label{properties of quadratic forms associated to line bundles}
Let $\mathcal{L}$ be a symmetric line bundle on $A$. Then we have 

\begin{itemize}

\item[(i)] if $\mathcal{L}\cong \mathcal{L}'$ then $q_\mathcal{L}=q_{\mathcal{L}'}$,
\item[(ii)] the function $q_\mathcal{L}$ is a quadratic form on $A[2]$ (valued in $\boldsymbol \mu_2$) whose associated bilinear pairing is $(~,~)_\mathcal{L}$,
\item[(iii)] if $\mathcal{M}$ is another symmetric line bundle then $q_{\mathcal{L}\otimes \mathcal{M}}=q_\mathcal{L}\cdot q_{\mathcal{M}}$.
\end{itemize}
\end{lemma}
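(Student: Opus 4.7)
All three statements are essentially classical (see \cite{MUMFORD1966}); I outline the intended approach.

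For (i) and (iii), the uniqueness and multiplicativity of the normalized symmetry isomorphism $\tau:\mathcal{L}\to[-1]^*\mathcal{L}$ do the work. For (i), given any isomorphism $\alpha:\mathcal{L}\xrightarrow{\sim}\mathcal{L}'$, I would transport $\tau_\mathcal{L}$ through $\alpha$ to obtain an isomorphism $\mathcal{L}'\to[-1]^*\mathcal{L}'$ whose pullback to the identity section is still the identity; uniqueness forces it to equal $\tau_{\mathcal{L}'}$, and restricting to $x\in A[2]$ then gives $\eta_x^\mathcal{L}=\eta_x^{\mathcal{L}'}$. For (iii), the tensor product $\tau_\mathcal{L}\otimes\tau_\mathcal{M}:\mathcal{L}\otimes\mathcal{M}\to[-1]^*(\mathcal{L}\otimes\mathcal{M})$ again restricts to the identity on the identity section, so it is the normalized symmetry isomorphism for $\mathcal{L}\otimes\mathcal{M}$; restricting to $x\in A[2]$ produces the scalar $\eta_x^\mathcal{L}\cdot\eta_x^\mathcal{M}$, giving $q_{\mathcal{L}\otimes\mathcal{M}}=q_\mathcal{L}\cdot q_\mathcal{M}$.

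Part (ii) is the substantive content. That $q_\mathcal{L}$ takes values in $\boldsymbol\mu_2$ is again an application of uniqueness: since $[-1]\circ[-1]=\mathrm{id}$, the composition $[-1]^*\tau\circ\tau:\mathcal{L}\to\mathcal{L}$ is a normalized automorphism and so equals $\mathrm{id}_\mathcal{L}$, forcing $\eta_x^2=1$. For the bilinearity identity $q_\mathcal{L}(x+y)q_\mathcal{L}(x)^{-1}q_\mathcal{L}(y)^{-1}=(x,y)_\mathcal{L}$, my plan is to pass to the theta group $\mathcal{G}(\mathcal{L})$, a central extension of $K(\mathcal{L})$ by $\mathbb{G}_m$ whose commutator pairing, restricted to $A[2]\times A[2]$, recovers $(~,~)_\mathcal{L}$. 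The symmetric structure $\tau$ induces an involutive automorphism of $\mathcal{G}(\mathcal{L})$ compatible with $[-1]$ on $K(\mathcal{L})$, and after tracking how this involution acts on lifts $\tilde x,\tilde y\in\mathcal{G}(\mathcal{L})$ of $x,y\in A[2]$ and on their product $\tilde x\tilde y$, the failure of $q_\mathcal{L}$ to be a homomorphism is expressed precisely as the commutator $[\tilde x,\tilde y]\in\mathbb{G}_m$, yielding the desired identity. The main obstacle is the careful sign bookkeeping in this theta-group calculation; (i) and (iii) reduce to formal manipulations with the normalized isomorphism.
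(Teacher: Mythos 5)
Your sketches for (i) and (iii) are correct and complete in substance: in both cases the point is that the candidate isomorphism (the transport of $\tau_{\mathcal L}$ through $\alpha$, respectively $\tau_{\mathcal L}\otimes\tau_{\mathcal M}$) pulls back to the identity along the zero section, so uniqueness of the normalised symmetry isomorphism finishes the job. The $\boldsymbol\mu_2$-valuedness in (ii) via $[-1]^*\tau\circ\tau = \mathrm{id}_{\mathcal L}$ is also right. The paper itself simply declares (i) immediate and cites \cite{MUMFORD1966} and \cite[Prop.~3.2]{MR2915483} for (ii)--(iii), so your argument is a fleshed-out version of the same classical route rather than a different one.

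The one place you should be careful is the quadratic-refinement identity in (ii). You propose to run the commutator computation in $\mathscr G(\mathcal L)$ and ``restrict to $A[2]\times A[2]$.'' But $\mathscr G(\mathcal L)$ is a central extension of $K(\mathcal L)=\ker\phi_{\mathcal L}$, and for a general (not necessarily ample, not necessarily a double) symmetric line bundle $\mathcal L$ the group $A[2]$ need not be contained in $K(\mathcal L)$ --- in fact $A[2]\subseteq K(\mathcal L)$ exactly when $\phi_{\mathcal L}$ is divisible by $2$. So the commutator pairing of $\mathscr G(\mathcal L)$ is not literally defined on $A[2]$, and one cannot lift arbitrary $x,y\in A[2]$ to $\mathscr G(\mathcal L)$. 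The standard repair (and what Mumford actually does) is to work in $\mathscr G(\mathcal L^2)$, for which $K(\mathcal L^2)=\phi_{\mathcal L}^{-1}(A^\vee[2])\supseteq A[2]$, and to relate $e_*^{\mathcal L}$ to the action of the involution $\delta_{-1}$ on $\mathscr G(\mathcal L^2)$ via the natural map $\mathscr G(\mathcal L)\to\mathscr G(\mathcal L^2)$; moreover $(~,~)_{\mathcal L}$ as defined in the paper is $(a,\phi_{\mathcal L}(b))_{e_2}$, not directly a theta-group commutator of $\mathscr G(\mathcal L)$. With that adjustment (or by directly invoking \cite[Prop.~3.2]{MR2915483}, where the identity $q_{\mathcal L}(x+y)q_{\mathcal L}(x)^{-1}q_{\mathcal L}(y)^{-1}=(x,y)_{\mathcal L}$ is verified), your plan goes through; as stated it glosses over the mismatch between $A[2]$ and $K(\mathcal L)$.
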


\begin{proof}
Part (i) is immediate. For parts $(ii)$ and $(iii)$ see e.g. \cite[Section 2]{MUMFORD1966} or \cite[Proposition 3.2]{MR2915483}.  
\end{proof}

For a principal polarisation $\lambda:A\rightarrow A^{\vee}$ defined over $F$, we can use \Cref{properties of quadratic forms associated to line bundles} to give a geometric interpretation of the principal homogeneous space for $A[2]$ associated to the set of quadratic refinements of the Weil pairing  $(~,~)_\lambda$ on $A[2]$.

\begin{defi} \label{identification of torsors: Kestutis remark}
Let $\lambda:A\rightarrow A^{\vee}$ be a self-dual homomorphism defined over $F$. We define $\mathfrak{c}_\lambda\in H^1(F,A^{\vee}[2])$ to be the image of $\lambda$ under the connecting homomorphism in the long exact for Galois cohomology associated to \cref{neron severi refinement}. If $\lambda$ is a principal polarisation we will also, by an abuse of notation, write $\mathfrak{c}_\lambda$ for the element $\lambda^{-1}(\mathfrak{c}_\lambda)\in H^1(F,A[2])$.
\end{defi}

\begin{lemma}  \label{identification of torsors: Kestutis lemma}
Let $\lambda:A\rightarrow A^{\vee}$ be a principal polarisation defined over $F$, so that $(~,~)_\lambda$ is a non-degenerate, $G_F$-equivariant, alternating pairing on $A[2]$. Then $G_F$ acts on $A[2]$ through the symplectic group $\textup{Sp}(A[2])$ associated to the pairing $(~,~)_\lambda$. Let $\mathfrak{c}\in H^1(F,A[2])$ be the cohomology class associated  to the set of quadratic refinements of $(~,~)_\lambda$ as in \Cref{quadratic refinements subsection}.

Then we have the equality $\mathfrak{c}=\mathfrak{c}_\lambda$ inside $H^1(F,A[2])$. 
\end{lemma}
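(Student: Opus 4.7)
The plan is to compute both $\mathfrak{c}$ and $\mathfrak{c}_\lambda$ starting from the same piece of data --- a symmetric line bundle $\mathcal{L}$ on $A_{\bar F}$ lifting $\lambda$ --- and check that the resulting cocycles agree on the nose. Since $\lambda$ is a principal polarisation it lies in the image of the surjection $\textup{Pic}^{\textup{sym}}A_{\bar F}\to \textup{Hom}_{\textup{self-dual}}(A_{\bar F},A^\vee_{\bar F})$ of \cref{neron severi refinement}, so the first step is to fix such an $\mathcal{L}$. Because $\lambda$ is $G_F$-equivariant, for each $\sigma\in G_F$ the self-dual homomorphism $\phi_{\sigma(\mathcal{L})}$ also equals $\lambda$, and hence $m_\sigma:=\sigma(\mathcal{L})\otimes\mathcal{L}^{-1}$ lies in $A^\vee[2]$. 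By the very definition of the connecting homomorphism, the cocycle $\sigma\mapsto \lambda^{-1}(m_\sigma)$ represents $\mathfrak{c}_\lambda\in H^1(F,A[2])$.

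Next, I would use \Cref{properties of quadratic forms associated to line bundles} to show that the same data describes the Galois action on the torsor of quadratic refinements of $(~,~)_\lambda$. By part (ii) of that lemma, $q_\mathcal{L}$ is such a refinement. Transport of structure applied to the (unique) normalised isomorphism $\mathcal{L}\cong [-1]^*\mathcal{L}$ of \Cref{defi of quadratic refinement cor line bundle} gives $q_{\sigma(\mathcal{L})}=q_\mathcal{L}\circ\sigma^{-1}$, whilst part (iii) gives $q_{\sigma(\mathcal{L})}=q_\mathcal{L}\cdot q_{\mathcal{M}_\sigma}$, where $\mathcal{M}_\sigma$ is the (automatically symmetric) 2-torsion line bundle corresponding to $m_\sigma$. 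Comparing these two expressions yields the identity
\[q_\mathcal{L}\circ \sigma^{-1}-q_\mathcal{L}=q_{\mathcal{M}_\sigma}\]
in $\textup{Hom}(A[2],\mathbb{F}_2)$, whose left hand side is exactly what appears in the cocycle formula for $\mathfrak{c}$ once one takes $q=q_\mathcal{L}$ as the base point in \Cref{quadratic refinements subsection}.

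The last step is to identify the linear form $q_{\mathcal{M}_\sigma}$ with $\lambda^{-1}(m_\sigma)\in A[2]$ under the isomorphism $A[2]\xrightarrow{\sim}\textup{Hom}(A[2],\mathbb{F}_2)$ induced by $(~,~)_\lambda$. For an arbitrary $\mathcal{M}\in A^\vee[2]$ corresponding to $m$, the map $\phi_\mathcal{M}$ vanishes, so $q_\mathcal{M}$ is a linear form, and a classical computation identifies it with $a\mapsto (a,m)_{e_2}$. Self-duality of $\lambda$ then yields $(a,m_\sigma)_{e_2}=(\lambda^{-1}(m_\sigma),a)_\lambda$, so $q_{\mathcal{M}_\sigma}$ corresponds to $\lambda^{-1}(m_\sigma)$ under the identification $A[2]\cong \textup{Hom}(A[2],\mathbb{F}_2)$; combined with the previous paragraph this makes the cocycle representing $\mathfrak{c}$ coincide with the one representing $\mathfrak{c}_\lambda$, proving the lemma.

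I expect the main obstacle to be justifying the two assertions about $q_\mathcal{L}$ used only implicitly above: the Galois equivariance $q_{\sigma(\mathcal{L})}=q_\mathcal{L}\circ \sigma^{-1}$ (which hinges on the uniqueness of the normalised isomorphism in \Cref{defi of quadratic refinement cor line bundle} being preserved by $\sigma$), and the formula $q_\mathcal{M}(a)=(a,m)_{e_2}$ for $\mathcal{M}\in A^\vee[2]$, which is the only place one has to unwind the theta-group description of the Weil pairing. Both are essentially standard but are the cruces that tie Mumford's analytic/scheme-theoretic definition of $q_\mathcal{L}$ to the cohomological formalism of \Cref{quadratic refinements subsection}.
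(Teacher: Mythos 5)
Your proof is correct and realizes the first of the two routes the paper explicitly offers in its (brief, sketch-level) proof: ``an explicit computation using the association $\mathcal{L}\mapsto q_\mathcal{L}$.'' The paper itself does not carry out this computation — it points the reader to the conceptual argument via the commutative diagram $(16)$ of \cite{MR2915483}, whose rows are the sequence \cref{neron severi refinement} and the sequence \cref{construction of torsor of quadratic refinements} from \Cref{exact sequence quadratic refinement remark}, and deduces the claim by naturality of connecting maps.

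The two routes are closely related: the conceptual argument compresses precisely the three facts you isolate (that $q_{\sigma(\mathcal{L})}=q_\mathcal{L}\circ\sigma^{-1}$, that $q_{\mathcal{L}\otimes\mathcal{M}}=q_\mathcal{L}\cdot q_\mathcal{M}$ from \Cref{properties of quadratic forms associated to line bundles}(iii), and that for $\mathcal{M}\in A^\vee[2]$ with $\phi_\mathcal{M}=0$ the degenerate quadratic form $q_\mathcal{M}$ is the linear form $a\mapsto(a,m)_{e_2}$) into the assertion that the map $\mathcal{L}\mapsto q_\mathcal{L}$ defines a morphism of short exact sequences of $G_F$-modules from \cref{neron severi refinement} to (the $\boldsymbol\mu_2$-valued analogue of) \cref{construction of torsor of quadratic refinements}. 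Your unpacked version makes visible exactly where the content lies — in the Galois-equivariance of Mumford's $e_*^{\mathcal{L}}$ (which follows from uniqueness of the normalised isomorphism $\mathcal{L}\cong[-1]^*\mathcal{L}$, hence its stability under $\sigma$) and in the Weil-pairing formula for $q_{\mathcal{M}}$ on $2$-torsion line bundles (Mumford's \cite[Proposition 3]{MUMFORD1966}). Both are standard, as you note, and your handling of the identifications $A[2]\cong\textup{Hom}(A[2],\mathbb{F}_2)$ via $(~,~)_\lambda$ and the comparison with the cocycle $c_q(\sigma)=\lambda^{-1}(q\circ\sigma^{-1}-q)$ from \Cref{quadratic refinements subsection} is clean. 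No gaps.
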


\begin{proof}
We remark that this Lemma is implicit in \cite[Section 3]{MR2915483}. First note that by \Cref{properties of quadratic forms associated to line bundles}(ii),  for any symmetric line bundle $\mathcal{L}$ for which $\lambda=\phi_\mathcal{L}$, the function $q_\mathcal{L}$ is a quadratic refinement of $(~,~)_\lambda$. The result now follows either by an explict computation using the association $\mathcal{L}\mapsto q_\mathcal{L}$ or, more conceptually, from the long exact sequences for cohomology associated to the commutative diagram $(16)$ of \cite[Section 3.4]{MR2915483}, the top row of which is our sequence \cref{neron severi refinement} and the bottom row of which is the exact sequence \cref{construction of torsor of quadratic refinements} of \Cref{exact sequence quadratic refinement remark}.
\end{proof}

\subsection{Theta groups} \label{theta groups subsection}

In this subsection we suppose that $\mathcal{L}$ is an ample line bundle on $A$ so that $K(\mathcal{L})$ is finite.  We recall the definition of the Theta group assoicated to $\mathcal{L}$ (see \cite{MUMFORD1966} for more details of what follows). 

\begin{defi}
The \emph{Theta group} $\mathscr{G}(\mathcal{L})$ associated to $\mathcal{L}$ is the set of pairs $(x,\varphi)$ where $x\in K(\mathcal{L})$ and $\varphi$ is an isomorphism $\varphi:\mathcal{L}\stackrel{\sim}{\longrightarrow} \tau_x^*\mathcal{L}$ (over $\bar{F}$). The group operation is given by
\[(x,\varphi)\cdot (x',\varphi')=(x+x',\tau_{x'}^*(\varphi) \circ \varphi').\]
\end{defi}

\begin{remark} \label{invariance of theta group}
If $\mathcal{L}\cong \mathcal{L}'$ then fixing an isomorphism $\alpha:\mathcal{L}\stackrel{\sim}{\longrightarrow}\mathcal{L}'$ we obtain an isomorphism $\mathscr{G}(\mathcal{L})\stackrel{\sim}{\longrightarrow}\mathscr{G}(\mathcal{L'})$ given by 
\[(x,\varphi)\longmapsto (x,\tau_x^*(\alpha)\circ \varphi \circ \alpha^{-1})\]
which is  independent of $\alpha$ (since any two choices differ by a scalar). As such,  $\mathscr{G}(\mathcal{L})$ is canonically isomorphic to $\mathscr{G}(\mathcal{L}')$.
\end{remark}

\begin{remark} \label{theta group exact sequence}
The group $\mathscr{G}(\mathcal{L})$ sits in a short exact sequence 
\begin{equation}
0\rightarrow \bar{F}^{\times}\longrightarrow \mathscr{G}(\mathcal{L})\longrightarrow K(\mathcal{L})\rightarrow 0,
\end{equation} the map $\mathscr{G}(\mathcal{L})\rightarrow K(\mathcal{L})$ being projection onto the first factor and the map $\bar{F}^{\times}\rightarrow \mathscr{G}(\mathcal{L})$ sending $\eta\in \bar{F}^{\times}$ to the pair $(0,\textup{mult. by }\eta)$.
\end{remark}

\begin{lemma} \label{functoriality of theta}
We have the following  functorial properties of $\mathscr{G}$:
\begin{itemize}
\item[(i)] let $A/F$ and $B/F$ be abelian varieties, let $\mathcal{L}$ be an ample line bundle on $B$ 
and let $f:A\rightarrow B$ be an isomorphism. Then the map
$\tilde{f}:\mathscr{G}(f^*\mathcal{L})\stackrel{\sim}{\longrightarrow}\mathscr{G}(\mathcal{L})$   given by 
\[\left(x,\varphi\right)\mapsto \left(f(x),(f^{-1})^*(\varphi)\right)\]
is an isomorphism making the diagram 
\[
\xymatrix{0\ar[r] & \bar{F}^{\times}\ar@{=}[d]\ar[r] & \mathscr{G}(f^*\mathcal{L})\ar[d]^{\tilde{f}}\ar[r] & K(f^*\mathcal{L})\ar[d]^{f}\ar[r] & 0\\
0\ar[r] & \bar{F}^{\times}\ar[r] & \mathscr{G}(\mathcal{L})\ar[r] & K(\mathcal{L})\ar[r] & 0
}
\]
commute,
\item[(ii)] given abelian varieties $A/F,B/F$ and $C/F$, isomorphisms $f_1:A\rightarrow B$ and $f_2:B\rightarrow C$ and an ample line bundle $\mathcal{L}$ on $C$, we have
\[\widetilde{f_2\circ f_1}=\tilde{f_2}\circ \tilde{f_1}:\mathscr{G}(f_1^*f_2^*\mathcal{L})\stackrel{\sim}{\longrightarrow}\mathscr{G}(\mathcal{L}).\] 
\end{itemize}
\end{lemma}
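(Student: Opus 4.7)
The proof is a direct verification using the fundamental compatibility between $f$ and translations: since $f\colon A\to B$ is an isomorphism, for every $x\in A(\bar F)$ we have $f\circ \tau_x=\tau_{f(x)}\circ f$. Taking pullbacks this gives the two dual identities
\[
\tau_x^*\circ f^*=f^*\circ \tau_{f(x)}^*,\qquad (f^{-1})^*\circ\tau_x^*=\tau_{f(x)}^*\circ(f^{-1})^*,
\]
which I would establish first; everything else is then essentially bookkeeping.

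To show $\tilde f$ is well-defined, I would use the first identity to compute $\phi_{f^*\mathcal{L}}(x)=\tau_x^*f^*\mathcal{L}\otimes(f^*\mathcal{L})^{-1}=f^*\phi_\mathcal{L}(f(x))$ in $\mathrm{Pic}(A_{\bar F})$. Since $f^*$ is an isomorphism on Picard groups, this forces $K(f^*\mathcal{L})=f^{-1}K(\mathcal{L})$, so $f(x)\in K(\mathcal{L})$. Applying $(f^{-1})^*$ to $\varphi\colon f^*\mathcal{L}\xrightarrow{\sim}\tau_x^* f^*\mathcal{L}$ and invoking the second identity produces an isomorphism $\mathcal{L}\xrightarrow{\sim}\tau_{f(x)}^*\mathcal{L}$ (using the canonical identification $(f^{-1})^*f^*\mathcal{L}=\mathcal{L}$), so $\tilde f(x,\varphi)\in\mathscr{G}(\mathcal{L})$.

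Next I would verify the group law. Unwinding the product in $\mathscr{G}(f^*\mathcal{L})$ and applying $\tilde f$ gives $(f(x)+f(x'),\,(f^{-1})^*(\tau_{x'}^*\varphi)\circ(f^{-1})^*\varphi')$, while the product in $\mathscr{G}(\mathcal{L})$ of $\tilde f(x,\varphi)$ and $\tilde f(x',\varphi')$ is $(f(x)+f(x'),\,\tau_{f(x')}^*(f^{-1})^*\varphi\circ(f^{-1})^*\varphi')$; these agree by the second compatibility identity above. Commutativity of the displayed diagram is immediate from the definitions: the map $\tilde f$ sends $(0,\text{mult.\ by }\eta)$ to $(0,\text{mult.\ by }\eta)$ since $(f^{-1})^*$ preserves scalar multiplication, and on the quotient $\tilde f$ acts as $f|_{K(f^*\mathcal{L})}$ by construction.

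For bijectivity, the cleanest route is to observe that part (ii) forces $\widetilde{f^{-1}}$ to be a two-sided inverse of $\tilde f$, so I would prove (ii) in parallel: for $(x,\varphi)\in\mathscr{G}(f_1^*f_2^*\mathcal{L})$,
\[
(\tilde f_2\circ\tilde f_1)(x,\varphi)=\bigl(f_2(f_1(x)),\,(f_2^{-1})^*(f_1^{-1})^*\varphi\bigr),
\]
and this equals $\widetilde{f_2\circ f_1}(x,\varphi)$ because $(f_2\circ f_1)^{-1}=f_1^{-1}\circ f_2^{-1}$ yields $((f_2\circ f_1)^{-1})^*=(f_2^{-1})^*(f_1^{-1})^*$. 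Taking $f_2=f^{-1}$ and $f_1=f$ (and noting $\widetilde{\mathrm{id}}=\mathrm{id}$) then gives the bijectivity of $\tilde f$. There is no substantive obstacle in this argument; the only point that requires care is keeping the canonical identifications $(f^{-1})^*f^*\mathcal{L}=\mathcal{L}$ and $f^*\tau_{f(x)}^*=\tau_x^* f^*$ straight, so that the isomorphisms $(f^{-1})^*\varphi$ have the claimed source and target on the nose.
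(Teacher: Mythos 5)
Your proposal is correct and carries out exactly the direct verification that the paper dismisses as ``a simple computation,'' with the key mechanism (the compatibility $f\circ\tau_x=\tau_{f(x)}\circ f$ and its pullback consequences) correctly identified and used to check well-definedness, the group law, the exact-sequence diagram, and, via part (ii) and $\widetilde{\mathrm{id}}=\mathrm{id}$, bijectivity. The paper gives no details beyond the warning that $f$ must be an isomorphism, which your argument respects at precisely the points where it matters ($f^*$ an isomorphism on Picard groups; $(f^{-1})^*$ available to transport $\varphi$), so there is nothing to add or correct.
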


\begin{proof}
In both cases this is a simple computation. We remark that we crucially require that $f$ is an isomorphism in $(i)$, the situation for a general homomorphism being more subtle. See, for example, \cite[Proposition 2]{MUMFORD1966} and the surrounding discussion.
\end{proof}

\subsection{Theta groups in the main case of interest} \label{case of interest}

Suppose that $A$ is equipped with a fixed principal polarisation $\lambda:A\rightarrow A^{\vee}$ defined over $F$ and  take $\mathcal{L}=(1,\lambda)^*\mathcal{P}$ where $\mathcal{P}$ is the Poincar\'{e} line bundle on $A\times A^{\vee}$ (here, for a homomorphism $\mu:A\rightarrow A^{\vee}$ we denote by $(1,\mu):A\rightarrow A\times A^{\vee}$ the composition of the diagonal morphism $\Delta:A\rightarrow A\times A$ with the morphism $1\times \mu:A\times A\rightarrow A\times A^{\vee}$). Then $\mathcal{L}$ is an $F$-rational, ample, symmetric line bundle on $A$ such that $\phi_\mathcal{L}=2\lambda$ (see \cite[Remark 4.5]{MR2833483}). In particular, we have $K(\mathcal{L})=\ker(2\lambda)=A[2]$.

Since $[-1]^*\mathcal{L}\cong \mathcal{L}$ we have an induced isomorphism $\widetilde{[-1]}$ of $\mathscr{G}(\mathcal{L})$ as in \Cref{functoriality of theta}.

%

\begin{lemma} \label{triviality of mult by -1}
With $\mathcal{L}=(1,\lambda)^*\mathcal{P}$ as above, the automorphism $\widetilde{[-1]}$ of $\mathscr{G}(\mathcal{L})$ is trivial.
\end{lemma}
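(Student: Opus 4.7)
The plan is to identify $\widetilde{[-1]}$ as an automorphism of the central extension $\mathscr{G}(\mathcal{L})$ with a character $\eta \colon A[2] \to \boldsymbol{\mu}_2$, and then show that $\eta$ is trivial. First, by the commutative diagram of \Cref{functoriality of theta}(i), $\widetilde{[-1]}$ is a group automorphism of $\mathscr{G}(\mathcal{L})$ which restricts to the identity on $\bar{F}^{\times}$, and since $[-1]$ acts as the identity on $A[2] = K(\mathcal{L})$ it descends to the identity on the quotient. Any such automorphism of a central extension has the form $(x,\varphi) \mapsto (x, \eta(x)\varphi)$ for a unique homomorphism $\eta \colon A[2] \to \bar{F}^{\times}$; the relation $\widetilde{[-1]} \circ \widetilde{[-1]} = \widetilde{1_A} = \textup{id}$ from \Cref{functoriality of theta}(ii) then forces $\eta$ to take values in $\boldsymbol{\mu}_2$.

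Next I would unwind the canonical identification $\mathscr{G}(\mathcal{L}) \cong \mathscr{G}([-1]^*\mathcal{L})$ of \Cref{invariance of theta group} (which uses the normalised isomorphism $\tau \colon \mathcal{L} \to [-1]^*\mathcal{L}$) followed by $\widetilde{[-1]} \colon \mathscr{G}([-1]^*\mathcal{L}) \to \mathscr{G}(\mathcal{L})$ from \Cref{functoriality of theta}(i). A direct computation, evaluating the resulting isomorphism of line bundles on the fibre over the origin, identifies $\eta(x)$ with the scalar by which $\tau$ acts on the fibre $\mathcal{L}|_x$; by \Cref{defi of quadratic refinement cor line bundle} this is exactly $q_{\mathcal{L}}(x)$. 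Hence $\eta = q_{\mathcal{L}}$.

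The problem therefore reduces to showing $q_{\mathcal{L}} \equiv 1$. Since $q_{\mathcal{L}}$ depends only on the isomorphism class of $\mathcal{L}$ (\Cref{properties of quadratic forms associated to line bundles}(i)), we may pass to $\bar{F}$ and write $\lambda = \phi_{\mathcal{L}_0}$ for a symmetric ample line bundle $\mathcal{L}_0$. Pulling back the standard Mumford identity $s^*\mathcal{L}_0 \otimes p_1^*\mathcal{L}_0^{-1} \otimes p_2^*\mathcal{L}_0^{-1} \cong (1 \times \phi_{\mathcal{L}_0})^*\mathcal{P}$ along the diagonal $\Delta \colon A \to A \times A$ gives $[2]^*\mathcal{L}_0 \cong \mathcal{L}_0^{\otimes 2} \otimes \mathcal{L}$, while the theorem of the cube applied to $(1_A, 1_A, [-1])$, combined with the symmetry of $\mathcal{L}_0$, gives $[2]^*\mathcal{L}_0 \cong \mathcal{L}_0^{\otimes 4}$. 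Comparing the two yields $\mathcal{L} \cong \mathcal{L}_0^{\otimes 2}$, and \Cref{properties of quadratic forms associated to line bundles}(iii) then gives $q_{\mathcal{L}} = q_{\mathcal{L}_0}^{2} \equiv 1$.

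The main obstacle is the bookkeeping in the second paragraph: carefully tracking how $\tau$ and the various pullbacks interact under the identifications of \Cref{invariance of theta group} and \Cref{functoriality of theta}(i), so as to confirm that $\eta$ coincides on the nose with the function $q_{\mathcal{L}}$ of \Cref{defi of quadratic refinement cor line bundle}. Once this is verified, the remaining steps are routine consequences of the theorem of the cube and the formalism of the theta group.
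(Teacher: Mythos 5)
Your proof is essentially the paper's: both reduce the lemma to showing $q_\mathcal{L}\equiv 1$, and your final computation ($\mathcal{L}\cong\mathcal{L}_0^{\otimes 2}$ via Mumford's isomorphism for the Poincar\'e bundle plus the theorem of the cube, hence $q_\mathcal{L}=q_{\mathcal{L}_0}^2\equiv 1$) is word-for-word the paper's argument. The only divergence is that the paper simply cites \cite[Proposition 3]{MUMFORD1966} for the identity $\widetilde{[-1]}(x,\varphi)=(x,q_\mathcal{L}(x)\varphi)$, whereas you propose to re-derive it; the bookkeeping you flag as ``the main obstacle'' does go through, since evaluating $\widetilde{[-1]}(x,\varphi)=(x,\eta(x)\varphi)$ on the fibre over the identity and using the normalisation $\tau_e=\mathrm{id}$ gives $(\tau_x^*\tau)_e=\tau|_{\mathcal{L}_x}=q_\mathcal{L}(x)\cdot\mathrm{id}$, whence $\eta=q_\mathcal{L}$.
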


\begin{proof}
By \cite[Proposition 3]{MUMFORD1966}, if $\mathcal{F}$ is any ample symmetric line bundle on $A$ and $(x,\varphi)\in \mathscr{G}(\mathcal{F})$ is such that $x\in A[2]$, then the automorphism $\widetilde{[-1]}$ of  $\mathscr{G}(\mathcal{F})$ sends $(x,\varphi)$ to $(x,q_\mathcal{F}(x)\varphi)$. 

In particular, since $K(\mathcal{L})=A[2]$ in our case, it suffices to show that $q_\mathcal{L}$ is trivial. Pick a symmetric line bundle $\mathcal{M}$ such that $\lambda=\phi_\mathcal{M}$ (whilst it may not be possible to choose an $F$-rational such $\mathcal{M}$, this is always possible over $\bar{F}$). By standard properties of the Poincar\'{e} line bundle we have $(1\times\phi_\mathcal{M})^*\mathcal{P}\cong m^*\mathcal{M}\otimes p_1^*\mathcal{M}^{-1}\otimes p_2^*\mathcal{M}^{-1}$, where $m:A\times A\rightarrow A$ is addition and $p_1$ (resp. $p_2$) denotes projection onto the first (resp. second) factor. Pulling back along the diagonal morphism $\Delta:A\rightarrow A\times A$ we obtain 
\[\mathcal{L}\cong [2]^*\mathcal{M}\otimes \mathcal{M}^{-2}\cong \mathcal{M}^2\]
where for the second isomorphism above we use the symmetry of $\mathcal{M}$ along with the fact that for any line bundle $\mathcal{F}$ on $A$ we have $[2]^*\mathcal{F}\cong \mathcal{F}^3\otimes[-1]^*\mathcal{F}$ (see e.g. \cite[Corollary 6.6]{MR861974}). By \Cref{properties of quadratic forms associated to line bundles}(iii) we conclude that $q_\mathcal{L}=(q_\mathcal{M})^2=1$ as desired.
\end{proof}

\begin{remark} \label{Galois action}
As $\mathcal{L}$ is $F$-rational, the group $\mathscr{G}(\mathcal{L})$ carries a natural $G_F$-action. Explicitly, for $\sigma \in G_F$ and $(x,\varphi)\in \mathscr{G}(\mathcal{L})$, we have \[\sigma\cdot (x,\varphi)=(\sigma(x),\sigma^*(\varphi))\in \mathscr{G}(\sigma^*\mathcal{L})=\mathscr{G}(\mathcal{L})\] where for the equality  $\mathscr{G}(\sigma^*\mathcal{L})=\mathscr{G}(\mathcal{L})$ we combine \Cref{invariance of theta group} with the assumption that $\mathcal{L}$ is $F$-rational. In particular, the exact sequence of \Cref{theta group exact sequence} becomes a short exact sequence of $G_F$-modules
\begin{equation}\label{specific theta exact seqeunce}
0\rightarrow \bar{F}^{\times}\longrightarrow \mathscr{G}(\mathcal{L})\longrightarrow A[2]\rightarrow 0.
\end{equation}
This short exact sequence will be important in what follows. More specifically, as in \cite[Corollary 4.7]{MR2833483}, the associated connecting homomorphism $H^1(F,A[2])\rightarrow H^2(F,\bar{F}^{\times})$ is a quadratic form whose associated bilinear pairing is that arising from cup-product and the Weil pairing $(~,~)_\lambda:A[2]\times A[2]\rightarrow \boldsymbol \mu_2\hookrightarrow \bar{F}^{\times}$.
\end{remark}

%
%

\subsection{Quadratic twists} \label{quadratic twisting theta groups}

Maintaining the notation of \Cref{case of interest} (so in particular $\mathcal{L}=(1,\lambda)^*\mathcal{P}$) let $\chi:G_F\rightarrow \boldsymbol \mu_2$ be a quadratic character. Write $(A^\chi,\psi)$ for the quadratic twist of $A$ by $\chi$.
We now consider the effect of quadratic twisting on the constructions appearing earlier in this section. Note that $\psi$ restricts to a $G_F$-equivariant isomorphism $A[2]\stackrel{\sim}{\longrightarrow}A^\chi[2]$. 

\begin{lemma} \label{descent of polarisation}
The morphism $\lambda_\chi:=(\psi^\vee)^{-1} \lambda \psi^{-1}:A^{\chi}\rightarrow A^{\chi \vee}$ is a principal polarisation defined over $F$.
\end{lemma}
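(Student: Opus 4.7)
The proof plan has three ingredients: Galois-equivariance, the existence of an ample line bundle inducing $\lambda_\chi$, and the isomorphism property.

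\textbf{Step 1: Descent to $F$.} The definition $\psi^{-1}\psi^\sigma = [\chi(\sigma)]$ rearranges to $\psi^\sigma = \psi \circ [\chi(\sigma)]$, and dually $(\psi^\vee)^\sigma = (\psi^\sigma)^\vee = [\chi(\sigma)]^\vee \circ \psi^\vee = [\chi(\sigma)] \circ \psi^\vee$, since $[n]^\vee = [n]$ on the dual. Inverting and using that $\chi(\sigma) \in \{\pm 1\}$ gives $([\chi(\sigma)])^{-1} = [\chi(\sigma)]$. Using that $\lambda$ is defined over $F$, I would compute
\[
\sigma(\lambda_\chi) \;=\; ((\psi^\sigma)^\vee)^{-1}\, \lambda \, (\psi^\sigma)^{-1} \;=\; (\psi^\vee)^{-1}\circ [\chi(\sigma)]\circ \lambda\circ [\chi(\sigma)]\circ \psi^{-1}.
\]
Since $[\chi(\sigma)]$ commutes with every homomorphism of abelian varieties (it is multiplication by an integer), the two copies of $[\chi(\sigma)]$ collapse to $[\chi(\sigma)^2] = [1]$, yielding $\sigma(\lambda_\chi) = \lambda_\chi$. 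Hence $\lambda_\chi$ is defined over $F$.

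\textbf{Step 2: Polarisation via pullback of an ample bundle.} Over $\bar F$, the principal polarisation $\lambda$ is of the form $\phi_\mathcal{M}$ for some ample line bundle $\mathcal{M}$ on $A_{\bar F}$. Applying the standard identity $\phi_{f^*\mathcal{N}} = f^\vee \circ \phi_\mathcal{N} \circ f$ to the isomorphism $f = \psi^{-1}:A^\chi_{\bar F}\to A_{\bar F}$ and $\mathcal{N} = \mathcal{M}$, one obtains
\[
\phi_{(\psi^{-1})^*\mathcal{M}} \;=\; (\psi^{-1})^\vee \circ \phi_\mathcal{M} \circ \psi^{-1} \;=\; (\psi^\vee)^{-1} \circ \lambda \circ \psi^{-1} \;=\; \lambda_\chi.
\]
Because $(\psi^{-1})^*\mathcal{M}$ is the pullback of an ample line bundle along an isomorphism it is again ample on $A^\chi_{\bar F}$, so $\lambda_\chi$ is a polarisation (the notion is Galois-geometric and Step~1 already gave us $F$-rationality).

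\textbf{Step 3: Principality.} This is immediate: $\psi$ is an isomorphism, hence so is $\psi^\vee$, and $\lambda$ is an isomorphism by hypothesis, so $\lambda_\chi = (\psi^\vee)^{-1}\lambda\psi^{-1}$ is an isomorphism, i.e. the polarisation has degree $1$.

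There is no genuine obstacle; the only point requiring care is bookkeeping in Step~1, where the essential cancellation relies on $\chi$ being quadratic (so $\chi(\sigma)^2 = 1$) together with the fact that integer multiplication maps commute with every isogeny and satisfy $[n]^\vee = [n]$.
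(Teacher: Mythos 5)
Your proof is correct and takes essentially the same approach as the paper's (which is stated very tersely): the paper also computes Galois-invariance directly and then exhibits $\lambda_\chi = \phi_{(\psi^{-1})^*\mathcal{M}}$ for $\mathcal{M}$ a bundle with $\lambda = \phi_\mathcal{M}$. You merely spell out the bookkeeping the paper leaves implicit, including the observation that the cancellation requires $\chi$ to be quadratic — which is the paper's remark that $[-1]^*$ acts trivially on the Néron–Severi group.
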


\begin{proof}
This  is a manifestation of the fact that $[-1]^*$ acts trivially on the N\'{e}ron--Severi group. More precisely, one computes immediately that $\lambda_\chi$ is defined over $F$, and it's a polarisation since if $\mathcal{M}$ is a line bundle on $A$ (not necessarily $F$-rational) such that $\lambda=\phi_\mathcal{M}$ then one has $\lambda_\chi=\phi_{\mathcal{M}_\chi}$ where $\mathcal{M}_\chi=(\psi^{-1})^*\mathcal{M}$.  
\end{proof}

More generally, we have. 

\begin{lemma} \label{commutative twist diagram symm}
We have a commutative diagram of $G_F$-modules 
\[
\xymatrix{0\ar[r] & A^{\chi \vee}[2]\ar[d]^{\psi^{\vee}}\ar[r] & \textup{Pic}^{\textup{sym}}A_{\bar{F}}^\chi\ar[d]^{\psi^*}\ar[r] & \textup{Hom}_{\textup{self-dual}}(A_{\bar{F}}^\chi,A_{\bar{F}}^{\chi \vee})\ar[d]\ar[r] & 0\\
0\ar[r] & \A^{\vee}[2]\ar[r] & \textup{Pic}^{\textup{sym}}A_{\bar{F}}\ar[r] &\textup{Hom}_{\textup{self-dual}}(A_{\bar{F}},A_{\bar{F}}^{\vee})\ar[r] & 0,
}
\]
where the rightmost vertical map  sends $\mu$ to $\psi \mu \psi^{\vee}$. 
\end{lemma}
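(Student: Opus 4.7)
The plan is to unpack the claimed diagram, verify its commutativity, and then check $G_F$-equivariance of each vertical map despite $\psi$ being defined only over the quadratic extension cut out by $\chi$.

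First I would observe that all three vertical maps are induced by the $\bar F$-isomorphism $\psi\colon A\to A^\chi$. The leftmost map $\psi^\vee$ is the restriction of the dual isomorphism to the $2$-torsion subgroup. The middle map $\psi^*$ is pullback of line bundles along $\psi$, and it lands in the symmetric subgroup because $\psi$, being an isomorphism of abelian varieties, commutes with $[-1]$, so $\psi^*\bigl([-1]^*\mathcal{L}\bigr)\cong[-1]^*(\psi^*\mathcal{L})$. Commutativity of the left square follows from the standard identification of $\mathrm{Pic}^0$ with the dual abelian variety, under which pullback by $\psi$ corresponds to $\psi^\vee$. Commutativity of the right square is the familiar formula $\phi_{\psi^*\mathcal{L}}=\psi^\vee\circ\phi_\mathcal{L}\circ\psi$, valid for any isomorphism $\psi$ and any line bundle $\mathcal{L}$; this is what the rightmost vertical map records (the expression $\psi\mu\psi^\vee$ in the statement being read as conjugation by $\psi$).

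The main step, and the only subtle one, is $G_F$-equivariance. Since $\psi$ is not defined over $F$, its Galois conjugates satisfy $\psi^\sigma=[\chi(\sigma)]\circ\psi$ for all $\sigma\in G_F$, so each vertical map a priori acquires a $[\chi(\sigma)]$-factor under the $G_F$-action. The point is that this factor acts as the identity in each of the three relevant contexts, for three distinct reasons. On the leftmost term, $[-1]$ acts trivially on $2$-torsion, so $\psi^\vee|_{A^{\chi\vee}[2]}$ is $G_F$-equivariant. On the middle term, $[-1]^*$ acts trivially on symmetric line bundles by the very definition of symmetry, so $\psi^*$ is $G_F$-equivariant as a map between symmetric Picard groups. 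On the rightmost term, the map $\mu\mapsto\psi^\vee\mu\psi$ involves $\psi$ \emph{twice}, so under $\sigma$ the $\chi(\sigma)$-factor enters squared and hence cancels.

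The hard part here is really just bookkeeping signs and conjugations correctly; there is no deep content beyond the three cancellations above. These are precisely the same mechanism by which $[-1]^*$ fixes the N\'{e}ron--Severi class of a line bundle, which (as exploited in the proof of \Cref{descent of polarisation}) is what allows a principal polarisation on $A$ to descend to every quadratic twist. Once $G_F$-equivariance is verified, the diagram chase is immediate from the ambient functoriality already used in establishing commutativity over $\bar F$.
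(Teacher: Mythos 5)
Your proposal is correct and matches the paper's own (very terse) proof exactly: the paper asserts in one line that the diagram commutes by explicit computation and that $G_F$-equivariance follows because $[-1]$ acts trivially on each of the three groups, and your three cancellations — $[-1]$ trivial on $2$-torsion, $[-1]^*$ trivial on symmetric line bundles, and the $\chi(\sigma)$-factor squaring away in $\mu\mapsto\psi^\vee\mu\psi$ — are precisely the unpacking of that remark. You have also correctly read $\psi\mu\psi^\vee$ as conjugation (i.e.\ $\psi^\vee\mu\psi$ in standard right-to-left composition), which is a slight notational infelicity in the statement.
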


\begin{proof}
As with \Cref{descent of polarisation} this follows from an explicit computation, and results from the fact that $[-1]$ acts trivially on each group appearing.
\end{proof}

\begin{cor} \label{twisting the torsor}
Let $\bar{\psi}^{-1}$ denote the isomorphism $H^1(F,A^\chi[2])\rightarrow H^1(F,A[2])$ induced by $\psi^{-1}$ and let $\mathfrak{c}_\lambda \in H^1(F,A[2])$ (resp. $\mathfrak{c}_{\lambda_\chi}\in H^1(F,A^\chi[2])$) be the cohomology class associated to $\lambda$ (resp. $\lambda_\chi$) as in \Cref{identification of torsors: Kestutis remark}. Then we have $\bar{\psi}^{-1}(\mathfrak{c}_{\lambda_\chi})=\mathfrak{c}_\lambda$.
\end{cor}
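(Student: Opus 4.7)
The plan is to reduce the statement to the naturality of the connecting homomorphism arising from the diagram of Lemma \ref{commutative twist diagram symm}, combined with the explicit formula $\lambda_\chi = (\psi^\vee)^{-1}\lambda\psi^{-1}$.

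First I would unwind the abuse of notation in Definition \ref{identification of torsors: Kestutis remark}. Writing $\mathfrak{c}_\lambda^\vee \in H^1(F,A^\vee[2])$ for the class obtained directly from the connecting homomorphism $\delta^A$ associated to the bottom row of the diagram in Lemma \ref{commutative twist diagram symm}, and $\mathfrak{c}_{\lambda_\chi}^\vee \in H^1(F,A^{\chi\vee}[2])$ for the analogous class coming from the top row, the definition gives $\mathfrak{c}_\lambda = (\lambda^{-1})_*\mathfrak{c}_\lambda^\vee$ and $\mathfrak{c}_{\lambda_\chi} = (\lambda_\chi^{-1})_*\mathfrak{c}_{\lambda_\chi}^\vee$, where $\lambda$ and $\lambda_\chi$ are being viewed as isomorphisms on $2$-torsion.

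Next I would record that $\lambda_\chi$ maps to $\lambda$ under the rightmost vertical arrow of Lemma \ref{commutative twist diagram symm}: the only sensible composition (pre-composing by $\psi\colon A\to A^\chi$ and post-composing by $\psi^\vee\colon A^{\chi\vee}\to A^\vee$) sends $\mu$ to $\psi^\vee\mu\psi$, and substituting $\mu=\lambda_\chi = (\psi^\vee)^{-1}\lambda\psi^{-1}$ collapses to $\lambda$. By the naturality of the connecting homomorphism with respect to the commutative diagram of Lemma \ref{commutative twist diagram symm}, we have $\psi^\vee_*\circ \delta^{A^\chi} = \delta^A\circ(\text{rightmost vertical})$, hence
\[
\psi^\vee_*\,\mathfrak{c}_{\lambda_\chi}^\vee \;=\; \delta^A(\psi^\vee\lambda_\chi\psi) \;=\; \delta^A(\lambda)\;=\;\mathfrak{c}_\lambda^\vee \quad\text{in } H^1(F,A^\vee[2]).
\]

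Finally I would assemble the pieces. Using $\lambda_\chi\circ\psi = (\psi^\vee)^{-1}\lambda$, which gives $(\lambda_\chi\circ\psi)^{-1} = \lambda^{-1}\circ\psi^\vee$, we compute
\[
\bar\psi^{-1}(\mathfrak{c}_{\lambda_\chi}) \;=\; (\psi^{-1})_*(\lambda_\chi^{-1})_*\mathfrak{c}_{\lambda_\chi}^\vee \;=\; (\lambda^{-1})_*\psi^\vee_*\mathfrak{c}_{\lambda_\chi}^\vee \;=\; (\lambda^{-1})_*\mathfrak{c}_\lambda^\vee \;=\; \mathfrak{c}_\lambda,
\]
which is the desired identity. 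The argument is a pure diagram chase; the only thing requiring care is keeping track of which of the four $2$-torsion groups ($A[2]$, $A^\vee[2]$, $A^\chi[2]$, $A^{\chi\vee}[2]$) each cohomology class lives in at each step, and distinguishing the identifications induced by $\psi,\psi^\vee,\lambda,\lambda_\chi$ on cohomology. There is no substantive obstacle.
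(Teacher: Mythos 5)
Your proof is correct and takes the same route as the paper, which simply cites the naturality of the connecting homomorphism for the commutative diagram of Lemma \ref{commutative twist diagram symm}; you have filled in the diagram chase that the paper leaves implicit. (You also correctly note in passing that the rightmost vertical map in Lemma \ref{commutative twist diagram symm} should send $\mu$ to $\psi^\vee \mu \psi$ rather than $\psi\mu\psi^\vee$; the formula as printed in the paper is a typo in the order of composition.)
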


\begin{proof}
This follows immediately from the long exact sequences for cohomology associated to the commutative diagram of \Cref{commutative twist diagram symm}.
\end{proof}

We now consider the effect of quadratic twisting on the Theta group associated to $\mathcal{L}=(1,\lambda)^*\mathcal{P}$.

\begin{lemma} \label{pull back of line bundle}
Let $\mathcal{L}=(1,\lambda)^*\mathcal{P}$, write $\mathcal{P}_\chi$ for the Poincar\'{e} line bundle on $A^\chi\times A^{\chi \vee}$ and define $\mathcal{L}_{\chi}:=(1,\lambda_\chi)^*\mathcal{P}_\chi$. Then $\psi^*\mathcal{L}_\chi \cong \mathcal{L}$.
\end{lemma}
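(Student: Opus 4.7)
The plan is to reduce the claim to the functoriality of the Poincar\'e line bundle under the isomorphism $\psi:A\stackrel{\sim}{\longrightarrow}A^{\chi}$ and its dual $\psi^{\vee}:A^{\chi\vee}\stackrel{\sim}{\longrightarrow}A^{\vee}$.

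First I would unwind the composition $(1,\lambda_\chi)\circ \psi:A\rightarrow A^\chi \times A^{\chi\vee}$. Using the definition $\lambda_\chi=(\psi^{\vee})^{-1}\lambda\psi^{-1}$ from \Cref{descent of polarisation}, this morphism sends $x\mapsto(\psi(x),(\psi^{\vee})^{-1}\lambda(x))$, and hence factors as
\[A\stackrel{(1,\lambda)}{\longrightarrow}A\times A^{\vee}\stackrel{\psi\times (\psi^{\vee})^{-1}}{\longrightarrow} A^\chi\times A^{\chi \vee}.\]
Consequently $\psi^*\mathcal{L}_\chi=((1,\lambda_\chi)\circ \psi)^*\mathcal{P}_\chi\cong (1,\lambda)^*\bigl(\psi\times (\psi^{\vee})^{-1}\bigr)^*\mathcal{P}_\chi$, and it suffices to produce an isomorphism $(\psi\times (\psi^{\vee})^{-1})^*\mathcal{P}_\chi\cong \mathcal{P}$ of line bundles on $A\times A^{\vee}$.

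To produce this isomorphism I would further decompose $\psi\times(\psi^{\vee})^{-1}$ as the composition
\[A\times A^{\vee}\stackrel{1\times (\psi^{\vee})^{-1}}{\longrightarrow}A\times A^{\chi\vee}\stackrel{\psi\times 1}{\longrightarrow}A^\chi\times A^{\chi\vee},\]
and then invoke the standard functoriality of the Poincar\'e bundle under the homomorphism $\psi:A\rightarrow A^{\chi}$, which gives the identification
\[(\psi\times 1_{A^{\chi\vee}})^*\mathcal{P}_\chi\cong (1_A\times \psi^{\vee})^*\mathcal{P}\]
on $A\times A^{\chi\vee}$ (this is the usual expression of the fact that $\mathcal{P}$ represents the identity on $A^\vee$; see e.g.\ \cite[Section 8]{MR861974}). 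Pulling back this isomorphism along $1\times (\psi^{\vee})^{-1}$ produces $(\psi\times (\psi^{\vee})^{-1})^*\mathcal{P}_\chi\cong (1\times 1)^*\mathcal{P}=\mathcal{P}$, which combined with the previous paragraph yields $\psi^*\mathcal{L}_\chi\cong (1,\lambda)^*\mathcal{P}=\mathcal{L}$ as required.

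There is no real obstacle here: the only non-trivial input is the functoriality statement for $\mathcal{P}_\chi$, which is classical. The one point requiring mild care is that $\psi$ is only defined over $\bar{F}$ in general, so all of the above takes place in $\mathrm{Pic}\,A_{\bar{F}}$ rather than over $F$; since the statement is about an isomorphism of line bundles on $A_{\bar{F}}$ this causes no problem.
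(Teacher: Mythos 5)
Your argument is correct and is essentially the paper's proof, reorganized slightly: both reduce to the same standard functoriality of the Poincar\'e bundle, $(\psi\times 1)^*\mathcal{P}_\chi\cong(1\times\psi^{\vee})^*\mathcal{P}$, and then compose with $(1,\lambda)$ and $1\times(\psi^{\vee})^{-1}$ (you split off the intermediate claim $(\psi\times(\psi^{\vee})^{-1})^*\mathcal{P}_\chi\cong\mathcal{P}$, while the paper carries $(1,\lambda)^*$ through in one chain). Your remark about $\psi$ only being defined over $\bar F$ is the right point to flag and is handled the same way implicitly in the paper.
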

 
\begin{proof}
Standard properties of the Poincar\'{e} line bundle (see e.g. \cite[Section 11]{MR861974}) give 
\[(1\times\psi^{\vee})^*\mathcal{P}\cong (\psi\times 1)^*\mathcal{P}_\chi\]
as line bundles on $A\times A^{\chi \vee}$. Since $\psi^{\vee}$ is an isomorphism we obtain
\[\mathcal{L}=(1, \lambda)^*\mathcal{P}\cong (1,\lambda)^*(1\times (\psi^{\vee})^{-1})^*(\psi\times1)^*\mathcal{P}_\chi.\]
The right hand side of the above expression is easily seen to be equal to
\[\psi^*\Delta^*(1\times \lambda_\chi)^*\mathcal{P}_{\chi}=\psi^*\mathcal{L}_{\chi}\]
as desired (here $\Delta:A\rightarrow A\times A$ is the diagonal morphism).
\end{proof}

\begin{lemma} \label{main twist of theta groups lemma}
The isomorphism $\tilde{\psi}:\mathscr{G}(\mathcal{L})\rightarrow \mathscr{G}(\mathcal{L}_\chi)$ (arising from \Cref{functoriality of theta} and \Cref{pull back of line bundle}) is Galois equivariant. In particular, $\tilde{\psi}$ fits into a commutative diagram of $G_F$-modules 
\[
\xymatrix{0\ar[r] & \bar{F}^{\times}\ar@{=}[d]\ar[r] & \mathscr{G}(\mathcal{L})\ar[d]^{\tilde{\psi}}\ar[r] & A[2]\ar[d]^{\psi}\ar[r] & 0\\
0\ar[r] & \bar{F}^{\times}\ar[r] & \mathscr{G}(\mathcal{L}_\chi)\ar[r] & A^\chi[2]\ar[r] & 0,
}
\]
where all vertical maps are isomorphisms. 
\end{lemma}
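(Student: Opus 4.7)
The plan is to verify Galois equivariance of $\tilde\psi$ directly on pairs, and then read off the commutative diagram from \Cref{functoriality of theta}(i). Using \Cref{Galois action} to describe the $G_F$-actions explicitly, the condition to check for $\sigma \in G_F$ and $(x,\varphi)\in\mathscr{G}(\mathcal L)$ is the equality
\[
\bigl(\psi(\sigma x),\,(\psi^{-1})^*\sigma^*\varphi\bigr) \;=\; \bigl(\sigma\psi(x),\,\sigma^*(\psi^{-1})^*\varphi\bigr)
\]
in $\mathscr G(\mathcal L_\chi)$. The central difficulty will be that $\psi$ itself is not defined over $F$; I expect its defect from being $G_F$-equivariant to be exactly absorbed by \Cref{triviality of mult by -1}, which is why the symmetry of $\mathcal L = (1,\lambda)^*\mathcal P$ is the crucial input.

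For the first coordinate, the defining relation $\psi^\sigma=\psi\circ[\chi(\sigma)]$ gives $\sigma\psi(x)=\psi^\sigma(\sigma x)=\chi(\sigma)\,\psi(\sigma x)$; since $x\in K(\mathcal L)=A[2]$ and $\pm 1$ act identically on $2$-torsion, this collapses to $\sigma\psi(x)=\psi(\sigma x)$, recovering the $G_F$-equivariance of $\psi|_{A[2]}$ already noted in \Cref{quadratic twisting theta groups}.

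For the second coordinate, I would invert $\psi^\sigma=\psi\circ[\chi(\sigma)]$ and unwind the definition of the Galois conjugate to obtain the identity
\[
\sigma_A \circ \psi^{-1} \;=\; [\chi(\sigma)]\circ\psi^{-1}\circ\sigma_{A^\chi}
\]
as morphisms $A^\chi\to A$ defined over $\bar F$. Passing to pull-backs on line bundles (which reverses the order of composition) and applying the resulting identity to $\varphi$, the equality of second coordinates reduces to checking $[\chi(\sigma)]^*\varphi = \varphi$. This is trivial when $\chi(\sigma)=1$, and when $\chi(\sigma)=-1$ it will follow from \Cref{triviality of mult by -1}: by \Cref{functoriality of theta}(i) the automorphism $\widetilde{[-1]}$ of $\mathscr G(\mathcal L)$ sends $(x,\varphi)$ to $(-x,[-1]^*\varphi)=(x,[-1]^*\varphi)$ since $-x=x$ on $A[2]$, and its triviality forces $[-1]^*\varphi=\varphi$ under the canonical identification $[-1]^*\mathcal L\cong\mathcal L$. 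This is the hard part of the argument; once it is isolated, it reduces to a single invocation of the previously established $q_\mathcal L=1$.

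Once Galois equivariance of $\tilde\psi$ is in hand, the commutative diagram is immediate: the middle square is exactly the diagram produced by \Cref{functoriality of theta}(i) applied to $\psi$, the left column is tautological (the map on $\bar F^\times$ is the identity), and the right square expresses the $G_F$-equivariance of $\psi|_{A[2]}$ just verified.
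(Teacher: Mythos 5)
Your argument is correct and relies on precisely the same three ingredients as the paper's proof: the twist relation $\psi^\sigma=\psi\circ[\chi(\sigma)]$, the functoriality of the theta-group construction from \Cref{functoriality of theta}, and the triviality of $\widetilde{[-1]}$ established in \Cref{triviality of mult by -1}. The paper packages the same computation more abstractly, first asserting Galois equivariance of $f\mapsto\tilde f$ and then writing $(\tilde\psi)^\sigma=\widetilde{\psi^\sigma}=\widetilde{\psi\circ[\chi(\sigma)]}=\tilde\psi\circ\widetilde{[\chi(\sigma)]}=\tilde\psi$, whereas you unwind the same chain coordinate-by-coordinate on pairs $(x,\varphi)$; this is a presentational difference, not a different route.
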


\begin{proof}
Write $\textup{Isom}_{\mathcal{L},\mathcal{L}_\chi}(A,A^\chi)$ for the set of $\bar{F}$-isomorphisms $f:A\rightarrow A^\chi$ for which $f^*\mathcal{L}_\chi\cong \mathcal{L}$. Then using the explicit Galois action given in \Cref{Galois action} one sees that the map
$\textup{Isom}_{\mathcal{L},\mathcal{L}_\chi}(A,A^\chi)\rightarrow \textup{Isom}\left(\mathscr{G}(\mathcal{L}),\mathscr{G}(\mathcal{L}_\chi)\right)$
given by $f\mapsto\tilde{f}$ is Galois equivariant. It then follows from \Cref{functoriality of theta} that we have, for all $\sigma \in G_F$,
\[({\widetilde{\psi}})^{\sigma}=\widetilde{\psi^\sigma}=\widetilde{\psi\circ [\chi(\sigma)]}=\widetilde{\psi}\circ \widetilde{[\chi(\sigma)]}=\tilde{\psi}\]
where the last equality follows from \Cref{triviality of mult by -1}.
\end{proof}


\section{Controlling the parity of $\dim_{\mathbb{F}_2}\Sha_{\mathrm{nd}}(A/K)[2]$ under quadratic twist}

In this section we prove \Cref{local sha twist decomp} concerning the behaviour under quadratic twist of the Shafarevich--Tate group of a principally polarised abelian variety.

For the rest of the section, fix a number field $K$ and and let $(A/K,\lambda)$ be a principally polarised abelian variety. To fix notation, we briefly recall the definition of the $2$-Selmer and Shafarevich--Tate groups of $A/K$.

\subsection{The  $2$-Selmer group and the Shafarevich--Tate group}

For a place $v$ of $K$ we denote by $\delta_v:A(K_v)/2A(K_v)\hookrightarrow H^1(K_v,A[2])$ the connecting homomorphism associated to the multiplication-by-two Kummer sequence 
\begin{equation}
0\rightarrow A[2]\longrightarrow A(\bar{K_v})\stackrel{[2]}{\longrightarrow}A(\bar{K_v})\rightarrow 0.
\end{equation}
over the completion $K_v$ of $K$ at $v$. 

The $2$-Selmer group of $A/K$ is the group
\[\textup{Sel}^2(A/K):=\{\xi\in H^1(K,A[2])~~:~~\xi_v\in \textup{im}(\delta_v)~~\forall v\in M_K\}.\]

It sits in a short exact sequence 
\begin{equation} \label{2-selmer sequence}
0\rightarrow A(K)/2A(K)\longrightarrow \textup{Sel}^2(A/K)\longrightarrow \Sha(A/K)[2]\rightarrow 0
\end{equation}
where \[\Sha(A/K):=\ker(H^1(K,A)\rightarrow \prod_{v\in M_K}H^1(K_v,A))\] 
is the Shafarevich--Tate group of $A/K$. 

\subsection{The Cassels--Tate pairing}
Denote  by $\Sha_{\mathrm{nd}}(A/K)$ the quotient of $\Sha(A/K)$ by its maximal divisible subgroup. The Cassels--Tate pairing is a bilinear pairing
\[\left \langle ~,~\right \rangle_{\mathrm{CT}}:\Sha(A/K)\times \Sha(A^\vee/K)\longrightarrow \mathbb{Q}/\mathbb{Z}\]
the left (resp. right) kernel of which is $\Sha_{\textup{nd}}(A/K)$ (resp. $\Sha_{\textup{nd}}(A^{\vee}/K$)). The principal polarisation $\lambda:A\rightarrow A^{\vee}$ induces a non-degenerate bilinear pairing  
\[\left \langle ~,~\right \rangle_{\mathrm{CT},\lambda}:\Sha_{\mathrm{nd}}(A/K)\times \Sha_{\mathrm{nd}}(A/K)\longrightarrow \mathbb{Q}/\mathbb{Z}\]
defined by $ \left \langle a,b\right \rangle_{\mathrm{CT},\lambda}=\left \langle a,\lambda(b)\right \rangle_{\mathrm{CT}}$ for $a,b\in \Sha(A/K)$. This pairing is antisymmetric (\cite[Theorem 2]{MR1079004}, see also \cite[Corollary 6]{MR1740984}). 

Via the map $\textup{Sel}^2(A/K)\rightarrow \Sha(A/K)[2]$ of \cref{2-selmer sequence} the Cassels--Tate pairing $\left \langle ~,~\right \rangle_{\textup{CT},\lambda}$ induces an  antisymmetric pairing on $\textup{Sel}^2(A/K)$ (though this is no longer non-degenerate). By an abuse of notation, we denote this by $\left \langle ~,~\right \rangle_{\textup{CT},\lambda}$ also.

 \subsection{Description of the Cassels--Tate pairing on $\textup{Sel}^2(A/K)$} \label{description of CT pairing}

We will need an explicit description of the Cassels--Tate pairing $\left \langle ~,~\right \rangle_{\mathrm{CT},\lambda}$ on $\textup{Sel}^2(A/K)$. We use the `Weil pairing' definition as in \cite[Section 12.2]{MR1740984} which we copy almost verbatim and to which we refer for more details.

\begin{defi}[Cassels--Tate pairing] \label{defi of ct pairing}
Let $\mathfrak{a}$, $\mathfrak{b}\in \textup{Sel}^2(A/K)$. There will be several choices involved in the definition of $\left \langle \mathfrak{a},\mathfrak{b}\right \rangle_{\mathrm{CT},\lambda}$. We begin with the global choices.

Pick cocycles $a$ and $b$ representing $\mathfrak{a}$ and $\mathfrak{b}$ respectively. Next, pick $\sigma \in C^1(K,A[4])$ such that $2\sigma=a$. Then $d\sigma$ is a 2-cocycle with values in $A[2]$, i.e. an element of $Z^2(K,A[2])$. The Weil pairing $(~,~)_\lambda:A[2]\times A[2] \rightarrow \boldsymbol \mu_2\hookrightarrow \bar{K}^{\times}$ induces a cup-product map $\cup:Z^2(K,A[2])\times Z^1(K,A[2]) \rightarrow Z^3(K,\bar{K}^{\times})$. As $K$ is a number field  $H^3(K,\bar{K}^{\times})=0$, so we may choose $\epsilon \in C^2(K,\bar{K}^{\times})$ such that $d\sigma \cup b= d\epsilon$.

Now for the local choices. Fix a place $v$ of $K$. The class of $a_v$  is trivial in $H^1(K_v,A(\bar{K}_v))$ so we may choose $P_v\in A(\bar{K_v})$ with $a_v=dP_v$. Pick $Q_v\in A(\bar{K_v})$ with $2Q_v=P_v$. Then $\rho_v:=dQ_v$ is an element of $Z^1(K_v,A[4])$ and $\sigma_v-\rho_v$ takes values in $A[2]$, i.e. is an element of $C^1(K_v,A[2])$. Then we may form the element $(\sigma_v-\rho_v)\cup b_v$ of $C^2(K_v,\bar{K_v}^{\times})$ (again defining the cup-product map using the Weil pairing on $A[2]$). The difference $(\sigma_v-\rho_v)\cup b_v   - \epsilon_v$ is a $2$-cocycle with values in $\bar{K_v}^{\times}$. Let $\mathfrak{c}_v$ denote its class in $H^2(K_v,\bar{K}_v^{\times})=\textup{Br}(K_v)$. Then 
$\left \langle \mathfrak{a},\mathfrak{b}\right \rangle _{\mathrm{CT},\lambda}$ is defined as
\[\left \langle \mathfrak{a},\mathfrak{b}\right \rangle _{\mathrm{CT},\lambda}:=\sum_{v\in M_K} \textup{inv}_v(\mathfrak{c}_v)~~\in \mathbb{Q}/\mathbb{Z}.\]
The value of the sum above is independent of all choices made.
\end{defi}

\subsection{Controlling the parity of $\dim_{\mathbb{F}_2}\Sha_{\mathrm{nd}}(A/K)[2]$ globally}

If $A$ is an elliptic curve and $\lambda$ its canonical principal polarisation then it's well known that $\left \langle ~,~\right \rangle_{\mathrm{CT},\lambda}$ is in fact alternating and it follows that $\dim_{\mathbb{F}_2}\Sha_{\mathrm{nd}}(A/K)[2]$ is even. 
For general principally polarised abelian varieties however, Poonen and Stoll showed in \cite{MR1740984} that $\dim_{\mathbb{F}_2}\Sha_{\mathrm{nd}}(A/K)[2]$ need not be even and gave a criterion for determining whether or not this is the case. Specificaly, let $\mathfrak{c}_\lambda\in H^1(K,A[2])$ be the cohomology class associated to $\lambda$ as in \Cref{identification of torsors: Kestutis remark}. By  \cite[Lemma 1]{MR1740984} we in fact have $\mathfrak{c}_\lambda\in \textup{Sel}^2(A/K)$. 

We then have the following theorem of Poonen--Stoll.

\begin{theorem} \label{poonen stoll}
The group $\Sha_{\mathrm{nd}}(A/K)[2]$ has even $\mathbb{F}_2$-dimension if and only if 
\[\left\langle \mathfrak{c}_\lambda,\mathfrak{c}_\lambda \right \rangle_{\mathrm{CT},\lambda}=0\in \mathbb{Q}/\mathbb{Z}.\]
\end{theorem}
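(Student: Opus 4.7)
The plan is to reduce the statement to a linear-algebra fact over $\mathbb{F}_2$ and then to the key identity
\begin{equation*}
\langle \mathfrak{a},\mathfrak{a}\rangle_{\mathrm{CT},\lambda} \;=\; \langle \mathfrak{c}_\lambda,\mathfrak{a}\rangle_{\mathrm{CT},\lambda}\qquad\text{for all }\mathfrak{a}\in \Sha(A/K)[2],
\end{equation*}
which I refer to as $(\ast)$.

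First I would observe that $q(\mathfrak{a}):=\langle \mathfrak{a},\mathfrak{a}\rangle_{\mathrm{CT},\lambda}$ is $\mathbb{F}_2$-linear on $\Sha_{\mathrm{nd}}(A/K)[2]$: antisymmetry of $\langle~,~\rangle_{\mathrm{CT},\lambda}$ gives $q(\mathfrak{a}+\mathfrak{b})-q(\mathfrak{a})-q(\mathfrak{b})=\langle\mathfrak{a},\mathfrak{b}\rangle_{\mathrm{CT},\lambda}+\langle\mathfrak{b},\mathfrak{a}\rangle_{\mathrm{CT},\lambda}=0$. Since $\langle~,~\rangle_{\mathrm{CT},\lambda}$ is non-degenerate on $\Sha_{\mathrm{nd}}(A/K)[2]$, there is a unique $\mathfrak{c}\in\Sha_{\mathrm{nd}}(A/K)[2]$ with $q(\mathfrak{a})=\langle \mathfrak{c},\mathfrak{a}\rangle_{\mathrm{CT},\lambda}$. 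A short linear-algebra argument over $\mathbb{F}_2$ then shows $\dim_{\mathbb{F}_2}\Sha_{\mathrm{nd}}(A/K)[2]$ is even if and only if $\langle \mathfrak{c},\mathfrak{c}\rangle_{\mathrm{CT},\lambda}=0$: if $\mathfrak{c}=0$ the form is alternating and the dimension is forced to be even, while if $\mathfrak{c}\neq 0$ the hyperplane $\mathfrak{c}^\perp$ has codimension one and the form restricted there is alternating, being non-degenerate precisely when $\mathfrak{c}\notin\mathfrak{c}^\perp$, i.e.\ $\langle \mathfrak{c},\mathfrak{c}\rangle_{\mathrm{CT},\lambda}=1$; comparing parities in each case gives the claim. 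Thus the theorem reduces to $(\ast)$, since $(\ast)$ forces $\mathfrak{c}=\mathfrak{c}_\lambda$ inside $\Sha_{\mathrm{nd}}(A/K)[2]$.

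For $(\ast)$ I would work directly with the explicit Weil-pairing description of the Cassels--Tate pairing in \Cref{defi of ct pairing}, applied on the one hand with $\mathfrak{b}=\mathfrak{a}$ and on the other hand with the first argument replaced by $\mathfrak{c}_\lambda$. The crucial observation is that the global cochain $\sigma\in C^1(K,A[4])$ with $2\sigma=a$ has differential $d\sigma\in Z^2(K,A[2])$ whose cup product $d\sigma\cup a$ via the Weil pairing is exactly the obstruction to lifting $\mathfrak{a}$ through the theta-group extension \cref{specific theta exact seqeunce}. By \Cref{Galois action}, this obstruction is the value at $\mathfrak{a}$ of a quadratic form $H^1(K,A[2])\to H^2(K,\bar{K}^\times)$ whose associated bilinear form is cup product by the Weil pairing $(~,~)_\lambda$; and by \Cref{identification of torsors: Kestutis lemma}, $\mathfrak{c}_\lambda$ classifies the $A[2]$-torsor of quadratic refinements of $(~,~)_\lambda$ on $A[2]$. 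Matching these two roles of $\mathfrak{c}_\lambda$ yields a universal cocycle identity between the expression for $\langle \mathfrak{a},\mathfrak{a}\rangle_{\mathrm{CT},\lambda}$ and that for $\langle \mathfrak{c}_\lambda,\mathfrak{a}\rangle_{\mathrm{CT},\lambda}$; summing local invariants over $v\in M_K$ and using global reciprocity on $\textup{Br}(K)$ collapses the difference to zero.

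The main obstacle is the cocycle bookkeeping in the last step: the local choices $(P_v,Q_v,\rho_v)$ and the global choices $(\sigma,\epsilon)$ made in \Cref{defi of ct pairing} must be reconciled with the connecting homomorphism coming from \cref{specific theta exact seqeunce}, so that the universal identity descends to an equality of local Brauer classes up to classes whose invariants sum to zero. Once this local-global matching is in place, combining $(\ast)$ with the $\mathbb{F}_2$-linear-algebra reduction above yields the theorem.
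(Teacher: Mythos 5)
The paper's own proof is a two-sentence citation: it identifies $\mathfrak{c}_\lambda$ with the homogeneous space Poonen--Stoll attach to the polarisation and then invokes \cite[Theorem 8]{MR1740984}. Your proposal attempts to reprove the fact from scratch, which is a genuinely different route, but the linear-algebra reduction at its core has a real gap.

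The problem is the assertion that $\langle~,~\rangle_{\mathrm{CT},\lambda}$ is non-degenerate on $\Sha_{\mathrm{nd}}(A/K)[2]$. The Cassels--Tate pairing is non-degenerate on $\Sha_{\mathrm{nd}}(A/K)$, but its restriction to the $2$-torsion subgroup has kernel $\Sha_{\mathrm{nd}}(A/K)[2]\cap 2\Sha_{\mathrm{nd}}(A/K)$, which is nonzero whenever $\Sha_{\mathrm{nd}}$ contains elements of order $\geq 4$. For instance, if the $2$-primary part of $\Sha_{\mathrm{nd}}$ is $\mathbb{Z}/4\oplus\mathbb{Z}/4$ with the standard hyperbolic antisymmetric pairing, the restriction to the $2$-torsion is identically zero. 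This breaks your argument in two places: the element $\mathfrak{c}$ representing the linear form $q(\mathfrak{a})=\langle\mathfrak{a},\mathfrak{a}\rangle$ is no longer unique (or even guaranteed to exist inside $\Sha_{\mathrm{nd}}[2]$ by duality alone), and your case analysis ``$\mathfrak{c}=0$ forces even dimension'' is false for a degenerate alternating form --- the zero form on a one-dimensional $\mathbb{F}_2$-space is alternating.

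What is actually needed, and what Poonen--Stoll supply, is a structure theorem for a finite abelian group $G$ equipped with a non-degenerate antisymmetric $\mathbb{Q}/\mathbb{Z}$-valued pairing, applied to the whole of $\Sha_{\mathrm{nd}}$ rather than to its $2$-torsion: one decomposes $(G,e)$ into an orthogonal sum of hyperbolic pieces $\mathbb{Z}/n\oplus\mathbb{Z}/n$ plus at most one copy of $\mathbb{Z}/2$ with $e(1,1)=\tfrac12$, and the extra $\mathbb{Z}/2$ appears precisely when $e(c,c)\neq 0$, where $c$ is the unique element of $G$ (not of $G[2]$) with $e(c,x)=e(x,x)$ for all $x\in G$. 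Only this global decomposition controls the parity of $\dim_{\mathbb{F}_2}G[2]$. Your identity $(\ast)$ (with $\mathfrak{a}$ ranging over all of $\Sha$, not just $\Sha[2]$ --- this also matters, since $\mathfrak{c}_\lambda$ must represent $q$ on the whole group) is indeed the key input and is correctly located in the theta-group/quadratic-refinement circle of ideas, but on its own it does not yield the theorem without the structure theory for the pairing on the full group. Since this is precisely the content of \cite[Theorem 8]{MR1740984}, the more economical route --- taken in the paper --- is simply to cite it after matching $\mathfrak{c}_\lambda$ with their class.
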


\begin{proof}
The image of $\mathfrak{c}_\lambda$ in $\Sha(A/K)[2]$ is the homogeneous space associated to $\lambda$ as in \cite[Section 2]{MR1740984}. Theorem 8 of op. cit. now gives the result.
\end{proof}

\begin{remark} \label{order of pairing remark}
Since the image of $\mathfrak{c}_\lambda$ in $\Sha(A/K)$ is annihilated by $2$ we have $\left\langle \mathfrak{c}_\lambda,\mathfrak{c}_\lambda \right \rangle_{\mathrm{CT},\lambda}\in \{0,\frac{1}{2}\}$.
\end{remark}

\subsection{Quadratic twists}  \label{notation for quadratic twist cohomology}

For the rest of the section fix a quadratic character $\chi$ and let $(A^\chi,\psi)$ be the quadratic twist of $A$ by $\chi$. We now set up the notation which we will use when computing with $A^\chi$ in what follows.  We endow $A^\chi$ with the $K$-rational principal polarisation $\lambda_\chi:=(\psi^\vee)^{-1} \lambda \psi^{-1}$ (see \Cref{quadratic twisting theta groups}). Associated to $\lambda_\chi$ is the Weil pairing \[(~,~)_{\lambda_\chi}:A^\chi[2]\times A^{\chi}[2]\rightarrow \boldsymbol \mu_2\] and the Cassels--Tate pairing \[\left \langle ~,~\right \rangle_{\textup{CT},\lambda_\chi}:\Sha(A^\chi/K)[2]\times \Sha(A^\chi/K)[2]\rightarrow \mathbb{Q}/\mathbb{Z}\] (which we also view as a pairing on $\textup{Sel}^2(A^\chi/K)$). Using the isomorphism $\psi$ 
we identify $A^\chi[2]$ and $A[2]$ as $G_K$-modules. Note that this identification also respects the Weil pairing (i.e. identifies $(~,~)_{\lambda_\chi}$ with $(~,~)_\lambda$ ; to see this e.g. combine \Cref{properties of quadratic forms associated to line bundles}(ii) and \Cref{commutative twist diagram symm}). In this way, we  identify $H^1(K,A^\chi[2])$ with $H^1(K,A[2])$ and thus view the $2$-Selmer group $\textup{Sel}^2(A^\chi/K)$ inside $H^1(K,A[2])$. In particular, we may talk about the intersection of $\textup{Sel}^2(A/K)$ and $\textup{Sel}^2(A^{\chi}/K)$.  

We also use $\psi$ to identify $A[4](\bar{K})$ with $A^\chi[4](\bar{K})$. This last identification does not respect the $G_K$-action. Thus for each $i$, we have identified $C^i(K,A^\chi[4])$ with $C^i(K,A[4])$ but the differential $d:C^i(K,A^\chi[4])\rightarrow C^{i+1}(K,A^\chi[4])$ is not identified with the usual differential on $C^i(K,A[4])$; we write $d_\chi$ for the map $C^i(K,A[4]) \rightarrow C^{i+1}(K,A[4])$ to which is does correspond. For example, the map $d:C^1(K,A^\chi[4])\rightarrow C^2(K,A^\chi[4])$ corresponds to the map $d_\chi:C^1(K,A[4])\rightarrow C^2(K,A[4])$ defined by 
\[(d_\chi f)(\sigma,\tau)=f(\sigma)+\chi(\sigma)\sigma f(\tau)- f(\sigma\tau).\] Similarly, we use $\psi$ to identify $C^i(K,A^\chi(\bar{K}))$ and $C^i(K,A(\bar{K}))$ for each $i$, and define differentials $d_\chi$ on $C^i(K,A(\bar{K}))$ similarly.

\subsection{Strategy of the  proof of \Cref{local sha twist decomp}}

To motivate what follows, we briefly sketch the proof of  \Cref{local sha twist decomp}. 

For $\mathfrak{a},\mathfrak{b}\in \Sha(A/K)$, in the definition of  $\left \langle \mathfrak{a},\mathfrak{b}\right \rangle_{\textup{CT},\lambda}$   the local terms $\mathfrak{c}_v$ (in the notation of \Cref{defi of ct pairing}) depend on the global choices. In particular, it is not clear  that $\left\langle \mathfrak{c}_\lambda,\mathfrak{c}_\lambda \right \rangle_{\mathrm{CT},\lambda}$, and hence the parity of $\dim_{\mathbb{F}_2}\Sha_{\textup{nd}}(A/K)[2]$, may be expressed as a sum of local terms whose definition requires no global choices (this is, however, known to be true if $A/K$ is the Jacobian of a curve: see \cite[Corollary 12]{MR1740984}).

When considering $A$ along with its quadratic twist $A^\chi$, we eliminate the global choices as follows.
 Associated to $\lambda_\chi$ is the class $\mathfrak{c}_{\lambda_\chi}\in \textup{Sel}^2(A^\chi/K)$ (viewed inside $H^1(K,A[2])$ as in \Cref{notation for quadratic twist cohomology}). By \Cref{twisting the torsor} we have $\mathfrak{c}_{\lambda_\chi}=\mathfrak{c}_\lambda$ and in particular, $\mathfrak{c}_\lambda$ lies in $\textup{Sel}^2(A/K)\cap \textup{Sel}^2(A^\chi/K)$. Now the sum  of the pairings $\left \langle ~,~\right \rangle_{\textup{CT},\lambda}$ and $\left \langle ~,~\right \rangle_{\textup{CT},\lambda_\chi}$ gives a new pairing on $\textup{Sel}^2(A/K)\cap \textup{Sel}^2(A^\chi/K)$. By \Cref{poonen stoll},  $\dim_{\mathbb{F}_2}\Sha_{\textup{nd}}(A/K)[2]+\dim_{\mathbb{F}_2}\Sha_{\textup{nd}}(A^\chi/K)[2]$ is even if and only if $\mathfrak{c}_\lambda$ pairs trivially with itself under this new pairing. 

We show in \Cref{identification of pairings} that the global choices involved in computing the sum of the two Cassels--Tate pairings  are milder than those for the individual pairings (we remark that this simplification of the Cassels--Tate pairing under quadratic twist has also been observed in the recent preprint of Smith \cite[Proof of Theorem 3.2]{Smith16}). Specifically, the global choices involved in computing  
\[\left \langle \mathfrak{c}_\lambda,\mathfrak{c}_\lambda\right \rangle_{\textup{CT},\lambda}+\left \langle \mathfrak{c}_\lambda,\mathfrak{c}_\lambda\right \rangle_{\textup{CT},\lambda_\chi}\]
are: a choice of cocycle $c_\lambda \in Z^1(K,A[2])$ representing $\mathfrak{c}_\lambda$, and a choice of cochain $F_c:G_K\rightarrow \boldsymbol \mu_2$ such that $dF_c=c_\lambda\cup c_\lambda \in Z^2(K,\boldsymbol \mu_2)$.

By \Cref{identification of torsors: Kestutis lemma},  $\mathfrak{c}_\lambda\in H^1(K,A[2])$ is the cohomology class parameterizing quadratic refinements of the Weil pairing. In particular, a choice of cocycle representing $\mathfrak{c}_\lambda$ amounts to a choice of quadratic refinement $q$. For each such $q$ we have already constructed a canonical choice for the function $F_c$ above, namely that given by \Cref{extension of dickson invariant}.  Thus  the only global choice remaining is that of $q$. \Cref{change of form}  shows how $F_c$ changes upon changing $q$, allowing us to prove that the local terms then arising do not, in fact, depend on the choice of quadratic refinement either.

\subsection{Pairings on $\text{Sel}^2(A/K)\cap \text{Sel}^2(A^{\chi}/K)$} \label{new pairing}

Define $\mathcal{S}_{\chi}:=\text{Sel}^2(A/K)\cap \text{Sel}^2(A^{\chi}/K)$. Here we define a pairing $\left \langle ~,~ \right \rangle_{S_\chi}$ on $\mathcal{S}_{\chi}$ with values in $\mathbb{Q}/\mathbb{Z}$ which we shall  see is  the sum of the Cassels--Tate pairings for $A$ and its twist $A^\chi$. However, for clarity when using this pairing later, we define it separately. 

\begin{defi}[The pairing $\left \langle ~,~ \right \rangle_{S_\chi}$] \label{definition of our altered pairing}
Let $\mathfrak{a}$,$\mathfrak{b}\in \mathcal{S}_{\chi}=\textup{Sel}^2(A/K)\cap \textup{Sel}^2(A^{\chi}/K)$. As with the definition of the Cassels--Tate pairing, we begin with the global choices. We first claim that $\mathfrak{a}\cup \mathfrak{b}=0\in H^2(K,\boldsymbol\mu_2)=\textup{Br}(K)[2]$. Indeed, for each place $v$ of $K$ both $\mathfrak{a}_v$ and $\mathfrak{b}_v$  are in the image of $A(K_v)/2A(K_v)$ under the connecting homomorphism associated to the multiplication-by-2 Kummer sequence.  Since this image is its own orthogonal complement under the cup-product pairing \[H^1(K_v,A[2])\times H^1(K_v,A[2])\rightarrow H^2(K_v,\bar{K_v}^{\times})=\textup{Br}(K_v)\] (this is well known and results from Tate local duality, see e.g. \cite[I.3.4]{MR2261462}) we have $(\mathfrak{a}\cup \mathfrak{b})_v=0\in \textup{Br}(K_v)$ for each place $v$ of $K$. Reciprocity for the Brauer group now gives the claim.  

Now represent $\mathfrak{a}$ and $\mathfrak{b}$ by cocycles $a$ and $b$ respectively and, as is possible by the above discussion, pick $f\in C^1(K,\boldsymbol\mu_2)$ with $df=a\cup b\in \text{Z}^2(K,\boldsymbol \mu_2)$.  

We now turn to the local choices. Fix a place $v$ of $K$. Since $\mathfrak{a}\in \textup{Sel}^2(A/K)$ there is $P_v\in A(\bar{K_v})$ with $dP_v=a_v$. Pick $Q_v\in  A(\bar{K_v})$ with $2Q_v=P_v$. Then $\rho_v:=dQ_v$ is an element of $Z^1(K_v,A[4])$. Since $\mathfrak{a}$ is also in $\textup{Sel}^2(A^{\chi}/K)$  we can similarly (i.e. by replacing $d$ by $d_\chi$ throughout) define $P_{v,\chi}$, $Q_{v,\chi}$ and $\rho_{v,\chi}=d_{\chi}Q_{v,\chi}\in C^1(K_v,A[4])$. Then $\rho_{v}+\rho_{v,\chi}$ takes values in $A[2]$. One checks that $d(\rho_{v}+\rho_{v,\chi})=\chi_v\cup a_v\in Z^2(K_v,A[2])$. Thus the difference
\[(\rho_{v}+\rho_{v,\chi})\cup b_v ~~ - ~~  \chi_v \cup f_v\]
is a $2$-cocycle with values in $\boldsymbol\mu_2$. Denote by $\mathfrak{c}_v$ its class in $\textup{Br}(K_v)[2]$. 

Now define 
\[\left \langle a,b \right \rangle_{S_\chi} :=\sum_{v\in M_K} \textup{inv}_v(\mathfrak{c}_v)\in \mathbb{Q}/\mathbb{Z}.\]
One easily checks that once the initial global choices are made the cocycle class $\mathfrak{c}_v\in \textup{Br}(K_v)$ is independent of the local choices. That the resulting sum is independent of all choices follows from reciprocity for the Brauer group.
\end{defi}

\begin{remark}
If a place $v$ of $K$ splits in the quadratic extension $L/K$ associated to $\chi$ then $\chi_v$ is trivial and $\psi$ gives an isomorphism between $A$ and $A^\chi$ over $K_v$. It follows easily that the local terms $\textup{inv}_v(\mathfrak{c}_v)$ are trivial at all such $v$. Thus in the definition of $\left \langle ~,~ \right \rangle_{S_\chi}$ we may replace the sum over all places of $K$ by the sum over all places of $K$ non-split in $L/K$.
\end{remark}

\begin{lemma} \label{identification of pairings}
The pairing $\left \langle ~,~\right \rangle_{S_\chi}$ is the sum of the Cassels--Tate pairings for $A$ and $A^{\chi}$:
\[\left \langle ~,~\right \rangle_{S_\chi}=\left \langle ~,~\right \rangle_{CT,\lambda}+\left \langle ~,~\right \rangle_{CT,\lambda_\chi}.\]
In particular, it is (anti-)symmetric.
\end{lemma}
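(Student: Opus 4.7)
The strategy is to pair up the global choices involved in the two Cassels--Tate constructions so that their sum of local contributions differs from the local cocycles defining $\langle~,~\rangle_{S_\chi}$ by a globally defined $2$-cocycle, at which point reciprocity for the Brauer group produces the identity.

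Begin by fixing cocycles $a,b\in Z^1(K,A[2])$ representing $\mathfrak a,\mathfrak b$ and a cochain $f\in C^1(K,\boldsymbol\mu_2)$ with $df=a\cup b$, as in \Cref{definition of our altered pairing}. Choose one $\sigma\in C^1(K,A[4])$ with $2\sigma=a$ (using the set-theoretic identification of $A^\chi[4]$ with $A[4]$ from \Cref{notation for quadratic twist cohomology}) and use it in computing \emph{both} Cassels--Tate pairings. A direct cochain calculation, exploiting only that $\chi(g)-1\in\{0,-2\}$ and that $-2\sigma(h)=a(h)$ in $A[2]$, yields the key identity
\[
d_\chi \sigma - d\sigma = \chi\cup a \in C^2(K,A[2]).
\]
Next choose $\epsilon\in C^2(K,\bar K^\times)$ with $d\epsilon=d\sigma\cup b$ for use in $\langle\mathfrak a,\mathfrak b\rangle_{CT,\lambda}$. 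Using $d\chi=0$ and associativity $\chi\cup(a\cup b)=(\chi\cup a)\cup b$, one checks that $\epsilon_\chi:=\epsilon+\chi\cup f$ satisfies $d\epsilon_\chi=d_\chi\sigma\cup b$, and so serves as a valid choice when computing $\langle\mathfrak a,\mathfrak b\rangle_{CT,\lambda_\chi}$.

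Now fix a place $v$ and make local choices $P_v,Q_v,\rho_v=dQ_v$ and $P_{v,\chi},Q_{v,\chi},\rho_{v,\chi}=d_\chi Q_{v,\chi}$ that feed both Cassels--Tate local cocycles and the pairing $\langle~,~\rangle_{S_\chi}$. Summing the two local Cassels--Tate cocycles and simplifying using $2\sigma_v=a_v$, $a_v\cup b_v=df_v$, and $\epsilon_{v,\chi}=\epsilon_v+(\chi\cup f)_v$, one finds that the result differs from the $S_\chi$ local cocycle $(\rho_v+\rho_{v,\chi})\cup b_v-\chi_v\cup f_v$ by the $2$-cocycle $df_v-2\epsilon_v$ (here $2$ annihilates $\boldsymbol\mu_2$-valued cochains, killing the spurious $2(\rho_v+\rho_{v,\chi})\cup b_v$ term). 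The class of $df_v$ in $\textup{Br}(K_v)$ is trivial, while $2\epsilon\in Z^2(K,\bar K^\times)$ is a globally defined $2$-cocycle (since $d(2\epsilon)=2\,d\sigma\cup b=da\cup b=0$), so $\sum_v\textup{inv}_v[2\epsilon_v]=0$ by reciprocity for the Brauer group. Summing $\textup{inv}_v$ over all $v$ therefore yields
\[
\langle \mathfrak a,\mathfrak b\rangle_{CT,\lambda}+\langle \mathfrak a,\mathfrak b\rangle_{CT,\lambda_\chi}=\langle\mathfrak a,\mathfrak b\rangle_{S_\chi},
\]
and (anti-)symmetry then follows from the antisymmetry of the Cassels--Tate pairing (noting that the values lie in $\tfrac12\mathbb Z/\mathbb Z$).

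The main potential obstacle is the bookkeeping around two simultaneous differentials on $A[4]$-valued cochains, together with tracking how choices made for one Cassels--Tate computation propagate to the other. The argument, however, reduces cleanly to the identity $d_\chi\sigma-d\sigma=\chi\cup a$, which replaces the independent global data of the two Cassels--Tate pairings by a single $\sigma$ plus the explicit global correction $\chi\cup f$; everything else is a mechanical cocycle comparison followed by one application of Brauer reciprocity.
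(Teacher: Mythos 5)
Your proof is correct and follows essentially the same strategy as the paper: reuse the cocycle representatives $a,b$ and the lift $\sigma\in C^1(K,A[4])$ in both Cassels--Tate computations, establish the key identity $d_\chi\sigma-d\sigma=\chi\cup a$, and invoke Brauer reciprocity to kill the remaining global $2$-cocycle. The only (cosmetic) difference is that you explicitly take $\epsilon_\chi=\epsilon+\chi\cup f$, which makes the correcting cocycle concretely $-2\epsilon$, whereas the paper allows an arbitrary admissible $\epsilon_\chi$ and absorbs the discrepancy into an abstract $g\in Z^2(K,\bar K^\times)$ — one checks $g=-2\epsilon$ for your choice, so the two arguments coincide.
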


\begin{remark}
This lemma is implicit in the recent arxiv preprint of Smith \cite[Proof of Theorem 3.2]{Smith16}.
\end{remark}

\begin{proof}
Fix $\mathfrak{a},\mathfrak{b}\in S_\chi$. We begin by making the global choices involved in computing
$\left \langle \mathfrak{a},\mathfrak{b}\right \rangle _{CT,\lambda}$.
We pick cocycles $a$ and $b$ representing $\mathfrak{a}$ and $\mathfrak{b}$ respectively and pick $\sigma \in C^1(K,A[4])$ with $2\sigma =a$. Next, we pick $\epsilon \in C^2(K,\bar{K}^{\times})$ with $d\epsilon=d\sigma \cup b$. 

We now make the corresponding choices involved in computing $\left \langle \mathfrak{a},\mathfrak{b}\right \rangle _{CT,\lambda_\chi}$.  As we are at liberty to do, we pick the same cocycle representitves $a$ and $b$ chosen above. We similarly pick the same element $\sigma$  of $C^1(K,A[4])$ satisfying $2\sigma =a$ (here using the identification of $A[4]$ with $A^\chi[4]$ via $\psi$ as discussed). We then pick $\epsilon_\chi\in C^2(K,\bar{K}^{\times})$ such that $d\epsilon_\chi=d_\chi\sigma \cup b$. Note that we cannot chose $\epsilon=\epsilon_\chi$ in general due to the difference between the differentials $d$ and $d_\chi$. However, we have 
\[d(\epsilon+\epsilon_\chi)=(d\sigma+d_\chi \sigma)\cup b= (\chi \cup a) \cup b,\]
the last equality following from the definition of $d_\chi$ and a simple computation. 

Now let $f\in C^1(K,\boldsymbol\mu_2)$ be such that $df=a\cup b$.  By \cref{cup product on cochains} and associativity of the cup-product we have $d(\chi \cup f)=d(\epsilon+\epsilon_\chi)$ whence $\chi \cup f=\epsilon+\epsilon_\chi + g$ for some cocycle $g\in Z^2(K,\bar{K}^{ \times})$.

We now make the local choices involved in computing $\left \langle \mathfrak{a},\mathfrak{b}\right \rangle _{CT,\lambda}$. We choose $P_v\in A(\bar{K_v})$ with $dP_v=a_v$ and then pick $Q_v \in A(\bar{K_v})$ with $2Q_v=P_v$. Next, set $\rho_v:=dQ_v \in C^1(K_v,A[4])$ 
and  define $\mathfrak{c}_v$ to be the class of $(\sigma_v -\rho_v)\cup b_v -\epsilon_v$ in $H^2(K_v,\bar{K_v}^{\times})$. 

Finally, we make the local choices involved in computing $\left \langle \mathfrak{a},\mathfrak{b}\right \rangle _{CT,\lambda_\chi}$. Thus we pick $P_{v,\chi}$ with $d_\chi P_{v,\chi}=a_v$, $Q_{v,\chi}$ with $2Q_{v,\chi}=P_{v,\chi}$, set $\rho_{v,\chi}=d_\chi Q_{v,\chi}$ and define $\mathfrak{c}_{v,\chi}$ to be the class of $(\sigma_v -\rho_{v,\chi})\cup b_v -\epsilon_{\chi,v}$ in $H^2(K_v,\bar{K_v}^{\times})$. 

We these choices in place, $\mathfrak{c}_{v}+\mathfrak{c}_{v,\chi}$ is the class in $\textup{Br}(K_v)$ of
\[(a_v-(\rho_v+\rho_{v,\chi}))\cup b_v  - \chi_v \cup f_v + g_v.\]
Noting that $\rho_v+\rho_{v,\chi}$ takes values in $A[2]$ and that $\mathfrak{a}_v\cup \mathfrak{b}_v=0$ (as discussed previously) we see that
\[\text{inv}_v(\mathfrak{c}_v)+\text{inv}_v(\mathfrak{c}_{v,\chi})=\text{inv}_v\left((\rho_v+\rho_{v,\chi})\cup b_v - \chi_v \cup f_v \right)+\text{inv}_v(g_v).\]
Summing over all places and noting that by reciprocity for the Brauer group we have
\[\sum_{v\in M_K}\text{inv}_v (g_v)=0\in \mathbb{Q}/\mathbb{Z},\]
we have
\[\left \langle \mathfrak{a},\mathfrak{b}\right \rangle _{CT}+\left \langle \mathfrak{a},\mathfrak{b}\right \rangle _{CT,\chi}=\sum_{v\in M_K}\text{inv}_v \left((\rho_v+\rho_{v,\chi})\cup b_v - \chi_v \cup f_v \right).\]
But this is precisely how the quantity $\left \langle \mathfrak{a},\mathfrak{b}\right \rangle_{S_\chi}$ was defined.
\end{proof}

\subsection{The local terms $\mathfrak{g}(A,\lambda,\chi)$}

In this subsection we study the local terms which arise in computing $\left \langle \mathfrak{c}_\lambda,\mathfrak{c}_\lambda\right \rangle_{\mathcal{S}_\chi}$, and show in particular that they are independent of certain choices involved. We work purely locally and take $F$ to be a local field of characteristic $0$. Let $(A/F,\lambda)$ be a principally polarised abelian variety. Let $\chi\in\textup{Hom}_{\textup{cnts}}(G_F,\boldsymbol \mu_2)$ be a quadratic character of $F$ and  $(A^\chi/F,\psi)$ be the quadratic twist of $A$ by $\chi$.  We use the same conventions and  notation as in \Cref{notation for quadratic twist cohomology} when talking about objects associated to $A^\chi$. 

Denote by $\mathfrak{c}_\lambda\in H^1(F,A[2])$ the cohomology class associated to $\lambda$ as in \Cref{identification of torsors: Kestutis remark}. By \cite[Lemma 1]{MR1740984} its image in $H^1(F,A)[2]$ is trivial. By \Cref{twisting the torsor}, it follows also that the image of $\mathfrak{c}_\lambda$ in $H^1(F,A^\chi)[2]$ is trivial too (here the map $H^1(F,A[2])\rightarrow H^1(F,A^\chi)[2]$ comes from identifying $A[2]$ with $A^\chi[2]$ via $\psi$). 

\begin{remark} \label{representative of c remark}
By \Cref{identification of torsors: Kestutis lemma} $\mathfrak{c}_\lambda$ is equal to the cohomology class associated to the set of quadratic refinements of the Weil pairing $(~,~)_\lambda$ on $A[2]$. In particular, for each quadratic refinement $q$ of $(~,~)_\lambda$, the function $c_q:G_F\rightarrow A[2]$ sending $\sigma \in G_F$ to the unique element $c_q(\sigma)\in A[2]$ such that 
\[ q(\sigma^{-1}v)-q(v)=(v,c_q(\sigma))_\lambda\] for all $v\in A[2]$, is a cocycle in $Z^1(F, A[2])$ representing the class $\mathfrak{c}_\lambda$. 
\end{remark}

 \begin{defi}
 Let $q:A[2]\rightarrow \boldsymbol \mu_2$ be a quadratic refinement of the Weil pairing $(~,~)_{\lambda}$. Then we define the function 
 $F_q:G_F\rightarrow \boldsymbol \mu_2$ as the composition
 \[F_q:G_F\longrightarrow \textup{Sp}(A[2])\stackrel{f_q}{\longrightarrow}\mathbb{F}_2\cong \boldsymbol \mu_2,\]
 where the map $G_F\rightarrow \textup{Sp}(A[2])$ is the homomorphism coming from the action of $G_F$ on $A[2]$ and  $f_q:\textup{Sp}(A[2])\rightarrow \mathbb{F}_2$ is the map afforded by \Cref{extension of dickson invariant}. 
 \end{defi}
 
 \begin{remark} \label{d of cochain remark}
 For each quadratic refinement $q$ of $(~,~)_\lambda$ it follows from \Cref{extension of dickson invariant} that we have $dF_q=c_q\cup c_q \in Z^2(F,\boldsymbol \mu_2)$.
 \end{remark}
 
 \begin{defi} \label{local terms controlling sha}
Let $\chi\in \textup{Hom}_{\textup{cnts}}(G_F,\boldsymbol \mu_2)$ be a quadratic character, let $q$ be a quadratic refinement of $\left( ~,~\right)_\lambda$ and let $c_q$ be the associated cocycle representing $\mathfrak{c}_\lambda$. As in the definition of the local choices for the pairing $\left \langle ~,~\right \rangle_{S_\chi}$, pick $P_{q}\in A(\bar{F})$ with $dP_{q}=c_q$, let $Q_{q}\in A(\bar{F})$ be such that $2Q_{q}=P_{q}$ and set $\rho_{q}=dQ_{q}$. Similarly, pick  $P_{\chi,q}\in A(\bar{F})$ with $d_\chi P_{\chi,q}=c_q$, let $Q_{\chi,q}\in A(\bar{F})$ be such that $2Q_{\chi,q}=P_{\chi,q}$ and set $\rho_{\chi,q}=d_\chi Q_{\chi,q}$. 

We then define $\mathfrak{g}(A,\lambda,\chi,q)$ to be the class of the cocycle \[g(A,\lambda,\chi,q):=(\rho_{q}+\rho_{\chi,q})\cup c_{q}-\chi \cup F_{q}\] in $\textup{Br}(F)[2]$. 
 As in \Cref{new pairing},  $\mathfrak{g}(A,\lambda,\chi,q)$ does not depend on the choices of $P_{q}, Q_{q}, P_{\chi,q}$ or $Q_{\chi,q}$. 
 \end{defi}
 
 The following lemma is key to the proof of \Cref{local sha twist decomp}.
 
\begin{lemma} \label{independence of refinement}
The quantity $\mathfrak{g}(A,\lambda,\chi,q)\in \textup{Br}(F)[2]$ is independent of the choice of quadratic refinement $q$. 
\end{lemma}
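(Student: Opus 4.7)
The plan is to choose the cocycle and local lifts for $q'$ compatibly with those for $q$, and to exhibit the resulting change in $g(A,\lambda,\chi,q)$ as a coboundary in $C^2(F,\boldsymbol\mu_2)$.

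Write $q' = q + v$ for the unique $v \in A[2]$ with this property, so that $c_{q'} = c_q + dv$, and \Cref{change of form} gives
\[F_{q'} = F_q + c_q\cup v + v\cup c_q + v\cup dv.\]
Since $\chi$ takes values in $\pm 1$ and $A[2]$ is $2$-torsion, $d_\chi v = dv$ for $v \in A[2]$; I would therefore take $P_{q'} = P_q + v$ and $P_{\chi,q'} = P_{\chi,q} + v$, and, picking $w \in A[4](\bar F)$ with $2w = v$, set $Q_{q'} = Q_q + w$ and $Q_{\chi,q'} = Q_{\chi,q} + w$. A direct computation then shows that the cochain $u := dw + d_\chi w \in C^1(F,A[4])$ actually takes values in $A[2]$, and that $\rho_{q'} + \rho_{\chi,q'} = (\rho_q + \rho_{\chi,q}) + u$.

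The crux of the proof is the pair of identities
\[u \cup c_q = \chi \cup v \cup c_q \qquad \textup{and} \qquad u \cup dv = \chi \cup v \cup dv\]
in $C^2(F,\boldsymbol\mu_2)$, where the left-hand cup products are formed via the extension $e_{4,\lambda}\colon A[4]\otimes A[2] \to \boldsymbol\mu_2$ of the Weil pairing. These follow by direct computation: $(d_\chi w - dw)(\sigma) = (\chi(\sigma) - 1)\sigma w$, which vanishes for $\chi(\sigma)=1$ and equals $\sigma v$ (viewed in $A[2]$, using $-\sigma v = \sigma v$) for $\chi(\sigma)=-1$; Galois equivariance of the Weil pairing together with the vanishing of $2\cdot (dw \cup c_q)$ in the $\mathbb{F}_2$-valued group $C^2(F,\boldsymbol\mu_2)$ then collapses the expression to the asserted form.

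Granting these identities and expanding $g(A,\lambda,\chi,q') - g(A,\lambda,\chi,q)$, all cup products involving $\chi$ cancel except $\chi \cup c_q \cup v$, leaving
\[g(A,\lambda,\chi,q') - g(A,\lambda,\chi,q) = (\rho_q + \rho_{\chi,q}) \cup dv - \chi \cup c_q \cup v.\]
The relation $d(\rho_q + \rho_{\chi,q}) = \chi \cup c_q$ from \Cref{definition of our altered pairing}, together with the Leibniz rule, identifies this with $-d\bigl((\rho_q + \rho_{\chi,q}) \cup v\bigr)$, a coboundary; hence $\mathfrak{g}(A,\lambda,\chi,q') = \mathfrak{g}(A,\lambda,\chi,q)$ in $\textup{Br}(F)[2]$. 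The main technical obstacle is the careful bookkeeping of the two flavours of cup product (between cochains valued in $A[4]$ versus $A[2]$) needed to verify the two key identities.
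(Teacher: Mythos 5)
Your overall scaffolding matches the paper exactly: the choices $P_{q'} = P_q + v$, $Q_{q'} = Q_q + w$ (the paper's $T = w$), the invocation of \Cref{change of form}, and the final Leibniz-rule step identifying $(\rho_q+\rho_{\chi,q})\cup dv - \chi\cup c_q\cup v$ with $d\bigl((\rho_q+\rho_{\chi,q})\cup v\bigr)$ all appear in the paper's proof. However, your ``crux identities'' are false as stated, and the argument you give for them has a pairing incompatibility.

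The correct cochain-level identity (proved in the paper by a direct one-line computation, with no reference to any extended pairing) is
\[
u := dw + d_\chi w \;=\; dv \;+\; \chi\cup v \quad\text{in } C^1(F,A[2]),
\]
not $u\cup c_q = \chi\cup v\cup c_q$. Consequently $u\cup c_q = dv\cup c_q + (\chi\cup v)\cup c_q$ and $u\cup dv = dv\cup dv + (\chi\cup v)\cup dv$, and the extra terms $dv\cup c_q$ and $dv\cup dv$ are nonzero cochains. They happen to combine into the coboundary $dv\cup(c_q + dv) = dv\cup c_{q'}$ (coboundary cup cocycle), so your final conclusion is unharmed, but you have silently dropped a term, and you would in fact have to argue separately that it dies in $\textup{Br}(F)$. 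Your claimed ``collapse'' is therefore an equality modulo coboundaries that you have presented as an equality of cochains.

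More seriously, the mechanism you propose for proving the crux identities does not work. You write $u = (d_\chi w - dw) + 2\,dw$ and want to kill $2(dw\cup c_q)$ by cupping via a pairing $e_{4,\lambda}\colon A[4]\otimes A[2]\to\boldsymbol\mu_2$. For this manipulation to be legitimate you would need such a $G_F$-equivariant pairing restricting to $(\,,\,)_\lambda$ on $A[2]\times A[2]$, because the $u\cup c_q$ that actually appears when one expands $g(A,\lambda,\chi,q') = (\rho_{q'}+\rho_{\chi,q'})\cup c_{q'} - \chi\cup F_{q'}$ is formed with the Weil pairing on $A[2]$ (as specified in Definitions \ref{defi of ct pairing}, \ref{definition of our altered pairing} and \ref{local terms controlling sha}). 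But the natural candidate $e_{4,\lambda}(x,y) = (x,\lambda y)_{e_4}$ is \emph{trivial} on $A[2]\times A[2]$: for $x,y\in A[2]$ one has $e_4(x,\lambda y) = e_2(2x, \lambda y) = e_2(0,\lambda y) = 1$ (concretely, on an elliptic curve with basis $P,Q$ of $E[4]$ and $e_4(P,Q)=i$, one computes $e_4(2P,2Q) = i^4 = 1 \ne e_2(2P,2Q) = -1$). So your $u\cup c_q$ computed with $e_{4,\lambda}$ is a genuinely different cochain from the $u\cup c_q$ you need. The fix is to abandon the extended pairing entirely and prove the clean identity $dw + d_\chi w = dv + \chi\cup v$ directly as an identity of $A[2]$-valued cochains (evaluating both sides on $\sigma$ with $\chi(\sigma)=\pm 1$), then carry the resulting coboundary $dv\cup(c_q+dv)$ through to the end.
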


\begin{proof}
Keep the notation of \Cref{local terms controlling sha} in what follows. Let $q$ and $q'$ be two quadratic refinements. Then $q-q'=\left( -,v\right)_\lambda$ for some $v\in A[2]$ and $c_{q'}=c_q+dv$. By \Cref{change of form} we have 
\[F_{q'}=F_q+c_q\cup v+v\cup c_q+v\cup dv.\]
Now fix choices for $P_{q}, Q_{q}, P_{\chi,q}$ and $Q_{\chi,q}$ as in \Cref{local terms controlling sha}. Then we may take $P_{q'}=P_{q}+v$ and $P_{\chi,q'}=P_{\chi,q}+v$. Pick $T\in A[4]$ with $2T=v$. Then we may take $Q_{q'}=Q_{q}+T$ and $Q_{\chi,q'}=Q_{\chi,q}+T$. Thus
\[\rho_{q'}+\rho_{\chi,q'}=\rho_{q}+\rho_{\chi,q}+dT+d_\chi T.\] 
An easy computation gives $dT+d_\chi T=dv+\chi \cup v$. 
Combining this with the expressions for $F_{q'}$ and $c_{q'}$ in terms of $F_{q}$ and $c_{q}$ respectively, we see that we have an equality of cocycles 
\[g(A,\lambda,\chi,q')=g(A,\lambda,\chi,q)+(\rho_{q}+\rho_{\chi,q})\cup dv+(dv+\chi \cup v)\cup(c_q+dv)-\chi \cup(c_q\cup v+v\cup c_q+v\cup dv)\]
 inside $Z^2(F,\boldsymbol \mu_2)$.

Now $c_q+dv\in C^1(F,A[2])$ is a cocycle whilst $dv\in C^1(F,A[2])$ is a coboundary. Thus the class of $dv\cup (c_q+dv)$ is trivial in $\text{Br}(F)[2]$. Using this observation, cancelling like terms in the previous expression, and passing to classes in the Brauer group, one has
\[\mathfrak{g}(A,\lambda,\chi,q')=\mathfrak{g}(A,\lambda,\chi,q)+[(\rho_{q}+\rho_{\chi,q})\cup dv-\chi \cup c_q\cup v]\]
(where here `$[~]$' denotes the operation of taking classes in the Brauer group).

Now, as remarked in the definition of the pairing $\left \langle ~,~\right \rangle _{S_\chi}$, we have $d(\rho_{q}+\rho_{\chi,q})=\chi \cup c_q$. Thus by standard properties of cup product on cochains (see \Cref{cup product of cochains}) we have
\[d\left((\rho_{q}+\rho_{\chi,q})\cup v\right)=(\rho_{q}+\rho_{\chi,q})\cup dv-\chi \cup c_q \cup v.\] 
In particular, the class of $(\rho_{q}+\rho_{\chi,q})\cup dv-\chi \cup c_q \cup v$ is trivial in $\textup{Br}(F)$, whence $\mathfrak{g}(A,\lambda,\chi,q')=\mathfrak{g}(A,\lambda,\chi,q)$
as desired.
\end{proof}

As a consequence of \Cref{independence of refinement}, we make the folllowing refinement of \Cref{local terms controlling sha}.

\begin{defi} \label{the local terms part 2}
Define $\mathfrak{g}(A,\lambda,\chi)\in \textup{Br}(F)[2]$ to be the quantity $\mathfrak{g}(A,\lambda,\chi,q)$ for any choice of quadratic refinement $q$ of $\left( ~,~\right)_\lambda$.
\end{defi}

The following proposition computes explicitly the terms $\mathfrak{g}(A,\lambda,\chi)$ in certain cases.

\begin{proposition} \label{local terms in sha computation}
Let $\mathfrak{g}(A,\lambda,\chi)\in \textup{Br}(F)[2]$ be as in \Cref{the local terms part 2}. 

\begin{itemize}
\item[(i)] We have $\mathfrak{g}(A,\lambda,\mathbbm{1})=0$ where $\mathbbm{1}$ is the trivial character of $F$.
\item[(ii)] Suppose that $q$ is a $G_F$-invariant quadratic refinement of the Weil pairing $\left( ~,~\right)_\lambda$ on $A[2]$ and let $\alpha:G_{F}\rightarrow \boldsymbol \mu_2$ be the quadratic character corresponding to the homomorphism 
\[G_{F}\longrightarrow O(q)/SO(q)\cong \mathbb{Z}/2\mathbb{Z}\cong \boldsymbol \mu_2\] 
coming from the action of $G_{F}$ on $A[2]$. Then 
\[\mathfrak{g}(A,\lambda,\chi)=\alpha\cup \chi\in \textup{Br}(F)[2].\]
\item[(iii)] Suppose that $F$ is nonarchimedean with odd residue characteristic and that $A$ has good reduction. 
Then we have
\[\textup{inv}_F~ \mathfrak{g}(A,\lambda,\chi)=\begin{cases}0~~&~~\chi~\textup{unramified} \\ \frac{1}{2}\dim_{\mathbb{F}_2}A(F)[2] \in \mathbb{Q}/\mathbb{Z}~~&~~\chi~\textup{ramified}. \end{cases}\]
\item[(iv)] Suppose that $F$ is archimedean. Then we have
\[\textup{inv}_F~ \mathfrak{g}(A,\lambda,\chi)=\begin{cases}0~~&~~F=\mathbb{C}~~\textup{or}~~\chi~~\textup{trivial,} \\ \frac{1}{2}\dim_{\mathbb{F}_2}A(F)[2] \in \mathbb{Q}/\mathbb{Z}~~&~~F=\mathbb{R}~~\textup{ and }\chi~\textup{non-trivial.}\end{cases}\]
\end{itemize}
\end{proposition}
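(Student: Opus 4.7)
My plan is to handle the four parts in order, using the independence of $\mathfrak{g}(A,\lambda,\chi)$ from the choice of quadratic refinement (\Cref{independence of refinement}) to pick $q$ adapted to each situation.

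For part (i), I would observe that $\chi=\mathbbm{1}$ makes $d_\chi=d$, so I may take $P_{\chi,q}=P_q$ and $Q_{\chi,q}=Q_q$, yielding $\rho_q+\rho_{\chi,q}=2\rho_q=dP_q=c_q$ as an $A[2]$-valued cochain. The trivial character corresponds to the zero cochain in $C^1(F,\mathbb{F}_2)$, so $\chi\cup F_q=0$, leaving $g(A,\lambda,\mathbbm{1},q)=c_q\cup c_q=dF_q$ by \Cref{d of cochain remark}, a coboundary with trivial class in the Brauer group. For part (ii), $G_F$-invariance of $q$ forces $c_q\equiv 0$ by the formula of \Cref{representative of c remark}, killing the term $(\rho_q+\rho_{\chi,q})\cup c_q$; moreover the action of $G_F$ on $A[2]$ factors through $O(q)$, and by \Cref{extension of dickson invariant} the restriction of $f_q$ to $O(q)$ is the Dickson homomorphism, so $F_q$ coincides with $\alpha$ under $\mathbb{F}_2\cong\boldsymbol\mu_2$. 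Therefore $g=-\chi\cup F_q$, which equals $\alpha\cup\chi$ in $\textup{Br}(F)[2]$ since orderings and signs of degree-one cup products are irrelevant modulo $2$.

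For parts (iii) and (iv) the plan is to reduce to (ii) by exhibiting a $G_F$-invariant refinement and then evaluating $\textup{inv}_F(\alpha\cup\chi)$ via the local Hilbert symbol. In (iii), good reduction of $A$ at $v\nmid 2$ makes $A[2]$ unramified, and I expect $\mathfrak{c}_\lambda$ to vanish in $H^1(F,A[2])$ in this setting (for instance by extending $\lambda$ to a symmetric line-bundle representative on the N\'eron model), producing an invariant $q$. The resulting $\alpha$ is then unramified, and $\alpha(\textup{Frob})\equiv\dim_{\mathbb{F}_2}A(F)[2]\pmod 2$ by \Cref{dimension proposition}. For $\chi$ unramified the cup product is unramified hence trivial in odd residue characteristic, and for $\chi$ ramified the Hilbert symbol equals $\frac{1}{2}$ exactly when $\alpha$ is nontrivial unramified, matching the stated formula. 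Part (iv) should run in parallel: $\textup{Br}(\mathbb{C})=0$ handles the complex case, and over $\mathbb{R}$ one produces a $G_\mathbb{R}$-invariant refinement, computes $\alpha(c)\equiv\dim_{\mathbb{F}_2}A(\mathbb{R})[2]\pmod 2$ for complex conjugation $c$, and invokes the real Hilbert symbol to get nontriviality precisely when both factors of $\chi\cup\alpha$ are nontrivial.

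The main technical step, I expect, will be verifying the existence of a $G_F$-invariant quadratic refinement in (iii) and (iv), equivalently the vanishing of $\mathfrak{c}_\lambda$ in those settings; once this geometric input is in place, everything else reduces to cup-product bookkeeping combined with standard local class field theory.
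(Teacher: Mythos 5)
Your proposal follows essentially the same route as the paper: part (i) is direct from the definitions, part (ii) exploits that an $F$-rational refinement forces $c_q\equiv 0$ so that $\mathfrak{g}$ reduces to $\chi\cup F_q=\chi\cup\alpha$ via \Cref{extension of dickson invariant}, and parts (iii)--(iv) reduce to (ii) by producing a $G_F$-invariant refinement and then invoking \Cref{dimension proposition} together with standard cup-product facts for quadratic characters. The one step you flag as needing verification --- existence of a $G_F$-invariant quadratic refinement under good reduction away from $2$, resp.\ over $\mathbb{R}$ --- is exactly where the paper cites \cite[Proposition 3.6 (d)]{MR2915483}, and your suggested N\'eron-model heuristic is in the right spirit.
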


\begin{proof}
(i): Clear.

(ii): If there is an $F$-rational quadratic refinement $q$ then $c_q$ is identically zero and it follows immediately from \Cref{independence of refinement} and the definition of $\mathfrak{g}(A,\lambda,\chi,q)$ that 
\[\mathfrak{g}(A,\lambda,\chi)=\mathfrak{g}(A,\lambda,\chi,q)=-\chi\cup F_{q}=\chi\cup \alpha\]
where for the last equality we use that the restriction of $F_{q}$ to elements of $O(q)$ agrees with the Dickson homomorphism $d_q$ (see \Cref{extension of dickson invariant}).

(iii): By \cite[Proposition 3.6 (d)]{MR2915483}, our assumptions on $F$ and the reduction of $A$ imply that there is a $G_F$-invariant quadratic refinement $q$ of the Weil pairing on $A[2]$. Let $\alpha$ be the associated quadratic character so that $\mathfrak{g}(A,\lambda,\chi)=\alpha \cup \chi$ by (ii). Now by definition, $\alpha$ factors through $\textup{Gal}(F(A[2])/F)$ and our assumptions on $F$ and $A$ mean that $F(A[2])/F$ is unramified. Consequently, $\alpha$ is unramified. In fact, let $\sigma$ denote the Frobenius element in $F(A[2])/F$. Then by \Cref{dimension proposition} we have
\[\alpha(\sigma)=(-1)^{\textup{dim}_{\mathbb{F}_2}A[2]^{\sigma}}=(-1)^{\dim_{\mathbb{F}_2}A(F)[2]}.\] 
In particular, we see that if $\dim_{\mathbb{F}_2}A(F)[2]$ is even then $\alpha$ is the trivial character, whilst if $\dim_{\mathbb{F}_2}A(F)[2]$ is odd then $\alpha$ is the unique non-trivial unramified quadratic character of $F$. Since $F$ is assumed to have odd residue characteristic, standard properties of the cup-product of two quadratic characters gives the result (we review these later in \Cref{local symbols subsection}: see, in particular, \Cref{properties of local symbol}).

(iv): The argument here is similar to that of (iii). First note that if $\chi$ is trivial then $\mathfrak{g}(A,\lambda,\chi)=0$ by $(i)$. In particular, the only case we have not already covered is when $F=\mathbb{R}$ and $\chi$ is the unique non-trivial quadratic character corresponding to the extension $\mathbb{C}/\mathbb{R}$. By \cite[Proposition 3.6 (d)]{MR2915483} there is an $\mathbb{R}$-rational quadratic refinement $q$ of the Weil pairing $(~,~)_\lambda$. Let $\alpha$ be the associated quadratic character and write $\sigma$ for the unique non-trivial element of $\textup{Gal}(\mathbb{C}/\mathbb{R})$. By \Cref{dimension proposition} we see that $\alpha$ is trivial if $\dim_{\mathbb{F}_2}A[2]^\sigma=\dim_{\mathbb{F}_2}A(\mathbb{R})[2]$ is even, and is the quadratic character corresponding to $\mathbb{C}/\mathbb{R}$ otherwise. The result now follows from $(ii)$.
\end{proof}

\begin{remark}
As in \Cref{properties of quadratic forms associated to line bundles}, if the polarisation $\lambda$ is of the form $\phi_\mathcal{L}$ for an $F$-rational symmetric line bundle $\mathcal{L}$ then there is an associated $G_F$-invariant quadratic refinement  of the Weil pairing on $A[2]$. Thus combined with  \Cref{local terms in sha computation} (ii) this gives a geometric condition for when the local terms $\mathfrak{g}(A,\lambda,\chi)$ may be evaluated.
\end{remark}

\begin{remark}
It is natural to ask if the terms $\mathfrak{g}(A,\lambda,\chi)$ are independent of the choice of principal polarisation $\lambda$. 
The above proposition shows that this is true when $\chi$ is trivial, when $A/F$ has good reduction and $F$ has odd residue characteristic, or when $F$ is archimedean. We have been unable to prove this in general however. 
\end{remark}

\subsection{Controlling the parity of $\dim_{\mathbb{F}_2}\Sha_{\mathrm{nd}}(A/K)[2]+\dim_{\mathbb{F}_2}\Sha_{\mathrm{nd}}(A^{\chi}/K)[2]$ via local contributions}

We return to the notation of Sections 5.1-5.7 so that, in particular, $K$ is a number field and $(A/K,\lambda)$ a principally polarised abelian variety.


 \begin{theorem}[=\Cref{local sha twist decomp}] \label{main sha decomp theorem}
Let $\chi$ be a quadratic character of $K$ and for each place $v$ of $K$ write $\chi_v$ for the restriction of $\chi$ to $G_{K_v}$,  $A/K_v$ for the base change of  $A$ to $K_v$, and $\lambda_v$ for the principal polarisation on $A/K_v$ corresponding to $\lambda$. 

Then $\textup{dim}_{\mathbb{F}_2}\Sha_\textup{nd}(A/K)[2]+\textup{dim}_{\mathbb{F}_2}\Sha_\textup{nd}(A^\chi/K)[2]\equiv 0 ~~\textup{ (mod 2)}$ if and only if 
\[\sum_{v\in M_K} \textup{inv}_v~\mathfrak{g}(A/K_v,\lambda_v,\chi_v) =0\in \mathbb{Q}/\mathbb{Z}.\]
\end{theorem}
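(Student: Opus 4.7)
The plan is to assemble the pieces already developed in Sections 3, 4 and 5. Applying the Poonen--Stoll criterion (\Cref{poonen stoll}) to both $(A/K,\lambda)$ and $(A^\chi/K,\lambda_\chi)$, the parity of $\dim_{\mathbb{F}_2}\Sha_\textup{nd}(A/K)[2]+\dim_{\mathbb{F}_2}\Sha_\textup{nd}(A^\chi/K)[2]$ is controlled by the vanishing of
\[\left\langle \mathfrak{c}_\lambda,\mathfrak{c}_\lambda\right\rangle_{\textup{CT},\lambda}+\left\langle \mathfrak{c}_{\lambda_\chi},\mathfrak{c}_{\lambda_\chi}\right\rangle_{\textup{CT},\lambda_\chi}\in \mathbb{Q}/\mathbb{Z}.\]
Here \Cref{twisting the torsor} identifies $\mathfrak{c}_{\lambda_\chi}$ with $\mathfrak{c}_\lambda$ once $A[2]$ is identified with $A^\chi[2]$ via $\psi$, and in particular $\mathfrak{c}_\lambda\in \mathcal{S}_\chi$. \Cref{identification of pairings} then rewrites the above sum as $\left\langle \mathfrak{c}_\lambda,\mathfrak{c}_\lambda\right\rangle_{S_\chi}$, so it suffices to produce a decomposition of $\left\langle \mathfrak{c}_\lambda,\mathfrak{c}_\lambda\right\rangle_{S_\chi}$ whose local contributions are precisely $\textup{inv}_v\,\mathfrak{g}(A/K_v,\lambda_v,\chi_v)$.

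The key step is to produce, globally, the cochain $f$ required by \Cref{definition of our altered pairing} with $df=c_\lambda\cup c_\lambda$. To that end I would fix any quadratic refinement $q$ of the Weil pairing $(~,~)_\lambda$ on $A[2]$ (over $\bar{K}$; no Galois invariance is needed). By \Cref{representative of c remark} the associated cocycle $c_q\in Z^1(K,A[2])$ represents $\mathfrak{c}_\lambda$. Since the $G_K$-action on $A[2]$ preserves $(~,~)_\lambda$ and hence factors through $\textup{Sp}(A[2])$, composing it with the function $f_q$ of \Cref{extension of dickson invariant} yields a continuous cochain $F_q\in C^1(K,\boldsymbol\mu_2)$ satisfying $dF_q=c_q\cup c_q$. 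This is precisely the global primitive needed.

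With these choices in hand, I would evaluate $\left\langle \mathfrak{c}_\lambda,\mathfrak{c}_\lambda\right\rangle_{S_\chi}$ using \Cref{definition of our altered pairing} with $a=b=c_q$ and $f=F_q$. For each place $v$, the restrictions $c_{q,v}$ and $F_{q,v}$ are exactly the objects constructed locally from the restricted refinement $q_v$ of $(~,~)_{\lambda_v}$, and the local lifts $P_v,Q_v,P_{v,\chi},Q_{v,\chi}$ are exactly those appearing in \Cref{local terms controlling sha}. Consequently, the local cocycle class $\mathfrak{c}_v$ contributed at $v$ is the class $\mathfrak{g}(A/K_v,\lambda_v,\chi_v,q_v)$, which by \Cref{independence of refinement} equals $\mathfrak{g}(A/K_v,\lambda_v,\chi_v)$. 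Summing, one obtains the identity
\[\left\langle \mathfrak{c}_\lambda,\mathfrak{c}_\lambda\right\rangle_{S_\chi}=\sum_{v\in M_K}\textup{inv}_v\,\mathfrak{g}(A/K_v,\lambda_v,\chi_v),\]
which combined with the reductions above proves the theorem. (As noted after \Cref{definition of our altered pairing}, the contributions at places split in the quadratic extension cut out by $\chi$ vanish, reconciling this with the sum over non-split places in the introductory formulation.)

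The substantive obstacle is not in this final assembly but in the preparation: one needs a globally defined cochain trivialising $c_q\cup c_q$ on all of $G_K$, and its existence is not formal — it rests on the triviality of $\mathfrak{c}\cup\mathfrak{c}\in H^2(\textup{Sp}(V),\mathbb{F}_2)$ established in \Cref{extension of dickson invariant}, together with the fact that the canonical choice $F_q$ furnished there transforms under a change of $q$ in exactly the way prescribed by \Cref{change of form}, which is what makes \Cref{independence of refinement} — and hence the well-definedness of $\mathfrak{g}(A/K_v,\lambda_v,\chi_v)$ — go through. Once these inputs are in place, the proof is a matter of matching the cocycle formulas of Definitions \ref{definition of our altered pairing} and \ref{local terms controlling sha} term by term.
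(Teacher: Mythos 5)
Your proof is correct and follows essentially the same route as the paper: apply Poonen--Stoll to both $A$ and $A^\chi$, identify $\mathfrak{c}_{\lambda_\chi}$ with $\mathfrak{c}_\lambda$ via \Cref{twisting the torsor}, reduce to evaluating $\left\langle \mathfrak{c}_\lambda,\mathfrak{c}_\lambda\right\rangle_{S_\chi}$ by \Cref{identification of pairings}, then choose the global cochain $F_q$ from \Cref{extension of dickson invariant} so that the local contributions match \Cref{local terms controlling sha} and invoke \Cref{independence of refinement}. The paper's argument proceeds in exactly this order with the same inputs, so there is nothing to add.
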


\begin{remark}
Before proving \Cref{main sha decomp theorem} we remark that if $v$ is a nonarchimedean place of $K$, not dividing $2$ and such that both $A$ has good reduction and $\chi$ is unramified at $v$, then $\mathfrak{g}(A/K_v,\lambda_v,\chi_v)=0$ by \Cref{local terms in sha computation} (iii). In  particular, the sum in the statement of \Cref{main sha decomp theorem} is finite.
\end{remark}

\begin{proof}[Proof of \Cref{main sha decomp theorem}]
By \Cref{twisting the torsor}, \Cref{poonen stoll} applied to both $A$ and $A^\chi$ (along with their principal polarisations $\lambda$ and $\lambda_\chi$), and \Cref{identification of pairings}, we see that $\textup{dim}_{\mathbb{F}_2}\Sha_\textup{nd}(A/K)[2]+\textup{dim}_{\mathbb{F}_2}\Sha_\textup{nd}(A_\chi/K)[2]$ is even if and only if $\left \langle \mathfrak{c}_\lambda,\mathfrak{c}_\lambda\right \rangle_{\mathcal{S}_\chi}=0$.

We now follow \Cref{definition of our altered pairing} to compute $\left \langle \mathfrak{c}_\lambda,\mathfrak{c}_\lambda\right \rangle_{\mathcal{S}_\chi}$. For the global choices, fix a quadratic refinement $q$ of the Weil pairing $(~,~)_\lambda$ on $A[2]$. Then as in the local case (\Cref{representative of c remark}) the function $c_q:G_K\rightarrow A[2]$ sending $\sigma \in G_K$ to the unique element $c_q(\sigma)\in A[2]$ such that 
\[ q(\sigma^{-1}v)-q(v)=(v,c_q(\sigma))_\lambda\] for all $v\in A[2]$, is a cocycle in $Z^1(F, A[2])$ representing the class $\mathfrak{c}_\lambda$. Similarly, the function  $F_q:G_K\rightarrow \boldsymbol \mu_2$ defined as the composition
 \[F_q:G_K\longrightarrow \textup{Sp}(A[2])\stackrel{f_q}{\longrightarrow}\mathbb{F}_2\cong \boldsymbol \mu_2,\]
(where the map $G_K\rightarrow \textup{Sp}(A[2])$ is the homomorphism coming from the action of $G_K$ on $A[2]$  and  $f_q:\textup{Sp}(A[2])\rightarrow \mathbb{F}_2$ is the map afforded by \Cref{extension of dickson invariant}) is an element of $C^1(K,\boldsymbol \mu_2)$ satisfying $dF_q=c_q\cup c_q \in Z^2(K,\bar{K}^{\times})$.

With these global choices in place, the local terms arising in the definition of $\left \langle \mathfrak{c}_\lambda,\mathfrak{c}_\lambda\right \rangle_{\mathcal{S}_\chi}$ are precisely the terms $\mathfrak{g}(A/K_v,\lambda_v,\chi_v,q)$ of \Cref{local terms controlling sha}. By \Cref{independence of refinement} (for fixed $v$) they are independent of $q$, their common value being by definition $\mathfrak{g}(A/K_v,\lambda_v,\chi_v)$.

Thus  
\[\left \langle \mathfrak{c}_\lambda,\mathfrak{c}_\lambda\right \rangle_{\mathcal{S}_\chi}=\sum_{v\in M_K}\textup{inv}_v~\mathfrak{g}(A/K_v,\lambda_v,\chi_v)\]
and the result follows.
\end{proof}

%




\section{Disparity in Selmer ranks: definitions and recollections}

In the next four sections we prove \Cref{main twisting theorem combined intro} concerning the parity of certain Selmer groups defined in terms of abstract twisting data. Our aproach follows closely the strategy of \cite{MR3043582}, which proves the result for Galois modules of dimension $2$ (whilst we handle arbitrary (even) dimension). Many of the statements of op. cit. go through with some minor changes however in order to highlight the diferences it is necessary to recall much of their setup and basic results. Thus in this section we recall the setup of \cite{MR3043582}. Where notions need to be generalised or slightly adapted we state the differences in a remark immediately following the definition.

\subsection{Notation} Here we fix some notation which will remain in place for the entirety of Sections 6-9. Fix first a prime $p$  and number field $K$. Following \cite{MR3043582}, for a field $L$ (either $K$ or $K_v$ for some $v\in M_K$) we define $\mathcal{C}(L):=\text{Hom}_{\textup{cnts}}(G_L,\boldsymbol\mu_p)$, the group of characters of order dividing $p$. We denote the trivial character by $\mathbbm{1}_L$. Further, we define $\mathcal{F}(L)$ to be the quotient of $\mathcal{C}(L)$ by the action of $\text{Aut}(\boldsymbol\mu_p)$ (the action given by post-composition). The set $\mathcal{F}(L)$ is naturally identified with the set of cyclic extensions of $L$ of degree dividing $p$, the map being given by sending the equivalence class of $\chi \in \mathcal{C}(L)$ to the fixed field $\bar{K}^{\text{ker}(\chi)}$. When $L$ is a local field we write $\mathcal{C}_\text{ram}(L)$ (resp. $\mathcal{C}_{\text{ur}}(L)$) for the subset of $\mathcal{C}(L)$ consisting of ramified (resp. unramified) characters, and similarly write $\mathcal{F}_{\text{ram}}(L)$ (resp. $\mathcal{F}_{\text{ur}}(L)$) for the subset of $\mathcal{F}(L)$ corresponding to ramified (resp. unramified) extensions. Note that if $L$ has residue characteristic coprime to $p$ then $\mathcal{C}_{\text{ram}}(L)$ (and hence also $\mathcal{F}_{\text{ram}}(L)$) is non-empty if and only if $\boldsymbol \mu_p \subseteq L$.

For an finite dimensional $\mathbb{F}_p$-vector space $M$ we say that a map $q:M\rightarrow \mathbb{Q}/\mathbb{Z}$ is a \emph{quadratic form} if $q(nx)=n^2q(x)$ for all $n\in \mathbb{Z}$ and $x\in M$, and if the map $(x,y)\mapsto q(x+y)-q(x)-q(y)$ is a symmetric bilinear pairing on $M$. We say that $q$ is \emph{non-degenerate} if the associated pairing is (i.e. if it has trivial kernel).  If $q$ is a quadratic form on $M$ and $\left \langle ~,~\right \rangle$ the associated pairing then for a subgroup $W$ of $M$ we write
\[W^\perp=\{m\in M~~:~~\left \langle w,m\right \rangle =0~~\forall w\in W\}\]
for the orthogonal complement of $W$ and say that $W$ is a \emph{Lagrangian}  subspace of $(M,q)$ if $W=W^\perp$ and $q(W)=0$. We call $(M,q)$ a \emph{metabolic space} if $q$ is non-degenerate and if $M$ has a Lagrangian subspace.  

\subsection{The module $T$ and the finite set of places $\Sigma$} \label{the module T}

Fix, once and for all, a finite dimensional $\mathbb{F}_p$-vector space $T$  equipped with a continuous $G_K$-action and a non-degenerate $G_K$-equivariant alternating pairing 
\[(~,~):T\times T \longrightarrow \boldsymbol \mu_p\]
(so that, in particular, $\dim_{\mathbb{F}_p}T$ is necessarily even).
For $v\in M_K$, if the inertia subgroup of $G_{K_v}$ acts trivially on $T$ then we say that $T$ is \textit{unramified} at $v$, and \textit{ramified} at $v$ otherwise. 
We denote by $K(T)$ the field of definition of the elements of $T$, i.e. the fixed field of the kernel of the action of $G_K$ on $T$.

We also fix  a finite set $\Sigma$ of places of $K$ containing all archimedean places, all places over $p$ and all places where $T$ is ramified (and possibly some more to be specified later). 

\subsection{The Local Tate Pairing and Tate quadratic forms} \label{local tate pairing subsection}

For each place $v\in M_K$, we write $\left \langle ~,~\right \rangle_v$ for the local Tate pairing 
\[H^1(K_v,T)\times H^1(K_v,T)\longrightarrow \mathbb{Q}/\mathbb{Z}\]
given by the composition
\[H^1(K_v,T)\times H^1(K_v,T)\stackrel{\cup}{\longrightarrow} H^2(K_v,\boldsymbol \mu_p)\stackrel{\text{inv}_v}{\longrightarrow}\mathbb{Q}/\mathbb{Z},\]
where the first map is induced by cup-product and the pairing $(~,~)$. It is non-degenerate, bilinear and symmetric. 

\begin{defi} \label{tate form defi}
Let $v$ be a place of $K$. We say a quadratic form $q_v:H^1(K_v,T)\rightarrow \mathbb{Q}/\mathbb{Z}$ is a \emph{Tate quadratic form} if its associated bilinear form is the local Tate pairing $\left \langle ~,~\right \rangle_v$. If $v\notin \Sigma$, then we say that $q_v$ is \emph{unramified} if it vanishes on $H^1_{\text{ur}}(K_v,T)$ (in which case $H^1_{\text{ur}}(K_v,T)$ is a Lagrangian subspace for $q_v$). 
\end{defi}

\begin{remark}
If $p=2$ then our definition differs slightly from \cite[Definition 3.2]{MR3043582} of Klagsbrun--Mazur--Rubin since it allows quadratic forms valued in $\frac{1}{4}\mathbb{Z}/\mathbb{Z}$ whilst their definition only allows quadratic forms taking values in $\frac{1}{2}\mathbb{Z}/\mathbb{Z}$. This extra generality is necessary when $\dim_{\mathbb{F}_p}T>2$ in order to allow $T=A[2]$ for a principally polarised abelian variety $A/K$ (see \Cref{valued in Z/4Z}). 
\end{remark}

As in \cite[Lemma 3.4]{MR3043582}, if $p>2$ then there is a unique Tate quadratic form $q_v$ on $H^1(K_v,T)$ given by
\[q_v=\frac{1}{2}\left\langle ~,~\right \rangle_v.\]

\subsection{Global metabolic structures} \label{global metabolic structures} 
With our slightly modified definition of a Tate quadratic form in hand we can define a global metabolic structure on $T$ in an identical way to \cite[Definition 3.3]{MR3043582}.

\begin{defi} \label{global metabolic structure defi}
A \textit{global metabolic structure} $\textbf{q}$ on $T$ consists of a collection $\textbf{q}=(q_v)_v$ ($v\in M_K$) of Tate quadratic forms such that 
\begin{itemize}
\item[(i)] for each $v\in M_K$ the pair $\left(H^1(K_v,T),q_v\right)$ is a metabolic space, 
\item[(ii)] the quadratic form $q_v$ is unramified at each place $v\notin \Sigma$,
\item[(iii)] if $c\in H^1(K,T)$ then $\sum_v q_v(c_v)=0$.
\end{itemize}
\end{defi}

As in \cite[Lemma 3.4]{MR3043582}, if $p>2$ then the unique Tate quadratic forms on $H^1(K_v,T)$ defined above do indeed give a global metabolic structure on $T$, so specifying a global metabolic structure is only necessary when $p=2$. 

\subsection{Selmer structures and Selmer groups}

We define  Selmer structures for $(T,\textbf{q})$, along with the associated Selmer groups as in \cite[Definition 3.8]{MR3043582}. 

\begin{defi}
A Selmer structure $\mathcal{S}$ for $(T,\textbf{q})$ is the data of 
\begin{itemize}
\item[(i)] a finite set $\Sigma_{\mathcal{S}}$ of places of $K$ containing $\Sigma$,
\item[(ii)] for each $v\in \Sigma_{\mathcal{S}}$, a Lagrangian subspace $H_{\mathcal{S}}(K_v,T)$ of $\left(H^1(K_v,T),q_v\right)$.  
\end{itemize}
\end{defi}

\begin{defi}
Let $\mathcal{S}$ be a Selmer structure for $(T,\textbf{q})$. For each $v\notin \Sigma_{\mathcal{S}}$ we set $H^1_{\mathcal{S}}(K_v,T)=H^1_{\textup{ur}}(K_v,T)$ and define the \emph{Selmer group} associated to $\mathcal{S}$ as
\[H^1_{\mathcal{S}}(K,T):=\ker\left(H^1(K,T)\longrightarrow \bigoplus_{v\in M_K} H^1(K_v,T)/H^1_{\mathcal{S}}(K_v,T)\right).\]
\end{defi}

The following theorem, which is a very slight generalisation of \cite[Theorem 3.9]{MR3043582} allows us to compare the dimensions of two Selmer groups modulo 2. 

\begin{theorem} \label{comparison of Selmer structures}
Let $\mathcal{S}$ and $\mathcal{S}'$ be two Selmer structures for $(T,\textbf{q})$. Then
\[\dim_{\mathbb{F}_p}H^1_{\mathcal{S}}(K,T)-\dim_{\mathbb{F}_p}H^1_{\mathcal{S}'}(K,T)\equiv \]
\[~~~~~~~~~~~~~~~~~~~~~~~~~~~~~~~~~\sum_{{\Sigma}_\mathcal{S}\cup{\Sigma}_{\mathcal{S}'}}\dim_{\mathbb{F}_p}H^1_{\mathcal{S}}(K_v,T)/(H^1_{\mathcal{S}}(K_v,T)\cap H^1_{\mathcal{S}'}(K_v,T))~~\textup{ (mod 2)}.\]
\end{theorem}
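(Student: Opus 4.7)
The plan is to adapt the proof of \cite[Theorem 3.9]{MR3043582} (which treats the case $\dim_{\mathbb{F}_p} T = 2$) to arbitrary even $\dim_{\mathbb{F}_p} T$, with only mild modifications.

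First I would reduce to the case $\Sigma_\mathcal{S} = \Sigma_{\mathcal{S}'} = \Sigma_0$ by enlarging both Selmer structures to the common set $\Sigma_0 := \Sigma_\mathcal{S} \cup \Sigma_{\mathcal{S}'}$, declaring $H^1_\mathcal{S}(K_v, T) := H^1_{\textup{ur}}(K_v, T)$ at each place $v \in \Sigma_0 \setminus \Sigma_\mathcal{S}$ (and analogously for $\mathcal{S}'$). This $H^1_{\textup{ur}}(K_v, T)$ is Lagrangian for $q_v$ by condition (ii) of \Cref{global metabolic structure defi} together with local Tate duality applied to $T$ (which is unramified at $v \notin \Sigma$), and neither the global Selmer groups nor the local terms on the right-hand side are affected by this enlargement.

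Next, for each $v \in \Sigma_0$, introduce the auxiliary local conditions $L_v^\cap := H^1_\mathcal{S}(K_v, T) \cap H^1_{\mathcal{S}'}(K_v, T)$ and $L_v^+ := H^1_\mathcal{S}(K_v, T) + H^1_{\mathcal{S}'}(K_v, T)$; since the Lagrangians defining $\mathcal{S}$ and $\mathcal{S}'$ are self-orthogonal under the local Tate pairing, $(L_v^\cap)^\perp = L_v^+$. Let $H^1_\cap(K, T)$ and $H^1_+(K, T)$ denote the associated global subgroups. The Poitou--Tate five-term exact sequence for Selmer groups, together with the self-duality $T \cong \operatorname{Hom}(T, \boldsymbol \mu_p)$ afforded by the pairing on $T$, then yields
\[
0 \to H^1_\cap(K, T) \to H^1_+(K, T) \to W \to H^1_+(K, T)^\vee \to H^1_\cap(K, T)^\vee \to 0,
\]
where $W := \bigoplus_{v \in \Sigma_0} L_v^+ / L_v^\cap$ is a metabolic $\mathbb{F}_p$-space (the forms $q_v$ descend to $L_v^+/L_v^\cap$ because $L_v^\cap \subseteq L_{\mathcal{S}, v}$ is $q_v$-isotropic and $L_v^\cap = (L_v^+)^\perp$) containing transverse Lagrangian subspaces $\tilde L_\mathcal{S} := \bigoplus_v H^1_\mathcal{S}(K_v, T)/L_v^\cap$ and $\tilde L_{\mathcal{S}'} := \bigoplus_v H^1_{\mathcal{S}'}(K_v, T)/L_v^\cap$. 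By the alternating sum of dimensions in the exact sequence together with isotropy coming from condition (iii) of \Cref{global metabolic structure defi}, the image $\bar V$ of $H^1_+(K, T) \to W$ is a Lagrangian in $W$ as well.

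Since $\dim H^1_\mathcal{S}(K, T) - \dim H^1_\cap(K, T) = \dim(\bar V \cap \tilde L_\mathcal{S})$ and likewise for $\mathcal{S}'$, the left-hand side of the theorem equals $\dim(\bar V \cap \tilde L_\mathcal{S}) - \dim(\bar V \cap \tilde L_{\mathcal{S}'})$ whilst the right-hand side is $\dim \tilde L_\mathcal{S}$. The result therefore reduces to the linear-algebraic statement: for three Lagrangians $L_1, L_2, L_3$ in a metabolic $\mathbb{F}_p$-space with $L_1, L_2$ transverse, $\dim(L_3 \cap L_1) - \dim(L_3 \cap L_2) \equiv \dim L_1 \pmod 2$. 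After choosing a complement to $(L_3 \cap L_1) \oplus (L_3 \cap L_2)$ in $L_3$, one expresses this complement as the graph of a linear map between subspaces of $L_1$ and $L_2$; the Lagrangian condition on $L_3$ forces the associated bilinear form (via the identification of $L_2$ with $L_1^\vee$ afforded by the pairing) to be alternating, and hence of even rank, giving the parity. The main obstacle will be the careful assembly of the Poitou--Tate exact sequence for the non-Lagrangian intermediate conditions $L_v^\cap, L_v^+$ and the verification that $\bar V$ is Lagrangian (not merely isotropic) in $W$; the case $p=2$ requires additional attention since the quadratic form $q_v$ is not determined by the bilinear local Tate pairing, so the alternating refinement used in the final linear-algebra step must be extracted from $q_v$ rather than from the bilinear form alone.
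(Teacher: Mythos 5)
Your proposal is essentially a correct reconstruction of the proof the paper invokes. The paper's own proof is deliberately terse: it simply asserts that the argument of \cite[Theorem 3.9]{MR3043582} (stated there for $\dim_{\mathbb{F}_p}T=2$) ``generalises verbatim'' to arbitrary even dimension, and flags only one subtlety, namely that the quadratic forms $q_v$ here take values in $\mathbb{Q}/\mathbb{Z}$ rather than $\mathbb{F}_p$, which turns out not to affect the cited Proposition 2.4 of Klagsbrun--Mazur--Rubin on intersections of Lagrangian subspaces of a metabolic space. Your sketch is faithful to that argument: the reduction to a common $\Sigma_0$, the auxiliary conditions $L_v^\cap$ and $L_v^+$, the Poitou--Tate exact sequence producing a Lagrangian image $\bar V$ in the finite metabolic space $W=\bigoplus_v L_v^+/L_v^\cap$, and the reduction to the three-Lagrangian congruence is exactly the KMR mechanism, and your observation that for $p=2$ the alternating refinement must be extracted from $q_v$ rather than from the bilinear pairing alone is the correct diagnosis of why one needs the quadratic (as opposed to merely bilinear) structure.

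One point in your final linear-algebra step deserves to be made explicit. Having written the complement $C$ of $(L_3\cap L_1)\oplus(L_3\cap L_2)$ inside $L_3$ as a graph and extracted an alternating form $B$ on $A_1:=\pi_1(C)$, you conclude ``hence of even rank, giving the parity.'' But the desired conclusion is that $\dim C=\dim A_1$ is even, which requires $B$ to be \emph{non-degenerate}, not merely alternating. This is true but needs an argument: using the perfect pairing $L_1\times L_2\to\mathbb{F}_p$ one checks that $A_1$ is a complement to $L_3\cap L_1$ inside $\operatorname{im}\pi_1=(L_3\cap L_2)^{\perp_{L_1}}$ (and symmetrically for $A_2$), whence any $c_1\in A_1$ with $B(c_1,-)=0$ pairs trivially with $A_2+(L_3\cap L_2)=(L_3\cap L_1)^{\perp_{L_2}}$, forcing $c_1\in L_3\cap L_1\cap A_1=0$. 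With that supplied, the argument is complete; you might also note (as the paper does) that nothing in the argument uses that $q_v$ takes values in $\frac{1}{p}\mathbb{Z}/\mathbb{Z}$ rather than $\mathbb{Q}/\mathbb{Z}$, since the only pairings that enter the final step are the $\mathbb{F}_p$-valued bilinear ones.
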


\begin{proof}
This is proven for $\text{dim}_{\mathbb{F}_p}T=2$ in \cite[Theorem 3.9]{MR3043582} and the proof generalises verbatim to the case where $T$ has arbitrary (even) dimension with one subtlety: their proof relies on \cite[Proposition 2.4]{MR3043582} which is a general result concerning the dimension of the intersection of Lagrangian subspaces of a finite dimensional metabolic space. The one difference from the case there is that now our quadratic forms (in general) take values in $\mathbb{Q}/\mathbb{Z}$ rather than just $\mathbb{F}_p$ as they assume. However, one readily verifies that this assumption is not used in the proof of the cited result. Alternatively, see \cite[Theorem 5.9]{KES14} which gives a further generalisation of \cite[Theorem 3.9]{MR3043582} which includes our case.
\end{proof}

\subsection{Twisting data and twisted Selmer groups} 

Fix from now on a global metabolic structure $\textbf{q}$ on $T$. 

\begin{defi} For each place $v\in M_K$, write $\mathcal{H}(q_v)$ for the set of Lagrangian subspaces for $q_v$ and, for $v\notin \Sigma$, write $\mathcal{H}_{\textup{ram}}(q_v)$ for the subset of $\mathcal{H}(q_v)$ consisting of Lagrangian subspaces $X$ for which $X\cap H^1_{\textup{ur}}(K_v,T)=0$. 
\end{defi}

\begin{defi}[Twisting data]  \label{twisting data}
We define \textit{twisting data} $\boldsymbol \alpha$ for $(T,\textbf{q},\Sigma)$ to consist of
\begin{itemize}
\item[(i)] for each $v\in \Sigma$, a map
\[\alpha_v:\mathcal{F}(K_v)\longrightarrow \mathcal{H}(q_v),\]
\item[(ii)] for each $v\notin \Sigma$ for which $\boldsymbol \mu_p \subseteq K_v$, a map
\[\alpha_v:\mathcal{F}_\textup{ram}(K_v)\longrightarrow \mathcal{H}_{\textup{ram}}(q_v).\]
\end{itemize}
\end{defi}

\begin{remark}
Our definition of twisting data is slightly different to that of  \cite[Definition 4.4]{MR3043582}. In their case, since $T$ has dimension $2$, for $v\notin \Sigma$ and with $\boldsymbol \mu_p \subseteq K_v$,  $\mathcal{H}_\textup{ram}$ has cardinality $0$,$1$, or $p$ according to $\textup{dim}T^{G_{K_v}}=0,1$ or $2$ respectively. In the first two cases they do not  specify a map $\alpha_v$ as there is a unique such. In the final case, they additionally insist that $\alpha_v$ is a bijection, as is possible since $\mathcal{F}_\textup{ram}(K_v)$ has order $p$. 

Since $T$ is allowed to have dimension greater that $2$ we in general have $|\mathcal{H}_\textup{ram}|>p$ and so we cannot insist that $\alpha_v$ is a bijection once the map ceases to be unique. Although omitting this condition does not impact what follows, and is in fact not used in the main results of \cite{MR3043582}, we remark that it is used crucially in a follow up paper to op. cit. (\cite{Klagsbrun_Mazur_Rubin_2014}).
\end{remark}

\begin{defi}[Twisted Selmer groups] \label{twisted selmer groups defi}
Let ($T$, $\textbf{q}$, $\Sigma$, $\boldsymbol \alpha$) as above be fixed, and let $\chi\in \mathcal{C}(K)$. Let $\P_\chi$ denote the set of primes of $K$ for which $\chi$ ramifies. Then we define a Selmer structure $\mathcal{S}(\chi)$ by taking $\Sigma_{\mathcal{S}(\chi)}$ to be $\Sigma \cup \P_{\chi}$ and setting $H^1_{\mathcal{S}(\chi)}(K_v,T):=\alpha_v(\chi_v) $ for $ v\in \Sigma \cup \P_\chi$. We write $\textup{Sel}(T,\chi)$ for the associated Selmer group: 
\[\textup{Sel}(T,\chi):=H^1_{\mathcal{S}(\chi)}(K,T).\]
\end{defi}

\subsection{Comparing the parity of dimensions of twisted Selmer groups}

From now on we fix $T$, the set of places $\Sigma$, a global metabolic structure $\textbf{q}$, and twisting data $\boldsymbol \alpha$.

The following theorem, which is a slight variant of \cite[Theorem 4.11]{MR3043582} allows us to compare the parity of the dimensions of the Selmer groups $\text{Sel}(T,\chi)$ as we vary $\chi$. We first make one further definition. 

\begin{defi} \label{notation for intersection of twisted subspaces}
Let $v$ be a place of $K$ and $\chi_1$ and $\chi_2$ be elements of $\mathcal{C}(K_v)$. Then we set
\[h_v(\chi_1,\chi_2):=\dim_{\mathbb{F}_p}\left(\alpha_v(\chi_1)/(\alpha_v(\chi_1)\cap \alpha_v(\chi_2))\right).\]
Note that since any two Lagrangian subspaces of $H^1(K_v,T)$ have the same dimension this is symmetric in $\chi_1$ and $\chi_2$.
\end{defi}

\begin{theorem} \label{local decomposition}
For any $\chi \in \mathcal{C}(K)$ we have
\[\dim_{\mathbb{F}_p}\textup{Sel}(T,\chi)-\dim_{\mathbb{F}_p}\textup{Sel}(T,\mathbbm{1}_K)\equiv \sum_{v\in \Sigma}h_v(\mathbbm{1}_{K_v},\chi_v)+\sum_{v\notin \Sigma,\chi_v~\textup{ram}}\dim_{\mathbb{F}_p}T^{G_{K_v}}~~~\textup{ (mod 2)}.\]
\end{theorem}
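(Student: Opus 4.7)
The plan is to apply \Cref{comparison of Selmer structures} directly to the two Selmer structures $\mathcal{S}(\chi)$ and $\mathcal{S}(\mathbbm{1}_K)$, and then identify the local contributions. Since $\Sigma_{\mathcal{S}(\chi)} = \Sigma \cup \P_\chi$ and $\Sigma_{\mathcal{S}(\mathbbm{1}_K)} = \Sigma$, the places appearing in the resulting sum are exactly $\Sigma \cup \P_\chi$, which I would split as a disjoint union of $\Sigma$ and $\P_\chi \setminus \Sigma$ (the latter being exactly the set of $v \notin \Sigma$ at which $\chi_v$ is ramified).

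For $v \in \Sigma$, by definition $H^1_{\mathcal{S}(\chi)}(K_v,T) = \alpha_v(\chi_v)$ and $H^1_{\mathcal{S}(\mathbbm{1}_K)}(K_v,T) = \alpha_v(\mathbbm{1}_{K_v})$, so the corresponding term in \Cref{comparison of Selmer structures} is exactly $h_v(\mathbbm{1}_{K_v},\chi_v)$ by \Cref{notation for intersection of twisted subspaces} (and the symmetry noted there). This accounts for the first sum in the statement.

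For $v \in \P_\chi \setminus \Sigma$, we have $H^1_{\mathcal{S}(\chi)}(K_v,T) = \alpha_v(\chi_v)$ and $H^1_{\mathcal{S}(\mathbbm{1}_K)}(K_v,T) = H^1_{\textup{ur}}(K_v,T)$. The clause (ii) of \Cref{twisting data} forces $\alpha_v(\chi_v) \in \mathcal{H}_{\textup{ram}}(q_v)$, meaning $\alpha_v(\chi_v) \cap H^1_{\textup{ur}}(K_v,T) = 0$. Hence the contribution at such $v$ is simply $\dim_{\mathbb{F}_p} \alpha_v(\chi_v)$. Since $\alpha_v(\chi_v)$ and $H^1_{\textup{ur}}(K_v,T)$ are both Lagrangian subspaces of the metabolic space $(H^1(K_v,T), q_v)$ (the latter by definition \ref{tate form defi} and the unramifiedness of $q_v$ away from $\Sigma$), they have the same $\mathbb{F}_p$-dimension. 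To finish I would invoke the standard fact that for $v \notin \Sigma$ (so $T$ is unramified at $v$), $\dim_{\mathbb{F}_p} H^1_{\textup{ur}}(K_v,T) = \dim_{\mathbb{F}_p} T^{G_{K_v}}$, which follows from $H^1_{\textup{ur}}(K_v,T) = H^1(\textup{Gal}(K_v^{\textup{ur}}/K_v), T) = T/(\textup{Frob}_v - 1)T$ and the observation that for a finite-dimensional $\mathbb{F}_p$-vector space with an automorphism, the coinvariants and invariants have equal dimension.

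There is no real obstacle: the whole proof is a bookkeeping exercise matching the two local contributions appearing in \Cref{comparison of Selmer structures} to the two explicit sums in the statement. The only non-cosmetic input is the equality of dimensions of any two Lagrangians in a given metabolic space, combined with the standard computation $\dim H^1_{\textup{ur}}(K_v,T) = \dim T^{G_{K_v}}$ at unramified places.
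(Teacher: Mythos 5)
Your argument is correct and is essentially the same as the paper's: both apply \Cref{comparison of Selmer structures} to $\mathcal{S}(\chi)$ and $\mathcal{S}(\mathbbm{1}_K)$, identify the $\Sigma$-contributions with the $h_v$'s and the contributions at $v\notin\Sigma$ with $\dim_{\mathbb{F}_p}H^1_{\textup{ur}}(K_v,T)$ using transversality of $\alpha_v(\chi_v)$ with $H^1_{\textup{ur}}(K_v,T)$, and then invoke $\dim_{\mathbb{F}_p}H^1_{\textup{ur}}(K_v,T)=\dim_{\mathbb{F}_p}T^{G_{K_v}}$. The only cosmetic difference is that you record the Lagrangian-dimension and Frobenius-coinvariants details that the paper delegates to \cite[Lemma 3.7]{MR3043582}.
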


\begin{proof}
This is essentially \cite[Theorem 4.11]{MR3043582}. Let $\mathcal{S}(\chi)$ and $\mathcal{S}(\mathbbm{1}_K)$ be the Selmer structures associated to the characters $\chi$ and $\mathbbm{1}_K$ respectively. Then 
\[\Sigma_{\mathcal{S}(\chi)}\cup \Sigma_{\mathcal{S}(\mathbbm{1}_K)}=\Sigma \sqcup\{v\notin \Sigma ~~:~~\chi_v ~~\text{ramified}\}.\]
Applying \Cref{comparison of Selmer structures} to $\mathcal{S}(\chi)$ and $\mathcal{S}(\mathbbm{1}_K)$ and noting that, by the definition of the twisting data, $H^1_{\text{ur}}(K_v,T)\cap \alpha_v(\chi_v)=0$ for all $v\notin \Sigma$ for which $\chi_v$ is ramified, we obtain
\[\dim_{\mathbb{F}_p}\textup{Sel}(T,\chi)-\dim_{\mathbb{F}_p}\textup{Sel}(T,\mathbbm{1}_K)\equiv \sum_{v\in \Sigma}h_v(\mathbbm{1}_{K_v},\chi_v)+\sum_{v\notin \Sigma,\chi_v~\textup{ram}}\text{dim}_{\mathbb{F}_p} H^1_{\textup{ur}}(K_v,T)~~~\textup{ (mod 2)}.\]
The result now follows since for each $v\notin \Sigma$ we have $\text{dim}_{\mathbb{F}_p} H^1_{\textup{ur}}(K_v,T)=\dim_{\mathbb{F}_p}T^{G_{K_v}}$. This is shown in (the proof of) \cite[Lemma 3.7]{MR3043582} in the case that $T$ has dimension $2$. The general case is identical. 
\end{proof}

\section{Disparity in Selmer ranks: statement and first cases}

In this section we fix $(T,\Sigma,\textbf{q},\boldsymbol \alpha)$ as in the previous section and consider the proportion of characters $\chi$ for which the associated Selmer groups $\text{Sel}(T,\chi)$ have odd (resp. even) $\mathbb{F}_p$-dimension. To make this precise, one has to order the elements of $\mathcal{C}(K)$.

\subsection{Ordering twists}

We use the same ordering as in \cite[Definition 7.3]{MR3043582}.  

\begin{defi} 
For $\chi \in \mathcal{C}(K)$, set
\[||\chi||=\textup{max}\{N(\mathfrak{p})~~:~~\chi ~~\text{is ramified at}~\mathfrak{p}\}\]
(where here for a prime $\mathfrak{p}\vartriangleleft \mathcal{O}_K$, $N(\mathfrak{p})$ denotes the norm of $\mathfrak{p}$). If this set is empty, our convention is that $||\chi||=1$.

Then for each $X>0$ define
\[\mathcal{C}(K,X)=\{\chi \in \mathcal{C}(K)~~:~~||\chi||<X\}.\]
\end{defi}

For each $X>1$, $\mathcal{C}(K,X)$ is a finite subgroup of $\mathcal{C}(K)$ and each element of $\mathcal{C}(K)$ appears in $\mathcal{C}(K,X)$ for some $X$. We will make crucial use of the group structure on the $\mathcal{C}(K,X)$ to facilitate with counting problems. 

We will repeatedly use the following fact.

\begin{lemma} \label{surjectivity of restriction}
For all sufficiently large $X>0$ the restriction homomorphism
\[\mathcal{C}(K,X)\longrightarrow \prod_{v\in \Sigma}\mathcal{C}(K_v)\]
sending $\chi$ to $(\chi_v)_{v\in \Sigma}$ is surjective.  
\end{lemma}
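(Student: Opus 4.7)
The plan is to combine two observations: existence of a global character realising any prescribed tuple of local characters on $\Sigma$, and the fact that $\prod_{v\in\Sigma}\mathcal{C}(K_v)$ is a finite group.

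First I would show that the (unbounded) restriction map $\mathcal{C}(K)\to \prod_{v\in\Sigma}\mathcal{C}(K_v)$ is surjective. This is a Grunwald--Wang type statement. When $\boldsymbol\mu_p\subseteq K$, Kummer theory identifies $\mathcal{C}(K)$ with $K^\times/(K^\times)^p$ and $\mathcal{C}(K_v)$ with $K_v^\times/(K_v^\times)^p$, and the subgroup $(K_v^\times)^p$ is open in $K_v^\times$ at every place $v$ (for archimedean and for non-archimedean $v$ alike, since in all cases the quotient is finite). Hence weak approximation for $K^\times$ immediately yields surjectivity of $K^\times \to \prod_{v\in\Sigma}K_v^\times/(K_v^\times)^p$. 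When $\boldsymbol\mu_p\not\subseteq K$ — which forces $p$ to be odd — I would reduce to the Kummer case by passing to $L=K(\boldsymbol\mu_p)$ and applying inflation--restriction with $\mathrm{Gal}(L/K)$, whose order divides $p-1$ and so is coprime to $p$. This handles $p$ odd with no obstruction. For $p=2$ we are automatically in the Kummer case since $\boldsymbol\mu_2=\{\pm1\}\subseteq K$, and the potential Grunwald--Wang ``special case'' obstruction is circumvented precisely by the assumption in \S\ref{the module T} that $\Sigma$ contains all places of $K$ above $p$, so that the argument above through weak approximation applies directly.

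Given surjectivity onto the finite group $\prod_{v\in\Sigma}\mathcal{C}(K_v)$, for each element $\tau$ of the target I would choose a single preimage $\chi^{(\tau)}\in\mathcal{C}(K)$. Each $\chi^{(\tau)}$ is a continuous character of finite order on $G_K$, hence is ramified at only finitely many primes, so the quantity $\|\chi^{(\tau)}\|$ is finite. Setting
\[
X_0 \;:=\; \max_{\tau\in\prod_{v\in\Sigma}\mathcal{C}(K_v)}\|\chi^{(\tau)}\|,
\]
we have $\chi^{(\tau)}\in\mathcal{C}(K,X)$ for all $\tau$ whenever $X>X_0$. Since $\tau$ lies in the image of the restriction map from $\mathcal{C}(K,X)$, and $\tau$ was arbitrary, the restriction map $\mathcal{C}(K,X)\to\prod_{v\in\Sigma}\mathcal{C}(K_v)$ is surjective for every $X>X_0$, as required.

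The step I expect to be the main obstacle is the first one, namely ensuring that no Grunwald--Wang obstruction arises at $p=2$. This is resolved by the hypothesis already built into the setup that $\Sigma$ contains every place of $K$ above $p$, after which everything else is formal finiteness and openness of $p$-th powers in the local groups $K_v^\times$.
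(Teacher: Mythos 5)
Your overall strategy is correct and the second step (exploit finiteness of the target, pick one preimage per element, take the max of $\|\cdot\|$) is clean and exactly right. What you have done, in effect, is reprove the relevant case of the Grunwald--Wang theorem: the paper's own proof consists of a single citation to that theorem (\cite[Theorem 9.2.3(ii)]{MR2392026}), so your argument is self-contained where the paper defers.

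Two comments on step 1. First, the case $\boldsymbol\mu_p\not\subseteq K$ (so $p$ odd): the idea of reducing to $L=K(\boldsymbol\mu_p)$, using $\gcd([L:K],p)=1$, is the right one, but inflation--restriction is not quite the mechanism that makes the descent work. The identification $H^1(K,\mathbb{Z}/p\mathbb{Z})\cong H^1(L,\mathbb{Z}/p\mathbb{Z})^{\operatorname{Gal}(L/K)}$ does not by itself let you solve the local prescription, because a Galois-invariant class over $L$ with prescribed restrictions at one place $w|v$ need not exist. The clean implementation uses corestriction (transfer): for each $v\in\Sigma$ fix one $w|v$, choose (by your Kummer-plus-weak-approximation argument over $L$) a class $\eta\in\mathcal C(L)$ whose localisation at that $w$ is $\operatorname{res}_{L_w/K_v}\bigl([L_w:K_v]^{-1}\chi_v\bigr)$ and whose localisation at every other prime of $L$ above $\Sigma$ is trivial, and take $\chi:=\operatorname{cor}_{L/K}(\eta)$. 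The compatibility of corestriction with localisation gives $\chi_v=\chi_v$ for $v\in\Sigma$ since corestriction followed by restriction is multiplication by the local degree. This is worth spelling out; as written the reduction is a gap (a fixable one, but the phrase ``apply inflation--restriction'' does not supply an argument).

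Second, your worry about a Grunwald--Wang ``special case'' obstruction at $p=2$ is misplaced, in two independent ways. For $\boldsymbol\mu_p\subseteq K$ (in particular always for $p=2$) you give a \emph{direct} proof via weak approximation, which never invokes Grunwald--Wang and cannot meet its special case. And in any event the special case requires $8\mid m$, so it never arises for $m=p$ prime. Relatedly, the hypothesis that $\Sigma$ contains all places above $p$ plays no role here; the weak-approximation argument is valid for \emph{any} finite set $\Sigma$, and the lemma's conclusion would hold without that hypothesis. It is needed elsewhere in the paper, not for this lemma.
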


\begin{proof}
This follows immediately from the Grunwald--Wang theorem. See for example \cite[Theorem 9.2.3(ii)]{MR2392026}. See also \cite[Proposition 6.8 (i)]{MR3043582} but note that they have a running hypothesis on the set of places $\Sigma$ which we do not wish to impose at this stage.
\end{proof}

\subsection{Statement of the result}

The proportion of characters for which $\dim_{\mathbb{F}_p}\textup{Sel}(T,\chi)$ is even (resp. odd) will depend heavily on the action of $G_K$ on $T$. More specifically, it will depend on the behaviour of the following function. Recall that $K(T)$ denotes the field of definition of the elements of $T$.

\begin{defi}
Write $G:=\textup{Gal}(K(T)/K)$ and define the function 
$\epsilon:G\rightarrow \{\pm 1\}$
by $\sigma \mapsto (-1)^{\dim_{\mathbb{F}_p}T^{\sigma}}$.
\end{defi}

The result is then the following. 

\begin{theorem}[=\Cref{main twisting theorem combined intro}] \label{main twisting theorem combined}
We have:
\begin{itemize}
\item[(i)] if either $p=2$ and $\epsilon$ fails to be a homomorphism, or $p>2$ and $\epsilon$ is non-trivial when restricted to $\textup{Gal}(K(T)/K(\boldsymbol \mu_p))$, then  
\[\lim_{X\rightarrow \infty}\frac{|\{\chi \in \mathcal{C}(K,X)~:~\textup{dim}_{\mathbb{F}_p}\textup{Sel}(T,\chi)~~\textup{is even}\}|}{|\mathcal{C}(K,X)|}=1/2.\]
Moreover, if $p=2$ then it suffices to take $X$ sufficiently large as opposed to taking the limit $X\rightarrow \infty$. 
\item[(ii)] if either $p=2$ and $\epsilon$ is a homomorphism, or $p>2$ and $\epsilon$ is trivial when restricted to $\textup{Gal}(K(T)/K(\boldsymbol \mu_p))$, then for all sufficiently large $X$ we have  \[\frac{|\{\chi \in \mathcal{C}(K,X)~:~\textup{dim}_{\mathbb{F}_p}\textup{Sel}(T,\chi)~~\textup{is even}\}|}{|\mathcal{C}(K,X)|}=\frac{1+(-1)^{\textup{dim}_{\mathbb{F}_p}\textup{Sel}(T,\mathbbm{1}_K)}\cdot\delta}{2}\] with $\delta=\prod_{v\in \Sigma}\delta_v$  given in \Cref{definition of delta}.
\end{itemize}
\end{theorem}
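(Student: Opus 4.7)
The starting point is Theorem \ref{local decomposition}, which expresses $\dim_{\mathbb{F}_p}\textup{Sel}(T,\chi) - \dim_{\mathbb{F}_p}\textup{Sel}(T,\mathbbm{1}_K)$ modulo $2$ as a finite sum of local contributions. Since $T$ is unramified outside $\Sigma$, for $v \notin \Sigma$ one has $(-1)^{\dim T^{G_{K_v}}} = \epsilon(\textup{Frob}_v)$, and thus the proportion of $\chi \in \mathcal{C}(K,X)$ with $\dim_{\mathbb{F}_p}\textup{Sel}(T,\chi)$ even equals $\tfrac{1}{2}\bigl(1 + (-1)^{\dim \textup{Sel}(T,\mathbbm{1}_K)} A(X)\bigr)$, where
\[A(X) := \frac{1}{|\mathcal{C}(K,X)|}\sum_{\chi \in \mathcal{C}(K,X)}\prod_{v \in \Sigma}(-1)^{h_v(\mathbbm{1}_{K_v},\chi_v)}\prod_{v \notin \Sigma,\, \chi_v \textup{ ram}}\epsilon(\textup{Frob}_v).\]
Both parts of the theorem then reduce to computing $A(X)$ asymptotically.

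To decouple behaviour at $\Sigma$ from behaviour outside $\Sigma$, set $\delta_v := |\mathcal{C}(K_v)|^{-1}\sum_{\chi_v}(-1)^{h_v(\mathbbm{1}_{K_v},\chi_v)}$ for $v \in \Sigma$. By \Cref{surjectivity of restriction}, for all sufficiently large $X$ the restriction $\mathcal{C}(K,X) \to \prod_{v \in \Sigma}\mathcal{C}(K_v)$ is surjective, and its kernel consists of the characters in $\mathcal{C}(K,X)$ locally trivial at every $v \in \Sigma$. Partitioning $\mathcal{C}(K,X)$ into cosets of this kernel, $A(X)$ factors as $\prod_{v \in \Sigma}\delta_v$ times an average $B(X)$ of $\prod_{v \notin \Sigma,\, \chi_v \textup{ ram}}\epsilon(\textup{Frob}_v)$ over characters locally trivial at $\Sigma$.

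The computation of $B(X)$ splits into the two cases. In case (ii) the hypothesis forces $\epsilon(\textup{Frob}_v)$, on the Frobenii that can arise, to factor through a homomorphism of the relevant Galois group, which by class field theory is realised by a global idèle class character; averaging over unramified-outside-$\Sigma$ twists then collapses the Euler factors at each $v \notin \Sigma$ to $1$, yielding $B(X) \to 1$ and hence $A(X) \to \prod_{v \in \Sigma}\delta_v = \delta$. In case (i), one invokes a Chebotarev density argument in the style of \cite[Proposition 6.8]{MR3043582} to produce infinitely many primes $\mathfrak{q} \notin \Sigma$ (also satisfying $\boldsymbol{\mu}_p \subseteq K_\mathfrak{q}$ when $p > 2$) and, via Grunwald--Wang, characters $\chi_\mathfrak{q}$ ramified only at $\mathfrak{q}$, trivial at every $v \in \Sigma$, and with $\epsilon(\textup{Frob}_\mathfrak{q}) = -1$. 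Multiplication by such a $\chi_\mathfrak{q}$ defines a sign-reversing involution on a cofinal subset of $\mathcal{C}(K,X)$, forcing $B(X) \to 0$; for $p = 2$ the same involution argument gives $B(X) = 0$ identically for all large $X$, which yields the stronger form of (i) stated there.

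The main obstacle is the existence portion of case (i): one must produce primes $\mathfrak{q}$ on which $\epsilon(\textup{Frob}_\mathfrak{q}) = -1$ \emph{and} which simultaneously support a character of order dividing $p$ ramified only at $\mathfrak{q}$ whose restriction to each $v \in \Sigma$ is trivial. This requires a careful compatibility check between the Chebotarev conditions at $K(T,\boldsymbol{\mu}_p)/K$ and the Grunwald--Wang triviality conditions at $\Sigma$, and it is precisely here that the hypothesis on $\epsilon$ enters: the failure of $\epsilon$ to be a homomorphism (for $p=2$) or the non-triviality of its restriction to $\textup{Gal}(K(T)/K(\boldsymbol{\mu}_p))$ (for $p > 2$) is exactly what prevents the Grunwald--Wang-admissible Frobenii from all lying in the kernel of $\epsilon$.
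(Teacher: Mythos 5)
Your reduction to the local decomposition (Theorem \ref{local decomposition}) and the observation $(-1)^{\dim T^{G_{K_v}}}=\epsilon(\textup{Frob}_v)$ for $v\notin\Sigma$ are both right, and the broad shape — factor off the contribution at $\Sigma$, then control the contribution outside $\Sigma$ — is the correct one. But both halves have genuine gaps.

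\textbf{Case (ii).} Your factorisation $A(X)=\bigl(\prod_{v\in\Sigma}\delta_v\bigr)\cdot B(X)$ requires the outside-$\Sigma$ product $w(\chi):=\prod_{v\notin\Sigma,\,\chi_v\,\textup{ram}}\epsilon(\textup{Frob}_v)$ to be constant along the fibres of the restriction $\mathcal{C}(K,X)\to\Gamma:=\prod_{v\in\Sigma}\mathcal{C}(K_v)$ \emph{and} independent of the fibre. When $p=2$ and $\epsilon$ is a \emph{non-trivial} homomorphism this fails: viewing $\epsilon$ as the quadratic character of $K(\sqrt\Delta)/K$, reciprocity gives $w(\chi)=\prod_{v\in\Sigma}\chi_v(\Delta)$, which is constant on each fibre but varies with the fibre $\gamma\in\Gamma$. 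So your $B(X)$ (computed on the trivial-at-$\Sigma$ fibre) is $1$, but $A(X)$ does not equal $\prod_v\delta_v$ with your $\delta_v=|\mathcal{C}(K_v)|^{-1}\sum_\chi(-1)^{h_v(\mathbbm{1},\chi)}$. One gets instead $A(X)=\prod_{v\in\Sigma}|\mathcal{C}(K_v)|^{-1}\sum_{\chi}\chi(\Delta)(-1)^{h_v(\mathbbm{1},\chi)}$, which is exactly what the paper's $\omega_v$ and Definition \ref{definition of delta} build in. Your formula agrees with the paper's only when $p>2$ (since there all terms in $w(\chi)$ vanish as $\textup{Frob}_v$ lies in $\textup{Gal}(K(T)/K(\boldsymbol\mu_p))$) or when $\epsilon\equiv 1$; the $\chi(\Delta)$ twist is not an optional normalisation but is forced by reciprocity.

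\textbf{Case (i).} The involution argument asks for a prime $\mathfrak{q}\notin\Sigma$ with $\epsilon(\textup{Frob}_\mathfrak{q})=-1$ supporting a character $\chi_\mathfrak{q}$ of order $p$ which is \emph{unramified outside $\{\mathfrak{q}\}$ and trivial at every $v\in\Sigma$}. Grunwald--Wang produces global characters with prescribed restrictions at a finite set of places, but it does \emph{not} control the ramification outside that set; so it cannot by itself deliver a character ramified only at $\mathfrak{q}$. The existence of a $\mathbb{Z}/p$-cover of $K$ ramified only at $\mathfrak{q}$ and locally trivial at $\Sigma$ is a non-trivial constraint governed by the relevant ray class group (equivalently, the Poitou--Tate sequence), and it need not hold for any $\mathfrak{q}$ with $\epsilon(\textup{Frob}_\mathfrak{q})=-1$. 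The paper avoids this: it works with characters $\varphi(\mathfrak{p})$ that are allowed to be ramified and non-trivial at $\Sigma$, uses \Cref{existence of characters} (Poitou--Tate) and \Cref{reduction to beta} to show that the relevant proportion $s_X(\gamma)$ depends only on $f(\gamma)\in\boldsymbol\mu_p^r$, and then shows in \Cref{surjectivity of alpha} that the homomorphism $\theta:\chi\mapsto(f(\chi|_\Gamma),w(\chi))$ is surjective iff $\epsilon$ is not a homomorphism. That surjectivity produces a $\chi$ with $f(\chi|_\Gamma)=1$ and $w(\chi)=-1$ (so $\chi|_\Gamma$ need not be trivial, and $\chi$ is typically ramified at several auxiliary primes) — a strictly weaker existence statement than yours, and all that is actually needed. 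For $p>2$ the further difficulty is that $w$ is no longer a homomorphism and multiplication by an auxiliary character has order $p$, not $2$; the paper replaces the involution by a recurrence $\hat{\mathbf{t}}_{n+1}=M(\textup{Frob}_{\mathfrak{p}_{n+1}})\hat{\mathbf{t}}_n$ and a contraction estimate (\Cref{limit of operator norm}, \Cref{limit of t}), which is substantially more work than the one-line pairing argument you propose.
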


The proof of \Cref{main twisting theorem combined}, which  is a combination of \Cref{epsilon homomorphism thm,epsilon non-trivial twisting theorem}, will occupy the remainder of Sections 7-9.

\begin{remark} \label{remark on the function epsilon}
Here we briefly discuss the function $\epsilon$. For convenience, we identify $\boldsymbol \mu_p$ with the additive group of $\mathbb{F}_p$ and think of the pairing $(~,~)$ as landing in  $\mathbb{F}_p$. Due to this pairing, the group $G=\textup{Gal}(K(T)/K)$ is a subgroup of the general symplectic group \[\textup{GSp}(T)=\{g\in \textup{GL}(V)~~:~~\forall v,w\in T, ~(gv,gw)=\lambda(g)(v,w)~\textup{for some}~\lambda(g)\in \mathbb{F}_p^{\times}\}.\] 

First suppose $p=2$, so that $\textup{GSp}(T)=\textup{Sp}(T)$ is the symplectic group associated to $(~,~)$. If $\textup{dim}_{\mathbb{F}_2}T>4$ then  $\textup{Sp}(T)$ is simple and since any symplectic transvection $\sigma$ (i.e. element of $\textup{Sp}(T)$ of the form $v\mapsto v+(v,w)w$
for fixed $0\neq w\in T$) has $\textup{dim}_{\mathbb{F}_2}T^{\sigma}$ odd, if $G$ is isomorphic to $\textup{Sp}(T)$ (i.e. is as large as possible) then $\epsilon$ is not a homomorphism. Thus Case (i) of \Cref{main twisting theorem combined} is, in some sense, the `generic' case. When $\dim_{\mathbb{F}_2}T=2$ one can check that $\epsilon$ is always a homomorphism, whilst if $\textup{dim}_{\mathbb{F}_2}T=4$ then $\textup{Sp}(T)$ is isomorphic to the symmetric group $S_6$. One can check  (see \Cref{hyperelliptic curves example} later) that when $G$ is either the whole of $S_6$ or the alternating group $A_6$ then $\epsilon$ is not a homomorphism, so again Case (i) of  \Cref{main twisting theorem combined} holds for $G$ `large enough'. On the other hand, \Cref{dimension proposition} gives a supply of examples where $\epsilon$ is a homomorphism. Namely, if $G$ fixes a quadratic refinement $q$ of $(~,~)$ then $G$ is a subgroup of the orthogonal group $O(q)$, in which case $\epsilon$ is the Dickson homomorphism. 

Now suppose that $p>2$. The subgroup $\textup{Gal}(K(T)/K(\boldsymbol \mu_p))$ consists of those elements $g\in G$ for which $\lambda(g)=1$. That is, it is the intersection of $G$ with the symplectic group $\textup{Sp}(T)$. If $G$ contains a symplectic transvection $\sigma$ (which as now $p>2$ is an element of $\textup{Sp}(T)$ of the form
$v\mapsto v+\beta\cdot (v,w)w$ for $\beta \in \mathbb{F}_p^{\times},0\neq w\in T$) then one sees easily that $\epsilon(\sigma)=-1$, so that $\epsilon$ is non-trivial when restricted to $\textup{Gal}(K(T)/K(\boldsymbol \mu_p))$. Thus again Case (i) of \Cref{main twisting theorem combined} holds for $G$ `large enough'.
\end{remark}

\subsection{The cases $p=2$ and $\epsilon$ a homomorphism, and $p>2$ and $\epsilon$ trivial when restricted to $\textup{Gal}(K(T)/K(\boldsymbol \mu_p))$}

Suppose now that either $p=2$ and $\epsilon$ is a homomorphism, or $p>2$ and $\epsilon$ is trivial for all $\sigma\in \text{Gal}(K(T)/K(\boldsymbol \mu_p))$. 

\begin{defi}
Let $v\in \Sigma$ and $\chi \in \mathcal{C}(K_v)$. If $p>2$ we define
\[\omega_v(\chi):=(-1)^{h_v(\mathbbm{1}_{K_v},\chi_v)}.\]

If $p=2$ view $\epsilon$ as a quadratic character of $K$ and let $\Delta\in K^{\times}/K^{\times 2}$ be such that the corresponding quadratic extension is given by $K(\sqrt{\Delta})/K$. We then define
\[\omega_v(\chi):=\chi(\Delta)(-1)^{h_v(\mathbbm{1}_{K_v},\chi_v)}\]
where here for a place $v$ of $K$  we evaluate $\chi_v$ at $\Delta$ via local class field theory.  
\end{defi}

\begin{lemma}  \label{first parity lemma}
For any $\chi\in \mathcal{C}(K)$ we have
\[(-1)^{\textup{dim}_{\mathbb{F}_p}\textup{Sel}(T,\chi)}=(-1)^{\textup{dim}_{\mathbb{F}_p}\textup{Sel}(T,\mathbbm{1}_K)}\prod_{v\in \Sigma}\omega_v(\chi_v).\]
\end{lemma}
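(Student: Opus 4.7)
The plan is to start from the mod-$2$ local decomposition of \Cref{local decomposition} and transform the tail sum (over places outside $\Sigma$) into the $\omega_v$-twists at places inside $\Sigma$. Writing things multiplicatively, \Cref{local decomposition} gives
\[
(-1)^{\dim_{\mathbb{F}_p}\mathrm{Sel}(T,\chi)-\dim_{\mathbb{F}_p}\mathrm{Sel}(T,\mathbbm{1}_K)}
=\Bigl(\prod_{v\in\Sigma}(-1)^{h_v(\mathbbm{1}_{K_v},\chi_v)}\Bigr)\cdot\Bigl(\prod_{v\notin\Sigma,\ \chi_v\textup{ ram}}(-1)^{\dim_{\mathbb{F}_p}T^{G_{K_v}}}\Bigr).
\]
Comparing with the definition of $\omega_v$, it therefore suffices to show that for $p>2$ the second factor on the right is trivial, while for $p=2$ it equals $\prod_{v\in\Sigma}\chi_v(\Delta)$.

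\emph{Case $p>2$.} If $v\notin\Sigma$ and $\chi_v$ is ramified, then (as recalled at the start of \S6.1) one must have $\boldsymbol\mu_p\subseteq K_v$, since $v\nmid p$. This means $\mathrm{Frob}_v\in\mathrm{Gal}(K(T)/K(\boldsymbol\mu_p))$. Since $T$ is unramified at $v$, $G_{K_v}$ acts on $T$ through $\mathrm{Frob}_v$, so $\dim_{\mathbb{F}_p}T^{G_{K_v}}=\dim_{\mathbb{F}_p}T^{\mathrm{Frob}_v}$. By hypothesis $\epsilon$ is trivial on $\mathrm{Gal}(K(T)/K(\boldsymbol\mu_p))$, so this dimension is even, and the second product is $1$.

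\emph{Case $p=2$.} The character $\epsilon$ factors through $\mathrm{Gal}(K(T)/K)$, so it is unramified outside the set of places where $T$ is ramified, and in particular outside $\Sigma$. Hence at every $v\notin\Sigma$ we can take $\Delta$ to be a unit in $K_v^\times$. Viewing $\chi_v(\Delta)$ as the Hilbert symbol $(\Delta,\beta_v)_v$ for some $\beta_v\in K_v^\times$ cutting out $\chi_v$: if $\chi_v$ is also unramified, then $\beta_v$ may be taken a unit, and since $v\nmid 2$ the tame Hilbert symbol of two units is trivial, so $\chi_v(\Delta)=1$. If instead $\chi_v$ is ramified we may take $\beta_v$ a uniformiser, and the tame symbol gives $\chi_v(\Delta)=\bigl(\tfrac{\Delta}{\pi_v}\bigr)=\epsilon(\mathrm{Frob}_v)=(-1)^{\dim_{\mathbb{F}_2}T^{G_{K_v}}}$. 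Combining this with the global reciprocity identity $\prod_{v}\chi_v(\Delta)=1$ (valid for $\Delta\in K^\times$ by class field theory),
\[
\prod_{v\in\Sigma}\chi_v(\Delta)
=\prod_{v\notin\Sigma}\chi_v(\Delta)
=\prod_{v\notin\Sigma,\ \chi_v\textup{ ram}}(-1)^{\dim_{\mathbb{F}_2}T^{G_{K_v}}},
\]
which is exactly the identity we needed, completing the proof.

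The main (and only) subtlety is the Hilbert symbol computation in the $p=2$ case: one must check that at a ramified $v\notin\Sigma$ the symbol $(\Delta,\pi_v)_v$ really reads off $(-1)^{\dim T^{G_{K_v}}}$. This hinges on identifying $\epsilon$ locally at $v$ with the unramified quadratic character sending $\mathrm{Frob}_v$ to $(-1)^{\dim T^{\mathrm{Frob}_v}}$, which is immediate from the definition of $\epsilon$ together with the fact that $T$, and hence $\epsilon$, is unramified at $v$.
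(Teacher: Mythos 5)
Your proof is correct and follows essentially the same route as the paper's: start from the local decomposition of Theorem 6.15, treat $p>2$ by observing that ramified $\chi_v$ at $v\notin\Sigma$ forces $\boldsymbol\mu_p\subseteq K_v$ and hence $\mathrm{Frob}_v\in\mathrm{Gal}(K(T)/K(\boldsymbol\mu_p))$, and treat $p=2$ via the quadratic character $\epsilon$ together with global reciprocity. The only stylistic difference is that you carry out the Hilbert-symbol computation at ramified $v\notin\Sigma$ explicitly, whereas the paper invokes its Lemma 8.4(ii) on the tame local symbol; these are the same calculation.
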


\begin{proof}
 Fix $v\notin \Sigma$ with $\boldsymbol \mu_p \subseteq K_v$, and let $\text{Frob}_v\in G$ denote the Frobenius element at $v$ in $K(T)/K$. Then as $T$ is unramified at $v$ we have 
\[(-1)^{\text{dim}_{\mathbb{F}_p}T^{G_{K_v}}}=\epsilon(\text{Frob}_v).\]

If $p>2$ then $\epsilon(\text{Frob}_v)=1$ for all $v\notin \Sigma$ by assumption, whence the result follows from \Cref{local decomposition}.

Now suppose that $p=2$. As above, view $\epsilon$ as a quadratic character of $K$. Since $\epsilon$ factors through $\text{Gal}(K(T)/K)$ it is unramified outside $\Sigma$. In particular, if $v\notin \Sigma$ is such that $\chi_v$ is unramified, then both $\epsilon_v$ and $\chi_v$ are unramified at $v$ and so $\chi_v(\Delta)=1$. On the other hand, if $v\notin \Sigma$ is such that $\chi_v$ ramifies at $\Sigma$ then since $K_v$ has odd residue characteristic, we have $\chi_v(\Delta)=\epsilon(\text{Frob}_v)$ (see \Cref{properties of local symbol} (ii)). Global class field theory gives $\prod_{v\in M_K}\chi_v(\Delta)=1$ from which it follows that
\[\prod_{v\notin \Sigma,~ \chi_v\text{ ram}}(-1)^{\dim_{\mathbb{F}_p}T^{G_{K_v}}}=\prod_{v\in \Sigma}\chi_v(\Delta).\]
We now conclude by \Cref{local decomposition}. 
\end{proof}

The argument now proceeds as in \cite[Section 7]{MR3043582}. 

\begin{defi} \label{definition of delta}
For each $v\in \Sigma$, define 
\[\delta_v:=\frac{1}{|\mathcal{C}(K_v)|}\sum_{\chi \in \mathcal{C}(K_v)}\omega(\chi) \phantom{hello} \textup{and} \phantom{hello} \delta:=\prod_{v\in \Sigma}\delta_v.\]
\end{defi}

\begin{remark}
We have decided to define $\delta$ slightly differently to  \cite[Section 7]{MR3043582} so that it is a product of local terms. Our definition of the $\delta_v$ is consistent with theirs however.
\end{remark}

\begin{theorem} \label{epsilon homomorphism thm}
Suppose that either $p=2$ and $\epsilon$ is a homomorphism, or $p>2$ and $\epsilon$ is trivial when restricted to $\textup{Gal}(K(T)/K(\boldsymbol \mu_p))$. Then for all sufficiently large $X>0$ we have
\[\frac{|\{\chi \in \mathcal{C}(K,X)~~:~~\dim_{\mathbb{F}_p}\textup{Sel}(T,\chi)~~\textup{is even}\}|}{|\mathcal{C}(K,X)|}=\frac{1+(-1)^{\textup{dim}_{\mathbb{F}_p}\textup{Sel}(T,\mathbbm{1}_K)}\delta}{2}.\]
\end{theorem}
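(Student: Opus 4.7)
The plan is to reduce the counting problem to a sign-average that factors as a product of local averages, the whole argument being built from two inputs already established: the local decomposition \Cref{first parity lemma} and the surjectivity \Cref{surjectivity of restriction}.

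First I would rewrite the proportion in question in terms of a signed average. For any finite set $S \subset \mathcal{C}(K)$, the elementary identity
\[\frac{|\{\chi \in S : \dim_{\mathbb{F}_p}\textup{Sel}(T,\chi)\ \textup{even}\}|}{|S|} \; = \; \frac{1}{2}\,+\,\frac{1}{2|S|}\sum_{\chi \in S}(-1)^{\dim_{\mathbb{F}_p}\textup{Sel}(T,\chi)}\]
holds. Taking $S = \mathcal{C}(K,X)$ and substituting \Cref{first parity lemma}, the claim reduces to showing that
\[\frac{1}{|\mathcal{C}(K,X)|}\sum_{\chi \in \mathcal{C}(K,X)}\prod_{v \in \Sigma}\omega_v(\chi_v)\;=\;\delta\]
for all sufficiently large $X$.

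Next I would exploit the group structure. The restriction map
\[\phi_X \colon \mathcal{C}(K,X) \longrightarrow \prod_{v \in \Sigma}\mathcal{C}(K_v),\qquad \chi \longmapsto (\chi_v)_{v \in \Sigma},\]
is a homomorphism of finite abelian groups, and by \Cref{surjectivity of restriction} it is surjective once $X$ is large enough. In particular, every fiber of $\phi_X$ has cardinality $|\mathcal{C}(K,X)|\big/\prod_{v\in\Sigma}|\mathcal{C}(K_v)|$. The integrand $\prod_{v\in\Sigma}\omega_v(\chi_v)$ factors through $\phi_X$, so fixing such an $X$ we obtain
\[\sum_{\chi \in \mathcal{C}(K,X)}\prod_{v \in \Sigma}\omega_v(\chi_v)\;=\;\frac{|\mathcal{C}(K,X)|}{\prod_{v\in\Sigma}|\mathcal{C}(K_v)|}\sum_{(\chi_v)_v \in \prod_v \mathcal{C}(K_v)}\prod_{v \in \Sigma}\omega_v(\chi_v).\]
The inner sum factors as $\prod_{v\in\Sigma}\sum_{\chi_v \in \mathcal{C}(K_v)}\omega_v(\chi_v)$, and dividing through by $|\mathcal{C}(K,X)|$ recovers precisely $\prod_{v\in\Sigma}\delta_v = \delta$ from \Cref{definition of delta}. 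Combining with the sign identity above yields the theorem.

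There is no serious obstacle beyond this bookkeeping: all the substantive content (the local formula for the parity difference, and the Grunwald–Wang-type surjectivity of the global-to-local restriction) has already been packaged in the preceding lemmas. The one point to keep in mind is that passing from a sum over $\mathcal{C}(K,X)$ to a sum over $\prod_v \mathcal{C}(K_v)$ requires equal fiber sizes, which is automatic because $\phi_X$ is a surjective homomorphism; this is why the formula becomes an exact equality for all sufficiently large $X$ rather than merely a limit.
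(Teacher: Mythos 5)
Your proposal is correct and follows essentially the same route as the paper's proof: both rely on \Cref{first parity lemma} to reduce to a product of local signs, on \Cref{surjectivity of restriction} (with the equal-fiber observation coming from the group structure) to pass from $\mathcal{C}(K,X)$ to $\prod_{v\in\Sigma}\mathcal{C}(K_v)$, and then on factoring the resulting sum into local averages. The only difference is presentational — you package the counting via the signed-indicator identity $\mathbbm{1}[\text{even}] = \tfrac{1}{2}(1 + (-1)^{\dim})$, whereas the paper computes $N - (|\Gamma|-N)$ directly — but these are the same computation.
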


\begin{proof}
The argument is the same as in \cite[Theorem 7.6]{MR3043582}. We repeat it for convenience. Write $\Gamma=\prod_{v\in \Sigma}\mathcal{C}(K_v)$ and for $\chi \in \mathcal{C}(K)$, write $\chi|_\Gamma$ for the image of $\chi$ under the natural restriction homomorphism $\mathcal{C}(K)\rightarrow \Gamma$ sending $\chi$ to $(\chi_v)_{v\in \Sigma}$. From \Cref{first parity lemma} we see that the parity of $\text{dim}_{\mathbb{F}_p}\text{Sel}(T,\chi)$ depends only on $\chi|_\Gamma$ and that
$\text{dim}_{\mathbb{F}_p}\text{Sel}(T,\chi)$ is even if and only if \[\prod_{v\in \Sigma}\omega(\chi_v)=(-1)^{\text{dim}_{\mathbb{F}_p}\text{Sel}(T,\mathbbm{1}_K)}.\]

Now fix $X$ sufficiently large so that $\mathcal{C}(K,X)$ surjects onto $\Gamma$ under restriction. Since restriction is a group homomorphism, its fibres all have the same size (being cosets of the kernel) and, in particular, we have 
\[\frac{|\{\chi \in \mathcal{C}(K,X)~~:~~\dim_{\mathbb{F}_p}\textup{Sel}(T,\chi)~~\textup{is even}\}|}{|\mathcal{C}(K,X)|}=\frac{|\{\gamma \in \Gamma ~~:~~\prod_{v\in \Sigma} \omega(\gamma_v)=(-1)^{\text{dim}_{\mathbb{F}_p}\text{Sel}(T,\mathbbm{1}_K)}\}|}{|\Gamma|}\]
where here, for $\gamma\in \Gamma$ we denote by $\gamma_v$ its projection onto $\mathcal{C}(K_v)$.

To evaluate the right hand side of the above expression, define
\[N:=|\{\gamma \in \Gamma~~:~~\prod_{v\in \Sigma}\omega(\gamma_v)=1\}|.\]
Then we have
\[N-(|\Gamma|-N)=\sum_{\gamma \in \Gamma}\prod_{v\in \Sigma}\omega(\gamma_v)=\prod_{v\in \Sigma}\sum_{\chi_v\in \mathcal{C}(K_v)}\omega(\chi_v).\]
Dividing the above expression through by $2|\Gamma|$ gives
\[\frac{|\{\gamma \in \Gamma~~:~~\prod_{v\in \Sigma}\omega(\gamma_v)=1\}|}{|\Gamma|}=\frac{1+\delta}{2}\]
and the result follows immediately. 
\end{proof}

\section{Disparity in Selmer ranks: local symbols and global characters} \label{local global}

In order to prove the remaining cases of \Cref{main twisting theorem combined} we now recall and slightly generalise (as well as rephrase for convenience in Section 9) the results of \cite[Section 6]{MR3043582}, which uses class field theory to analyse which collections of local characters arise from a global character.  

\subsection{Local symbols} \label{local symbols subsection}

For each nonarchimedean place $v$ of $K$, Tate local duality gives a non-degenerate pairing
\begin{equation} \label{local hilbert symbol}
H^1(K_v,\boldsymbol \mu_p) \times H^1(K_v,\mathbb{Z}/p\mathbb{Z})\longrightarrow \mathbb{Q}/\mathbb{Z},
\end{equation}
defined as the composition
\[H^1(K_v,\boldsymbol \mu_p) \times H^1(K_v,\mathbb{Z}/p\mathbb{Z})\stackrel{\cup}{\longrightarrow}H^2(K,\boldsymbol \mu_p)\hookrightarrow \text{Br}(K_v)\stackrel{\textup{inv}_v}{\rightarrow} \mathbb{Q}/\mathbb{Z}\]
(here the map `$\cup$' is the cup product map on cohomology combined with the canonical isomorphism $\mathbb{Z}/p\mathbb{Z}\otimes \boldsymbol \mu_p \cong \boldsymbol \mu_p$).

We now slightly modify this  pairing. As the Galois action on $\mathbb{Z}/p\mathbb{Z}$ is trivial we have
$H^1(K_v,\mathbb{Z}/p\mathbb{Z})=\text{Hom}_{\text{cnts}}(G_{K_v},\mathbb{Z}/p\mathbb{Z})$. Picking an isomorphism of abstract groups $\theta:\boldsymbol \mu_p \stackrel{\sim}{\rightarrow} \mathbb{Z}/p\mathbb{Z}$ induces isommorphisms
\begin{equation} \label{iso1}
\mathcal{C}(K_v)\cong H^1(K_v,\mathbb{Z}/p\mathbb{Z})~~~\phantom{\text{ghost}}~~~\textup{and}~~~~\phantom{\text{ghost}}~~\frac{1}{p}\mathbb{Z}/\mathbb{Z}\cong \boldsymbol \mu_p
\end{equation}
where for the latter we identify $\mathbb{Z}/p\mathbb{Z}$ with $\frac{1}{p}\mathbb{Z}/\mathbb{Z}$ by sending $1\in \mathbb{Z}/p\mathbb{Z}$ to $\frac{1}{p}$. Noting that  $H^2(K_v,\boldsymbol \mu_p)\subseteq \text{Br}(K_v)$ is mapped by $\textup{inv}_v$ into $\frac{1}{p}\mathbb{Z}/\mathbb{Z}$, combining the pairing \cref{local hilbert symbol} with the isomorphisms of \cref{iso1} yields a non-degenerate pairing
\begin{equation} \label{altered pairing}
[~,~]_v:H^1(K_v,\boldsymbol \mu_p) \times \mathcal{C}(K_v)\longrightarrow \boldsymbol \mu_p
\end{equation}
which is easily seen to be independent of the choice of $\theta$.

The following well known lemma summarises the properties of this local pairing. 

\begin{lemma} \label{properties of local symbol} 
Let $v$ be a nonarchimedean place of $K$. Then 
\begin{itemize}
\item[(i)] if $v\nmid p$ then the groups $H^1_{\textup{ur}}(K_v,\boldsymbol \mu_p)$ and  $\mathcal{C}_{\textup{ur}}(K_v)$ are orthogonal complements with respect to the pairing $[~,~]_v$.
\item[(ii)] let $x\in K_v^{\times}$ and write $\phi_x\in H^1(K_v,\boldsymbol \mu_p)$ for the image of $x$ under the boundary map associated to the Kummer sequence 
\[
1\rightarrow \boldsymbol \mu_p \longrightarrow \bar{K}_v^{\times}\stackrel{x\mapsto x^p}{\longrightarrow}\bar{K}_v^\times\rightarrow 1.
\]
Then for any $\chi\in \mathcal{C}(K_v)$ we have 
\[[\phi_x,\chi]_v=\chi(\textup{Art}_{K_v}(x))^{-1},\]
where here $\textup{Art}_{K_v}:K_v^\times \rightarrow G_{K_v}^{\textup{ab}}$ denotes the local Artin map.
\item[(iii)] suppose $v$ is such that $\boldsymbol \mu_p\subseteq K_v$ so that $H^1(K_v,\boldsymbol \mu_p)=\mathcal{C}(K_v)$. Then the resulting pairing 
\[[~,~]_v:\mathcal{C}(K_v)\times \mathcal{C}(K_v)\longrightarrow \boldsymbol \mu_p\]
is antisymmetric.
\end{itemize} 
\end{lemma}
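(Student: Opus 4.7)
The plan is to unwind the definition of the pairing $[\,,\,]_v$ in each case and reduce to standard results in local duality and local class field theory. Fixing the isomorphism $\theta:\boldsymbol\mu_p\xrightarrow{\sim}\mathbb{Z}/p\mathbb{Z}$ used to construct $[\,,\,]_v$, all three assertions concern the cup-product pairing
\[
H^1(K_v,\boldsymbol\mu_p)\times H^1(K_v,\mathbb{Z}/p\mathbb{Z})\xrightarrow{\cup}H^2(K_v,\boldsymbol\mu_p)\xrightarrow{\textup{inv}_v}\tfrac{1}{p}\mathbb{Z}/\mathbb{Z},
\]
transported to $\boldsymbol\mu_p$-valued pairings via $\theta$. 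Since the proofs of (i)--(iii) are essentially standard, I would simply cite the relevant references (e.g.\ \cite{MR2261462} I.2.6, or NSW Theorem 7.2.15 and Serre, \emph{Local Fields}, XIV \S2 Proposition 5) after indicating the key input for each part.

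For (i), assume $v\nmid p$. Since both $\boldsymbol\mu_p$ and $\mathbb{Z}/p\mathbb{Z}$ are unramified $G_{K_v}$-modules of order coprime to the residue characteristic, inflation identifies $H^1_{\textup{ur}}(K_v,\boldsymbol\mu_p)$ and $H^1_{\textup{ur}}(K_v,\mathbb{Z}/p\mathbb{Z})$ with $H^1$ of the unramified quotient $\textup{Gal}(K_v^{\textup{ur}}/K_v)\cong\widehat{\mathbb{Z}}$ acting on the corresponding modules. A standard computation (local Tate duality applied to the unramified subcomplexes) then shows that these two subgroups are exact annihilators of each other under the cup-product pairing, which is what is asserted.

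For (ii), the task is to identify $\phi_x\cup\chi\in\textup{Br}(K_v)[p]$, where $\chi$ is viewed in $H^1(K_v,\mathbb{Z}/p\mathbb{Z})$ via $\theta$. The class $\phi_x\cup\chi$ is represented by the cyclic algebra associated to the pair $(x,\chi)$, and the fundamental theorem of local class field theory evaluates its local invariant as $\theta(\chi(\textup{Art}_{K_v}(x)))\in\frac{1}{p}\mathbb{Z}/\mathbb{Z}$ up to a sign depending on the conventions used for cup-product, the boundary map in the Kummer sequence, and the isomorphism $\text{inv}_v$. The main delicate point is tracking this sign: with the conventions in this paper (standard Kummer boundary, standard $\textup{inv}_v$, and the isomorphism $\mathbb{Z}/p\mathbb{Z}\otimes\boldsymbol\mu_p\cong\boldsymbol\mu_p$ given by $a\otimes\zeta\mapsto\zeta^a$), one obtains precisely $\chi(\textup{Art}_{K_v}(x))^{-1}$, as asserted.

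For (iii), assume $\boldsymbol\mu_p\subseteq K_v$. Graded commutativity of the cup-product gives $\alpha\cup\beta=(-1)^{1\cdot 1}\beta\cup\alpha=-\beta\cup\alpha$ for $\alpha,\beta\in H^1(K_v,\boldsymbol\mu_p)$, with the twist that one must compose with the symmetry isomorphism of $\boldsymbol\mu_p\otimes\boldsymbol\mu_p$. Since the identification of $\boldsymbol\mu_p\otimes\boldsymbol\mu_p$ with $\boldsymbol\mu_p$ used in defining $[\,,\,]_v$ is the composition $\boldsymbol\mu_p\otimes\boldsymbol\mu_p\xrightarrow{1\otimes\theta}\boldsymbol\mu_p\otimes\mathbb{Z}/p\mathbb{Z}\cong\boldsymbol\mu_p$, which is independent of $\theta$ and symmetric in the two factors, the sign $-1$ carries through and gives antisymmetry of $[\,,\,]_v$. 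The hardest part of the lemma is (ii), purely because of sign bookkeeping; (i) and (iii) are immediate once one cites the appropriate duality and graded-commutativity statements.
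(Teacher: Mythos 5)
Your proposal is correct and follows essentially the same route as the paper: (i) and (ii) are reduced to the standard statements in NSW (Theorem 7.2.15 and Corollary 7.2.13), and (iii) is graded commutativity of the degree-one cup product, with the observation that the pairing $[\,,\,]_v$ is obtained from the cohomological one by a translation along $\theta$ that does not disturb these facts.
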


\begin{proof}
Part (i) is \cite[Theorem 7.2.15]{MR2392026} whilst part (ii) is Corollary 7.2.13 of op cit. (The cited results are stated for the pairing of \cref{local hilbert symbol} rather than the altered pairing $[~,~]_v$ but in each case that they imply the claimed results is immediate.) Finally, antisymmetry of the cup product 
\[H^1(K_v,\mathbb{Z}/p\mathbb{Z})\times H^1(K_v,\mathbb{Z}/p\mathbb{Z})\longrightarrow H^2(K_v,\mathbb{Z}/p\mathbb{Z}\otimes \mathbb{Z}/p\mathbb{Z})\]
gives part (iii).
\end{proof}

\subsection{Existence of global characters with specified restriction and ramification}

We will need the following lemma which is the analogue of \cite[Proposition 6.8 (iii)]{MR3043582} in the case that the dimension of $T$ is allowed to be larger than $2$.

\begin{notation} 
\label{hilbert symbol}
Writing $\Gamma:=\prod_{v\in \Sigma}\mathcal{C}(K_v)$, we denote by $[~,~]_\Sigma$ the non-degnerate bilinear pairing
\[[~,~]_\Sigma:\left(\prod_{v\in \Sigma}H^1(K_v,\boldsymbol \mu_p) \right) \times \Gamma  \longrightarrow \boldsymbol \mu_p\]
defined as the sum (or rather product) over $v\in \Sigma$ of the pairings $[~,~]_v$ of \Cref{altered pairing}.
\end{notation}

\begin{lemma} \label{existence of characters}
Let $\P$ denote the set of primes of $K$ not in $\Sigma$ which split completely in $K(T)/K$, and fix $\gamma \in \Gamma$. Then there is a character $\chi\in \mathcal{C}(K)$ unramified outside $\Sigma \cup \P$ and with $\chi|_\Gamma =\gamma$, if and only if $[c,\gamma]_\Sigma=0$ for each $c$ in the image of the restriction homomorphism \[H^1(K(T)/K,\boldsymbol \mu_p) \longrightarrow \prod_{v\in \Sigma}H^1(K_v,\boldsymbol \mu_p).\]
\end{lemma}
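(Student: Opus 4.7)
The plan is to combine global reciprocity for the pairings $[~,~]_v$ with Poitou--Tate duality and a Chebotarev density argument.

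Necessity is immediate: suppose $\chi \in \mathcal{C}(K)$ is unramified outside $\Sigma \cup \mathcal{P}$ with $\chi|_\Gamma = \gamma$, and let $c \in H^1(K(T)/K,\boldsymbol\mu_p)$, inflated to $H^1(K,\boldsymbol\mu_p)$. Global reciprocity gives $\sum_{v \in M_K}[c_v,\chi_v]_v = 0$. At $v \in \Sigma$ the terms combine to $[c,\gamma]_\Sigma$; at $v \in \mathcal{P}$ the class $c_v$ vanishes because $v$ splits completely in $K(T)/K$; at $v \notin \Sigma \cup \mathcal{P}$ both $c_v$ (as $K(T)/K$ is unramified at $v \notin \Sigma$) and $\chi_v$ (by hypothesis) are unramified and $v \nmid p$, whence Lemma 8.1(i) gives $[c_v, \chi_v]_v = 0$. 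This yields $[c,\gamma]_\Sigma = 0$.

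For sufficiency, fix a finite subset $\mathcal{P}_0 \subseteq \mathcal{P}$. Poitou--Tate duality on $K_{\Sigma \cup \mathcal{P}_0}/K$ (the maximal extension unramified outside $\Sigma \cup \mathcal{P}_0$), applied to $\mathbb{Z}/p\mathbb{Z}$, identifies the image of the restriction $H^1(K_{\Sigma \cup \mathcal{P}_0}/K,\mathbb{Z}/p\mathbb{Z}) \to \prod_{v \in \Sigma \cup \mathcal{P}_0} H^1(K_v,\mathbb{Z}/p\mathbb{Z})$ with the annihilator, under the sum of local Tate pairings, of the image of $H^1(K_{\Sigma \cup \mathcal{P}_0}/K,\boldsymbol\mu_p)$ in $\prod_{v \in \Sigma \cup \mathcal{P}_0} H^1(K_v,\boldsymbol\mu_p)$. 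Projecting to $\Gamma$, this shows that $\gamma$ arises from some $\chi \in \mathcal{C}(K)$ unramified outside $\Sigma \cup \mathcal{P}_0$ if and only if $[c,\gamma]_\Sigma = 0$ for every $c$ in
\[A_{\mathcal{P}_0} := \{c \in H^1(K,\boldsymbol\mu_p) : c \textup{ unramified outside } \Sigma \cup \mathcal{P}_0,\ c_v = 0 \textup{ for all } v \in \mathcal{P}_0\}.\]
The $A_{\mathcal{P}_0}$ form a decreasing chain of finite subgroups of $H^1(K,\boldsymbol\mu_p)$ (enlarging $\mathcal{P}_0$ by a prime $v_0$ replaces unramifiedness at $v_0$ by the strictly stronger triviality condition), so they stabilise at their intersection $A_\infty = \{c : c\textup{ unramified outside }\Sigma,\ c_v = 0 \textup{ for all } v \in \mathcal{P}\}$.

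The heart of the argument, and the main obstacle, is identifying $A_\infty$ with the inflation of $H^1(K(T)/K,\boldsymbol\mu_p)$ into $H^1(K,\boldsymbol\mu_p)$. The inclusion $\supseteq$ is routine: inflated classes are unramified outside $\Sigma$ and vanish at every $v \in \mathcal{P}$ since such $v$ splits completely in $K(T)/K$. For $\subseteq$, any $c \in A_\infty$ corresponds via Kummer theory to a cyclic extension $L/K$ of degree dividing $p$, unramified outside $\Sigma$ and split at every $v \in \mathcal{P}$; applying Chebotarev to the compositum $L \cdot K(T)/K$, the primes split completely therein contain $\mathcal{P}$ up to finite exception, which already has density $1/[K(T):K]$ by Chebotarev applied to $K(T)/K$, forcing $[L\cdot K(T):K] \leq [K(T):K]$ and hence $L \subseteq K(T)$. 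Taking the union of the $\mathcal{P}_0$-level criteria, the image of $\{\chi \in \mathcal{C}(K) : \chi\textup{ unramified outside }\Sigma \cup \mathcal{P}\}$ in $\Gamma$ equals the annihilator of the projection of $A_\infty = H^1(K(T)/K,\boldsymbol\mu_p)$ to $\prod_{v\in\Sigma}H^1(K_v,\boldsymbol\mu_p)$, which is precisely the characterisation asserted by the lemma.
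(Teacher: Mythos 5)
Your proof is correct and reaches the same characterisation, but it routes around the paper's argument in a couple of pleasant ways. The paper proves both directions at once by applying Poitou--Tate directly to the (infinite) set $\Sigma\cup\P$, projecting formally onto the $\Sigma$-components, and then feeding in a separate ``splitting primes'' lemma which is proven by restricting a class to $K(T)$, invoking inflation--restriction, and applying Chebotarev to the Galois closure of the cut-out extension. You instead give a short standalone reciprocity proof of necessity, and for sufficiency you apply Poitou--Tate only to finite truncations $\Sigma\cup\mathcal{P}_0$ and then pass to the limit using that the decreasing chain $A_{\mathcal{P}_0}$ stabilises inside the finite group $H^1(G_\Sigma,\boldsymbol\mu_p)$ --- this sidesteps any reliance on the Poitou--Tate sequence for an infinite set of places. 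Your identification of $A_\infty$ with $H^1(K(T)/K,\boldsymbol\mu_p)$ is also arguably cleaner than the paper's: because you keep $c$ defined over $K$ and apply Chebotarev to the honestly Galois extension $L\cdot K(T)/K$, you avoid the delicate point (present but unaddressed in the paper's proof of its splitting-primes lemma) that a class in $H^1(K(T),\boldsymbol\mu_p)$ trivial at the one distinguished place over each $v\in\P$ need not a priori be trivial at all places over $v$ --- a gap that is harmless there only because the relevant classes come from $K$, exactly as yours do by construction. One small imprecision: when $\boldsymbol\mu_p\not\subseteq K$, a class $c\in H^1(K,\boldsymbol\mu_p)$ corresponds via Kummer theory to an extension $K(a^{1/p})/K$ which is generally \emph{not} cyclic (nor even Galois) over $K$; this does not affect your argument, since you only ever apply Chebotarev to the compositum $K(T)(a^{1/p})/K$, which is Galois because $\boldsymbol\mu_p\subseteq K(T)$, but the phrase ``cyclic extension $L/K$'' should be dropped.
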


\begin{proof}
Exactness at the middle term of the Poitou--Tate exact sequence (see, for example, \cite[Theorem I.4.10]{MR2261462}) applied to the set $\Sigma \cup \P$ of places and the $G_K$-module $\mathbb{Z}/p\mathbb{Z}$ (and its dual $\boldsymbol \mu_p$), shows that 
\[\text{im}\left(H^1(K_{\Sigma \cup \P}/K,\mathbb{Z}/p\mathbb{Z}) \longrightarrow \sideset{}{'}\prod_{v\in \Sigma \cup \P} H^1(K_v,\mathbb{Z}/p\mathbb{Z})\right)\]  is the orthogonal complement of  \[\text{im}\left(H^1(K_{\Sigma \cup \P}/K,\boldsymbol \mu_p)\longrightarrow \sideset{}{'}\prod_{v\in \Sigma \cup \P}H^1(K_v,\boldsymbol \mu_p)\right)\]
under the sum of the local pairings of $\cref{local hilbert symbol}$, where here $K_{\Sigma \cup \P}$ denotes the maximal extension of $K$ unramified outside $\Sigma \cup \P$ and the restricted direct products are taken with respect to unramified classes. 

Now fix any choice of isomorphism $\boldsymbol \mu_p\cong \mathbb{Z}/p\mathbb{Z}$ and use it to identify $\mathcal{C}(K)$ with $H^1(K,\mathbb{Z}/p\mathbb{Z})$, and $\mathcal{C}(K_v)$ with $H^1(K_v,\mathbb{Z}/p\mathbb{Z})$ for each $v$ similarly. Then the group $H^1(K_{\Sigma \cup \P}/K,\mathbb{Z}/p\mathbb{Z})$ corresponds to the group of characters unramified outside $\Sigma\cup \P$, which we denote by $\mathcal{C}(K)_{\Sigma \cup \P}$. Making these identifications and projecting onto $\prod_{v\in \Sigma}\mathcal{C}(K_v)$, it follows formally that the image of $\mathcal{C}(K)_{\Sigma \cup \P}$  in $\prod_{v\in \Sigma}\mathcal{C}(K_v)$ is the orthogonal complement with respect to the pairing $[~.~]_{\Sigma}$ of the image of 
\[
\ker\left(H^1(K_{\Sigma \cup \P}/K,\boldsymbol \mu_p)\longrightarrow \sideset{}{'}\prod_{v\in \P}H^1(K_v,\boldsymbol \mu_p)\right)
\]
 in $\prod_{v\in \Sigma}H^1(K_v,\boldsymbol \mu_p)$. We now conclude by the following lemma.
\end{proof}

\begin{lemma} \label{splitting primes lemma}
Let $\P$ denote the set of primes of $K$ not in $\Sigma$ and which split completely in $K(T)/K$, and let $K_{\Sigma \cup \P}$ denote the maximal extension of $K$ unramified outside $\Sigma \cup \P$. Then we have
\[H^1(K(T)/K,\boldsymbol \mu_p)=
\ker\left(H^1(K_{\Sigma \cup \P}/K,\boldsymbol \mu_p)\longrightarrow \sideset{}{'}\prod_{v\in \P}H^1(K_v,\boldsymbol \mu_p)\right),
\]
the groups being compared inside $H^1(K,\boldsymbol \mu_p)$ (and the restricted direct product being taken with respect to unramified classes as above).
\end{lemma}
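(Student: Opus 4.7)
The plan is to prove the two inclusions separately, with the nontrivial containment handled via inflation–restriction and Chebotarev density.

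First I would record two preliminary facts. Since $T$ is unramified outside $\Sigma$ we have $K(T) \subseteq K_\Sigma \subseteq K_{\Sigma \cup \P}$, so every class in $H^1(K(T)/K,\boldsymbol \mu_p)$ is naturally a class in $H^1(K_{\Sigma\cup\P}/K,\boldsymbol \mu_p)$. Moreover, $G_K$-equivariance and non-degeneracy of the pairing $T\times T\to \boldsymbol \mu_p$ force any $\sigma\in G_{K(T)}$ to act trivially on $\boldsymbol \mu_p$ (such a $\sigma$ fixes every value $(v,w)\in\boldsymbol \mu_p$), so $K(\boldsymbol \mu_p)\subseteq K(T)$ and $H^1(K_{\Sigma\cup\P}/K(T),\boldsymbol \mu_p)=\textup{Hom}(H,\boldsymbol \mu_p)$ with $H:=\textup{Gal}(K_{\Sigma\cup\P}/K(T))$. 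With these preliminaries the inclusion $\subseteq$ is immediate: for $v\in\P$ the decomposition group of $v$ in $\textup{Gal}(K(T)/K)$ is trivial (as $v$ splits completely), hence the local restriction at $v$ of any inflated class vanishes.

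For the reverse inclusion $\supseteq$, I would set $\Gamma:=\textup{Gal}(K_{\Sigma\cup\P}/K)$ and $G:=\Gamma/H=\textup{Gal}(K(T)/K)$, and exploit the inflation–restriction sequence
\[0 \longrightarrow H^1(G,\boldsymbol \mu_p)\longrightarrow H^1(\Gamma,\boldsymbol \mu_p)\xrightarrow{\textup{res}} \textup{Hom}(H,\boldsymbol \mu_p)^{G}.\]
Given $c\in H^1(\Gamma,\boldsymbol \mu_p)$ with $c_v=0$ for all $v\in\P$, it suffices to prove $\phi:=\textup{res}(c)=0$. Suppose not. Since the image of restriction lies in the $G$-invariants, $\phi$ is $G$-equivariant, so $\ker\phi$ is stable under $\Gamma$-conjugation and is therefore normal in $\Gamma$. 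Setting $M:=K_{\Sigma\cup\P}^{\ker\phi}$, the extension $M/K$ is \emph{Galois} with $M/K(T)$ cyclic of degree $p$.

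The main obstacle, and the key input, is a Chebotarev argument applied to $M/K$. For $v\in\P$ and any prime $w$ of $K(T)$ above $v$, the identification $K(T)_w=K_v$ (coming from $v$ splitting completely in $K(T)/K$) carries $\phi|_{G_{K(T)_w}}$ to $c_v=0$, so $w$ splits completely in $M/K(T)$; combined with $v$ splitting in $K(T)/K$ this forces $v$ itself to split completely in $M/K$. Chebotarev applied to $M/K$ gives the set of primes splitting completely in $M/K$ density $1/[M:K]$, while Chebotarev applied to $K(T)/K$ gives $\P$ density $1/[K(T):K]$. The containment $\P \subseteq \{v:v~\textup{splits completely in}~M/K\}$ then forces $[M:K]\leq [K(T):K]$, and together with $M\supseteq K(T)$ this yields $M=K(T)$, contradicting $[M:K(T)]=p$. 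Hence $\phi=0$ as required. The crucial point that allows Chebotarev to be invoked directly (without taking a Galois closure) is the $G$-equivariance of $\phi$, which makes $M/K$ automatically Galois.
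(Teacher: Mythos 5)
Your proof is correct, and the overall strategy is the same as the paper's: identify the image of restriction from $K(T)$ as the relevant kernel and then use Chebotarev to rule out a nontrivial class vanishing at all $v \in \P$. The interesting difference is in how the Galois issue is handled. The paper works with a character $\chi \in H^1(K(T),\boldsymbol\mu_p)$, sets $L = K(T)^{\ker\chi}$, and passes to the Galois closure $L'/K$ before invoking Chebotarev; it then asserts that every $v \in \P$ splits completely in $L'/K$. You instead keep track of the ambient group $\Gamma = \textup{Gal}(K_{\Sigma\cup\P}/K)$ and note that the restricted class $\phi$ lands in $\textup{Hom}(H,\boldsymbol\mu_p)^G$ by inflation–restriction, so $\ker\phi$ is already normal in $\Gamma$ and $M/K$ is automatically Galois. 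This is a genuine (small) improvement in bookkeeping: the hypothesis that $\chi_v = 0$ for $v \in \P$ controls only the chosen decomposition group of $v$ over $K(T)$, and it takes an extra word to see that the splitting propagates to the full Galois closure $L'/K$ rather than just to $L/K(T)$ along the chosen prime. Recording the $G$-invariance of $\phi$ makes this propagation automatic, since it is exactly this invariance that the classes in the image of restriction from $K$ enjoy. So: same route, same key input (Chebotarev on a degree-$p$ extension of $K(T)$), but your use of the inflation–restriction sequence tidies up the one point of the paper's argument that requires care.
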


\begin{proof}
Since $K(T)$ is unramified outside $\Sigma$, we have $K(T)\subseteq K_{\Sigma \cup \P}$. Thus it suffices to show that we have
\[H^1(K(T)/K,\boldsymbol \mu_p)=\ker\left(H^1(K,\boldsymbol \mu_p)\stackrel{\text{res}}{\longrightarrow}\sideset{}{'}\prod_{v\in \P}H^1(K_v,\boldsymbol \mu_p)\right).\]

Since each prime in $\P$ splits completely in $K(T)/K$, the restriction map above factors as
\[H^1(K,\boldsymbol \mu_p)\stackrel{f_1}{\longrightarrow} H^1(K(T),\boldsymbol \mu_p) \stackrel{f_2}{\longrightarrow} \sideset{}{'}\prod_{v\in \mathcal{P}}H^1(K_v,\boldsymbol \mu_p), \] both maps being given by restriction. Since the inflation-restriction exact sequence identifies $H^1(K(T)/K,\boldsymbol \mu_p)$ with $\ker(f_1)$, it suffices to show that $f_2$ is injective. Since $K(T)$ and each $K_v$ ($v\in \P$) contain $\boldsymbol \mu_p$, we may reinterpret $f_2$ as the restriction map on characters
\[\mathcal{C}(K(T))\longrightarrow \sideset{}{'}\prod_{v\in \P}\mathcal{C}(K_v).\]
Suppose $\chi \in \mathcal{C}(K(T))$ is a character of $K(T)$ which is trivial in $\mathcal{C}(K_v)$ for each $v\in \P$, let $L/K(T)$ denote the extension corresponding to the fixed field of the kernel of $\chi$ and let $L'/K$ denote the Galois closure of $L/K$. Then our assumption on $\chi$ means that every prime $v\in \mathcal{P}$ splits completely in $L'/K$. By the Chebotarev density theorem, this gives $[L':K]\leq [K(T):K]$. Since we already know that $K(T)\subseteq L'$ we must have $L'=K(T)$ whence $\chi$ is the trivial character. 
\end{proof}

\subsection{Assumptions on the set of places $\Sigma$}

We now impose conditions on the finite set of places $\Sigma$ (in addition to containing all archimedean places, all primes over $p$ and all places for which $T$ is ramified) which will be necessary for the proof of the remaining cases of \Cref{main twisting theorem combined}.

\begin{assumption} \label{assumptions on places}
We henceforth impose the following conditions on the finite set of places $\Sigma$:

\begin{itemize}
\item[(i)] the restriction homomorphism
\[H^1(K(T)/K,\boldsymbol \mu_p)\longrightarrow \prod_{v\in \Sigma}H^1(K_v,\boldsymbol \mu_p)\] is injective,
\item[(ii)]  $\textup{Pic}(\mathcal{O}_{K,\Sigma})=0$, 
\item[(iii)] the map \[\mathcal{O}^{\times}_{K,\Sigma}/(\mathcal{O}_{K,\Sigma}^{\times})^p\longrightarrow \prod_{v\in \Sigma}K_v^{\times}/(K_v^{\times})^p\]
is injective.
\end{itemize}
(In (ii) and (iii), $\mathcal{O}_{K,\Sigma}$ denotes the elements of $K$ integral outside $\Sigma$.)
\end{assumption}

\begin{lemma}
A set of places $\Sigma$ satisfying \Cref{assumptions on places} exists.
\end{lemma}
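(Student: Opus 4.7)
The plan is to build $\Sigma$ by successive finite enlargements, starting from the forced set
\[\Sigma_0 := \{v\in M_K : v\textup{ archimedean}\} \cup \{v\in M_K : v\mid p\} \cup \{v\in M_K : T\textup{ is ramified at }v\}.\]
The strategy rests on two easy observations: condition (ii) is monotone under enlargement of $\Sigma$, since $\textup{Pic}(\mathcal{O}_{K,\Sigma'})$ is a quotient of $\textup{Pic}(\mathcal{O}_{K,\Sigma})$ whenever $\Sigma\subseteq\Sigma'$; and (i) is automatically preserved under enlargement, as adding coordinates can only enhance injectivity. Condition (iii) by contrast is delicate, because the source $\mathcal{O}_{K,\Sigma}^\times/(\mathcal{O}_{K,\Sigma}^\times)^p$ itself grows with $\Sigma$.

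First I would enlarge $\Sigma_0$ to $\Sigma_1$ by adjoining one finite prime from each ideal class of $K$; since the class group is finite this is a finite enlargement, and $\textup{Pic}(\mathcal{O}_{K,\Sigma_1})=0$ by construction. Set $U_1 := \mathcal{O}_{K,\Sigma_1}^\times$, which is finitely generated by Dirichlet's $S$-unit theorem, so that $U_1/U_1^p$ is finite; the group $H^1(K(T)/K,\boldsymbol\mu_p)$ is also finite. Both embed into $K^\times/(K^\times)^p = H^1(K,\boldsymbol\mu_p)$: the latter by inflation, and the former because any $p$th root in $K^\times$ of an element of $U_1$ has valuation $0$ at every place outside $\Sigma_1$ (hence again lies in $U_1$). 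For each nontrivial class $\bar x$ in the finite union of the images of these two groups in $K^\times/(K^\times)^p$, the field $K(x^{1/p})$ is a cyclic extension of $K$ of degree exactly $p$; by the Chebotarev density theorem there are infinitely many finite places of $K$ that do not split completely in it, at each of which $x$ is not locally a $p$th power. Choose for each $\bar x$ such a place $v_{\bar x}\notin \Sigma_1$ and let $\Sigma$ be the finite set obtained by adjoining these $v_{\bar x}$ to $\Sigma_1$.

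Conditions (i) and (ii) hold for $\Sigma$ by construction. To verify (iii), I would use that triviality of $\textup{Pic}(\mathcal{O}_{K,\Sigma_1})$ furnishes, for each $w\in\Sigma\setminus\Sigma_1$, an element $\pi_w\in K^\times$ with $v_w(\pi_w)=1$ and $v(\pi_w)=0$ for every other place $v\notin \Sigma_1$. These $\pi_w$ split the short exact sequence
\[0\longrightarrow U_1\longrightarrow \mathcal{O}_{K,\Sigma}^\times \longrightarrow \bigoplus_{w\in \Sigma\setminus \Sigma_1}\mathbb{Z}\longrightarrow 0\]
of valuations, yielding the decomposition
\[\mathcal{O}_{K,\Sigma}^\times/(\mathcal{O}_{K,\Sigma}^\times)^p \;\cong\; U_1/U_1^p \,\oplus \bigoplus_{w\in \Sigma\setminus\Sigma_1}\mathbb{F}_p\cdot \overline{\pi_w}.\]
If an element $\alpha = u\cdot \prod_w \pi_w^{a_w}$ lies in the kernel of the map to $\prod_{v\in\Sigma}K_v^\times/(K_v^\times)^p$, then reading off the valuation of $\alpha$ at each $w\in\Sigma\setminus\Sigma_1$ forces $a_w\equiv 0\pmod p$, and the residual class of $u\in U_1$ must be trivial in $U_1/U_1^p$ by our choice of the places $v_{\bar u}$. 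The single substantive step is this last verification that the Chebotarev enlargement undertaken for (i) and for (iii) do not destroy each other; the difficulty is resolved by the elementary valuation argument above, made available precisely by the triviality of $\textup{Pic}(\mathcal{O}_{K,\Sigma_1})$ secured in the first enlargement.
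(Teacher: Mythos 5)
Your proof is correct and takes a genuinely different, more self-contained route than the paper's. The paper obtains (i) from the Grunwald--Wang theorem (injectivity of $H^1(K,\boldsymbol\mu_p)\to\prod_v H^1(K_v,\boldsymbol\mu_p)$ detects the finite subgroup $H^1(K(T)/K,\boldsymbol\mu_p)$ at finitely many places) and then simply cites \cite[Lemma 6.1]{MR3043582} to arrange (ii) and (iii). You instead secure (ii) first by adjoining one prime per ideal class, and then exploit $\textup{Pic}(\mathcal{O}_{K,\Sigma_1})=0$ to obtain the splitting $\mathcal{O}_{K,\Sigma}^\times/(\mathcal{O}_{K,\Sigma}^\times)^p\cong U_1/U_1^p\oplus\bigoplus_{w}\mathbb{F}_p\cdot\overline{\pi_w}$, which is exactly what lets the valuation argument close (iii). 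Your reorganization --- arranging (ii) \emph{before} choosing detecting places, so that the verification of (iii) has the Picard triviality it needs --- is the correct structural insight, and it trades the paper's appeal to an external lemma for a transparent argument.

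One small wrinkle: you call $K(x^{1/p})/K$ a \emph{cyclic} extension of degree $p$, which requires $\boldsymbol\mu_p\subseteq K$. In general the extension has degree $p$ (since $t^p-x$ is irreducible over $K$ for $p$ prime and $x\notin(K^\times)^p$) but need not be Galois, so Chebotarev does not apply to it directly. The easiest repair is to pass to $M=K(\boldsymbol\mu_p)$: because $[M:K]$ is coprime to $p$, restriction $K^\times/(K^\times)^p\to M^\times/(M^\times)^p$ is injective, so $x$ stays nontrivial; $M(x^{1/p})/M$ \emph{is} cyclic of degree $p$, Chebotarev furnishes infinitely many $M$-primes not splitting completely in it, and the $K$-places beneath them serve as your $v_{\bar x}$. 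Alternatively one can simply invoke the injectivity form of Grunwald--Wang (as the paper does for (i)) to know each nontrivial class of $K^\times/(K^\times)^p$ is detected at some place; either fix leaves the rest of your argument intact.
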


\begin{proof}
We begin by taking $\Sigma$ large enough that it contains all archimedean places, all primes over $p$ and all places where $T$ ramifies. By the Grunwald--Wang theorem \cite[Theorem 9.1.9(ii)]{MR2392026}  the map
\[H^1(K,\boldsymbol \mu_p)\longrightarrow \prod_{v\in M_K}H^1(K_v,\boldsymbol \mu_p)\]
is injective. In particular, as $H^1(K(T)/K,\boldsymbol \mu_p)$ is a finite subgroup of $H^1(K,\boldsymbol \mu_p)$, we see that by enlarging $\Sigma$ if necessary we may additionally ensure that (ii) holds.  

Finally, \cite[Lemma 6.1]{MR3043582} shows that any finite set of places may be further enlarged so that (iii) and (iv) hold.
\end{proof}

We will have need of the following consequence of \Cref{assumptions on places}. 


\begin{lemma} \label{existence of generators}
Suppose \Cref{assumptions on places} is satisfied and let $\mathfrak{p}$ be a prime of $K$ with $\mathfrak{p}\notin \Sigma$ and $\boldsymbol \mu_p\subseteq K^{\times}_{\mathfrak{p}}$. Write \[\delta_{\mathfrak{p}}:K_{\mathfrak{p}}^{\times}/K_{\mathfrak{p}}^{{\times}p}\stackrel{\sim}{\longrightarrow}\mathcal{C}(K_\mathfrak{p})\] 
for the isomorphism (coming from the Kummer sequence) sending $x\in K_{\mathfrak{p}}^{\times}$ to the character $\sigma \mapsto \sigma(y)/y$
where $y\in \bar{K}_{\mathfrak{p}}^{\times}$ is such that $y^p=x$ (any two choices for $y$ yield the same character since $\boldsymbol \mu_p \subseteq K_{\mathfrak{p}}$).

Then there is a (global) character $\varphi(\mathfrak{p})\in \mathcal{C}(K)$ satisfying the following three conditions: 
\begin{equation} \label{our global generators}
\phantom{hello} \begin{matrix*}[l]  \varphi(\mathfrak{p})\textup{ ramifies at }\mathfrak{p}, \\ \varphi(\mathfrak{p})\textup{ is unramified outside }\Sigma \cup \{\mathfrak{p}\},\\ \textup{the restriction of }\varphi({\mathfrak{p}}) \textup{ to }\mathcal{C}(K_{\mathfrak{p}}) \textup{ is equal to }\delta_{\mathfrak{p}}(\varpi) \textup{  for some uniformiser }\varpi \textup{ of }K_{\mathfrak{p}}. \end{matrix*}
\end{equation}
\end{lemma}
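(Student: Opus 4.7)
The proof proceeds via global class field theory combined with an analysis of a reciprocity condition using \Cref{assumptions on places}. Set $S:=\Sigma\cup\{\mathfrak{p}\}$. Since $\textup{Pic}(\mathcal{O}_{K,\Sigma})=0$ by \Cref{assumptions on places}(ii), pick $\pi\in\mathcal{O}_{K,\Sigma}$ with $\mathfrak{p}=(\pi)$; then $\pi$ is a uniformizer at $\mathfrak{p}$ and an $S$-unit, and the exact sequence $0\to\mathcal{O}_{K,\Sigma}^{\times}\to\mathcal{O}_{K,S}^{\times}\xrightarrow{v_{\mathfrak{p}}}\mathbb{Z}\to 0$ splits via $\pi$. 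Noting that $\textup{Pic}(\mathcal{O}_{K,S})=0$ as well, global class field theory identifies the idele class group quotient $C_K/\prod_{v\notin S}\mathcal{O}_v^{\times}$ with $\prod_{v\in S}K_v^{\times}/\mathcal{O}_{K,S}^{\times}$ (with $\mathcal{O}_{K,S}^{\times}$ diagonally embedded). Hence characters in $\mathcal{C}(K)$ unramified outside $S$ correspond bijectively to tuples $(\chi_v)_{v\in S}\in\prod_{v\in S}\mathcal{C}(K_v)$ satisfying the reciprocity relation
\[\prod_{v\in S}\chi_v\!\left(\textup{Art}_{K_v}(u)\right)=1\qquad\text{for every }u\in\mathcal{O}_{K,S}^{\times}.\]

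I would then fix $\chi_{\mathfrak{p}}=\delta_{\mathfrak{p}}(\varpi)$ for a uniformizer $\varpi$ of $K_{\mathfrak{p}}$ to be chosen below; by \Cref{properties of local symbol}(ii) this forces the character $\chi_{\mathfrak{p}}\circ\textup{Art}_{K_{\mathfrak{p}}}$ of $K_\mathfrak{p}^\times$ to be $x\mapsto (x,\varpi)_{\mathfrak{p}}^{-1}$, where $(\cdot,\cdot)_{\mathfrak{p}}$ denotes the local Hilbert symbol. Splitting the reciprocity relation through $\mathcal{O}_{K,S}^{\times}=\mathcal{O}_{K,\Sigma}^{\times}\oplus\pi^{\mathbb{Z}}$ yields two subconditions: one indexed by $u'\in\mathcal{O}_{K,\Sigma}^{\times}$, and one for $u=\pi$. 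For the first, \Cref{assumptions on places}(iii) gives injectivity of $\mathcal{O}_{K,\Sigma}^{\times}/(\mathcal{O}_{K,\Sigma}^{\times})^p\hookrightarrow\prod_{v\in\Sigma}K_v^{\times}/K_v^{\times p}$; dualising via the perfect local pairings $\mathcal{C}(K_v)\times K_v^{\times}/K_v^{\times p}\to\boldsymbol{\mu}_p$ yields surjectivity of $\prod_{v\in\Sigma}\mathcal{C}(K_v)\twoheadrightarrow\textup{Hom}(\mathcal{O}_{K,\Sigma}^{\times}/(\mathcal{O}_{K,\Sigma}^{\times})^p,\boldsymbol{\mu}_p)$, which lets me realise by a choice of $(\chi_v)_{v\in\Sigma}$ the prescribed functional $u'\mapsto (u',\varpi)_{\mathfrak{p}}$ dictated by $\chi_{\mathfrak{p}}$.

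The main obstacle is the remaining subcondition at $u=\pi$, which demands $\prod_{v\in\Sigma}\chi_v(\textup{Art}_{K_v}(\pi))=(\pi,\varpi)_{\mathfrak{p}}$. If the class of $\pi$ in $\prod_{v\in\Sigma}K_v^{\times}/K_v^{\times p}$ is $\mathbb{F}_p$-linearly independent of the image of $\mathcal{O}_{K,\Sigma}^{\times}/(\mathcal{O}_{K,\Sigma}^{\times})^p$, residual freedom in $(\chi_v)_{v\in\Sigma}$ suffices to solve this equation for any uniformizer $\varpi$. Otherwise $\pi\equiv u'\pmod{K_v^{\times p}}$ at every $v\in\Sigma$ for some $u'\in\mathcal{O}_{K,\Sigma}^{\times}$, and the equation collapses (using the relation already imposed at $u'$) to the Hilbert symbol identity $(\varpi_0,\varpi)_{\mathfrak{p}}=1$, where $\varpi_0:=\pi/u'$ is a uniformizer of $K_{\mathfrak{p}}$. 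Since $\mathfrak{p}\nmid p$ and $\boldsymbol{\mu}_p\subseteq K_{\mathfrak{p}}$, the Hilbert pairing on $K_{\mathfrak{p}}^{\times}/K_{\mathfrak{p}}^{\times p}$ is non-degenerate, so one can replace $\varpi_0$ by $\varpi:=\varpi_0\cdot u$ for an appropriate unit $u\in\mathcal{O}_{\mathfrak{p}}^{\times}$ to force the required identity; this dependent case, hinging on fine control of the local symbol at $\mathfrak{p}$, is the technically most delicate step. Finally, $\chi_{\mathfrak{p}}=\delta_{\mathfrak{p}}(\varpi)$ is automatically ramified (as $K_{\mathfrak{p}}(\varpi^{1/p})/K_{\mathfrak{p}}$ is totally ramified), completing the construction of $\varphi(\mathfrak{p})$.
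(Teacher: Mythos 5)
Your argument is correct in its essential structure, but it takes a much more elaborate route than the paper. The paper's own proof is a two-line normalization on top of a cited result: it invokes \cite[Proposition 6.8(ii)]{MR3043582} (whose hypotheses are exactly \Cref{assumptions on places}) to produce \emph{some} character $\varphi(\mathfrak{p})$ that is ramified at $\mathfrak{p}$ and unramified outside $\Sigma\cup\{\mathfrak{p}\}$, then observes that its restriction to $K_{\mathfrak{p}}$ is $\delta_{\mathfrak{p}}(x)$ for some $x$ with $v_{\mathfrak{p}}(x)$ coprime to $p$ (ramification, plus $\mathfrak{p}\nmid p$); replacing $\varphi(\mathfrak{p})$ by $\varphi(\mathfrak{p})^m$ for a suitable $m$ prime to $p$ brings $v_{\mathfrak{p}}(x)$ to $1\pmod p$, and multiplying $x$ by a $p$-th power of a uniformiser finishes. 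The point is that one does not need to \emph{prescribe} the local component in advance; one only needs it to be of the form $\delta_\mathfrak{p}(\varpi)$ for \emph{some} uniformiser $\varpi$, and this can be arranged by a harmless group-theoretic normalisation \emph{after} the character exists.

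You, by contrast, rebuild the existence statement from scratch via the idele-class description, fix a target local component $\delta_{\mathfrak{p}}(\varpi)$ at the outset, split the reciprocity obstruction through $\mathcal{O}_{K,S}^{\times}=\mathcal{O}_{K,\Sigma}^{\times}\oplus\pi^{\mathbb{Z}}$, and are then forced into a case analysis on whether $\pi$ is linearly independent (in $\prod_{v\in\Sigma}K_v^\times/K_v^{\times p}$) of the image of $\mathcal{O}_{K,\Sigma}^\times$. The dependent case drives you to the Hilbert-symbol identity $(\varpi_0,\varpi)_{\mathfrak{p}}=1$ and a subsequent adjustment of $\varpi$ by a unit, which you correctly flag as delicate (for $p$ odd it is automatic since $(\varpi_0,\varpi_0)_\mathfrak{p}=1$; for $p=2$ you do need a non-square unit, which exists by non-degeneracy). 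This works, but it duplicates the content of the cited KMR proposition, and the case split arises precisely because you committed to the local target before knowing which local targets are globally achievable. The paper's ``existence first, normalise second'' ordering avoids this entirely. You should also be slightly more explicit that the identification of unramified-outside-$S$ characters with reciprocity-constrained tuples in $\prod_{v\in S}\mathcal{C}(K_v)$ is genuinely a bijection only because $\textup{Pic}(\mathcal{O}_{K,S})=0$ (which you note, but the bijectivity rather than mere surjectivity is what rules out spurious everywhere-unramified characters on $S$).
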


\begin{proof}
Given the assmuptions on $\Sigma$, the existence of a character $\varphi(\mathfrak{p})$ which ramifies at $\mathfrak{p}$ and is unramified outside $\Sigma\cup  \{\mathfrak{p}\}$ follows from  \cite[Proposition 6.8 (ii)]{MR3043582}. Fix one such and pick $x\in K_{\mathfrak{p}}^{\times}$ such that the restriction of $\varphi(\mathfrak{p})$ is equal to $\delta_{\mathfrak{p}}(x)$. Since $\varphi(\mathfrak{p})$ ramifies at $\mathfrak{p}$, the extension of  $K_{\mathfrak{p}}^{\times}$ obtained by adjoining a $p$-th root of $x$ ramifies. In particular, since  $K_{\mathfrak{p}}$ has  residue characteristic coprime to $p$ (as $\mathfrak{p}\notin \Sigma$), the valuation $v_\mathfrak{p}(x)$ of $x$ is coprime to $p$. Noting that replacing $\varphi({\mathfrak{p}})$ with $\varphi(\mathfrak{p})^m$ for any $m$ coprime to $p$ yields another character which ramifies at $\mathfrak{p}$ and is unramified outside $\Sigma\cup \{\mathfrak{p}\}$, we may suppose that $v_\mathfrak{p}(x)$ is congruent to $1$ modulo $p$. Finally, since $ K_{\mathfrak{p}}^{\times p}$ is in the kernel of $\delta_{\mathfrak{p}}$ we may now shift $x$ by a $p$-th power of a uniformiser to suppose that $x$ has valuation $1$ as desired. 
\end{proof}

The following lemma evaluates the pairing $[~,~]_\Sigma$ of \Cref{hilbert symbol} between the characters $\varphi({\mathfrak{p}})$ of \Cref{existence of generators} and elements of $H^1(K(T)/K,\boldsymbol \mu_p)$. 

\begin{lemma} \label{preevaluation of alpha}
Let $\mathfrak{p}$ be a prime of $K$ not in $\Sigma$, let $\varphi(\mathfrak{p})$ satisfy \cref{our global generators}, and let $c\in H^1(K(T)/K,\boldsymbol \mu_p)$. Then writing $\textup{Frob}_{\mathfrak{p}}$ for the Frobenius element at $\mathfrak{p}$ in $\textup{Gal}(K(T)/K)$, we have 
\[[c,\varphi(\mathfrak{p})]_\Sigma=c(\textup{Frob}_{\mathfrak{p}}).\]
\end{lemma}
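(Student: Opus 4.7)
The plan is to evaluate $[c, \varphi(\mathfrak{p})]_\Sigma$ by combining global reciprocity for the Brauer group with a direct local computation at $\mathfrak{p}$. By the definition of $[~,~]_v$ in \Cref{local symbols subsection}, the product $\prod_{v\in M_K}[c_v, \varphi(\mathfrak{p})_v]_v$ is the image of the global Brauer class $c \cup \varphi(\mathfrak{p})\in H^2(K,\boldsymbol\mu_p)$ under the sum of local invariant maps, and hence equals $1\in\boldsymbol\mu_p$ by reciprocity. The strategy is therefore to show that all local factors outside $\Sigma\cup\{\mathfrak{p}\}$ are trivial and to identify the remaining term at $\mathfrak{p}$.

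For vanishing outside $\Sigma\cup\{\mathfrak{p}\}$: at any such place $v$, the class $c$ is unramified because it factors through $\textup{Gal}(K(T)/K)$ and $K(T)/K$ is unramified outside $\Sigma$ (as $T$ is, by \Cref{the module T}), while $\varphi(\mathfrak{p})_v$ is unramified by the second condition in \cref{our global generators}. Since $v\notin \Sigma$ forces $v\nmid p$, \Cref{properties of local symbol}(i) yields $[c_v, \varphi(\mathfrak{p})_v]_v = 1$. Combined with reciprocity, this reduces the identity to determining the single local term $[c_\mathfrak{p},\varphi(\mathfrak{p})_\mathfrak{p}]_\mathfrak{p}$ and matching it to $c(\textup{Frob}_\mathfrak{p})$ up to the sign introduced by reciprocity.

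For the computation at $\mathfrak{p}$: since $\mathfrak{p}$ is unramified in $K(T)/K$ the restriction $c_\mathfrak{p}$ lies in $H^1_{\textup{ur}}(K_\mathfrak{p},\boldsymbol\mu_p)$, so Kummer theory furnishes a unit $u\in\mathcal{O}_{K_\mathfrak{p}}^\times$ with $c_\mathfrak{p}=\phi_u$; under the identification $\mathcal{C}(K_\mathfrak{p})=H^1(K_\mathfrak{p},\boldsymbol\mu_p)$ (valid because $\boldsymbol\mu_p\subseteq K_\mathfrak{p}$), the third property in \cref{our global generators} gives $\varphi(\mathfrak{p})_\mathfrak{p}=\phi_\varpi$. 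Applying \Cref{properties of local symbol}(ii) to the pair $(\phi_\varpi,\phi_u)$, then using the antisymmetry in \Cref{properties of local symbol}(iii), reduces $[\phi_u,\phi_\varpi]_\mathfrak{p}$ to $\phi_u(\textup{Art}_{K_\mathfrak{p}}(\varpi))$. Local class field theory identifies $\textup{Art}_{K_\mathfrak{p}}(\varpi)$ with a Frobenius lift on any unramified extension, and since $K_\mathfrak{p}(\sqrt[p]{u})/K_\mathfrak{p}$ is unramified this last quantity evaluates to $\phi_u(\textup{Frob}_\mathfrak{p})=c_\mathfrak{p}(\textup{Frob}_\mathfrak{p})=c(\textup{Frob}_\mathfrak{p})$, the final equality because $c$ is inflated from $\textup{Gal}(K(T)/K)$.

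The main subtlety to get right is the bookkeeping of signs: the inverse in \Cref{properties of local symbol}(ii), the antisymmetry in (iii), and the reciprocity identity must combine consistently so as to deliver $c(\textup{Frob}_\mathfrak{p})$ exactly, not its inverse. A useful cross-check, bypassing (ii) and (iii) entirely, is to compute $[\phi_u,\phi_\varpi]_\mathfrak{p}$ directly through the tame Hilbert symbol $(u,\varpi)_\mathfrak{p}\equiv u^{(N\mathfrak{p}-1)/p}\pmod{\mathfrak{p}}$ and observe that this residue is precisely $\textup{Frob}_\mathfrak{p}(\sqrt[p]{u})/\sqrt[p]{u}$, confirming both the value and the sign.
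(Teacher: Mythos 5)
Your proof follows the paper's argument precisely: apply global reciprocity to reduce $[c,\varphi(\mathfrak{p})]_\Sigma$ to the product over $v\notin\Sigma$, kill all those local terms except $\mathfrak{p}$ via \Cref{properties of local symbol}(i) using unramifiedness of both $c$ and $\varphi(\mathfrak{p})$, and then evaluate the term at $\mathfrak{p}$ by combining parts (ii) and (iii) with the identification of $\textup{Art}_{K_\mathfrak{p}}(\varpi)$ with a Frobenius lift and the fact that $c$ is inflated from $\textup{Gal}(K(T)/K)$. The tame Hilbert symbol cross-check at the end is a nice addition the paper does not include, but the argument itself is the same.
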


\begin{proof}
By global class field theory the product of $[c,\varphi({\mathfrak{p}})]_v$ over all places of $K$ is equal to 1. In particular, we have
\[[c,\varphi(\mathfrak{p})]_\Sigma=\prod_{v\notin \Sigma}[c,\varphi({\mathfrak{p}})]_v.\]
If $\mathfrak{q}$ is a prime of $K$ not in $\Sigma$  then $\mathfrak{q}\nmid p$ and, additionally, $K(T)/K$ is unramified at $\mathfrak{q}$ whence the restriction of $c$ to $H^1(K_\mathfrak{q},\boldsymbol \mu_p)$ is in the unramified subgroup $H^1_{\text{ur}}(K_\mathfrak{q},\boldsymbol \mu_p)$. If $\mathfrak{q}\neq \mathfrak{p}$ then $\varphi({\mathfrak{p}})$ is also unramified at $\mathfrak{q}$ whence $[c,\varphi(\mathfrak{p})]_\mathfrak{q}=1$ by \Cref{properties of local symbol} (i).  

It now follows that  $[c,\varphi(\mathfrak{p})]_\Sigma=[c,\varphi(\mathfrak{p})]_\mathfrak{p}$ and to conclude we must show that $[c,\varphi(\mathfrak{p})]_\mathfrak{p}=c(\text{Frob}_{\mathfrak{p}})$. Since $\boldsymbol \mu_p\subseteq K_{\mathfrak{p}}$ and we've chosen $\varphi(\mathfrak{p})$ so that its restriction to $\mathcal{C}(K_\mathfrak{p})$ agrees with $\delta_\mathfrak{p}(\varpi)$ for some uniformiser $\varpi$ of $K_{\mathfrak{p}}$, parts (ii) and (iii) of \Cref{properties of local symbol} combine to give
\[[c,\varphi({\mathfrak{p}})]_{\mathfrak{p}}=c(\textup{Art}_{K_{\mathfrak{p}}}(\varpi)).\] Now $c$ is unramified at $\mathfrak{p}$ and  by standard properties of the local Artin map, $\textup{Art}_{K_{\mathfrak{p}}}(\varpi)|_{{K_\mathfrak{p}}^{\text{nr}}}=\text{Frob}_{K_{\mathfrak{p}}}$. On the other hand, since $c$ came from $H^1(K(T)/K,\boldsymbol \mu_p)$, its restriction to $H^1(K_\mathfrak{p},\boldsymbol \mu_p)$ factors through $\text{Gal}(K_\mathfrak{p}(T)/K_{\mathfrak{p}})$. As the restriction of $\text{Frob}_{K_{\mathfrak{p}}}$ to $\text{Gal}(K_\mathfrak{p}(T)/K_{\mathfrak{p}})$ is precisely $\text{Frob}_{\mathfrak{p}}$, we have the result. 
\end{proof}

\section{Disparity in Selmer ranks: remaining cases}

We now treat the remaining cases of \Cref{main twisting theorem combined}, namely when $p=2$ and $\epsilon$ fails to be  a homomorphism, or when $p>2$ and $\epsilon$ is non-trivial when restricted to $\text{Gal}(K(T)/K(\boldsymbol \mu_p))$. Our strategy is broadly based on that of \cite[Section 8]{MR3043582}, although the arguments are more involved in order to allow the dimension of $T$ to be arbitrary.

As before, let $G$ denote the Galois group of $K(T)/K$ and write $\Gamma:=\prod_{v\in \Sigma}\mathcal{C}(K_v)$. For $\chi \in \mathcal{C}(K)$ we denote by $\chi|_\Gamma$ the image of $\chi$ in $\Gamma$ under the (product of the) natural restriction map(s).
 
 We fix a finite set of places $\Sigma$ satisfying \Cref{assumptions on places}.

\begin{defi} \label{definition of w}
Define a map $w:\mathcal{C}(K)\rightarrow \{\pm 1\}$ by 
\[w(\chi):=\prod_{v\notin \Sigma,~\chi_v~\text{ram}}(-1)^{\dim_{\mathbb{F}_p}T^{G_{K_v}}}=\prod_{v\notin \Sigma,~\chi_v~\text{ram}} \epsilon(\textup{Frob}_v),\]
where here $\textup{Frob}_v\in G$ denotes the Frobenius element at $v$ in $K(T)/K$. 
\end{defi}

\begin{remark} \label{local decomp remark}
By \Cref{local decomposition}, for each $\chi \in \mathcal{C}(K)$ we have
\[(-1)^{\dim_{\mathbb{F}_2}\textup{Sel}(T,\chi)}=w(\chi)(-1)^{\dim_{\mathbb{F}_2}\textup{Sel}(T,\mathbbm{1}_K)}\prod_{v\in \Sigma}(-1)^{h_v(\mathbbm{1}_K,\chi_v)}.\]
\end{remark}

We now examine the extent to which $w(\chi)$ behaves `independently' of the restriction of $\chi$ to $\Gamma$. To this end, we make the following definition.

\begin{defi}
For each $X\geq 1$ and $\gamma \in \Gamma$, define 
\[s_X(\gamma)=\frac{|\{\chi \in \mathcal{C}(K,X)~~:~~\chi|_\Gamma=\gamma,~~w(\chi)=1\}|}{|\{\chi \in \mathcal{C}(K,X)~~:~~\chi|_\Gamma=\gamma\}|}.\]
\end{defi}

The rest of the section is occupied with the proof of the following Theorem. 

\begin{theorem} \label{independence theorem}
We have
\begin{itemize}
\item[(i)] if $p=2$ and $\epsilon$ fails to be a homomorphism then for all sufficiently large $X$, $s_X(\gamma)=\frac{1}{2}$ for all $\gamma \in \Gamma$,
\item[(ii)] if $p>2$ and $\epsilon$ is non-trivial when restricted to $\textup{Gal}(K(T)/K(\boldsymbol \mu_p))$ then $\lim_{X\rightarrow \infty} s_X(\gamma)=\frac{1}{2}$ for all $\gamma \in \Gamma$.
\end{itemize} 
\end{theorem}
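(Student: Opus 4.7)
The plan is to reduce to a character-sum estimate via Fourier analysis on the finite group $\Gamma$, then apply an involution in case (i) and an exponential-decay estimate in case (ii). Since $s_X(\gamma)=1/2$ is equivalent to $\sum_{\chi \in S(\gamma,X)} w(\chi) = 0$ (writing $S(\gamma,X) := \{\chi \in \mathcal{C}(K,X) : \chi|_\Gamma = \gamma\}$) and \Cref{surjectivity of restriction} gives $|S(\gamma,X)| = |\mathcal{C}(K,X)|/|\Gamma|$ for $X$ large, Fourier inversion on $\widehat\Gamma$ reduces the claim to showing
\[A(\eta,X) := \sum_{\chi \in \mathcal{C}(K,X)} \eta(\chi|_\Gamma)\,w(\chi)\]
vanishes identically (case (i)) or is $o(|\mathcal{C}(K,X)|)$ (case (ii)) for every $\eta \in \widehat\Gamma$. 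The crucial preliminary for case (i) is that, for $p=2$, $w$ is multiplicative on $\mathcal{C}(K)$: indeed $R(\chi\chi') = R(\chi)\triangle R(\chi')$ outside $\Sigma$, since any prime $\mathfrak{q}$ with both $\chi_\mathfrak{q}$ and $\chi'_\mathfrak{q}$ ramified has $(\chi\chi')_\mathfrak{q}$ unramified (the two non-trivial ramified quadratic characters of $K_\mathfrak{q}$ differ by the unique non-trivial unramified one), giving $w(\chi\chi') = w(\chi)w(\chi')$.

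For case (i), it thus suffices to exhibit $\psi \in \mathcal{C}(K,X)$ with $\psi|_\Gamma = 1$ and $w(\psi) = -1$: then $\chi \mapsto \chi\psi$ is a fixed-point-free involution on $S(\gamma,X)$ satisfying $w(\chi\psi) = -w(\chi)$, yielding $s_X(\gamma) = 1/2$ exactly. I construct $\psi = \varphi(\mathfrak{p}_1)\cdots\varphi(\mathfrak{p}_k)\cdot\eta$ for $k \in \{2,3\}$, with $\varphi(\mathfrak{p}_i)$ from \Cref{existence of generators} and correcting element $\eta \in V := \mathcal{C}_\Sigma(K)$. By \Cref{preevaluation of alpha} and global reciprocity, $\varphi(\mathfrak{p}_1)\cdots\varphi(\mathfrak{p}_k)|_\Gamma$ lies in the image of $V\to\Gamma$ (so a correcting $\eta$ exists) if and only if $\textup{Frob}_{\mathfrak{p}_1}\cdots\textup{Frob}_{\mathfrak{p}_k} \in H := [G,G]\cdot G^2 \trianglelefteq G$ (the intersection of kernels of all $c \in H^1(K(T)/K,\boldsymbol\mu_2) = \textup{Hom}(G,\mathbb{F}_2)$), and then $w(\psi) = \prod_i \epsilon(\textup{Frob}_{\mathfrak{p}_i})$. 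A short case analysis produces the required Frobenius classes from the non-homomorphism of $\epsilon$: if $\epsilon$ does not factor through $G/H$, pick $\sigma \in G,\tau \in H$ with $\epsilon(\sigma) \ne \epsilon(\sigma\tau)$ and take $\sigma_1 := \sigma,\,\sigma_2 := \sigma\tau$ (so $k=2$ and $\sigma_1\sigma_2 = \sigma^2\tau \in H$); otherwise $\epsilon$ factors as $\bar\epsilon$ on the elementary abelian $2$-group $G/H$ and remains non-homomorphic, providing $x,y \in G/H$ with $\bar\epsilon(x)\bar\epsilon(y)\bar\epsilon(xy) = -1$ and thus $k = 3$ after lifting representatives of $x,y,xy$ to $G$. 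Chebotarev realizes these conjugacy classes at primes of norm $< X$ for all $X$ sufficiently large.

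For case (ii), $w$ is no longer multiplicative and no involution exists, so I bound $A(\eta,X)$ directly. Parameterizing $\chi = \eta_0\prod_{\mathfrak{p}\in\mathcal{P}(X)}\varphi(\mathfrak{p})^{n_\mathfrak{p}}$ with $\eta_0 \in V$, $(n_\mathfrak{p}) \in W(X)$, and $\mathcal{P}(X) := \{\mathfrak{p}\notin\Sigma : N\mathfrak{p}<X,\,\boldsymbol\mu_p\subseteq K_\mathfrak{p}\}$ (where by \Cref{existence of characters} the subgroup $W(X) \subseteq \bigoplus\mathbb{F}_p$ is the orthogonal complement of the image of Frobenius classes of $H^1(K(T)/K,\boldsymbol\mu_p)$), the sum over $\eta_0$ annihilates $A(\eta,X)$ unless $\eta$ is trivial on the image of $V$ in $\Gamma$. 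In the remaining case, Fourier-dualizing the constraint on $W(X)$ expresses $A(\eta,X)/|\mathcal{C}(K,X)|$ as a sum over $c \in H^1(K(T)/K,\boldsymbol\mu_p)$ of products over $\mathfrak{p}\in\mathcal{P}(X)$ of normalized local factors
\[\frac{1}{p}\sum_{n\in\mathbb{F}_p}\zeta^n\,\epsilon(\textup{Frob}_\mathfrak{p})^{\mathbb{1}(n\ne 0)}\]
for $\zeta \in \boldsymbol\mu_p$ depending on $c,\eta,\mathfrak{p}$. A direct computation shows this factor has absolute value strictly less than $1$ whenever $\epsilon(\textup{Frob}_\mathfrak{p}) = -1$, and vanishes when $\zeta \ne 1$ and $\epsilon(\textup{Frob}_\mathfrak{p}) = 1$. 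By Chebotarev applied to $\textup{Gal}(K(T)/K(\boldsymbol\mu_p))$, where $\epsilon$ is non-trivial by hypothesis, the set $\{\mathfrak{p}\in\mathcal{P}(X) : \epsilon(\textup{Frob}_\mathfrak{p}) = -1\}$ has positive density in $\mathcal{P}(X)$, yielding exponential decay of each product and thus $A(\eta,X)/|\mathcal{C}(K,X)|\to 0$ after uniform estimation over the finite set of $c$'s.

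The main obstacle is the group-theoretic step in case (i): producing the two (or three) Frobenius classes with both $\prod_i\textup{Frob}_{\mathfrak{p}_i} \in H$ and $\prod_i\epsilon(\textup{Frob}_{\mathfrak{p}_i}) = -1$ using only the non-homomorphism hypothesis. The case split above is short but depends on the observation that since $\pi: G \twoheadrightarrow G/H$ is surjective, a function $\epsilon$ on $G$ that factors through $G/H$ is a homomorphism if and only if the factored form $\bar\epsilon$ is, so non-homomorphism of $\epsilon$ forces non-homomorphism of $\bar\epsilon$ on the elementary abelian $2$-group $G/H$, providing the triple $(x,y,xy)$ with $\bar\epsilon(x)\bar\epsilon(y)\bar\epsilon(xy) = -1$. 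The remaining steps — Chebotarev, the reciprocity identity of \Cref{preevaluation of alpha}, and the Fourier estimation in case (ii) — are largely routine.
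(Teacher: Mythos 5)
Your approach is correct modulo two fixable imprecisions, and it takes a genuinely different route from the paper's for case (i). The paper first reduces the counting problem from fibers over $\Gamma$ to fibers over $\boldsymbol\mu_2^r$ via the map $f$ (\Cref{reduction to beta}), then packages $w$ and the restriction to $\Gamma$ into a homomorphism $\theta:\mathcal{C}(K)\to\boldsymbol\mu_2^r\times\{\pm1\}$ and proves that $\theta$ is surjective precisely when $\epsilon$ is not a homomorphism (\Cref{surjectivity of alpha}); equinumerosity of the fibers of a surjective homomorphism then finishes. You instead work directly with $S(\gamma,X)$ by exhibiting a single swapping character $\psi$ with $\psi|_\Gamma=\mathbbm{1}$ and $w(\psi)=-1$, and using $\chi\mapsto\chi\psi$ as a fixed-point-free $w$-reversing involution. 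This is cleaner to state, avoids \Cref{reduction to beta} entirely, and your case split according to whether $\epsilon$ factors through $G/G^2$ is a pleasant alternative to the paper's generation argument in \Cref{surjectivity of alpha}. For case (ii) the two proofs are essentially the same computation viewed from different angles: the paper's matrix recurrence (\Cref{recurrence for t}) corresponds to your Fourier factorization into local factors, and the operator-norm estimate (\Cref{limit of operator norm}) is your bound $\max(2/p,(p-2)/p)<1$ together with Chebotarev giving positive density of primes with $\epsilon(\textup{Frob}_\mathfrak{p})=-1$.

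The two imprecisions are worth flagging. First, in case (i) your correcting space $V$ should be the characters unramified outside $\Sigma$ together with the primes split completely in $K(T)/K$, not $\mathcal{C}_\Sigma(K)$: it is \Cref{existence of characters} that characterizes the image in $\Gamma$ of the former (via $[\,\cdot\,,\,\cdot\,]_\Sigma$-orthogonality to the image of $H^1(K(T)/K,\boldsymbol\mu_2)$), and applying the same characterization to $\mathcal{C}_\Sigma(K)$ is not justified by the cited lemma. The fix is harmless because allowing the correcting $\eta$ to ramify at split primes does not change $w(\psi)$, as $\epsilon(\textup{Frob}_\mathfrak{p})=\epsilon(1)=(-1)^{\dim T}=1$ there; only the direction you actually use (``if the product of Frobenii lies in $[G,G]G^2$ then a correcting $\eta$ exists'') is needed. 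Second, in case (ii) the set $W(X)$ and the ``sum over $c$'' are extraneous: after writing $\chi=\eta_0\prod_\mathfrak{p}\varphi(\mathfrak{p})^{n_\mathfrak{p}}$ with $\eta_0\in\mathcal{C}_\Sigma(K)$, the tuple $(n_\mathfrak{p})$ ranges freely over $\bigoplus_{\mathfrak{p}\in\mathcal{P}(X)}\mathbb{F}_p$ (not a proper subgroup), and once the $\eta_0$-sum forces $\eta$ to kill the image of $\mathcal{C}_\Sigma(K)$ in $\Gamma$, what remains is a single product of local factors indexed by $\mathfrak{p}\in\mathcal{P}(X)$, not a sum over $c\in H^1(K(T)/K,\boldsymbol\mu_p)$. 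Your estimates on those local factors (vanishing when $\zeta\neq 1$ and $\epsilon(\textup{Frob}_\mathfrak{p})=1$; absolute value at most $\max(2/p,(p-2)/p)<1$ when $\epsilon(\textup{Frob}_\mathfrak{p})=-1$) are correct, so the decay argument does go through once the bookkeeping is tidied.
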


Assuming this for the moment, we get as a corollary the remaining cases of \Cref{main twisting theorem combined}.

\begin{theorem} \label{epsilon non-trivial twisting theorem} 
We have
\begin{itemize}
\item[(i)] if $p=2$ and $\epsilon$ fails to be a homomorphism then for all sufficiently large $X$, \[\frac{|\{\chi \in \mathcal{C}(K,X)~~:~~\dim_{\mathbb{F}_p}\textup{Sel}(T,\chi)~~\textup{is even}\}|}{|\mathcal{C}(K,X)|}=\frac{1}{2},\]
\item[(ii)] if $p>2$ and $\epsilon$ is non-trivial when restricted to $\textup{Gal}(K(T)/K(\boldsymbol \mu_p))$ then \[\lim_{X\rightarrow \infty} \frac{|\{\chi \in \mathcal{C}(K,X)~~:~~\dim_{\mathbb{F}_p}\textup{Sel}(T,\chi)~~\textup{is even}\}|}{|\mathcal{C}(K,X)|}=\frac{1}{2}.\]
\end{itemize} 
\end{theorem}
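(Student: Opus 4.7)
The plan is to deduce the theorem directly from \Cref{independence theorem} combined with \Cref{local decomp remark}, the argument reducing to a bookkeeping exercise after partitioning $\mathcal{C}(K,X)$ by the restriction map to $\Gamma=\prod_{v\in \Sigma}\mathcal{C}(K_v)$. First I would fix $X$ large enough that the restriction homomorphism $r\colon \mathcal{C}(K,X)\to \Gamma$ is surjective, which is possible by \Cref{surjectivity of restriction}. Since $r$ is a group homomorphism its non-empty fibres are cosets of $\ker(r)$ and hence all have the same cardinality, so it suffices to control, uniformly in $\gamma \in \Gamma$, the proportion of $\chi$ in the fibre $r^{-1}(\gamma)$ with $\dim_{\mathbb{F}_p}\textup{Sel}(T,\chi)$ even.

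The key observation, provided by \Cref{local decomp remark}, is that for $\chi$ with $\chi|_\Gamma = \gamma$ one has
\[(-1)^{\dim_{\mathbb{F}_p}\textup{Sel}(T,\chi)} = w(\chi)\cdot c(\gamma),\]
where
\[c(\gamma):=(-1)^{\dim_{\mathbb{F}_p}\textup{Sel}(T,\mathbbm{1}_K)}\prod_{v\in \Sigma}(-1)^{h_v(\mathbbm{1}_{K_v},\gamma_v)} \in \{\pm 1\}\]
depends only on $\gamma$, not on $\chi$. Consequently, the proportion of $\chi\in r^{-1}(\gamma)$ with $\dim_{\mathbb{F}_p}\textup{Sel}(T,\chi)$ even equals $s_X(\gamma)$ when $c(\gamma)=+1$ and $1-s_X(\gamma)$ when $c(\gamma)=-1$.

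Finally, I would invoke \Cref{independence theorem}: in case (i) this proportion equals $\tfrac{1}{2}$ exactly for every $\gamma$ once $X$ is sufficiently large, while in case (ii) it tends to $\tfrac{1}{2}$ as $X\to\infty$; since $\Gamma$ is finite the convergence is automatically uniform in $\gamma$. Summing $|r^{-1}(\gamma)|\cdot\tfrac{1}{2}$ (respectively its asymptotic version) over $\gamma \in \Gamma$ and dividing by $|\mathcal{C}(K,X)|=|\Gamma|\cdot|\ker(r)|$ yields the claimed value $\tfrac{1}{2}$ in both cases. I do not anticipate any genuine obstacle in this step, as all the substantive work has been isolated in \Cref{independence theorem}; \Cref{epsilon non-trivial twisting theorem} is then a direct organisational consequence cleanly separating the local contributions (encoded by $\gamma$) from the global `ramification defect' $w(\chi)$.
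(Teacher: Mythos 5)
Your argument is correct and is essentially the same as the paper's: both fix $X$ large enough that restriction to $\Gamma$ is surjective, partition $\mathcal{C}(K,X)$ into fibres over $\Gamma$, use \Cref{local decomp remark} to reduce the parity condition on $\dim_{\mathbb{F}_p}\textup{Sel}(T,\chi)$ within a fibre to the condition $w(\chi)=c(\gamma)$ for a sign $c(\gamma)$ depending only on $\gamma$, and then invoke \Cref{independence theorem} to conclude each fibre proportion is (or tends to) $1/2$ before averaging over the equal-sized fibres.
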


\begin{proof}
Fix $\gamma \in \Gamma$ and suppose that $\chi \in \mathcal{C}(K,X)$ is such that $\chi|_\Gamma=\gamma$. Then by \Cref{local decomp remark} we have 
\[\text{dim}_{\mathbb{F}_2}\textup{Sel}(T,\chi) \textup{ is even}~~\Leftrightarrow ~~ w(\chi)=(-1)^{\dim_{\mathbb{F}_2}\textup{Sel}(T,\mathbbm{1}_K)}\prod_{v\in \Sigma}(-1)^{h_v(\mathbbm{1}_K,\gamma_v)},\]
and the right hand side depends only on $\gamma$. In particular, by \Cref{independence theorem} we have
\[\lim_{X\rightarrow \infty}\frac{|\{\chi \in \mathcal{C}(K,X)~~:~~\chi|_\Gamma=\gamma\text{ and }\text{dim}_{\mathbb{F}_2}\textup{Sel}(T,\chi) \textup{ is even}\}|}{|\{\chi \in \mathcal{C}(K,X)~~:~~\chi|_\Gamma=\gamma\}|}=\frac{1}{2},\]
and if $p=2$ then this is in fact an equality for all sufficiently large $X$  rather than a limit. 
Averaging over all $\gamma \in \Gamma$  gives the result (note that the sets 
$\{\chi \in \mathcal{C}(K,X)~~:~~\chi|_\Gamma=\gamma\}$  all have the same size for sufficiently large $X$ as the restriction map $\chi\mapsto \chi|_\Gamma$ is a homomorphism and is surjective for $X$ sufficiently large by \Cref{surjectivity of restriction}).
\end{proof}

%

We now turn to the proof of \Cref{independence theorem}. 

\begin{defi} \label{definition of beta 2}
Fix an $\mathbb{F}_p$-basis $\{\phi_1,...,\phi_r\}$ for $H^1(K(T)/K,\boldsymbol \mu_p)$. Further, define the homomorphism $f:\Gamma \rightarrow \boldsymbol \mu_p^r$ by setting
\[f(\gamma)= \left([\phi_i,\gamma]_\Sigma\right)_{i=1}^r\]
where here we view the $\phi_i$ inside $\prod_{v\in \Sigma}H^1(K_v,\boldsymbol \mu_p)$ via the product of the natural restriction maps, and $[~,~]_\Sigma$ is the pairing of \Cref{hilbert symbol} (we allow the case $r=0$ in  which case $\boldsymbol \mu_p^r$ is the trivial group).
\end{defi}

\begin{remark} \label{surjectivity of beta 2}
Since we have taken $\Sigma$ large enough that the map
\[H^1(K(T)/K,\boldsymbol \mu_p)\longrightarrow \prod_{v\in \Sigma}H^1(K_v,\boldsymbol \mu_p)\] is injective, it follows from the non-degeneracy of the pairing $[~,~]_\Sigma$  that $f$ is surjective.
\end{remark}

\begin{defi} \label{defi of t}
For each $n\geq 1$ and $\eta \in \boldsymbol \mu_p^r$, define
\[t_X(\eta)=\frac{|\{\chi \in \mathcal{C}(K,X)~~:~~f(\chi|_\Gamma)=\eta,~w(\chi)=1\}|}{|\{\chi \in \mathcal{C}(K,X)~~:~~f(\chi|_\Gamma)=\eta\}|}.\]
\end{defi}

The following lemma reduces the problem of understanding $s_X(\gamma)$ as $\gamma$ ranges over the elements of $\Gamma$, to understanding $t_X(\eta)$ as $\eta$ ranges over the elements of $\boldsymbol \mu_p^r$.

\begin{lemma}\textup{(c.f. }\cite[Lemma 8.4]{MR3043582}\textup{)} \label{reduction to beta}
Let $\gamma \in \Gamma$. Then for $X$ sufficiently large we have
\[s_X(\gamma)=t_X(f(\gamma)).\] 
\end{lemma}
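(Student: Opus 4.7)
The plan is to reduce the identity to showing that $s_X(\gamma)$ depends only on $f(\gamma)$ once $X$ is large enough, and then to establish this invariance by constructing a ``translation'' character $\chi_0\in\mathcal{C}(K)$ with $\chi_0|_\Gamma=\gamma\gamma'^{-1}$ whose ramification is sufficiently controlled. First, once $X$ is large enough that $\mathcal{C}(K,X)\to\Gamma$ is surjective (possible by \Cref{surjectivity of restriction}), all fibres of this restriction have the common size $|\ker(\mathcal{C}(K,X)\to\Gamma)|$. Consequently $t_X(\eta)$ is simply the average of $s_X(\gamma)$ over $\gamma\in f^{-1}(\eta)$, so it suffices to prove that $s_X$ is constant on fibres of $f$.

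To produce $\chi_0$, fix $\gamma,\gamma'\in\Gamma$ with $f(\gamma)=f(\gamma')$. Since the $\phi_i$ form an $\mathbb{F}_p$-basis of $H^1(K(T)/K,\boldsymbol\mu_p)$ and inject into $\prod_{v\in\Sigma}H^1(K_v,\boldsymbol\mu_p)$ by \Cref{assumptions on places}(i), the equality $f(\gamma\gamma'^{-1})=1$ translates into $[c,\gamma\gamma'^{-1}]_\Sigma=0$ for every $c$ in the image of $H^1(K(T)/K,\boldsymbol\mu_p)\to\prod_{v\in\Sigma}H^1(K_v,\boldsymbol\mu_p)$. Then \Cref{existence of characters} supplies $\chi_0\in\mathcal{C}(K)$ with $\chi_0|_\Gamma=\gamma\gamma'^{-1}$ and unramified outside $\Sigma\cup\P$, where $\P$ is the set of primes of $K$ outside $\Sigma$ splitting completely in $K(T)/K$; let $\P_0$ denote the finite ramification locus of $\chi_0$ outside $\Sigma$, so that $\P_0\subseteq\P$.

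For $X>\|\chi_0\|$, multiplication by $\chi_0$ restricts to a bijection from $\{\chi\in\mathcal{C}(K,X):\chi|_\Gamma=\gamma'\}$ onto $\{\chi\in\mathcal{C}(K,X):\chi|_\Gamma=\gamma\}$: the ramification locus of $\chi\chi_0$ outside $\Sigma$ is contained in the union of those of $\chi$ and $\chi_0$, so $\|\chi\chi_0\|\leq\max(\|\chi\|,\|\chi_0\|)<X$, and the map is inverted by multiplication by $\chi_0^{-1}$. The key point is that this bijection preserves $w$. Outside $\Sigma\cup\P_0$ the character $\chi_0$ is unramified, so the ramification of $\chi\chi_0$ coincides with that of $\chi$ and these primes contribute identically to $w(\chi)$ and $w(\chi\chi_0)$. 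At any $v\in\P_0\subseteq\P$, $\textup{Frob}_v$ acts trivially on $T$ so $\epsilon(\textup{Frob}_v)=(-1)^{\dim_{\mathbb{F}_p}T}=+1$ (using that $\dim_{\mathbb{F}_p}T$ is even, by non-degeneracy of the alternating pairing), and these primes contribute $+1$ to both $w(\chi)$ and $w(\chi\chi_0)$.

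Consequently $s_X(\gamma)=s_X(\gamma')$ once $X>\|\chi_0\|$; taking $X$ larger than this bound across the finitely many pairs $(\gamma,\gamma')$ in $\Gamma^2$ with $f(\gamma)=f(\gamma')$ (and larger than the threshold required by \Cref{surjectivity of restriction}), the identity $s_X(\gamma)=t_X(f(\gamma))$ follows. The only real subtlety is that the ``extra'' ramification of $\chi_0$ at primes in $\P\setminus\Sigma$ could a priori corrupt $w$; this is arranged away precisely because \Cref{existence of characters} confines such ramification to primes splitting completely in $K(T)/K$, at which $\epsilon(\textup{Frob}_v)$ is forced to be $+1$ by the evenness of $\dim_{\mathbb{F}_p}T$.
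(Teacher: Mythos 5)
Your proof is correct and follows essentially the same route as the paper: both proofs use \Cref{existence of characters} to supply a translation character whose ramification outside $\Sigma$ is confined to primes splitting completely in $K(T)/K$, observe that $\epsilon(\textup{Frob}_v)=(-1)^{\dim_{\mathbb{F}_p}T}=1$ at such primes so that multiplication by the translation character preserves $w$, and then use the resulting bijection between fibres of $\mathcal{C}(K,X)\to\Gamma$ to conclude. Your reorganization — invoking \Cref{surjectivity of restriction} to note the restriction fibres have common size, interpreting $t_X(\eta)$ as an average of $s_X$ over $f^{-1}(\eta)$, and reducing to constancy of $s_X$ on fibres — is a clean repackaging of the paper's direct sum manipulation, and your explicit handling of the uniformity of the $X$-threshold (taking $X$ past $\|\chi_0\|$ simultaneously for the finitely many relevant pairs) is slightly more careful than the paper's phrasing, but the underlying idea and the essential mechanism (\Cref{existence of characters} plus triviality of $\epsilon$ on Frobenii of completely split primes) coincide.
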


\begin{proof}
Let $\P$ denote the set of primes of $K$ not in $\Sigma$ and which split completely in $K(T)/K$, and let $\gamma'\in \Gamma$ be such that $f(\gamma')=f(\gamma)$. Then $\gamma'\gamma^{-1}$ is in the kernel of $f$ so by \Cref{existence of characters} there is $\chi_{\gamma,\gamma'}\in \mathcal{C}(K)$ with $\chi_{\gamma,\gamma'}|_\Gamma=\gamma'\gamma^{-1}$ and such that $\chi_{\gamma,\gamma'}$ is unramified outside $\Sigma \cup \P$. Now for any $\chi\in \mathcal{C}(K)$ we have $w(\chi)=w(\chi \chi_{\gamma,\gamma'})$ since the sets of primes not in $\Sigma$ where $\chi$ and $\chi_{\gamma,\gamma'}$ ramify differ only at primes $\mathfrak{p}\in \P$, and at such primes we have \[\epsilon(\text{Frob}_{\mathfrak{p}})=\epsilon(1)=(-1)^{\text{dim}T}=1\] (where as usual $\text{Frob}_{\mathfrak{p}}$ denotes the Frobenius element at $\mathfrak{p}$ in $K(T)/K$). Thus if $X$ is sufficiently large that $\chi_{\gamma,\gamma'}$ is in $\mathcal{C}(K,X)$, multiplication by $\chi_{\gamma,\gamma'}$ gives a bijection between the set 
\[\{\chi \in \mathcal{C}(K,X)~~:~~\chi|_\Gamma=\gamma,~w(\chi)=1\}\]
and the set
\[\{\chi \in \mathcal{C}(K,X)~~:~~\chi|_\Gamma=\gamma',~w(\chi)=1\},\]
as well as between the same two sets with the conditions on $w(\chi)$ removed. 

Writing $\eta=f(\gamma)$, it follows that for $X$ sufficiently large we have
\[t_X(\eta)=\frac{\sum_{\gamma' \in f^{-1}(\{\eta\})}|\{\chi \in \mathcal{C}(K,X)~~:~~\chi|_\Gamma=\gamma',~w(\chi)=1\}|}{\sum_{\gamma' \in f^{-1}(\{\eta\})}|\{\chi \in \mathcal{C}(K,X)~~:~~\chi|_\Gamma=\gamma'\}|}\]
\[~~~~~~~=\frac{|f^{-1}(\{\eta\})|\cdot |\{\chi \in \mathcal{C}(K,X)~~:~~\chi|_\Gamma=\gamma,~w(\chi)=1\}|}{|f^{-1}(\{\eta\})|\cdot |\{\chi \in \mathcal{C}(K,X)~~:~~\chi|_\Gamma=\gamma\}|}=s_X(\gamma)\]
as desired.
\end{proof}

We now study the quantities $t_X(\eta)$ as $\eta$ ranges over $\boldsymbol \mu_p^r$, splitting into cases  according to $p=2$ or $p>2$.

\subsection{The case where $p=2$ and $\epsilon$ fails to be a homomorphism}

Suppose now that $p=2$ and $\epsilon$ fails to be a homomorphism.  

\begin{defi} \label{definition of alpha}
Define the map $\theta:\mathcal{C}(K)\rightarrow \boldsymbol \mu_2^r \times \{\pm 1\}$ by setting
\[\theta(\chi):=\left(f(\chi|_\Gamma),w(\chi)\right).\]
\end{defi}

The following observation will be crucial to our method. We remark that it fails for $p>2$.

\begin{lemma}
The map $\theta$   is a homomorphism.
\end{lemma}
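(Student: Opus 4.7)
The plan is to treat the two coordinates of $\theta$ separately, showing each is a group homomorphism $\mathcal{C}(K) \to \boldsymbol\mu_2^r$ respectively $\mathcal{C}(K) \to \{\pm 1\}$, and then assembling them.

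For the first coordinate $\chi \mapsto f(\chi|_\Gamma)$: the restriction map $\mathcal{C}(K) \to \Gamma$, $\chi \mapsto (\chi_v)_{v \in \Sigma}$, is a group homomorphism because the local restriction maps $\mathcal{C}(K) \to \mathcal{C}(K_v)$ are, and $f$ is a homomorphism because each local symbol $[~,~]_v$, and therefore also $[~,~]_\Sigma$, is bilinear. So the composition is a homomorphism; this step is routine.

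The content of the lemma lies in showing that $w$ is a homomorphism. For $\chi \in \mathcal{C}(K)$ write $S(\chi) = \{v \notin \Sigma : \chi_v \text{ ramifies}\}$ so that, by \Cref{definition of w},
\[w(\chi) = \prod_{v \in S(\chi)} \epsilon(\textup{Frob}_v).\]
The key observation, which I expect to be the only nontrivial step (and the one which will fail for $p>2$), is that for each place $v \notin \Sigma$, the subset $\mathcal{C}_{\textup{ur}}(K_v) \subseteq \mathcal{C}(K_v)$ of unramified (quadratic) characters is a subgroup of index dividing $2$. Indeed, since $\Sigma$ contains all places above $p=2$, any $v \notin \Sigma$ has residue characteristic different from $2$; from $K_v^{\times}/K_v^{\times 2} \cong \langle u\rangle \times \langle \varpi\rangle$ (with $u$ a nonsquare unit and $\varpi$ a uniformiser) one reads off directly that $\mathcal{C}_{\textup{ur}}(K_v) = \langle u \rangle$ has index $2$ in $\mathcal{C}(K_v)$. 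Consequently the map $\mathcal{C}(K_v) \to \mathbb{F}_2$ recording whether a character is ramified is a group homomorphism, and hence for any $\chi_1, \chi_2 \in \mathcal{C}(K)$ we have $S(\chi_1\chi_2) = S(\chi_1) \triangle S(\chi_2)$, the symmetric difference.

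To finish, I would combine this with the identity $S(\chi_1) \triangle S(\chi_2) = (S(\chi_1) \cup S(\chi_2)) \setminus (S(\chi_1) \cap S(\chi_2))$ to rewrite
\[
w(\chi_1\chi_2) = \prod_{v \in S(\chi_1)\cup S(\chi_2)} \epsilon(\textup{Frob}_v) \cdot \prod_{v \in S(\chi_1)\cap S(\chi_2)} \epsilon(\textup{Frob}_v)^{-1},
\]
while
\[
w(\chi_1) w(\chi_2) = \prod_{v \in S(\chi_1)\cup S(\chi_2)} \epsilon(\textup{Frob}_v) \cdot \prod_{v \in S(\chi_1)\cap S(\chi_2)} \epsilon(\textup{Frob}_v).
\]
Their ratio is $\prod_{v \in S(\chi_1) \cap S(\chi_2)} \epsilon(\textup{Frob}_v)^{2}$, which equals $1$ since $\epsilon$ is $\{\pm 1\}$-valued. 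Thus $w$ is a homomorphism, and so is $\theta$. (The argument breaks for $p>2$ precisely because the ramification subgroup then has index $p \neq 2$, so $S(\chi_1\chi_2)$ is no longer $S(\chi_1)\triangle S(\chi_2)$ and the cancellation via $\epsilon^2=1$ is unavailable.)
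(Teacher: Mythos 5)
Your proof is correct and follows essentially the same route as the paper: the first coordinate of $\theta$ is a homomorphism for formal reasons, and the substance is that $w$ is a homomorphism because for $v\notin\Sigma$ (odd residue characteristic, $p=2$) the unramified characters form an index-$2$ subgroup of $\mathcal{C}(K_v)$. The paper packages this by factoring $w=\prod_{v\notin\Sigma}w_v$ and checking each $w_v$ is a homomorphism, whereas you compute $w(\chi_1\chi_2)/\bigl(w(\chi_1)w(\chi_2)\bigr)$ directly via symmetric differences and the identity $\epsilon^2=1$; these are the same observation in different notation.
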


\begin{proof}
Since both the  restriction map $\mathcal{C}(K)\rightarrow \Gamma$ and the map $f:\Gamma \rightarrow \boldsymbol\mu_2^r$ are homomorphisms, it suffices to show that $w:\mathcal{C}(K)\rightarrow \{\pm 1\}$ is a homomorphism.

For each $v\notin \Sigma$, define a map $w_v:\mathcal{C}(K_v)\rightarrow \{\pm 1\}$ by
\[w_v(\chi)=\begin{cases} (-1)^{\dim_{\mathbb{F}_2}T^{G_{K_v}}} ~~&~~\chi~~\text{ramified}\\ 1~~&~~\text{else}.\end{cases}\]
Since $w$ is the product of the $w_v$ over $v\notin \Sigma$, it suffices to show that each $w_v$ is a homomorphism. To see this, note that as $v\notin \Sigma$, $K_v$ has odd residue characteristic. In particular, the product of any two ramified characters of $K_v$ is unramified, and the product of a ramified  character with an unramified character is again ramified.
\end{proof}

\begin{remark} \label{strategy}
 For $X>0$, write $\theta_X$ for the restriction of $\theta$ to $\mathcal{C}(K,X)$. Then for each $\eta \in \boldsymbol \mu_2^r$, we have
\[t_X(\eta)=\frac{|\theta_X^{-1}\left((\eta,1)\right)|}{|\theta_X^{-1}\left((\eta,1)\right)|+|\theta_X^{-1}\left((\eta,-1)\right)|}.\]
Now (for $X> 1$), $\mathcal{C}(K,X)$ is a group and $\theta_X$ is a homomorphism.  Thus the fibres over points in the image of $\theta_X$ have the same size, being cosets of the kernel. In light of \Cref{reduction to beta}, \Cref{independence theorem} (i) is equivalent to the statement that, if $\epsilon$ fails to be a homomorphism, then $\theta_X$ is surjective for sufficiently large $X>0$. Since $\boldsymbol \mu_2^r\times \{\pm 1\}$ is a finite group this is, in turn, equivalent to the statement that if $\epsilon$ fails to be a homomorphism then $\theta$ is surjective. This is the statement we now study, and prove in \Cref{surjectivity of alpha}.
\end{remark}

We now fix a collection of global characters $\{\varphi(\mathfrak{p})\}_{\mathfrak{p}\notin \Sigma}$ satisfying \cref{our global generators}. Each $\varphi(\mathfrak{p})$ is ramified at $\mathfrak{p}$, yet unramified outside $\Sigma \cup \{\mathfrak{p}\}$.
%
\Cref{preevaluation of alpha} allows us to evaluate the map $\theta$ on the $\varphi(\mathfrak{p})$. 

\begin{lemma} \label{explicit evaluation of composition}
For each $\mathfrak{p}\notin \Sigma$ we have
\[\theta(\varphi(\mathfrak{p}))=\left(\left(\phi_i(\textup{Frob}_{\mathfrak{p}})\right)_{i=1}^r,\epsilon(\textup{Frob}_{\mathfrak{p}})\right)\]
where here $\textup{Frob}_{\mathfrak{p}}\in G$ denotes the Frobenius element at $\mathfrak{p}$ in $K(T)/K$. 
\end{lemma}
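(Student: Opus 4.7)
This lemma is essentially a repackaging of \Cref{preevaluation of alpha} together with the defining properties of $\varphi(\mathfrak{p})$; there is no genuine obstacle, only bookkeeping, and I would present it accordingly.

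By \Cref{definition of alpha}, $\theta(\varphi(\mathfrak{p}))$ has two components: $f(\varphi(\mathfrak{p})|_\Gamma) \in \boldsymbol\mu_2^r$ and $w(\varphi(\mathfrak{p})) \in \{\pm 1\}$. The plan is to evaluate each separately.

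\textbf{First component.} By \Cref{definition of beta 2}, $f(\varphi(\mathfrak{p})|_\Gamma) = ([\phi_i,\varphi(\mathfrak{p})|_\Gamma]_\Sigma)_{i=1}^r$, with the $\phi_i$ forming a basis of $H^1(K(T)/K,\boldsymbol\mu_p)$ viewed inside $\prod_{v\in\Sigma} H^1(K_v,\boldsymbol\mu_p)$. Since $\varphi(\mathfrak{p})$ is a global character satisfying \cref{our global generators}, \Cref{preevaluation of alpha} applies to each $\phi_i$ and gives $[\phi_i,\varphi(\mathfrak{p})]_\Sigma = \phi_i(\textup{Frob}_{\mathfrak{p}})$. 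This yields the first component of the claimed equality.

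\textbf{Second component.} By \Cref{definition of w},
\[
w(\varphi(\mathfrak{p})) = \prod_{v\notin\Sigma,\ \varphi(\mathfrak{p})_v\ \textup{ram}} \epsilon(\textup{Frob}_v).
\]
By \cref{our global generators}, $\varphi(\mathfrak{p})$ is unramified outside $\Sigma\cup\{\mathfrak{p}\}$ and ramified at $\mathfrak{p}$, so the only place $v\notin\Sigma$ appearing in the product is $v=\mathfrak{p}$. Hence $w(\varphi(\mathfrak{p})) = \epsilon(\textup{Frob}_{\mathfrak{p}})$, giving the second component and completing the proof.
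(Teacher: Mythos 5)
Your proposal is correct and follows essentially the same two-step route the paper uses: evaluate the $f$-component via \Cref{preevaluation of alpha} and the $w$-component by noting $\varphi(\mathfrak{p})$ ramifies at no prime outside $\Sigma$ except $\mathfrak{p}$. Your version is just slightly more explicit in its bookkeeping.
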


\begin{proof}
Since amongst the primes not in $\Sigma$ the character $\varphi(\mathfrak{p})$ only ramifies at $\mathfrak{p}$, we have $w(\varphi(\mathfrak{p}))=\epsilon(\text{Frob}_{\mathfrak{p}})$ by definition. We have $f(\varphi(\mathfrak{p})|_\Gamma)=(\phi_i(\text{Frob}_{\mathfrak{p}}))_{i=1}^r$  by \Cref{preevaluation of alpha}.
\end{proof}


\begin{proposition} \label{surjectivity of alpha}
The map $\theta:\mathcal{C}(K)\longrightarrow \boldsymbol \mu_2^r \times \{\pm 1\}$ of \Cref{definition of alpha} is surjective if and only if $\epsilon$ fails to be a homomorphism.  
\end{proposition}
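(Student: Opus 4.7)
The plan is to analyze the image of $\theta$ as a subgroup of $\boldsymbol{\mu}_2^r\times\{\pm 1\}$. Since $\theta$ is a homomorphism and the target is an elementary abelian $2$-group, and since the composition of $\theta$ with projection onto the first factor---namely $\chi\mapsto f(\chi|_\Gamma)$---is surjective (the restriction $\mathcal{C}(K)\to\Gamma$ is surjective by \Cref{surjectivity of restriction} and $f$ is surjective by \Cref{surjectivity of beta 2}), $\theta$ fails to be surjective if and only if there is a (necessarily unique) homomorphism $h:\boldsymbol{\mu}_2^r\to\{\pm 1\}$ with
\[w(\chi)=h\bigl(f(\chi|_\Gamma)\bigr)\qquad\text{for all }\chi\in\mathcal{C}(K).\]
The task thus reduces to showing that such an $h$ exists precisely when $\epsilon$ is a homomorphism.

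For the direction ``$h$ exists $\Rightarrow$ $\epsilon$ is a homomorphism'', I would substitute the characters $\varphi(\mathfrak{p})$ of \Cref{existence of generators} into the identity above and invoke \Cref{explicit evaluation of composition} to obtain
\[\epsilon(\textup{Frob}_\mathfrak{p})=h\bigl((\phi_i(\textup{Frob}_\mathfrak{p}))_{i=1}^r\bigr)\]
for every prime $\mathfrak{p}\notin\Sigma$. By the Chebotarev density theorem the elements $\textup{Frob}_\mathfrak{p}$ exhaust the conjugacy classes of $G=\textup{Gal}(K(T)/K)$, so the identity extends to all $\sigma\in G$ in place of $\textup{Frob}_\mathfrak{p}$. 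The right hand side is then manifestly a homomorphism in $\sigma$, forcing $\epsilon$ to be a homomorphism on $G$.

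For the converse, suppose $\epsilon:G\to\{\pm 1\}$ is a homomorphism. Identifying $\{\pm 1\}$ with $\boldsymbol{\mu}_2$, it defines a class in $H^1(K(T)/K,\boldsymbol{\mu}_2)$, which I expand in the chosen basis as $\epsilon=\sum_{i=1}^r e_i\phi_i$. Set $h((x_i)_i):=\prod_i x_i^{e_i}$; this $h$ satisfies $h((\phi_i(\sigma))_i)=\epsilon(\sigma)$ by construction. To verify $w(\chi)=h(f(\chi|_\Gamma))$ for arbitrary $\chi\in\mathcal{C}(K)$ I would apply global reciprocity to the product $\prod_{v\in M_K}[\epsilon_v,\chi_v]_v=0$. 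At each $v\notin\Sigma$ the restriction $\epsilon_v$ is unramified, so by \Cref{properties of local symbol}(i) the local symbol vanishes when $\chi_v$ is also unramified; while if $\chi_v$ is ramified, \Cref{properties of local symbol}(ii) (together with the odd residue characteristic computation carried out in the proof of \Cref{first parity lemma}) shows that $[\epsilon_v,\chi_v]_v=\epsilon(\textup{Frob}_v)$. The contribution from $v\notin\Sigma$ therefore equals $w(\chi)$, while the contribution from $\Sigma$ equals $[\epsilon,\chi]_\Sigma=\sum_{i=1}^r e_i\,[\phi_i,\chi]_\Sigma=h(f(\chi|_\Gamma))$ (written additively in $\boldsymbol{\mu}_2$). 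Reciprocity then yields the required identity.

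The main obstacle is the converse direction: one must equate the quantity $w(\chi)$, defined in terms of Frobenius data at primes outside $\Sigma$ where $\chi$ ramifies, with the purely $\Sigma$-local quantity $h(f(\chi|_\Gamma))$. This coupling across $\Sigma$ is furnished by global reciprocity applied to the character $\epsilon$, and depends crucially on the hypothesis that $\epsilon$ is in fact a homomorphism, so that it defines a global cohomology class to which reciprocity may be applied; the forward direction, by contrast, is essentially a formal consequence of Chebotarev together with \Cref{explicit evaluation of composition}.
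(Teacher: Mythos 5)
Your proof is correct, and it takes a genuinely different route from the paper's. Both arguments rest on the same ingredients (\Cref{explicit evaluation of composition}, \Cref{properties of local symbol}, Chebotarev, global reciprocity), but they are packaged differently. The paper first identifies the image of $\theta$ explicitly: since characters unramified outside $\Sigma$ lie in $\ker\theta$ and the $\varphi(\mathfrak{p})$ generate $\mathcal{C}(K)$ modulo such characters, $\operatorname{im}\theta$ is the subgroup generated by the elements $\bigl((\phi_i(\sigma))_{i=1}^r,\epsilon(\sigma)\bigr)$ for $\sigma\in G$; it then decides surjectivity by hands-on group theory in $G/G^2\times\{\pm 1\}$ --- the generating set lies on the index-$2$ graph of $\epsilon$ when $\epsilon$ is a homomorphism, and in the non-homomorphism case a witnessing triple $\sigma,\tau,\sigma\tau$ produces $(1,-1)$ and hence everything. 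You instead observe that, since $f\circ(\cdot|_\Gamma)$ is surjective onto $\boldsymbol{\mu}_2^r$, failure of surjectivity of $\theta$ is equivalent to the existence of a homomorphism $h:\boldsymbol{\mu}_2^r\to\{\pm1\}$ with $w=h\circ f\circ(\cdot|_\Gamma)$; the forward implication is then a Chebotarev argument, while the converse expands $\epsilon$ in the basis $\{\phi_i\}$ of $H^1(K(T)/K,\boldsymbol{\mu}_2)$ and applies global reciprocity to $\epsilon$ directly, essentially re-running the computation from the proof of \Cref{first parity lemma}. Your version is a bit more conceptual: it makes visible that the dichotomy is precisely whether $\epsilon$ defines a global cohomology class to which reciprocity can be applied, and it reuses \Cref{first parity lemma} rather than redoing the group theory by hand. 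The paper's version, in turn, yields a concrete description of $\operatorname{im}\theta$ as the subgroup generated by Frobenius data, which is the sharper structural statement.
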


\begin{proof}
Note that the subgroup $\mathcal{U}$ of $\mathcal{C}(K)$ consisting of characters unramified outside $\Sigma$ is in the kernel of $\theta$, and the quotient $\mathcal{C}(K)/\mathcal{U}$ is generated by the $\varphi({\mathfrak{p}})$ as $\mathfrak{p}$ ranges over primes not in $\Sigma$.

 By the Chebotarev density theorem, each conjugacy class in $G=\text{Gal}(K(T)/K)$ arises as $\text{Frob}_\mathfrak{p}$ for some $\mathfrak{p}\notin \Sigma$ and so by \Cref{explicit evaluation of composition} it follows that the image of $\theta$ is the subgroup of $\boldsymbol \mu_2^r\times\{\pm 1\}$ generated by the set 
\[\{\left((\phi_i(\sigma))_{i=1}^r,\epsilon(\sigma)\right)~~:~~\sigma \in G\}\]
(note that for $\sigma \in G$, both $\epsilon(\sigma)$ and the $\phi_i(\sigma)$ depend only on the conjugacy class of $\sigma$ in $G$). 

Recall that the set $\{\phi_i:1\leq i \leq r\}$ is a basis for $H^1(K(T)/K,\boldsymbol \mu_2)=\text{Hom}(G,\boldsymbol \mu_2)$. To make this more explicit, denote by $G^2$ the subgroup of $G$ generated by the squares of all the elements of $G$. It's a normal subgroup and the quotient $G/G^2$ is an abelian group of exponent $2$. That is, $G/G^2$ is a finite dimensional $\mathbb{F}_2$-vector space. Since every homomorphism from $G$ to $\boldsymbol \mu_2$ factors through G/$G^2$, we have
\[\text{Hom}(G,\boldsymbol \mu_2)=\text{Hom}(G/G^2,\boldsymbol \mu_2)\]
and the right hand group is just the dual of $G/G^2$ as an $\mathbb{F}_2$-vector space. In particular, the map $G/G^2\rightarrow \boldsymbol \mu_2^r$ sending $\sigma$ to $(\phi_i(\sigma))_{i=1}^r$ is an isomorphism.

Combining the above, we arrive at a purely group theoretic criterion: $\theta$ is surjective if and only if the set 
\[S:=\{\left(\bar{\sigma},\epsilon(\sigma)\right)~:~\sigma \in G\}\] 
generates $G/G^2\times \{\pm 1\}$, where here for $\sigma \in G$ we write $\bar{\sigma}$ for the image of $\sigma$ in $G/G^2$. 

Suppose now that $\epsilon$ is a homomorphism. Then $\epsilon$ necessarily factors through $G/G^2$ and we see that $S$ generates an index $2$ subgroup of $G/G^2\times \{\pm 1\}$, so that $\alpha$ is not surjective in this case.

Conversely, suppose that $\epsilon$ fails to be a homomorphism and write $H$ for the subgroup of $G/G^2\times \{\pm 1\}$ generated by $S$. By assumption, we may find $\sigma$ and $\tau$ in $G$ with $\epsilon(\sigma \tau)=-\epsilon(\sigma)\epsilon(\tau)$. Then
\[(\bar{\sigma},\epsilon(\sigma))\cdot (\bar{\tau},\epsilon(\tau))\cdot (\overline{\sigma \tau},\epsilon(\sigma \tau))=\left(\overline{(\sigma \tau)^2},-1\right)=(1,-1)\]
is in $H$ (here the first $1$ denotes the identity in $G/G^2$). Then for any $\sigma \in G$, both $(\bar{\sigma},\epsilon(\sigma))$ and \[(\bar{\sigma},-\epsilon(\sigma))=(1,-1) \cdot(\bar{\sigma},\epsilon(\sigma))\] are in $H$. Thus $H=G/G^2\times \{\pm 1\}$ whence $\alpha$ is surjective as desired. 
\end{proof}

\begin{proof}[Proof of \Cref{independence theorem} (i)]
By \Cref{strategy} we see that \Cref{independence theorem} (i) holds if and only if $\theta$ is surjective whenever $\epsilon$ fails to be a homomorphism. The result now follows from \Cref{surjectivity of alpha}.
\end{proof}

\subsection{The case where $p>2$ and $\epsilon$ is non-trivial when restricted to $\text{Gal}(K(T)/K(\boldsymbol \mu_p))$}

Suppose now that $p>2$ and that the restriction of $\epsilon$ to $\text{Gal}(K(T)/K(\boldsymbol \mu_p))$ is non-trivial. 

We begin by defining a slight refinement of the quantity $t_X(\eta)$.

\begin{defi} \label{defi of hat t}
Fix an enumeration of the primes $\mathfrak{p}\notin \Sigma$ such that if $i\leq j$ then $N(\mathfrak{p}_i)\leq N(\mathfrak{p}_j)$, and for each $n\geq 1$ define the subgroup $\mathcal{C}_n(K)$ of $\mathcal{C}(K)$ by
\[\mathcal{C}_n(K):=\{\chi \in \mathcal{C}(K)~~:~~\chi \text{ is unramified outside }\Sigma \cup\{\mathfrak{p}_1,...,\mathfrak{p}_n\}\}.\] Further, for each $n\geq 1$ and $\eta\in \boldsymbol \mu_p^r$, define 
\[\hat{t}_n(\eta):=\frac{|\{\chi \in \mathcal{C}_n(K)~~:~~f(\chi|_\Gamma)=\eta,~~w(\chi)=1\}|}{|\{\chi \in \mathcal{C}_n(K)~~:~~f(\chi|_\Gamma)=\eta\}|}-\frac{1}{2}.\]
\end{defi}

\begin{remark}
Note that we subtract $1/2$ in the definition of $\hat{t}_n(\eta)$ whilst we did not in the definition of $t_X(\eta)$. This will neaten the statement of some results in the rest of the section. Clearly for any $\eta \in \boldsymbol \mu_p^r$, to show that $\lim_{X\rightarrow \infty}t_X(\eta)=\frac{1}{2}$ it suffices to show that $\lim_{n\rightarrow \infty}\hat{t}_n(\eta)=0$.
\end{remark}

As in the case $p=2$ we now fix a collection of global characters \[\{\varphi({\mathfrak{p}})\}_{\mathfrak{p}\notin \Sigma,~\boldsymbol \mu_p\subseteq K_{\mathfrak{p}}}\] satisfying \cref{our global generators}.



\begin{lemma} \label{structure of characters}
Fix $n\geq 1$. Then if $\boldsymbol \mu_p \subsetneq K_{\mathfrak{p}_{n+1}}$  we have $\mathcal{C}_{n+1}(K)=\mathcal{C}_n(K)$. On the other hand, if $\boldsymbol \mu_p \subseteq K_{\mathfrak{p}_{n+1}}$ then we have
\[\mathcal{C}_{n+1}(K)=\bigsqcup_{i=0}^{p-1} \varphi({\mathfrak{p}_{n+1}})^i \cdot \mathcal{C}_n(K).\]
\end{lemma}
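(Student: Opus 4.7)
The plan is to analyse the lemma by examining, for any $\chi \in \mathcal{C}_{n+1}(K)$, the local behaviour of its restriction $\chi_{\mathfrak{p}_{n+1}} \in \mathcal{C}(K_{\mathfrak{p}_{n+1}})$, and using the character $\varphi(\mathfrak{p}_{n+1})$ (when it exists) as an explicit generator of the ramified part.

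First I would treat the case $\boldsymbol\mu_p \not\subseteq K_{\mathfrak{p}_{n+1}}$. Since $\mathfrak{p}_{n+1}\notin\Sigma$, the residue characteristic of $K_{\mathfrak{p}_{n+1}}$ is coprime to $p$. It is standard that under these hypotheses $\mathcal{C}_{\mathrm{ram}}(K_{\mathfrak{p}_{n+1}})$ is empty: any totally ramified cyclic extension of degree $p$ of a local field of residue characteristic different from $p$ is obtained, via Kummer theory, by adjoining a $p$-th root of a uniformiser, and so necessarily contains $\boldsymbol\mu_p$. Consequently every $\chi\in\mathcal{C}_{n+1}(K)$ is automatically unramified at $\mathfrak{p}_{n+1}$ and hence already lies in $\mathcal{C}_n(K)$, giving $\mathcal{C}_{n+1}(K)=\mathcal{C}_n(K)$.

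Next I would handle the case $\boldsymbol\mu_p \subseteq K_{\mathfrak{p}_{n+1}}$. Here Kummer theory gives $\mathcal{C}(K_{\mathfrak{p}_{n+1}}) \cong K_{\mathfrak{p}_{n+1}}^{\times}/K_{\mathfrak{p}_{n+1}}^{\times p}$ via the map $\delta_{\mathfrak{p}_{n+1}}$, and the subgroup $\mathcal{C}_{\mathrm{ur}}(K_{\mathfrak{p}_{n+1}})$ corresponds to the unit classes, so the quotient $\mathcal{C}(K_{\mathfrak{p}_{n+1}})/\mathcal{C}_{\mathrm{ur}}(K_{\mathfrak{p}_{n+1}})$ is cyclic of order $p$, generated by $\delta_{\mathfrak{p}_{n+1}}(\varpi)$ for any uniformiser $\varpi$. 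By the defining property \cref{our global generators} of $\varphi(\mathfrak{p}_{n+1})$, its restriction at $\mathfrak{p}_{n+1}$ equals $\delta_{\mathfrak{p}_{n+1}}(\varpi)$ for some uniformiser, so $\varphi(\mathfrak{p}_{n+1})_{\mathfrak{p}_{n+1}}$ projects to a generator of this order-$p$ quotient.

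Finally I would assemble the decomposition. Given $\chi\in \mathcal{C}_{n+1}(K)$, the previous paragraph produces a unique $i\in\{0,\ldots,p-1\}$ with $\chi_{\mathfrak{p}_{n+1}}\cdot \varphi(\mathfrak{p}_{n+1})_{\mathfrak{p}_{n+1}}^{-i}\in \mathcal{C}_{\mathrm{ur}}(K_{\mathfrak{p}_{n+1}})$; since $\varphi(\mathfrak{p}_{n+1})$ and $\chi$ are both unramified outside $\Sigma\cup\{\mathfrak{p}_1,\ldots,\mathfrak{p}_{n+1}\}$, the product $\chi\cdot\varphi(\mathfrak{p}_{n+1})^{-i}$ then lies in $\mathcal{C}_n(K)$, showing the inclusion $\subseteq$. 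For disjointness, if $\varphi(\mathfrak{p}_{n+1})^i\chi_1=\varphi(\mathfrak{p}_{n+1})^j\chi_2$ with $\chi_1,\chi_2\in\mathcal{C}_n(K)$ and $0\le i,j\le p-1$, then $\varphi(\mathfrak{p}_{n+1})^{i-j}=\chi_2\chi_1^{-1}$ lies in $\mathcal{C}_n(K)$ and hence is unramified at $\mathfrak{p}_{n+1}$; but the image of $\varphi(\mathfrak{p}_{n+1})^{i-j}$ in $\mathcal{C}(K_{\mathfrak{p}_{n+1}})/\mathcal{C}_{\mathrm{ur}}(K_{\mathfrak{p}_{n+1}})$ is non-trivial unless $i\equiv j\pmod p$, forcing $i=j$ and $\chi_1=\chi_2$.

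There is no real obstacle here: the lemma is essentially a bookkeeping statement, and the only delicate point is identifying $\mathcal{C}(K_{\mathfrak{p}_{n+1}})/\mathcal{C}_{\mathrm{ur}}(K_{\mathfrak{p}_{n+1}})$ explicitly in both the $\boldsymbol\mu_p\subseteq K_{\mathfrak{p}_{n+1}}$ and the $\boldsymbol\mu_p\not\subseteq K_{\mathfrak{p}_{n+1}}$ cases, both of which are standard consequences of local Kummer theory at primes of residue characteristic coprime to $p$.
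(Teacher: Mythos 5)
Your argument is correct, and it is the standard one. The paper itself gives no details here, deferring to \cite[Lemma~8.3]{MR3043582}, so your proof essentially supplies the content that the cited lemma packages: for $v\notin\Sigma$ (so residue characteristic coprime to $p$), $\mathcal{C}_{\mathrm{ram}}(K_v)$ is empty unless $\boldsymbol\mu_p\subseteq K_v$, and when $\boldsymbol\mu_p\subseteq K_v$ the quotient $\mathcal{C}(K_v)/\mathcal{C}_{\mathrm{ur}}(K_v)$ is cyclic of order $p$ generated by $\delta_v(\varpi)$; combined with property~\cref{our global generators} of $\varphi(\mathfrak{p}_{n+1})$ this gives exactly the coset decomposition. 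One small stylistic point: in the first case the phrase ``obtained via Kummer theory'' is slightly loose, since Kummer theory presupposes $\boldsymbol\mu_p$ in the base; what you really need is the structure of tamely totally ramified extensions ($L=K_v(\varpi^{1/p})$), from which Galois-ness forces $\boldsymbol\mu_p\subseteq L$ and hence, by total ramification, $\boldsymbol\mu_p\subseteq K_v$. The conclusion is unaffected. You also silently use the easy containment $\varphi(\mathfrak{p}_{n+1})^i\cdot\mathcal{C}_n(K)\subseteq\mathcal{C}_{n+1}(K)$; it is immediate from the ramification condition on $\varphi(\mathfrak{p}_{n+1})$ but worth a word.
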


\begin{proof}
In each case this follows from the structure of $\mathcal{C}(K_{\mathfrak{p}_{n+1}})$; see \cite[Lemma 8.3]{MR3043582}.
\end{proof}

\begin{defi} \label{defi of matrix}
Let $V$ be the regular representation of $\boldsymbol \mu_p^r$ over $\mathbb{C}$, so that $V$ has basis $\{e_\eta~~:~~\eta \in \boldsymbol \mu_p^r\}$ on which $\boldsymbol \mu_p^r$ acts via $\eta'\cdot e_\eta=e_{\eta'\eta}$.
For each $n\geq 1$, define
\[\hat{\textbf{t}}_n:=\sum_{\eta \in \boldsymbol \mu_p^r}\hat{t}_n(\eta)e_\eta\in V\]
and for $\sigma\in \textup{Gal}(K(T)/K(\boldsymbol \mu_p))$, define $\rho(\sigma):=\left(\phi_i(\sigma)\right)_{i=1}^r \in \boldsymbol \mu_p^r$
and \[M(\sigma):=\frac{1}{p}\left(1+\epsilon(\sigma)\sum_{i=1}^{p-1}\rho(\sigma)^i\right)\in \textup{End}(V).\] 
\end{defi}

\begin{remark} \label{conjugacy class remark}
For $\sigma \in \textup{Gal}(K(T)/K(\boldsymbol{\mu}_p))$, the element $M(\sigma)$ depends only on the conjugacy class of $\sigma$ in $G$. Indeed, for  each $1\leq i \leq r$ and $g\in G$, the cocycle relation for $\phi_i$ gives $\phi_i(g\sigma g^{-1})=g\phi_i(\sigma)$. It now follows that for each $i$, $\sum_{j=1}^{p-1}\phi_i(\sigma)^j$
depends only on the conjugacy class of $\sigma$ in $G$. Since the same is true for $\epsilon(\sigma)$ we are done.
\end{remark}

\begin{lemma} \label{recurrence for t}
Fix $n\geq 1$. If $\boldsymbol \mu_p\subsetneq K_{\mathfrak{p}_{n+1}}$ then we have $\hat{\textbf{t}}_{n+1}=\hat{\textbf{t}}_n$. On the other hand, if $\boldsymbol \mu_p\subseteq K_{\mathfrak{p}_{n+1}}$ then we have the following recurrence relation for $\hat{\textbf{t}}_n$:
\[\hat{\textbf{t}}_{n+1}=M(\textup{Frob}_{\mathfrak{p}_{n+1}})\hat{\textbf{t}}_n,\]
where here $\textup{Frob}_{\mathfrak{p}_{n+1}}\in G$ denotes the Frobenius element at $\mathfrak{p}_{n+1}$ in $K(T)/K$.
\end{lemma}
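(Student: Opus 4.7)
The plan is as follows. The first assertion is immediate from \Cref{structure of characters}: when $\boldsymbol\mu_p \not\subseteq K_{\mathfrak{p}_{n+1}}$ one has $\mathcal{C}_{n+1}(K)=\mathcal{C}_n(K)$, so each ratio defining $\hat{t}_n(\eta)$ is unchanged and $\hat{\textbf{t}}_{n+1}=\hat{\textbf{t}}_n$ trivially.

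For the remaining case, assume $\boldsymbol\mu_p\subseteq K_{\mathfrak{p}_{n+1}}$ and abbreviate $\varphi=\varphi(\mathfrak{p}_{n+1})$, $\sigma=\textup{Frob}_{\mathfrak{p}_{n+1}}$, $\rho=\rho(\sigma)$ and $\epsilon=\epsilon(\sigma)$. \Cref{structure of characters} will let me write every $\chi\in \mathcal{C}_{n+1}(K)$ uniquely as $\chi=\varphi^i \chi'$ with $0\le i<p$ and $\chi'\in \mathcal{C}_n(K)$, and my strategy is to track coset by coset how the pair $(f(\chi|_\Gamma),w(\chi))$ depends on $(i,\chi')$, then sum.

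For the first coordinate I would use that $f$ and restriction to $\Gamma$ are homomorphisms together with \Cref{preevaluation of alpha} (which gives $f(\varphi|_\Gamma)=\rho$) to obtain $f((\varphi^i\chi')|_\Gamma)=\rho^i f(\chi'|_\Gamma)$. For the second, the key observation is that by \Cref{existence of generators} $\varphi$ is unramified outside $\Sigma\cup\{\mathfrak{p}_{n+1}\}$, while $\chi'\in \mathcal{C}_n(K)$ is unramified at $\mathfrak{p}_{n+1}$: so outside $\Sigma$ the characters $\varphi^i\chi'$ and $\chi'$ have the same ramified primes away from $\mathfrak{p}_{n+1}$, and at $\mathfrak{p}_{n+1}$ itself the product $\varphi^i\chi'$ is ramified precisely when $i\neq 0$. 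From the defining product formula for $w$ this should give
\[w(\varphi^i\chi')=\epsilon^{[i\neq 0]}\,w(\chi').\]

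Substituting these two transformation rules into the numerator and denominator of $\hat{t}_{n+1}(\eta)+\tfrac{1}{2}$ and summing over the $p$ cosets will express each $\mathcal{C}_{n+1}$-count over $\eta$ in terms of the $\mathcal{C}_n$-counts over the twisted fibres $\rho^{-i}\eta$. After regrouping, and using $|\mathcal{C}_{n+1}(K)|=p\,|\mathcal{C}_n(K)|$, I expect to arrive at
\[\hat{t}_{n+1}(\eta)=\frac{1}{p}\Bigl(\hat{t}_n(\eta)+\epsilon\sum_{i=1}^{p-1}\hat{t}_n(\rho^{-i}\eta)\Bigr),\]
which is precisely the $e_\eta$-coefficient of $M(\sigma)\hat{\textbf{t}}_n$, since $\rho^i$ acts on $V$ by the shift $e_\eta\mapsto e_{\rho^i\eta}$. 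The main obstacle will be the careful verification of the identity $w(\varphi^i\chi')=\epsilon^{[i\neq 0]}w(\chi')$: it leans essentially both on the defining property of $\varphi$ from \Cref{existence of generators} (so that the only new bad prime introduced by multiplication by $\varphi^i$ is $\mathfrak{p}_{n+1}$) and on the membership $\chi'\in \mathcal{C}_n(K)$ (so that at $\mathfrak{p}_{n+1}$ the ramification contributions of $\varphi^i$ and $\chi'$ combine rather than cancel).
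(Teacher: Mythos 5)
Your proposal is correct and follows essentially the same route as the paper's own proof: decompose $\mathcal{C}_{n+1}(K)$ into cosets of $\mathcal{C}_n(K)$ via \Cref{structure of characters}, use \Cref{preevaluation of alpha} to evaluate $f(\varphi(\mathfrak{p}_{n+1})|_\Gamma)=\rho(\sigma)$, observe that $w$ is multiplicative across $\chi'$ and $\varphi(\mathfrak{p}_{n+1})^i$ because they ramify at disjoint sets of primes outside $\Sigma$, and sum over cosets. The paper packages the pair $(f(\chi|_\Gamma),w(\chi))$ into a single map $\theta$ and exploits its multiplicativity on these particular products, but this is only a cosmetic difference from your two-coordinate bookkeeping.
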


\begin{proof}
If $\boldsymbol \mu_p\subsetneq K_{\mathfrak{p}_{n+1}}$ then $\mathcal{C}_{n+1}(K)=\mathcal{C}_n(K)$ and the result is clear. 

Suppose now that $\boldsymbol \mu_p\subseteq K_{\mathfrak{p}_{n+1}}$ and define the map $\theta:\mathcal{C}(K)\rightarrow \boldsymbol \mu_2^r \times \{\pm 1\}$ by 
\[\theta(\chi):=\left(f(\chi|_\Gamma),w(\chi)\right)\]
(note that, unlike the case $p=2$ this is not a homomorphism). Then \Cref{preevaluation of alpha} gives
\[\theta(\varphi({\mathfrak{p}_n}))=\left(\rho(\textup{Frob}_{\mathfrak{p}_n}),\epsilon(\textup{Frob}_{\mathfrak{p}_n})\right).\]
 Moreover, if $\chi_0\in \mathcal{C}_n(K)$ then we have
\[\theta(\chi_0\cdot\varphi({\mathfrak{p}_{n+1}})^i)=\theta(\chi_0)\cdot \theta(\varphi({\mathfrak{p}_{n+1}})^i)\] 
since the sets of primes not in $\Sigma$ at which $\chi_0$ and $\varphi({\mathfrak{p}_{n+1}})^i$ ramify are disjoint. 
Writing $\sigma$ for $\text{Frob}_{\mathfrak{p}_{n+1}}$, this gives
\[\theta(\chi_0\cdot \varphi({\mathfrak{p}_{n+1}})^i)=\begin{cases}\theta(\chi_0)~~&~~i=0\\  \theta(\chi_0)\cdot \left(\rho(\sigma)^i,\epsilon(\sigma)\right)~~&~~1\leq i\leq p-1.\end{cases}\]
It now follows from \Cref{structure of characters} that for each $\eta\in \boldsymbol \mu_p^r$ we have
\begin{eqnarray*}
\left|\{\chi \in \mathcal{C}_{n+1}(K)~~:~~\theta(\chi)=(\eta,1)\}\right|&=&\sum_{i=0}^{p-1}\left|\{\chi \in \varphi({\mathfrak{p}_{n+1}})^i\cdot \mathcal{C}_{n}(K)~~:~~\theta(\chi)=(\eta,1)\}\right|
\end{eqnarray*}
\[=\left|\left\{\chi_0 \in \mathcal{C}_{n}(K)~~:~~\theta(\chi_0)=(\eta,1)\right\}\right|\]
\[+\sum_{i=1}^{p-1}\left|\left\{\chi_0 \in \mathcal{C}_{n}(K)~~:~~\theta(\chi_0)=\left(\eta \cdot \rho(\sigma)^{-i},\epsilon(\sigma)\right)\right\}\right|.\]
Dividing through by $|\mathcal{C}_{n+1}(K)|=p|\mathcal{C}_{n}(K)|$ gives
\[\hat{t}_{n+1}(\eta)=\frac{1}{p}\left(\hat{t}_n(\eta)+\epsilon(\sigma)\sum_{i=1}^{p-1} \hat{t}_n\left(\rho(\sigma)^{-i}\cdot \eta \right)\right)\]
and the result now follows from the definition of $M(\sigma)$.
\end{proof}

\begin{lemma} \label{limit of operator norm}
For any $m\geq 1$ and $\sigma \in \textup{Gal}(K(T)/K(\boldsymbol \mu_p))$ we have
\[M(\sigma)^m=\begin{cases}M(\sigma) ~~&~~\epsilon(\sigma)=1,\\\left(\frac{2}{p}\left(\frac{2-p}{p}\right)^m-\frac{2-p}{p}\left(\frac{2}{p}\right)^m\right)\textup{id}_V+\left(\left(\frac{2}{p}\right)^m-\left(\frac{2-p}{p}\right)^m\right)M(\sigma)~~&~~\epsilon(\sigma)=-1. \end{cases} \] 
In particular, for $p>2$ and writing $||\cdot ||$ for the operator norm on $\textup{End}(V)$, we have
\[\lim_{m\rightarrow \infty}||M(\sigma)^m||=\begin{cases}||M(\sigma)||~~&~~\epsilon(\sigma)=1,\\ 0~~&~~\epsilon(\sigma)=-1. \end{cases}\]
\end{lemma}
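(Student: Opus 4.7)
The plan is to compute the powers of $M(\sigma)$ by exhibiting an explicit spectral decomposition. Writing $\rho := \rho(\sigma) \in \textup{End}(V)$ (so that $\rho^p = \textup{id}_V$), the element $S := \sum_{i=0}^{p-1}\rho^i$ satisfies $(\rho - 1)S = \rho^p - 1 = 0$, hence $\rho S = S$ and so $S^2 = pS$. Thus $P := S/p$ is an idempotent and $V = \textup{im}(P) \oplus \ker(P)$. In these terms the definition of $M(\sigma)$ rewrites as
\[
M(\sigma) \;=\; \tfrac{1-\epsilon(\sigma)}{p}\,\textup{id}_V + \epsilon(\sigma)\,P.
\]

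When $\epsilon(\sigma) = 1$ this becomes $M(\sigma) = P$, so $M(\sigma)^m = M(\sigma)$ for every $m \geq 1$ by idempotence, which is the first case of the stated formula. When $\epsilon(\sigma) = -1$ we have $M(\sigma) = \tfrac{2}{p}\,\textup{id}_V - P$, which acts as the scalar $\tfrac{2}{p}$ on $\ker(P)$ and as the scalar $\tfrac{2-p}{p}$ on $\textup{im}(P)$. Consequently $M(\sigma)^m$ acts as $\bigl(\tfrac{2}{p}\bigr)^m$ on $\ker(P)$ and as $\bigl(\tfrac{2-p}{p}\bigr)^m$ on $\textup{im}(P)$. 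To put this in the form $A_m\,\textup{id}_V + B_m\,M(\sigma)$ it suffices to match scalars on the two eigenspaces: solving the resulting $2\times 2$ system (non-degenerate because $\tfrac{2}{p} \neq \tfrac{2-p}{p}$ for $p \neq 1$) yields exactly the pair of coefficients in the statement.

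For the norm limit, the case $\epsilon(\sigma) = 1$ is immediate since $M(\sigma)^m = M(\sigma)$ for all $m \geq 1$. In the case $\epsilon(\sigma) = -1$ the hypothesis $p > 2$ enters decisively: both $\bigl|\tfrac{2}{p}\bigr| < 1$ and $\bigl|\tfrac{2-p}{p}\bigr| < 1$, so the two eigenvalues of $M(\sigma)^m$ tend to zero, and hence so does $\|M(\sigma)^m\|$ (any norm on the finite-dimensional space $\textup{End}(V)$ will do, and one may equivalently use the operator norm after noting $M(\sigma)$ is diagonalisable). I do not expect any real obstacle: the lemma is pure linear algebra once one spots that $S$ is (up to normalisation) an idempotent, and the formulas drop out from a two-eigenvalue interpolation.
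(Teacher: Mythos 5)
Your proof is correct and takes essentially the same route as the paper: both observe that $P = T(\sigma) := \frac{1}{p}\sum_{i=0}^{p-1}\rho(\sigma)^i$ is an idempotent and that $M(\sigma)$ equals $P$ or $\frac{2}{p}\,\textup{id}_V - P$ according to the sign of $\epsilon(\sigma)$. The only difference is cosmetic — you compute $M(\sigma)^m$ via the eigenspace decomposition $V = \textup{im}(P)\oplus\ker(P)$ and a $2\times 2$ interpolation, while the paper invokes induction or the binomial theorem applied to $\left(\frac{2}{p}-T(\sigma)\right)^m$.
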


\begin{proof}
Fix  $\sigma \in \textup{Gal}(K(T)/K(\boldsymbol \mu_p))$ and define
\[T(\sigma):=\frac{1}{p}\sum_{i=0}^{p-1}\rho(\sigma)^i.\]
Then $T(\sigma)$ is an idempotent in $\textup{End}(V)$ (e.g. by orthogonality of characters of $\boldsymbol \mu_p$ or by explicit computation) so that $T(\sigma)^m=T(\sigma)$ for each $m\geq 1$. 
Note that we have
\[M(\sigma)=\begin{cases}T(\sigma)~~&~~\epsilon(\sigma)=1\\ \frac{2}{p}-T(\sigma)~~&~~\epsilon(\sigma)=-1.\end{cases}\]
If $\epsilon(\sigma)=1$ this immediately gives $M(\sigma)^m=M(\sigma)$, whilst if $\epsilon(\sigma)=-1$ 
the result now follows easily either by induction on $m$ or by expanding $(\frac{2}{p}-T(\sigma))^m$ with the binomial theorem. 
%

Since $p>2$ we have both
\[\lim_{m\rightarrow \infty}\left(\frac{2}{p}\right)^m=0~~\text{ and }~~\lim_{m\rightarrow \infty}\left(\frac{2-p}{p}\right)^m=0,\]
from which the statement about $\lim_{m\rightarrow \infty}||M(\sigma)^m||$ follows immediately.
\end{proof}

\begin{proposition} \label{limit of t}
Suppose that $p>2$ and $\epsilon$ is non-trivial when restricted to $\textup{Gal}(K(T)/K(\boldsymbol \mu_p))$. Then for each $\eta \in \boldsymbol \mu_p^r$ we have
\[\lim_{n\rightarrow \infty}\hat{t}_n(\eta)=0.\]
\end{proposition}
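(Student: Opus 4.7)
The plan is to exploit the fact that all operators $M(\sigma)$ with $\sigma \in \textup{Gal}(K(T)/K(\boldsymbol{\mu}_p))$ commute, so that iterating the recurrence of Lemma \ref{recurrence for t} yields a product that can be regrouped by conjugacy class. Once this is done, Chebotarev's density theorem will produce arbitrarily many occurrences of a contracting factor $M(\tau_{j_0})$ with $\epsilon(\tau_{j_0}) = -1$, and Lemma \ref{limit of operator norm} will finish the job.

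First I would observe that for $\sigma, \sigma' \in \textup{Gal}(K(T)/K(\boldsymbol{\mu}_p))$ the elements $\rho(\sigma)$ and $\rho(\sigma')$ commute inside the abelian group $\boldsymbol{\mu}_p^r$, so the corresponding operators $M(\sigma)$ (each being a polynomial in $\rho(\sigma)$) all commute on $V$. Moreover, since $V$ is the regular representation of $\boldsymbol{\mu}_p^r$ there is a simultaneous diagonalisation $V = \bigoplus_\chi V_\chi$ over characters $\chi$ of $\boldsymbol{\mu}_p^r$, each $V_\chi$ being one-dimensional; on $V_\chi$ the idempotent $T(\sigma)=\frac{1}{p}\sum_{i=0}^{p-1}\rho(\sigma)^i$ acts as $1$ or $0$ according as $\chi(\rho(\sigma))$ is trivial or not, and hence $M(\sigma)$ acts by one of $0, 1, \frac{2-p}{p}, \frac{2}{p}$. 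In particular $\|M(\sigma)\| \leq 1$ for every such $\sigma$, with respect to the Hermitian norm on $V$ making $\{e_\eta\}$ orthonormal.

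Using Remark \ref{conjugacy class remark} together with commutativity, I would enumerate the conjugacy classes $C_1, \ldots, C_k$ of $\textup{Gal}(K(T)/K(\boldsymbol{\mu}_p))$ with chosen representatives $\tau_j$ and rewrite the iterated recurrence of Lemma \ref{recurrence for t} as
\[\hat{\textbf{t}}_n \;=\; \prod_{j=1}^{k} M(\tau_j)^{N_j(n)} \,\hat{\textbf{t}}_0,\qquad N_j(n) := \#\{i \leq n \,:\, \textup{Frob}_{\mathfrak{p}_i} \in C_j\},\]
noting that the auxiliary condition $\boldsymbol{\mu}_p \subseteq K_{\mathfrak{p}_i}$ in Lemma \ref{recurrence for t} holds automatically whenever $\textup{Frob}_{\mathfrak{p}_i}$ lies in $\textup{Gal}(K(T)/K(\boldsymbol{\mu}_p))$. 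By hypothesis there exists $j_0$ with $\epsilon(\tau_{j_0}) = -1$, and Chebotarev's density theorem applied to $K(T)/K$ forces $N_{j_0}(n) \to \infty$ as $n \to \infty$.

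To conclude, combining submultiplicativity of the operator norm with the uniform bound $\|M(\tau_j)\|\leq 1$ and Lemma \ref{limit of operator norm},
\[\|\hat{\textbf{t}}_n\| \;\leq\; \|M(\tau_{j_0})^{N_{j_0}(n)}\| \cdot \prod_{j \neq j_0} \|M(\tau_j)^{N_j(n)}\| \cdot \|\hat{\textbf{t}}_0\| \;\leq\; \|M(\tau_{j_0})^{N_{j_0}(n)}\| \cdot \|\hat{\textbf{t}}_0\| \;\xrightarrow{n \to \infty}\; 0,\]
from which $\hat{t}_n(\eta) \to 0$ for every $\eta$ follows immediately. The main subtlety is the simultaneous diagonalisation argument establishing $\|M(\sigma)\| \leq 1$; once that is in hand, the remainder is a routine combination of regrouping by conjugacy class, Chebotarev, and the single-operator decay supplied by Lemma \ref{limit of operator norm}.
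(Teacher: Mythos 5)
Your proof is essentially the same as the paper's: regroup the iterated recurrence of Lemma \ref{recurrence for t} by conjugacy class using commutativity of $\mathbb{C}[\boldsymbol\mu_p^r]$, apply Chebotarev to get $N_{j_0}(n)\to\infty$ for some class with $\epsilon=-1$, and invoke Lemma \ref{limit of operator norm} for the decay. The only real difference is that you make the boundedness of the remaining factors explicit via simultaneous diagonalisation (eigenvalues in $\{0,1,\tfrac{2-p}{p},\tfrac{2}{p}\}$, all of modulus $\leq 1$), whereas the paper lets the $\epsilon(\sigma_i)=1$ case of Lemma \ref{limit of operator norm} ($M(\sigma)^m=M(\sigma)$, hence constant norm) implicitly handle that. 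One small imprecision: you should take the conjugacy classes of $G=\textup{Gal}(K(T)/K)$ contained in $\textup{Gal}(K(T)/K(\boldsymbol\mu_p))$, not the internal conjugacy classes of the latter — Chebotarev equidistributes $\textup{Frob}_v$ among $G$-conjugacy classes, and Remark \ref{conjugacy class remark} gives exactly the $G$-conjugacy invariance of $M(\sigma)$ that makes the regrouping valid.
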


\begin{proof}
Write $H:=\text{Gal}(K(T)/K(\boldsymbol \mu_p))$, and note that this is a normal subgroup of $G$. For each $n\geq 1$ we have $\boldsymbol \mu_p\subseteq K_{\mathfrak{p}_n}$ if and only if $\text{Frob}_{\mathfrak{p}_n}\in H$. By \Cref{recurrence for t}, for each $n\geq 1$ we have
\begin{equation} \label{formula for t}
\hat{\textbf{t}}_n=\left(\prod_{\substack{i=2\\\text{Frob}_{\mathfrak{p}_i}\in H}}^n M(\text{Frob}_{\mathfrak{p}_i})\right)\hat{\textbf{t}}_1.
\end{equation}

Write $C_1,...,C_l$ for the conjugacy classes in $G$ that are contained in $H$ and for each $i$, fix a representative $\sigma_i$ for $C_i$. Further, for each $1\leq i \leq l$, define
\[m_i(n):=|\{2\leq j \leq n~~:~~\text{Frob}_{\mathfrak{p}_j}\in C_i\}|.\]
Since the group ring $\mathbb{C}[\boldsymbol \mu_p^r]$ is  commutative, the matrices $M(\sigma_i)$ all mutually commute and so we may group like terms in \Cref{formula for t} to obtain (cf. \Cref{conjugacy class remark})
\[\hat{\textbf{t}}_n=\left(\prod_{i=1}^lM(\sigma_i)^{m_{i}(n)}\right)\hat{\textbf{t}}_1.\]
Writing $||\cdot ||$ for the usual Euclidean norm on $V$ (with respect to the basis $\{e_\eta:\eta \in \boldsymbol \mu_p^r\}$), we have
\[||\hat{\textbf{t}}_n||=\left\|\left(\prod_{i=1}^lM(\sigma_i)^{m_{i}(n)}\right)\hat{\textbf{t}}_1\right\|\leq \left(\prod_{i=1}^l\left\|M(\sigma_i)^{m_i(n)}\right\|\right)||\hat{\textbf{t}}_1||. \]
By the Chebotarev density theorem each of the $m_i(n)$ tend to infinity with $n$, and since we have assumed there is at least one $i$ with $\epsilon(\sigma_i)=-1$, it follows from \Cref{limit of operator norm} that
\[\lim_{n\rightarrow \infty}||\hat{\textbf{t}}_n||=0.\]
That is, $\lim_{n\rightarrow \infty}\hat{t}_n(\eta)=0$ for each $\eta \in \boldsymbol \mu_p^r$.
\end{proof}

\begin{proof}[Proof of \Cref{independence theorem} (ii)]
Fix $\gamma \in \Gamma$ and write $\eta=f(\gamma)$. Then by \Cref{reduction to beta}, for all $X$ sufficiently large we have $s_X(\gamma)=t_X(\eta)$. It follows from \Cref{limit of t} that $\lim_{X\rightarrow \infty}t_X(\eta)=\frac{1}{2}$, from which the result follows.
\end{proof}

\section{Twisting data for abelian varieties (p=2)} \label{twisting abelian varieties section}

In this section let $K$ be a number field and $(A/K,\lambda)$ a principally polarised abelian variety. In the notation of sections 6-9, we take $p=2$ and $T=A[2]$ endowed with the Weil pairing $(~,~)_\lambda$.  Let $\Sigma$ be a finite set of places of $K$ containing all archimedean places, all places dividing $2$, and all places at which $A$ has bad reduction, so that $T$ is unramified outside $\Sigma$. 

We now  endow $T$ with a global metabolic structure and twisting data in such a way that for $\chi\in \mathcal{C}(K)$ the associated Selmer group $\textup{Sel}(A[2],\chi)$ agrees with the $2$-Selmer group $\textup{Sel}^2(A^\chi/K)$ of the quadratic twist of $A$ by $\chi$. For elliptic curves this is done in \cite[Section 5]{MR3043582}. Our definition of the global metabolic structure and twisting data will be a direct generalisation of theirs. The main difficulty is establishing \Cref{identification of quadratic forms under twisting} which for elliptic curves is \cite[Lemma 5.2]{MR3043582} and for Jacobians of odd degree hyperelliptic curves is \cite[Theorem 5.10]{YU15}.  We will deduce the general case from the results of \Cref{quadratic twisting theta groups} concerning the behaviour of certain Theta groups under quadratic twist.

\subsection{A global metabolic structure on $A[2]$}


%


For a place $v$ of $K$ write 
\[\delta_{v}:A(K_v)/2A(K_v)\hookrightarrow H^1(K_v,A[2])\]
for the connecting homomorphism in the multiplication-by-2 Kummer sequence.

\begin{defi} \label{defi of tate quadratic form}
Let $\mathcal{P}$ denote the Poincare line bundle on $A\times A^{\vee}$.
For each place $v$ of $K$ write $\mathcal{L}_v$ for the pull back of $\mathcal{L}=(1,\lambda)^*\mathcal{P}$ to a line bundle on $A/K_v$ and let $\mathscr{G}(\mathcal{L}_v)$  denote the associated Theta group. Then we define $q_{A,\lambda,v}$ to be the map
\[q_{A,\lambda,v}:H^1(K_v,A[2])\longrightarrow H^2(K_v,\bar{K}_v^{\times})=\textup{Br}(K_v)\stackrel{\textup{inv}_v}{\longrightarrow}\mathbb{Q}/\mathbb{Z}\] where the first map is the connecting homomorphism associated to the short exact sequence of $G_{K_v}$-modules
\begin{equation} \label{quad form seq}
0\rightarrow \bar{K}_v^{\times}\longrightarrow \mathscr{G}(\mathcal{L}_v)\longrightarrow A[2]\rightarrow 0
\end{equation} 
of \Cref{Galois action}.
\end{defi}

\begin{lemma} \label{props of local form}
Let $v$ be a place of $K$. Then
\begin{itemize}
\item[(i)] $q_{A,\lambda,v}$ is a quadratic form on $H^1(K_v,A[2])$ whose associated bilinear pairing is the local Tate pairing corresponding to $(~,~)_\lambda$,
\item[(ii)] the image  of $A(K_v)/2A(K_v)$ under $\delta_v$ is  a Lagrangian subspace of $H^1(K_v,A[2])$ with respect to $q_{A,\lambda,v}$.
\end{itemize}
In particular, $q_{A,\lambda,v}$ is a Tate quadratic form on $H^1(K_v,A[2])$ in the sense of \Cref{tate form defi}.
\end{lemma}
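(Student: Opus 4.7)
The plan is to deduce (i) directly from \Cref{Galois action}: that remark asserts that the connecting homomorphism of \cref{specific theta exact seqeunce} is a quadratic form on $H^1(K_v,A[2])$ valued in $H^2(K_v,\bar{K}_v^\times)=\textup{Br}(K_v)$ whose associated bilinear pairing is the cup product induced by the Weil pairing $(\,,\,)_\lambda:A[2]\times A[2]\to \boldsymbol\mu_2\hookrightarrow \bar{K}_v^\times$. Post-composing with $\textup{inv}_v$ yields a $\mathbb{Q}/\mathbb{Z}$-valued quadratic form whose associated bilinear form is, by the definition in \S6.3, the local Tate pairing $\langle\,,\,\rangle_v$, establishing (i).

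For (ii) I will verify separately that (a) $\textup{im}(\delta_v)$ is its own orthogonal complement under the local Tate pairing, and (b) $q_{A,\lambda,v}$ itself vanishes on that image. Property (a) is Tate local duality for the principally polarised $A$, which may be cited from \cite[I.3.4]{MR2261462}; combined with (i) it shows that $\textup{im}(\delta_v)$ is a maximal isotropic subspace for the associated bilinear form, so the remaining content is (b).

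For (b), by functoriality of boundary maps the composition $A(K_v)\xrightarrow{\delta_v}H^1(K_v,A[2])\xrightarrow{q_{A,\lambda,v}}H^2(K_v,\bar{K}_v^\times)$ coincides with the iterated boundary arising from the four-term exact sequence
\[0\longrightarrow \bar{K}_v^\times\longrightarrow \mathscr{G}(\mathcal{L}_v)\longrightarrow A(\bar{K}_v)\xrightarrow{[2]} A(\bar{K}_v)\longrightarrow 0\]
obtained by splicing the multiplication-by-$2$ Kummer sequence for $A$ with \cref{specific theta exact seqeunce} along $A[2]\hookrightarrow A(\bar{K}_v)$. The plan is therefore to show that this $4$-term sequence represents the trivial class in $\textup{Ext}^2_{G_{K_v}}(A(\bar{K}_v),\bar{K}_v^\times)$, equivalently, that the Theta group extension extends to a central extension
\[0\longrightarrow \bar{K}_v^\times \longrightarrow \tilde{\mathscr{G}}\longrightarrow A(\bar{K}_v)\longrightarrow 0\]
pulling back to \cref{specific theta exact seqeunce} along $A[2]\hookrightarrow A$. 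Such an extension can be built from the identity $\phi_\mathcal{L}=2\lambda$: for $y\in A(\bar{K}_v)$ the line bundle $\tau_y^*\mathcal{L}\otimes \mathcal{L}^{-1}$ is canonically the fibre of the Poincar\'{e} bundle over $2\lambda(y)\in A^\vee(\bar{K}_v)$, and rigidifications at the identity section yield the required $\bar{K}_v^\times$-torsors above each $y$.

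The hard part will be verifying that these torsors assemble into a genuine group, since the line bundles $\tau_y^*\mathcal{L}\otimes\mathcal{L}^{-1}$ vary with $y$ and the naive product requires a cocycle condition on triples $(y,y',y+y')$ that amounts to the biextension (theorem of the cube) structure on the Poincar\'{e} bundle. The resulting extension must moreover be \emph{central}, which is where \Cref{triviality of mult by -1} will be used: the triviality of $\widetilde{[-1]}$ on $\mathscr{G}(\mathcal{L}_v)$ is exactly what forces the extension to be abelian modulo $\bar{K}_v^\times$. In the special cases of elliptic curves and of Jacobians of odd-degree hyperelliptic curves the corresponding statement is carried out by hand in \cite[Lemma 5.2]{MR3043582} and \cite[Theorem 5.10]{YU15} respectively; the plan here is to systematise those arguments via the biextension formalism applied to the Poincar\'{e} bundle.
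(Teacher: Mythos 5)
Your part (i) and part (ii)(a) are fine: (i) is simply a restatement of what \Cref{Galois action} already asserts, and (ii)(a) is Tate local duality. Note, however, that the paper itself does not reprove anything here: the entire proof is the citation ``\cite[Corollary 4.7]{MR2833483} gives (i), \cite[Proposition 4.9]{MR2833483} gives (ii).'' So for (ii)(b) you are proposing to re-derive the content of Poonen--Rains' Proposition 4.9 from scratch, which is a genuinely different (and far longer) route than the paper takes.

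Your strategy for (ii)(b) is sound in outline: if one has a $G_{K_v}$-equivariant central extension $1\to\bar{K}_v^\times\to\tilde{\mathscr{G}}\to A(\bar{K}_v)\to 1$ whose pullback along $A[2]\hookrightarrow A(\bar{K}_v)$ is $\mathscr{G}(\mathcal{L}_v)$, then for $P\in A(K_v)$, $Q$ with $2Q=P$, and any lift $\tilde{Q}\in\tilde{\mathscr{G}}$ of $Q$, the cochain $g(\sigma)=\sigma(\tilde{Q})\tilde{Q}^{-1}$ lifts $\delta_v(P)$ through $\mathscr{G}(\mathcal{L}_v)$, and a direct computation shows that $g(\sigma)\sigma(g(\tau))g(\sigma\tau)^{-1}$ equals $e(Q,\sigma Q)\cdot e(\sigma Q,\sigma\tau Q)\cdot e(Q,\sigma\tau Q)^{-1}$ (with $e$ the commutator pairing of $\tilde{\mathscr{G}}$), which is the coboundary of $h(\sigma)=e(Q,\sigma Q)\in C^1(K_v,\bar{K}_v^\times)$. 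Hence $q_{A,\lambda,v}(\delta_v(P))=0$. You should carry out this computation explicitly: it is short and is the actual argument, whereas the proposal as written leaves it implicit.

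There are, however, two genuine problems with the way you frame the reduction. First, the $\mathrm{Ext}^2$ formulation is incorrect. Because $\mathscr{G}(\mathcal{L}_v)$ is non-abelian, there is no four-term exact sequence of $G_{K_v}$-modules obtained by splicing, no class in $\mathrm{Ext}^2_{G_{K_v}}(A(\bar{K}_v),\bar{K}_v^\times)$, and $q_{A,\lambda,v}\circ\delta_v$ is not an iterated boundary in the Yoneda sense (indeed it is not even additive, since $q_{A,\lambda,v}$ is quadratic). Your alternative formulation in terms of the existence of a central extension $\tilde{\mathscr{G}}$ is the correct one, but it is not ``equivalent'' to the Ext-class statement. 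Second, the role you assign to \Cref{triviality of mult by -1} is wrong. That lemma is used in the paper (via \Cref{main twist of theta groups lemma}) solely to show Galois equivariance of the twisting isomorphism $\tilde{\psi}$; it has nothing to do with centrality of any extension of $A(\bar{K}_v)$. Centrality of $\bar{K}_v^\times$ in any such $\tilde{\mathscr{G}}$ built out of line-bundle data is automatic, because scalars commute with all morphisms; and ``abelian modulo $\bar{K}_v^\times$'' is vacuous since the quotient is $A(\bar{K}_v)$ by construction.

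Finally, the construction of $\tilde{\mathscr{G}}$ itself is only sketched, and you yourself flag it as the hard part. The description ``the fibre of the Poincar\'e bundle over $2\lambda(y)$'' is garbled: the fibre of $\mathcal{P}$ at a single point $(e,2\lambda(y))$ is canonically trivial by the rigidification, so it does not produce a nontrivial torsor. The object you want is closer to $\mathcal{L}^{\times}$ (the complement of the zero section) with a group law coming from a cubical structure on the symmetric line bundle $\mathcal{L}$, and making this precise is exactly the content of the cited Poonen--Rains proposition. As it stands the proposal identifies the right target but does not close the gap; to claim a proof you either need to cite \cite[Proposition 4.9]{MR2833483} directly, as the paper does, or actually construct $\tilde{\mathscr{G}}$ and verify the biextension compatibilities.
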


\begin{proof}
Part (i) is \cite[Corollary 4.7]{MR2833483} whilst Proposition 4.9 of op. cit. gives (ii).
\end{proof}

\begin{remark} \label{valued in Z/4Z}
In contrast to the case of elliptic curves, the quadratic form $q_{A,\lambda,v}$ in general takes values in $\frac{1}{4}\mathbb{Z}/\mathbb{Z}$ rather than just $\frac{1}{2}\mathbb{Z}/\mathbb{Z}$, which is the reason for allowing $\mathbb{Q}/\mathbb{Z}$-valued quadratic forms  in \Cref{tate form defi} rather than just those valued in $\mathbb{F}_2$. See \cite[Remark 4.16]{MR2833483} for an example of this phenomenon. 
\end{remark}

\begin{cor}
The collection $\mathbf{q}=(q_{A,\lambda,v})_v$ defines a global metabolic structure on $A[2]$.
\end{cor}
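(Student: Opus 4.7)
The plan is to verify the three conditions of Definition 6.5 in turn, with the bulk of the work already done by Lemma 10.2 and the constructions of Section 4.

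First, condition (i) — that each $(H^1(K_v,A[2]),q_{A,\lambda,v})$ is metabolic — follows immediately from Lemma 10.2. Part (i) of that lemma identifies the associated bilinear form with the local Tate pairing, which is non-degenerate, so $q_{A,\lambda,v}$ is non-degenerate. Part (ii) of the lemma exhibits $\delta_v(A(K_v)/2A(K_v))$ as a Lagrangian subspace. So nothing more is needed here.

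Next, for condition (ii), I need to check that $q_{A,\lambda,v}$ vanishes on $H^1_{\textup{ur}}(K_v,A[2])$ when $v\notin\Sigma$. By our choice of $\Sigma$, such a $v$ is non-archimedean, coprime to $2$, and a place of good reduction for $A$. Under these hypotheses the image $\delta_v(A(K_v)/2A(K_v))$ coincides with $H^1_{\textup{ur}}(K_v,A[2])$ (a standard consequence of good reduction together with $v\nmid 2$). Since this image is Lagrangian by Lemma 10.2(ii), $q_{A,\lambda,v}$ vanishes on it, as required.

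The main (though still routine) step is condition (iii), and here the essential point is that we built $\mathbf{q}$ from a \emph{global} object. Because $\lambda$ is $K$-rational, so is the line bundle $\mathcal{L}=(1,\lambda)^{*}\mathcal{P}$, and hence the Theta group sequence
\[0 \rightarrow \bar{K}^{\times} \longrightarrow \mathscr{G}(\mathcal{L}) \longrightarrow A[2] \rightarrow 0\]
is an exact sequence of $G_K$-modules (cf. Remark 4.18 applied globally). Let $q_{A,\lambda}: H^1(K,A[2]) \to H^2(K,\bar{K}^{\times})=\textup{Br}(K)$ denote the associated connecting map. By functoriality of the connecting homomorphism under the restriction maps $G_{K_v}\hookrightarrow G_K$, for $c\in H^1(K,A[2])$ we have $(q_{A,\lambda}(c))_v = q_{A,\lambda,v}(c_v)$ in $\textup{Br}(K_v)$ for every place $v$. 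Applying reciprocity for the Brauer group $\sum_v \textup{inv}_v\circ\textup{res}_v = 0$ to the class $q_{A,\lambda}(c)\in\textup{Br}(K)$ yields
\[\sum_{v\in M_K} q_{A,\lambda,v}(c_v) = \sum_{v\in M_K} \textup{inv}_v(q_{A,\lambda}(c)_v) = 0\in \mathbb{Q}/\mathbb{Z},\]
which is condition (iii). The hardest part was arranging the definitions of the $q_{A,\lambda,v}$ so that this reciprocity holds cleanly; once one uses the same global line bundle $\mathcal{L}$ to define every $q_{A,\lambda,v}$, the verification collapses to Hasse reciprocity in the Brauer group.
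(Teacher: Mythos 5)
Your proof is correct and follows essentially the same route as the paper's: Lemma \ref{props of local form} handles conditions (i) and (ii), and the global Theta group sequence for $\mathcal{L}=(1,\lambda)^*\mathcal{P}$ together with Brauer reciprocity gives condition (iii). The one thing you make more explicit than the paper is the compatibility $(q_{A,\lambda}(c))_v = q_{A,\lambda,v}(c_v)$ via functoriality of connecting homomorphisms under restriction, which the paper treats as implicit.
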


\begin{proof}
By \Cref{props of local form} (ii) $q_{A,\lambda,v}$ admits a Lagrangian subspace making $\left(H^1(K_v,T),q_{A,\lambda,v}\right)$ into a metabolic space for each place $v$ of $K$. Moreover,  if $v\notin \Sigma$ then $\textup{im}(\delta_{v})=H^1_{\textup{ur}}(K_v,A[2])$ (see e.g. \cite[Proposition 4.12]{MR2833483} and the preceeding remark). In particular, by \Cref{props of local form} (ii), $q_{A,\lambda,v}$ is unramified at each such place. 

Finally, let $\mathfrak{a}\in H^1(K,A[2])$. Write \[q:H^1(K,A[2])\rightarrow H^2(K,\bar{K}^\times)=\textup{Br}(K)\] for the connecting homomorphism associated to the sequence \cref{quad form seq} viewed over $K$ instead of $K_v$ (with $\mathcal{L}_v$ replaced by $\mathcal{L}:=(1,\lambda)^*\mathcal{P}$). Then $q(\mathfrak{a})\in \textup{Br}(K)$ and we have 
\[\sum_{v\in M_K}q_v(\mathfrak{a}_v)=\sum_{v\in M_K}\textup{inv}_vq(\mathfrak{a})=0,\]
 the last equality following from reciprocity for the Brauer group of $K$.
\end{proof}

\subsection{Twisting data associated to $A/K$}

We now define the twisting data $\boldsymbol{\alpha}$.

Fix a place $v$ of $K$ and $\chi\in \mathcal{C}(K_v)$, and let $(A^\chi,\psi)$ denote the quadratic twist of $A$ by $\chi$. By \Cref{descent of polarisation}, $\lambda_\chi:=(\psi^\vee)^{-1}\lambda \psi^{-1}$ is a principal polarisation on $A^\chi$, defined over $K_v$. In particular, associated to the pair $(A^\chi,\lambda_\chi)$ we have a quadratic form $q_{A^\chi,\lambda_\chi,v}$ on $H^1(K_v,A^\chi[2])$. 


\begin{lemma} \label{identification of quadratic forms under twisting}  \label{is a lagrangian subspace}
The isomorphism $H^1(K_v,A^\chi[2])\cong H^1(K_v,A[2])$ induced by $\psi$ identifies the quadratic forms $q_{A,\lambda,v}$ and $q_{A^\chi,\lambda_\chi,v}$. 
\end{lemma}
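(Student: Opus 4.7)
The plan is to deduce the lemma directly from \Cref{main twist of theta groups lemma}, which is precisely the technical input that makes this work for arbitrary principally polarised abelian varieties rather than only Jacobians of odd degree hyperelliptic curves as in \cite{YU15}.

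First, I would unwind the definitions: by construction $q_{A,\lambda,v}$ is the composition
\[
H^1(K_v,A[2]) \xrightarrow{\partial} H^2(K_v,\bar{K}_v^\times) \xrightarrow{\mathrm{inv}_v} \mathbb{Q}/\mathbb{Z},
\]
where $\partial$ is the connecting homomorphism associated to the Theta-group sequence for $\mathcal{L}_v = (1,\lambda)^*\mathcal{P}$, and similarly $q_{A^\chi,\lambda_\chi,v}$ is built from the analogous sequence attached to $\mathcal{L}_{v,\chi} = (1,\lambda_\chi)^*\mathcal{P}_\chi$. The claim therefore reduces to showing that the two connecting homomorphisms are intertwined by the isomorphism $H^1(K_v,A^\chi[2]) \xrightarrow{\sim} H^1(K_v,A[2])$ induced by $\psi$.

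Next, I would apply \Cref{main twist of theta groups lemma} to $F=K_v$, which provides a commutative diagram of $G_{K_v}$-modules
\[
\xymatrix{0\ar[r] & \bar{K}_v^{\times}\ar@{=}[d]\ar[r] & \mathscr{G}(\mathcal{L}_v)\ar[d]^{\tilde{\psi}}\ar[r] & A[2]\ar[d]^{\psi}\ar[r] & 0\\
0\ar[r] & \bar{K}_v^{\times}\ar[r] & \mathscr{G}(\mathcal{L}_{v,\chi})\ar[r] & A^\chi[2]\ar[r] & 0}
\]
in which every vertical arrow is a $G_{K_v}$-equivariant isomorphism. This is exactly the genuine input: the non-trivial content is that $\tilde\psi$ is Galois-equivariant, which was established in Section 4 via the triviality of $\widetilde{[-1]}$ on $\mathscr{G}(\mathcal{L})$ (\Cref{triviality of mult by -1}) together with the functoriality of the Theta-group construction (\Cref{functoriality of theta}).

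Finally, I would take Galois cohomology of this diagram. By naturality of the connecting homomorphism for a commutative diagram of short exact sequences, the square
\[
\xymatrix{H^1(K_v,A^\chi[2])\ar[r]^-{\partial_\chi}\ar[d]_{\psi_*^{-1}} & H^2(K_v,\bar{K}_v^\times)\ar@{=}[d]\\
H^1(K_v,A[2])\ar[r]^-{\partial} & H^2(K_v,\bar{K}_v^\times)}
\]
commutes. Post-composing with $\mathrm{inv}_v$ (which depends only on the local field $K_v$ and not on the twist) yields the desired equality of quadratic forms. No nontrivial obstacle remains once \Cref{main twist of theta groups lemma} is in hand; essentially all the geometric work has been carried out in Section 4.
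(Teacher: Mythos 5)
Your proof is correct and follows exactly the same route as the paper's: apply \Cref{main twist of theta groups lemma} over $F=K_v$ and take the long exact sequence in Galois cohomology, using naturality of the connecting homomorphism. You simply spell out the intermediate steps that the paper leaves implicit.
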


\begin{proof}
Take the long exact sequences for Galois cohomology associated to the commutative diagram of \Cref{main twist of theta groups lemma}.
\end{proof}

\begin{defi}
For $\chi \in \mathcal{C}(K_v)$ define $\alpha_v(\chi)\subseteq H^1(K_v,A[2])$ to be the image of the map  
\[A^\chi(K_v)/2A^\chi(K_v)\hookrightarrow H^1(K_v,A^\chi[2])\stackrel{\sim}{\longrightarrow}H^1(K_v,A[2])\] the first map arising from the multiplication-by-2 Kummer sequence for $A^\chi$ and the latter being induced by $\psi^{-1}$. 

By combining \Cref{identification of quadratic forms under twisting} with \Cref{props of local form} (ii) applied to $A^\chi/K_v$ we see that $\alpha_v(\chi)$ is a Lagrangian subspace of $H^1(K_v,A[2])$.
\end{defi}

As in \Cref{notation for intersection of twisted subspaces},
for $\chi_1$ and $\chi_2$ elements of $\mathcal{C}(K_v)$ we set
\[h_v(\chi_1,\chi_2)=\dim_{\mathbb{F}_p}\left(\alpha_v(\chi_1)/(\alpha_v(\chi_1)\cap \alpha_v(\chi_2))\right).\]

\begin{lemma} \label{intersection is norm}
For each quadratic character $\chi \in \mathcal{C}(K_v)$, let $L_\chi$ denote the extension of $K_v$ cut out by $\chi$. Then  \[h_v(\mathbbm{1},\chi_v)=\dim_{\mathbb{F}_2}A(K_v)/N_{L_\chi/K_v}A(L_\chi)\] where here $N_{L_\chi/K_v}:A(L_{\chi})\rightarrow A(K_v)$ is the  `local norm map' sending $P\in A(L_\chi)$ to \[N_{L_\chi/K_v}(P):=\sum_{\sigma \in \textup{Gal}(L_\chi/K_v)}\sigma(P).\]
\end{lemma}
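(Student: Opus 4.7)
The plan is to construct a canonical isomorphism
\[
\alpha_v(\mathbbm{1}_{K_v})\big/(\alpha_v(\mathbbm{1}_{K_v})\cap\alpha_v(\chi))\ \cong\ A(K_v)\big/N_{L_\chi/K_v}A(L_\chi),
\]
the claim then following by taking $\mathbb{F}_2$-dimensions. Since the Kummer map $\delta_v$ identifies $\alpha_v(\mathbbm{1}_{K_v})$ with $A(K_v)/2A(K_v)$, and since $N_{L_\chi/K_v}(P)=2P$ for $P\in A(K_v)$ forces $2A(K_v)\subseteq N_{L_\chi/K_v}A(L_\chi)$, it suffices to prove
\[
\delta_v(P)\in \alpha_v(\chi)\ \iff\ P\in N_{L_\chi/K_v}A(L_\chi)\qquad (P\in A(K_v)).
\]

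To prove this equivalence I would describe $\alpha_v(\chi)$ as the kernel of the composite $H^1(K_v,A[2])\to H^1(K_v,A^\chi[2])\to H^1(K_v,A^\chi)$, where the first map is the $G_{K_v}$-equivariant isomorphism induced by $\psi$ (Galois-equivariant on $A[2]$ since $\chi$ has order two) and the second map comes from the Kummer sequence for $A^\chi$. Representing $\delta_v(P)$ by the cocycle $\tau\mapsto\tau Q-Q$ with $2Q=P$, the resulting class in $H^1(K_v,A^\chi)$ vanishes precisely when there exists $S\in A^\chi(\bar{K}_v)$ with $\psi(\tau Q-Q)=\tau S-S$ for every $\tau\in G_{K_v}$, the right-hand side using the $A^\chi$-Galois action. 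Writing $S=\psi(Q-R)$ with $R\in A(\bar{K}_v)$ and using the identification $A^\chi(\bar{K}_v)=A(\bar{K}_v)$ under which the $A^\chi$-Galois action becomes $\tau\cdot_\chi T=\chi(\tau)\tau T$, the condition becomes
\[
\tau Q-Q=\chi(\tau)\,\tau(Q-R)-(Q-R)\qquad \forall\tau\in G_{K_v}.
\]

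Splitting this identity according to the value of $\chi(\tau)\in\{\pm 1\}$: for $\tau$ with $\chi(\tau)=1$ it reduces to $\tau R=R$, forcing $R\in A(L_\chi)$; granted this, for any $\tau$ with $\chi(\tau)=-1$, using $2\tau Q=\tau P=P$, it reduces to $P=R+\sigma R=N_{L_\chi/K_v}(R)$. Hence the existence of $R$ satisfying the displayed identity is equivalent to $P\in N_{L_\chi/K_v}A(L_\chi)$, giving the forward direction of the equivalence; the converse is the same computation read in reverse, starting from $R\in A(L_\chi)$ with $P=N_{L_\chi/K_v}(R)$ and setting $S=\psi(Q-R)$ for any $Q\in A(\bar{K}_v)$ with $2Q=P$. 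The main technical point is the careful bookkeeping of the $A^\chi$-Galois action on $A^\chi(\bar{K}_v)$ throughout the cocycle manipulations; beyond this, the argument is purely formal.
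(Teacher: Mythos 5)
Your proof is correct, and it is in substance the same argument the paper invokes: the paper dispatches this lemma by citing \cite[Proposition 5.2]{MR2373150} and \cite[Proposition 7]{MR597871}, and the content of those references is precisely the cocycle computation you carry out — describing $\alpha_v(\chi)$ as the kernel of the composite into $H^1(K_v,A^\chi)$, transporting the $A^\chi$-Galois action through $\psi$ (which is legitimate on $A[2]$ since $[-1]$ is the identity there), and reading off the coboundary condition for $\chi(\tau)=\pm 1$ separately to get $R\in A(L_\chi)$ and $P=N_{L_\chi/K_v}(R)$. So you have filled in explicitly what the paper leaves to the citations, following the same route.
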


\begin{proof}
This is shown in \cite[Proposition 5.2]{MR2373150}. Whilst loc. cit. is stated for the case of elliptic curves and for twists by characters of order $p>2$, the proof carries over unchanged to our case. See also \cite[Proposition 7]{MR597871}.
\end{proof}


The following Lemma evaluates the cokernel of the local norm map in certain cases.

\begin{lemma} \label{first cases of norm}
Let $v$ be a place of $K$ and $\chi\in \mathcal{C}(K_v)$. As above, let $L_\chi$ denote the extension of $K_v$ cut out by $\chi$.
\begin{itemize}
\item[(i)]  Suppose $v\nmid 2$ is nonarchimedean and that $A$ has good reduction at $v$. If $\chi$  is unramified  then 
\[\dim_{\mathbb{F}_2}A(K_v)/N_{L_\chi/K_v}A(L_\chi)=0.\] 
On the other hand, if $\chi$  is ramified then $N_{L_\chi/K_v}A(L_\chi)=2A(K_v)$ and, in particular, we have
\[\dim_{\mathbb{F}_2}A(K_v)/N_{L_\chi/K_v}A(L_\chi)=\dim_{\mathbb{F}_2}A(K_v)[2].\]
\item[(ii)] Suppose $v$ is archimedean and $\chi$ non-trivial. Then
\[\dim_{\mathbb{F}_2}A(K_v)/N_{L_\chi/K_v}A(L_\chi)=\dim_{\mathbb{F}_2}A(K_v)[2]-g\]
where $g=\textup{dim}A$ is the dimension of $A$. 
\end{itemize}
\end{lemma}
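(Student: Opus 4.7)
The plan is to apply \Cref{intersection is norm} and then evaluate the cokernel of the local norm map $N_{L_\chi/K_v}:A(L_\chi)\to A(K_v)$ in each of the three situations separately. In each case the argument is essentially local and geometric.

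For part (i) with $\chi$ unramified, I would avoid computing the norm cokernel directly and instead argue at the level of the Selmer-local condition. Since $\chi_v$ is unramified, the inertia subgroup $I_v\subseteq G_{K_v}$ acts identically on $A[2]$ and $A^\chi[2]$ under the isomorphism induced by $\psi$, so the $\psi$-identification  $H^1(K_v,A^\chi[2])\cong H^1(K_v,A[2])$ matches the unramified subgroups. Since $A$, and hence also $A^\chi$, has good reduction at $v$ and $v\nmid 2$, the image of the Kummer map $\delta_v^\chi:A^\chi(K_v)/2A^\chi(K_v)\hookrightarrow H^1(K_v,A^\chi[2])$ equals $H^1_{\mathrm{ur}}(K_v,A^\chi[2])$ (as recalled in the discussion following \Cref{props of local form}). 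Hence $\alpha_v(\chi)=H^1_{\mathrm{ur}}(K_v,A[2])=\alpha_v(\mathbbm{1})$, giving $h_v(\mathbbm{1},\chi_v)=0$, which by \Cref{intersection is norm} yields the claim.

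For part (i) with $\chi$ ramified, the plan is as follows. The inclusion $2A(K_v)\subseteq N_{L_\chi/K_v}A(L_\chi)$ is immediate since $N_{L_\chi/K_v}$ restricted to $A(K_v)$ is multiplication by $2$. For the reverse inclusion, use that because $v\nmid 2$ the formal group $\widehat A(\mathfrak m_v)$ is a pro-$\ell$ group for $\ell$ the (odd) residue characteristic, hence uniquely $2$-divisible; this gives a natural isomorphism $A(K_v)/2A(K_v)\cong A(k_v)/2A(k_v)\cong A(k_v)[2]\cong A(K_v)[2]$, so the target cokernel has the desired $\mathbb{F}_2$-dimension. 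The same divisibility observation applied to $A(L_\chi)$ reduces the surjectivity question to showing that the image of $N_{L_\chi/K_v}$ in $A(k_v)$ lies in $2A(k_v)$. Decomposing $A(L_\chi)\otimes\mathbb Z_2$ (or equivalently the $2$-primary part of the finite group $A(k_{L_\chi})$) into $\pm 1$-eigenspaces for $\mathrm{Gal}(L_\chi/K_v)$, which is possible since $2$ is invertible in the Galois group ring, the norm kills the minus part and acts as multiplication by $2$ on the plus part; thus its image in $A(k_v)$ is contained in $2A(k_v)$, as required. The hardest step here is verifying the last containment rigorously; once this is done both sides of the inclusion match and the computation of $\dim_{\mathbb{F}_2}A(K_v)/N A(L_\chi)$ follows from the identification $A(K_v)/2A(K_v)\cong A(K_v)[2]$.

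For part (ii), observe first that if $K_v=\mathbb{C}$ then $\mathcal{C}(K_v)$ is trivial, so the hypothesis forces $K_v=\mathbb{R}$ and $L_\chi=\mathbb{C}$. Since $A(\mathbb{C})$ is a connected real Lie group and $N_{\mathbb{C}/\mathbb{R}}=1+c$ (with $c$ complex conjugation) is continuous and lands in $A(\mathbb{R})$, its image lies in the identity component $A(\mathbb{R})^{\circ}$; conversely, the identity component is the image of the exponential, which factors through $A(\mathbb{C})$, so $N_{\mathbb{C}/\mathbb{R}}A(\mathbb{C})=A(\mathbb{R})^{\circ}$. Consequently
\[
\dim_{\mathbb{F}_2}A(\mathbb{R})/N_{\mathbb{C}/\mathbb{R}}A(\mathbb{C})=\dim_{\mathbb{F}_2}\pi_0\bigl(A(\mathbb{R})\bigr),
\]
and the classical formula $|\pi_0(A(\mathbb{R}))|=|A(\mathbb{R})[2]|/2^g$ (see e.g.\ Gross--Harris) completes the proof. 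The main obstacle in the whole lemma is the ramified case of (i), where one must be careful to separate the formal-group contribution from the reduction-level contribution when showing that the norm image is contained in $2A(K_v)$.
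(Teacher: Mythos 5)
Your proposal follows the paper's broad outline (reduce to the local norm cokernel via \Cref{intersection is norm}, then compute case by case), but two of the three cases deserve comment, and one has a genuine error.

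For part (i) unramified, you avoid Mazur's norm theorem (which is what the paper cites) by instead showing that both $\delta_v$-images equal $H^1_{\textup{ur}}(K_v,\cdot)$ and reading off $h_v(\mathbbm{1},\chi_v)=0$. This is a legitimate alternative, relying only on the standard fact that for an abelian variety with good reduction at $v\nmid 2$ the Kummer image is the unramified subgroup, plus the observation that $A^\chi$ again has good reduction when $\chi$ is unramified. It is slightly more indirect than simply invoking Mazur, but it works.

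For part (i) ramified, there is a real gap. You write ``Decomposing $A(L_\chi)\otimes\mathbb{Z}_2$ \dots into $\pm1$-eigenspaces for $\mathrm{Gal}(L_\chi/K_v)$, which is possible since $2$ is invertible in the Galois group ring.'' But $\mathrm{Gal}(L_\chi/K_v)$ has order $2$, and you are explicitly working with the $2$-primary part; $2$ is \emph{not} invertible in $\mathbb{Z}_2[\mathrm{Gal}(L_\chi/K_v)]$, so the idempotents $\tfrac{1\pm c}{2}$ do not exist and the claimed decomposition fails (for instance, $\mathbb{Z}/4\mathbb{Z}$ with trivial action has $M^+=M$, $M^-=M[2]$, which are not complementary). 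The conclusion you are after is nevertheless correct and the fix is short: since $v\nmid 2$, a ramified quadratic extension $L_\chi/K_v$ is \emph{totally} ramified, so $k_{L_\chi}=k_v$ and $\mathrm{Gal}(L_\chi/K_v)$ acts trivially on $A(k_v)$; hence the norm $N=1+c$ induces multiplication by $2$ on $A(k_v)$, so the image of $NA(L_\chi)$ in $A(k_v)$ is exactly $2A(k_v)$, and unique $2$-divisibility of $\widehat A(\mathfrak m_v)$ then lifts this to $NA(L_\chi)\subseteq 2A(K_v)$. No eigenspace decomposition is needed (and none is available). The paper sidesteps this by citing the elliptic-curve case from Mazur--Rubin and noting the argument is identical.

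Part (ii) is fine and is essentially the paper's argument: the image of the norm is connected hence in $A(\mathbb{R})^\circ$, and the reverse inclusion follows because the identity component is a real torus $(\mathbb{R}/\mathbb{Z})^g$, so $A(\mathbb{R})^\circ=2A(\mathbb{R})^\circ\subseteq NA(\mathbb{C})$. You then invoke the standard formula $|\pi_0(A(\mathbb{R}))|=|A(\mathbb{R})[2]|/2^g$, whereas the paper reads both facts off the structure theorem $A(\mathbb{R})\cong(\mathbb{R}/\mathbb{Z})^g\times(\mathbb{Z}/2\mathbb{Z})^m$; these are interchangeable.
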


\begin{proof}
(i). The case where $\chi$ is unramified is a result of Mazur \cite[Corollary 4.4]{MR0444670}. For $\chi$ ramified the case where $A$ is an elliptic curve is \cite[Lemma 5.5 (ii)]{MR2373150} and the argument for general abelian varieties is identical. 

(ii). By assumption $L_\chi/K_v$ is the extension $\mathbb{C}/\mathbb{R}$. Now since $A/K_v$ is an abelian variety of dimension $g$ over the reals, we have an isomorphism of real Lie groups
\begin{equation} \label{real lie groups}
A(K_v)\cong\left(\mathbb{R}/\mathbb{Z}\right)^{g}\times\left(\mathbb{Z}/2\mathbb{Z}\right)^{m}
\end{equation}
for some $0\leq m\leq g$ (see, for example, \cite[Proposition 1.9~and~Remark 1.12]{Silhol}). Now $N_{L_\chi/K_v}$ is a continuous map from the connected group $A(L_\chi)$ to $A(K_v)$ (for the complex and real topologies respectively) and it follows that the image of $N_{L_\chi/K_v}$ is contained in the connected component of the identity in $A(K_v)$, which we denote $A^{0}(K_v)$. Under the isomorphism \cref{real lie groups}, $A^{0}(K_v)$ is the factor corresponding to $\left(\mathbb{R}/\mathbb{Z}\right)^{g}$. On the other hand, we have $2A(K_v)\subseteq N_{L_\chi/K_v} A(L_\chi)$ and we see again from \cref{real lie groups} that multiplication by $2$ is surjective on $A^{0}(K_v)$. Thus  $N_{L_\chi/K_v} A(L_\chi)=A^{0}(K_v)$.  Appealing to \cref{real lie groups} one last time we obtain $|A(K_v)/N_{L_\chi/K_v} A(L_\chi) |=2^{-g}|A(K_v)[2]|$.
\end{proof}

\begin{proposition} \label{equivalence of selmer group definitions}
The collection of maps $\boldsymbol{\alpha}=(\alpha_v)_v$ defines twisting data with respect to $(A[2],\textbf{q},\Sigma)$. Moreover,  we have
\[\textup{Sel}(A[2],\chi)\cong\textup{Sel}^2(A^\chi/K)\]
where $\textup{Sel}(A[2],\chi)$ is defined with respect to  $(A[2],\textbf{q},\Sigma,\boldsymbol \alpha)$ as in \Cref{twisted selmer groups defi}.
\end{proposition}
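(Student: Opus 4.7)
The plan has two parts: first verify that $\boldsymbol{\alpha}$ satisfies the axioms of twisting data, and then compare the Selmer conditions place by place.

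For the first part, note that when $p=2$ we have $\textup{Aut}(\boldsymbol\mu_2)=\{1\}$, so $\mathcal{F}(K_v)=\mathcal{C}(K_v)$ and there is nothing to check about invariance of $\alpha_v$ under the $\textup{Aut}(\boldsymbol\mu_2)$-action. By construction (and the comment following the definition of $\alpha_v$), each $\alpha_v(\chi)$ is a Lagrangian subspace of $(H^1(K_v,A[2]),q_{A,\lambda,v})$, so we obtain the map in part (i) of \Cref{twisting data}. The only remaining axiom is the one in part (ii): for $v\notin\Sigma$ and $\chi\in\mathcal{C}_{\textup{ram}}(K_v)$, one must show $\alpha_v(\chi)\cap H^1_{\textup{ur}}(K_v,A[2])=0$. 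Since $\alpha_v(\mathbbm{1}_{K_v})=\textup{im}(\delta_v)=H^1_{\textup{ur}}(K_v,A[2])$ and the two Lagrangians $\alpha_v(\mathbbm{1}_{K_v})$ and $\alpha_v(\chi)$ have the same $\mathbb{F}_2$-dimension, this is equivalent to $h_v(\mathbbm{1}_{K_v},\chi)=\dim_{\mathbb{F}_2}\alpha_v(\mathbbm{1}_{K_v})$. Using \Cref{intersection is norm} to identify $h_v(\mathbbm{1}_{K_v},\chi)$ with $\dim_{\mathbb{F}_2}A(K_v)/N_{L_\chi/K_v}A(L_\chi)$, and \Cref{first cases of norm}(i) to evaluate both sides (using that $v\nmid 2$, $A/K_v$ has good reduction, and $\chi$ is ramified), this equality reduces to the standard fact that for an abelian variety of good reduction over a local field of odd residue characteristic, $\dim_{\mathbb{F}_2}A(K_v)/2A(K_v)=\dim_{\mathbb{F}_2}A(K_v)[2]$.

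For the second part, identify $H^1(K,A^\chi[2])$ with $H^1(K,A[2])$ using the $G_K$-equivariant isomorphism $\psi:A[2]\stackrel{\sim}{\to}A^\chi[2]$ (which is $G_K$-equivariant because $\chi$ takes values in $\{\pm 1\}$ and $A[2]$ is $2$-torsion, so the twisting cocycle acts trivially). Under this identification, we must check that at every place $v$ the local condition defining $\textup{Sel}(A[2],\chi)$ agrees with the local condition defining $\textup{Sel}^2(A^\chi/K)$. At each $v\in\Sigma\cup\mathcal{P}_\chi$ this is true by definition: both conditions demand that $\xi_v$ lies in the image of $A^\chi(K_v)/2A^\chi(K_v)\hookrightarrow H^1(K_v,A^\chi[2])=H^1(K_v,A[2])$, which is $\alpha_v(\chi_v)$. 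At a place $v\notin\Sigma\cup\mathcal{P}_\chi$, $\chi_v$ is unramified so that $A^\chi/K_v$ is an unramified twist of $A/K_v$ and hence also has good reduction; the image of the local Kummer map for $A^\chi$ therefore equals $H^1_{\textup{ur}}(K_v,A^\chi[2])$, which coincides with $H^1_{\textup{ur}}(K_v,A[2])$ under the $G_K$-equivariant identification $\psi$. This is precisely the local condition $\alpha_v(\mathbbm{1}_{K_v})$ imposed by $\textup{Sel}(A[2],\chi)$ at $v\notin\Sigma_{\mathcal{S}(\chi)}$, completing the identification.

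The main obstacle is already overcome by \Cref{is a lagrangian subspace}, whose proof uses the non-trivial fact (\Cref{main twist of theta groups lemma}) that the Theta-group-induced quadratic forms $q_{A,\lambda,v}$ and $q_{A^\chi,\lambda_\chi,v}$ match under $\psi$; without this matching, $\alpha_v(\chi)$ would only be known to be a Lagrangian for the twisted form, not for $q_{A,\lambda,v}$. Granted \Cref{is a lagrangian subspace,intersection is norm,first cases of norm}, the remaining work is the bookkeeping described above.
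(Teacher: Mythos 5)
Your proof is correct and follows essentially the same route as the paper's: Lagrangianness comes from \Cref{is a lagrangian subspace}, the ramified-twist condition $\alpha_v(\chi)\cap H^1_{\textup{ur}}(K_v,A[2])=0$ at $v\notin\Sigma$ follows from \Cref{intersection is norm} and \Cref{first cases of norm}(i), and the agreement of local conditions at the remaining places reduces to $\alpha_v(\chi_v)=H^1_{\textup{ur}}(K_v,A[2])$ when $\chi_v$ is unramified. The only cosmetic difference is that the paper deduces $h_v(\mathbbm{1},\chi)=\dim_{\mathbb{F}_2}\alpha_v(\mathbbm{1})$ directly from the identity $N_{L_\chi/K_v}A(L_\chi)=2A(K_v)$ in \Cref{first cases of norm}(i), rather than passing through $\dim_{\mathbb{F}_2}A(K_v)[2]=\dim_{\mathbb{F}_2}A(K_v)/2A(K_v)$, and for the unramified comparison it invokes $h_v(\mathbbm{1},\chi_v)=0$ rather than reasoning about good reduction of the unramified twist; both variants are sound.
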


\begin{proof}
Note that since $p=2$, the group $\mathcal{F}(K_v)$ appearing in the definition of twisting data (\Cref{twisting data}) is equal to $\mathcal{C}(K_v)$. For each place $v$ of $K$ and $\chi_v\in \mathcal{C}(K_v)$, the subspace $\alpha_v(\chi_v)$ of $H^1(K_v,A[2])$ is Lagrangian by \Cref{is a lagrangian subspace}. Moreover, if $v\notin \Sigma$ and $\chi_v$ is ramified then $\alpha_v(\chi_v)$ is an element of $\mathcal{H}_\textup{ram}(q_v)$. Indeed, by definition we need to show that $\alpha_v(\chi_v)\cap H^1_{\textup{ur}}(K_v,A[2])=0$. As before, as $v\notin \Sigma$ we have \[H^1_{\textup{ur}}(K_v,A[2])=\delta_v(A(K_v)/2A(K_v))=\alpha_v(\mathbbm{1}_v).\] Combining \Cref{intersection is norm} with \Cref{first cases of norm} gives
\[\dim_{\mathbb{F}_2}\left(\alpha(\mathbbm{1}_v)/\alpha_v(\chi_v)\cap \alpha_v(\mathbbm{1}_v)\right)=\dim_{\mathbb{F}_2}A(K_v)/2A(K_v)=\dim_{\mathbb{F}_2}\alpha(\mathbbm{1}_v)\] whence $\alpha_v(\chi_v)\cap \alpha_v(\mathbbm{1}_v)=0$ as desired. Thus $\boldsymbol \alpha$ defines twisting data. 

Finally, we will show that for $\chi\in \mathcal{C}(K)$ the associated Selmer group $\textup{Sel}(A[2],\chi)$ agrees with the classical Selmer group $\textup{Sel}^2(A^\chi/K)$. 

Now by the definition of $\textup{Sel}^2(A^\chi/K)$ and the maps $\alpha_v$, we have
\[\textup{Sel}^2(A^\chi/K) =\{\mathfrak{a}\in H^1(K,A[2])~~:~~\mathfrak{a}_v\in \alpha_v(\chi_v)~~\textup{for all }v\in M_K\}.\] On the other hand, we have 
\[\textup{Sel}(A[2],\chi) = \{\mathfrak{a}\in H^1(K,A[2])~~:~~\mathfrak{a}_v\in H^1_{\mathcal{S}(\chi)}(K_v,A[2])~~\textup{for all }v\in M_K\}\]
where, as in \Cref{twisted selmer groups defi}, $H^1_{\mathcal{S}(\chi)}(K_v,A[2])=\alpha(\chi_v)$ if $v\in \Sigma$ or $\chi_v$ is ramified at $v$, and is equal to $H^1_{\textup{ur}}(K_v,A[2])$ otherwise.

In particular, to show that $\textup{Sel}(A[2],\chi)=\textup{Sel}^2(A^\chi/K)$ it suffices to show that $\alpha(\chi_v)=H^1_{\textup{ur}}(K_v,A[2])$ whenever $v\notin \Sigma$ and $\chi_v$ is unramified. But for such places we have $\alpha(\mathbbm{1}_v)=H^1_{\textup{ur}}(K_v,A[2])$ and since $\chi_v$ is unramified \Cref{first cases of norm} (i) gives $h(\mathbbm{1}_v,\chi_v)=0$. It now follows immediately that $\alpha(\chi_v)=H^1_{\textup{ur}}(K_v,A[2])$ as desired.
\end{proof}

\subsection{Main theorems for $2$-Selmer ranks} \label{Main theorems for $2$-Selmer ranks}

Having interpreted the groups $\textup{Sel}^2(A^\chi/K)$ as those arising from twisting data we apply the results of the previous  sections to deduce results about abelian varieties. 

 The following generalises a theorem of Kramer \cite[Theorem 1]{MR597871} for elliptic curves and Yu \cite[Theorem 5.11]{YU15}  for odd degree hyperelliptic curves. 

\begin{theorem} \label{difference of selmer groups for abelian varieties}
Let $K$ be a number field, $\chi$ a quadratic character of $K$ corresponding to the extension $L/K$, and $A/K$ a principally polarised abelian variety. Then 
\[\dim_{\mathbb{F}_2}\textup{Sel}^2(A^\chi/K)\equiv \dim_{\mathbb{F}_2}\textup{Sel}^2(A/K)+\sum_{v\in M_K}\dim_{\mathbb{F}_2}A(K_v)/N_{L_w/K_v}A(L_w)~~\textup{ (mod }2\textup{)}\] (here $w$ denotes any place of $L$ extending $v$). 
\end{theorem}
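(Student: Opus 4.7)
The strategy is to derive this from the general local decomposition result Theorem 6.6, once we invoke the identification between the abstract twisted Selmer groups and the classical $2$-Selmer groups of quadratic twists set up in this section.

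First I would fix the setup of Section 10: take $T = A[2]$ with the Weil pairing $(-,-)_\lambda$, the global metabolic structure $\mathbf{q} = (q_{A,\lambda,v})_v$ and the twisting data $\boldsymbol{\alpha}$ just constructed, and choose $\Sigma$ to contain all archimedean places, all places above $2$, and all places of bad reduction of $A$. By Proposition \ref{equivalence of selmer group definitions} we have $\textup{Sel}(A[2],\chi) = \textup{Sel}^2(A^\chi/K)$ and $\textup{Sel}(A[2],\mathbbm{1}_K) = \textup{Sel}^2(A/K)$. Applying Theorem \ref{local decomposition} with $p = 2$ then yields
\[
\dim_{\mathbb{F}_2}\textup{Sel}^2(A^\chi/K) - \dim_{\mathbb{F}_2}\textup{Sel}^2(A/K) \equiv \sum_{v \in \Sigma} h_v(\mathbbm{1}_{K_v}, \chi_v) + \sum_{\substack{v \notin \Sigma \\ \chi_v\, \textup{ram}}} \dim_{\mathbb{F}_2} A[2]^{G_{K_v}} \pmod 2.
\]

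Next, by Lemma \ref{intersection is norm}, each term $h_v(\mathbbm{1}_{K_v},\chi_v)$ equals $\dim_{\mathbb{F}_2} A(K_v)/N_{L_w/K_v}A(L_w)$, so the first sum already has the desired form restricted to places in $\Sigma$. It remains to account for the second sum and for the places $v \notin \Sigma$ omitted from the first sum. For $v \notin \Sigma$ we are nonarchimedean, coprime to $2$, and $A$ has good reduction, so Lemma \ref{first cases of norm}(i) applies: if $\chi_v$ is unramified then $A(K_v)/N_{L_w/K_v}A(L_w) = 0$, while if $\chi_v$ is ramified then $\dim_{\mathbb{F}_2}A(K_v)/N_{L_w/K_v}A(L_w) = \dim_{\mathbb{F}_2} A(K_v)[2]$. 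The latter equals $\dim_{\mathbb{F}_2}A[2]^{G_{K_v}}$ since $A[2]$ is unramified at such $v$ and $A[2]^{G_{K_v}} = A(K_v)[2]$.

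Combining these identifications, the contribution at each $v \notin \Sigma$ with $\chi_v$ ramified matches exactly, and the terms at $v \notin \Sigma$ with $\chi_v$ unramified vanish; adding them to the first sum therefore extends it to a sum over all places of $K$ without changing its value mod $2$. This yields the claimed congruence. There is no genuine obstacle here once one has Proposition \ref{equivalence of selmer group definitions} and Lemmas \ref{intersection is norm}--\ref{first cases of norm} in hand; the main content of the theorem is already packaged into the construction of the twisting data and the local decomposition Theorem \ref{local decomposition}, and this argument simply unwinds that machinery.
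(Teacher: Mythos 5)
Your proposal is correct and follows essentially the same approach as the paper, which simply combines \Cref{local decomposition}, \Cref{equivalence of selmer group definitions}, and \Cref{intersection is norm}. You supply the details (via \Cref{first cases of norm}(i) and the identification $A[2]^{G_{K_v}} = A(K_v)[2]$ for $v \notin \Sigma$) needed to match the second sum in \Cref{local decomposition} and to extend the sum from $\Sigma$ to all of $M_K$; this is exactly what the paper's terse ``combine'' leaves implicit.
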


\begin{proof}
Combine \Cref{local decomposition}, \Cref{equivalence of selmer group definitions} and \Cref{intersection is norm}.
\end{proof}

\begin{theorem}[=\Cref{theorem}] \label{main selmer ranks explicit theorem}
Let $K$ be a number field, $A/K$ a principally polarised abelian variety and  $\Sigma$ the set consisting of all archimedean places of $K$, all places of bad reduction for $A$, and all places dividing $2$. Define $\epsilon:\textup{Gal}(K(A[2])/K)\rightarrow \{\pm 1\}$ by $\sigma \mapsto (-1)^{\dim_{\mathbb{F}_2}A[2]^\sigma}$. 

\begin{itemize}
\item[(i)] If $\epsilon$ fails to be a homomorphism then for all sufficiently large $X$ \[\frac{|\{\chi \in \mathcal{C}(K,X)~:~\textup{dim}_{\mathbb{F}_2}\textup{Sel}^2(A^\chi/K)~~\textup{is even}\}|}{|\mathcal{C}(K,X)|}=1/2.\]
\item[(ii)] If $\epsilon$ is a homomorphism, let $K(\sqrt{\Delta})/K$ be the fixed field of the kernel of $\epsilon$. For each $v\in \Sigma$ and quadratic character $\chi \in \mathcal{C}(K_v)$ write $L_\chi/K_v$ for the extension cut out by $\chi$ and define
\[\omega_v(\chi):=\chi(\Delta)(-1)^{\dim_{\mathbb{F}_2}A(L_\chi)/N_{L_\chi/K_v}A(L_\chi)}.\] 
Finally, define
\[\delta_v:=\frac{1}{|\mathcal{C}(K_v)|}\sum_{\chi \in \mathcal{C}(K_v)}\omega(\chi)\phantom{hello}\textup{and}\phantom{hello}\delta:=\prod_{v\in \Sigma}\delta_v.\]
Then for all sufficiently large $X$, 
\[\frac{|\{\chi \in \mathcal{C}(K,X)~:~\textup{dim}_{\mathbb{F}_2}\textup{Sel}^2(A^\chi/K)~~\textup{is even}\}|}{|\mathcal{C}(K,X)|}=\frac{1+(-1)^{\textup{dim}_{\mathbb{F}_2}\textup{Sel}^2(A/K)}\cdot\delta}{2}.\]
\end{itemize}
\end{theorem}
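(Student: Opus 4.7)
The plan is to deduce the statement by applying \Cref{main twisting theorem combined} to the triple $(T,\mathbf{q},\boldsymbol{\alpha})$ constructed in this section, where $T = A[2]$, $\mathbf{q}$ is the global metabolic structure built from the Theta groups $\mathscr{G}(\mathcal{L}_v)$, and $\boldsymbol{\alpha}$ is the twisting data whose value at a local character $\chi_v$ is the image of $\delta_v: A^{\chi_v}(K_v)/2A^{\chi_v}(K_v) \hookrightarrow H^1(K_v, A[2])$. The key input is \Cref{equivalence of selmer group definitions}, which identifies $\textup{Sel}(A[2],\chi)$ with $\textup{Sel}^2(A^\chi/K)$, so that each side of the claimed asymptotic formulae matches the corresponding side in \Cref{main twisting theorem combined}. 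Note that the function $\epsilon$ here is precisely the $\epsilon$ of that theorem for $T=A[2]$.

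First I would address case (i): when $\epsilon$ fails to be a homomorphism this is immediate from \Cref{main twisting theorem combined}(i), using the fact that for $p=2$ the stronger ``sufficiently large $X$'' conclusion holds rather than just a limit.

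For case (ii), the plan is to match the local weights $\omega_v$ defined here with the weights appearing in Section 7, and consequently match the product $\delta = \prod_v \delta_v$. Recall that in Section 7 the local weight is
\[
\omega_v(\chi_v) = \chi_v(\Delta)\cdot (-1)^{h_v(\mathbbm{1}_{K_v},\chi_v)},
\]
where $\Delta \in K^\times/K^{\times 2}$ corresponds to the quadratic character $\epsilon$ of $\textup{Gal}(K(A[2])/K)$. By \Cref{intersection is norm} we have the identity
\[
h_v(\mathbbm{1}_{K_v},\chi_v) = \dim_{\mathbb{F}_2} A(K_v)/N_{L_\chi/K_v} A(L_\chi),
\]
which is exactly the exponent appearing in the definition of $\omega_v$ in the statement. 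Thus the two definitions of $\omega_v$ (and hence of $\delta_v$ and $\delta$) coincide. Applying \Cref{main twisting theorem combined}(ii) to $(A[2],\mathbf{q},\boldsymbol{\alpha})$ and invoking \Cref{equivalence of selmer group definitions} to identify $\textup{Sel}(A[2],\mathbbm{1}_K) = \textup{Sel}^2(A/K)$ yields the claimed formula.

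The only nontrivial verification in the above reduction is that the $\omega_v$ used in Section 7 really does compute the parity of $\dim_{\mathbb{F}_2} \textup{Sel}^2(A^\chi/K) - \dim_{\mathbb{F}_2} \textup{Sel}^2(A/K)$ on the quotient by the restriction-to-$\Sigma$ map; this was the content of \Cref{first parity lemma} and relied on having chosen $\Sigma$ to contain all archimedean places, all places over $2$, and all bad reduction places, which is exactly the $\Sigma$ specified in the statement. The only potential obstacle is checking that for $v \notin \Sigma$ the formula $h_v(\mathbbm{1}_{K_v},\chi_v) = \dim_{\mathbb{F}_2} A(K_v)[2]$ for $\chi_v$ ramified (needed in the proof of \Cref{first parity lemma}) genuinely matches $\epsilon(\textup{Frob}_v)$; this however is immediate from \Cref{first cases of norm}(i) and the characterisation of $\epsilon$, so no further work is required.
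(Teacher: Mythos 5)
Your proposal is correct and takes essentially the same route as the paper, whose proof is the one-line combination of \Cref{equivalence of selmer group definitions}, \Cref{main twisting theorem combined} and \Cref{intersection is norm}. Your expanded write-up correctly identifies how these three inputs fit together; the only quibble is in the final paragraph, where the term $h_v(\mathbbm{1}_{K_v},\chi_v)$ is slightly misattributed to the proof of \Cref{first parity lemma} (the contribution at $v\notin\Sigma$ in \Cref{local decomposition} is $\dim_{\mathbb{F}_p}T^{G_{K_v}}$, not $h_v$; the equality $h_v(\mathbbm{1}_{K_v},\chi_v)=\dim_{\mathbb{F}_2}A(K_v)[2]$ is instead what is used in \Cref{equivalence of selmer group definitions} to show $\boldsymbol{\alpha}$ is genuine twisting data), but this is a bookkeeping slip and not a gap.
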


\begin{proof}
Combine \Cref{equivalence of selmer group definitions},  \Cref{main twisting theorem combined} and \Cref{intersection is norm}.
\end{proof}

\begin{remark}
\Cref{first cases of norm} (ii) enables one to evaluate the local terms $\delta_v$ for archimedean places. For nonarchimedean places of odd residue characteristic, the dimension of the cokernel of the norm map may be expressed in terms of Tamagawa numbers: see  \cite[Lemma 2.5]{morgan}.
\end{remark}

In the following examples we examine when $\epsilon$ is (resp. is not) a homomorphism for certain families of abelian varieties. 

\begin{example}[Generic $2$-torsion]
For any principally polarised abelian variety $A/K$ of dimension $g$, $\textup{Gal}(K(A[2])/K)$ is a subgroup of the symplectic group $\textup{Sp}_{2g}(\mathbb{F}_2)$. As in \Cref{remark on the function epsilon}, if $g\geq 2$ and $\textup{Gal}(K(A[2])/K)\cong \textup{Sp}_{2g}(\mathbb{F}_2)$ then $\epsilon$ is not a homomorphism.
\end{example}

\begin{example}[Elliptic curves]
Suppose that $A/K$ is an elliptic curve, say given by a Weierstrass equation of the form $y^2=f(x)$ for some monic (separable) cubic polynomial $f(x)$. Then $\textup{Gal}(K(A[2])/K)=\textup{Gal}(f)$ is the Galois group of the splitting field of $f(x)$ and as such may be viewed as a subgroup of the symmetric group $S_3$. One readily checks that the map $\sigma \mapsto (-1)^{\dim_{\mathbb{F}_2}A[2]^\sigma}$ is  the sign homomorphism. Thus $\epsilon$ is always a homomorphism and we may take $\Delta$ to be the discriminant  of the elliptic curve. Thus \Cref{main selmer ranks explicit theorem} recovers \cite[Theorem A]{MR3043582}. See Proposition 7.9 of op. cit. for a table computing the local terms $\delta_v$ as a function of the reduction of the elliptic curve.
\end{example}

\begin{example}[Hyperelliptic curves] \label{hyperelliptic curves example}
Let $C/K$ be a hyperelliptic curve of genus $g\geq 2$, say given by a Weierstrass equation $y^2=f(x)$ for a (separable, not necessarily monic) polynomial $f(x)$ with $\textup{deg}(f)\in \{2g+1,2g+2\}$. Take $A/K$ to be the Jacobian of $C$ so that $A/K$ is a principally polarised abelian variety of dimension $g$. Then again $\textup{Gal}(K(A[2])/K)=\textup{Gal}(f)$ which we view as a subgroup of the symmetric group $S_{\textup{deg}(f)}$. Write $\textup{sgn}:\textup{S}_n\rightarrow \{\pm 1\}$ for the sign homomorphism and fix $\sigma \in \textup{Gal}(f)$ with cycle type $(d_1~...~d_s)$. Then we have
\begin{equation}\label{2-torsion in hyp curve}
\epsilon(\sigma)=\begin{cases} -\textup{sgn}(\sigma)~~&~~\textup{all }d_i\textup{ even and }\textup{deg}(f)\equiv 2~~\textup{(mod }4\textup{)},\\\textup{sgn}(\sigma)~~&~~\textup{else.}\end{cases}
\end{equation}
Indeed, this follows from \cite[Theorem 1.4]{MR1865865} (whilst loc. cit. is stated for hyperelliptic curves over finite fields of odd residue characteristic, the proof yields the above statement for all fields of characteristic not $2$; note also the erratum \cite{MR2169307}). 

Suppose now that either $g$ is odd or $\textup{deg}(f)$ is odd. Then by \cref{2-torsion in hyp curve} $\epsilon$ is always a homomorphism and again we may take $\Delta$ to be the discriminant of the hyperelliptic curve $C$. In particular, the case $\textup{deg}(f)$ odd recovers \cite[Theorem 1]{YU15}. 

Now suppose that both $g$  and $\textup{deg}(f)$ are even, or equivalently $\textup{deg}(f)\equiv 2~\textup{(mod }4\textup{)}$. Suppose further that either $\textup{Gal}(f)\cong S_{2g+2}$ or $\textup{Gal}(f)\cong A_{2g+2}$. Then by \cref{2-torsion in hyp curve} we see that $\epsilon$ is not a homomorphism (indeed, the only non-trivial homomorphism from $S_{2g+2}$ to $\{\pm 1\}$ is sgn  yet \cref{2-torsion in hyp curve} shows that $\epsilon$ is non-trivial when restricted to $A_{2g+2}$). 
\end{example}

\begin{example}[Abelian varieties with principal polarisation induced by a rational symmetric line bundle] \label{symm line bundle example}
Suppose that $(A/K,\lambda)$ is a principally polarised abelian variety and that the polarisation $\lambda$ is induced by a rational (i.e. $G_K$-invariant) symmetric line bundle $\mathcal{L}$. Then the associated quadratic refinement $q_\mathcal{L}$ of the Weil-pairing $(~,~)_\lambda$ on $A[2]$ (as in \Cref{defi of quadratic refinement cor line bundle}) is $G_K$-invariant also, whence $\textup{Gal}(K(A[2])/K)$ acts on $A[2]$ through the orthogonal group $O(q_\mathcal{L})$. Then $\epsilon$ is the Dickson homomorphism $d_{q_\mathcal{L}}$ (\Cref{dimension proposition}). We remark that this case includes both elliptic curves and Jacobians of hyperelliptic curves of either odd degree or odd genus: see \cite[Proposition 3.11]{MR2915483}.
\end{example}

\subsection{Main theorems for $2^\infty$-Selmer ranks}

We now incorporate the results of Section $5$ to move from $2$-Selmer ranks  to $2^\infty$-Selmer ranks. 


\begin{theorem} \label{2-infinity selmer decomp}
Let $K$ be a number field and $(A/K,\lambda)$ a principally polarised abelian variety. Let $\Sigma$ be the set consisting of all archimedean places of $K$, all places of bad reduction for $A$, and all places dividing $2$, and let $L/K$ be a quadratic extension with associated quadratic character $\chi$. 

Then 

\[\textup{rk}_2(A/L)\equiv ~~~~~~\phantom{hi}\sum_{\mathclap{\substack{v\in \Sigma \\v~\textup{ non-split in }L/K }}}\phantom{hi}~~~~~~\left(2~\textup{inv}_v~\mathfrak{g}(A/K_v,\lambda_v,\chi_v)+\dim_{\mathbb{F}_2}A(K_v)/N_{L_w/K_v}A(L_w)\right)~~\textup{ (mod }2\textup{)}\]
where the local terms $\mathfrak{g}(A/K_v,\lambda_v,\chi_v)\in \textup{Br}(K_v)[2]$ are given in \Cref{the local terms part 2} and $w$ denotes any place of $L$ extending $v$. 
\end{theorem}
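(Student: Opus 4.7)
The plan is to relate $\textup{rk}_2(A/L)$ to $2$-Selmer ranks and $\Sha_{\textup{nd}}[2]$-dimensions of $A/K$ and $A^\chi/K$, and then to invoke \Cref{difference of selmer groups for abelian varieties} together with \Cref{local sha twist decomp}. The first step is standard: writing $\textup{Res}_{L/K}A$ for the Weil restriction, the map $(x_1,x_2)\mapsto(x_1+x_2,x_1-x_2)$ gives a $K$-isogeny $\textup{Res}_{L/K}A\to A\times A^\chi$ (with kernel the antidiagonal copy of $A[2]$), while Shapiro's lemma identifies $\textup{Sel}_{2^\infty}(A/L)$ with $\textup{Sel}_{2^\infty}(\textup{Res}_{L/K}A/K)$. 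Since the $2^\infty$-Selmer corank is an isogeny invariant over $K$, I obtain $\textup{rk}_2(A/L)=\textup{rk}_2(A/K)+\textup{rk}_2(A^\chi/K)$.

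Next, applying the short exact sequence \cref{2-selmer sequence} to $A/K$ and $A^\chi/K$, together with the decomposition $\Sha[2]=\Sha_{\textup{div}}[2]\oplus\Sha_{\textup{nd}}[2]$, yields
\[
\dim_{\mathbb{F}_2}\textup{Sel}^2(A/K)\equiv \textup{rk}_2(A/K)+\dim_{\mathbb{F}_2}A(K)[2]+\dim_{\mathbb{F}_2}\Sha_{\textup{nd}}(A/K)[2]\pmod{2},
\]
and likewise for $A^\chi$. Summing these and combining with the identity from Step~1 shows that $\textup{rk}_2(A/L)$ is congruent modulo $2$ to
\[
\dim_{\mathbb{F}_2}\textup{Sel}^2(A/K)+\dim_{\mathbb{F}_2}\textup{Sel}^2(A^\chi/K)+\dim_{\mathbb{F}_2}\Sha_{\textup{nd}}(A/K)[2]+\dim_{\mathbb{F}_2}\Sha_{\textup{nd}}(A^\chi/K)[2],
\]
the $2$-torsion terms $\dim_{\mathbb{F}_2}A(K)[2]+\dim_{\mathbb{F}_2}A^\chi(K)[2]$ cancelling because the twisting isomorphism $\psi$ restricts to a $G_K$-equivariant identification of $A[2]$ with $A^\chi[2]$ (as $\chi(\sigma)\in\{\pm 1\}$ acts as the identity on $2$-torsion).

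Finally, I apply \Cref{difference of selmer groups for abelian varieties} to the two Selmer terms and \Cref{local sha twist decomp} to the two $\Sha_{\textup{nd}}[2]$-terms (using that $\textup{inv}_v\mathfrak{g}\in\tfrac{1}{2}\mathbb{Z}/\mathbb{Z}$, so $2\,\textup{inv}_v\mathfrak{g}$ lifts canonically to $\{0,1\}\subset\mathbb{Z}$) to rewrite the whole quantity as a sum of local terms. Places $v$ split in $L/K$ contribute zero trivially. For $v\notin\Sigma$ non-split, $A$ has good reduction at $v\nmid 2$ and $\chi_v$ is necessarily ramified, so \Cref{first cases of norm}(i) and \Cref{local terms in sha computation}(iii) each contribute $\dim_{\mathbb{F}_2}A(K_v)[2]$, and these cancel modulo $2$. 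The remaining contributions are exactly those indexed by $v\in\Sigma$ non-split in $L/K$, giving the stated formula. The main obstacle has in fact already been overcome, namely in the proof of \Cref{local sha twist decomp}; what remains in the present theorem is essentially bookkeeping, the one conceptual point being this cancellation outside $\Sigma$, which is what confines the final sum to $\Sigma$.
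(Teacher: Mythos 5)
Your proposal follows essentially the same route as the paper's proof: use $\textup{rk}_2(A/L)=\textup{rk}_2(A/K)+\textup{rk}_2(A^\chi/K)$, pass through the $2$-Selmer sequence and $\Sha_{\textup{nd}}[2]$-dimensions, cancel the $2$-torsion terms via the $G_K$-equivariant identification $A[2]\cong A^\chi[2]$, and then feed in \Cref{difference of selmer groups for abelian varieties} and \Cref{local sha twist decomp} and cancel contributions outside $\Sigma$. Your Weil-restriction/Shapiro justification of the first identity is more detail than the paper supplies, but is standard and correct.

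There is one small but genuine slip in the last step. You assert that for $v\notin\Sigma$ non-split in $L/K$, ``$\chi_v$ is necessarily ramified.'' That is false: such $v$ can be inert, in which case $\chi_v$ is a non-trivial \emph{unramified} quadratic character. As written, your argument only handles the ramified case. The fix is immediate and is what the paper does implicitly: for $v\notin\Sigma$ with $\chi_v$ unramified, \Cref{first cases of norm}(i) gives $\dim_{\mathbb{F}_2}A(K_v)/N_{L_w/K_v}A(L_w)=0$, and \Cref{local terms in sha computation}(iii) gives $\textup{inv}_v\,\mathfrak{g}(A/K_v,\lambda_v,\chi_v)=0$, so the local term at $v$ is zero. (Similarly, you should note — or it should be noted — that for $v\in\Sigma$ which split in $L/K$ the local terms also vanish, so restricting the sum to non-split $v\in\Sigma$ is justified; this is already implicit in your ``places $v$ split in $L/K$ contribute zero trivially.'') With that patch the proof is complete and matches the paper's.
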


\begin{proof}
First note that $\text{rk}_2(A/L)=\text{rk}_2(A/K)+\text{rk}_2(A^\chi/K)$. Moreover, we have
\[\text{dim}_{\mathbb{F}_2}\text{Sel}^2(A/K)=\text{rk}_2(A/K)+\text{dim}_{\mathbb{F}_2}A(K)[2]+\text{dim}_{\mathbb{F}_2}\Sha_\textup{nd}(A/K)[2]\] and the analagous equality for $A^\chi/K$. Noting that $\text{dim}_{\mathbb{F}_2}A(K)[2]=\text{dim}_{\mathbb{F}_2}A^\chi(K)[2]$ the above observations combine to give
\[\textup{rk}_2(A/L)~~\equiv~~ \text{dim}_{\mathbb{F}_2}\text{Sel}^2(A/K)+\text{dim}_{\mathbb{F}_2}\text{Sel}^2(A^\chi/K)\]
\[\textup{ }\textup{ }\textup{ }\textup{ }\textup{ }\textup{ }\textup{ }\textup{ }\textup{ }\textup{ }\textup{ }\textup{ }\textup{ }\textup{ }\textup{ }\textup{ }\textup{ }\textup{ }\textup{ }\textup{ }\textup{ }\textup{ }\textup{ }\textup{ }\textup{ }\textup{ }\textup{ }\textup{ }\textup{ }\textup{ }\textup{ }\textup{ }\textup{ }\textup{ }\textup{ }\textup{ }\textup{ }\textup{ }\textup{ }\textup{ }\textup{ }\textup{ }+\text{dim}_{\mathbb{F}_2}\Sha_\textup{nd}(A/K)[2]+\text{dim}_{\mathbb{F}_2}\Sha_\textup{nd}(A^\chi/K)[2]~~\textup{ (mod }2\textup{).}\]

Combining \Cref{difference of selmer groups for abelian varieties} with \Cref{main sha decomp theorem} then gives
\[\textup{rk}_2(A/L)\equiv \sum_{v\in M_K}\left(2~\textup{inv}_v\mathfrak{g}(A/K_v,\lambda_v,\chi_v)+\dim_{\mathbb{F}_2}A(K_v)/N_{L_w/K_v}A(L_w)\right)~~\textup{ (mod }2\textup{)}.\]

Finally, combining \Cref{local terms in sha computation} (iii) with \Cref{first cases of norm} shows that \[2~\textup{inv}_v\mathfrak{g}(A/K_v,\lambda_v,\chi_v)+\dim_{\mathbb{F}_2}A(K_v)/N_{L_w/K_v}A(L_w)\equiv 0 ~~\textup{ (mod }2\textup{)}\] for each place $v\notin \Sigma$, and similarly for each place $v\in \Sigma$ which split in $L/K$.
\end{proof}

We now prove \Cref{2-infinity selmer rank main theorem}, after first defining the local terms appearing in the statement.

\begin{defi} \label{delta for 2-infinity}
Let $K$ be a number field, $(A/K,\lambda)$ a principally polarised abelian variety and let $\Sigma$ denote the set consisting of all archimedean places of $K$, all places of bad reduction for $A$, and all places dividing $2$. 

For each $v\in \Sigma$ and $\chi \in \mathcal{C}(K_v)$ define
\[\Omega_v(\chi):=(-1)^{2~\textup{inv}_v\mathfrak{g}(A/K_v,\lambda_v,\chi)+\dim_{\mathbb{F}_2}A(K_v)/N_{L_\chi/K_v}A(L_\chi) }\]
where here $L_\chi$ is the extension of $K_v$ cut out by $\chi$. Further, we define (for each $v\in \Sigma$)
\[\kappa_v=\frac{1}{|\mathcal{C}(K_v)|}\sum_{\chi \in \mathcal{C}(K_v)}\Omega_v(\chi)\phantom{hello}\textup{and}\phantom{hello}\kappa=\prod_{v\in \Sigma}\kappa_v.\]
\end{defi}

\begin{remark} \label{archimedean places evaluation of delta}
If $v$ is archimedean then by \Cref{2-infinity selmer decomp} we have 
\[\Omega_v(\chi_v)=\begin{cases} 1~~&~~\chi_v ~\textup{trivial} \\ (-1)^{\textup{dim}A}~~&~~\textup{else}. \end{cases}\]
In particular, if $v$ is a real place and $\textup{dim}A$ is odd then $\kappa_v=0$ (hence also $\kappa=0$), whilst if $v$ is complex or $\textup{dim}A$ is even, we have $\kappa_v=1$.
\end{remark}

\begin{theorem} \label{2-infinity selmer rank main theorem expanded}
Let  $A/K$ be a principally polarised abelian variety. Then for all sufficiently large $X>0$, 
\[\frac{|\{\chi \in \mathcal{C}(K,X)~:~\textup{rk}_2(A^\chi/K)~~\textup{is even}\}|}{|\mathcal{C}(K,X)|}=\frac{1+(-1)^{\textup{rk}_2(A/K)}\cdot\kappa}{2}\]
\end{theorem}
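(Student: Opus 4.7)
The plan is to reduce the theorem to a straightforward counting argument by combining \Cref{2-infinity selmer decomp}, which expresses the parity of $\textup{rk}_2$ over a quadratic extension as a finite sum of local contributions at places of $\Sigma$, with the Grunwald--Wang-type counting strategy already used in the proof of \Cref{epsilon homomorphism thm}.

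First I would observe that since $\textup{rk}_2(A/L) = \textup{rk}_2(A/K) + \textup{rk}_2(A^\chi/K)$, \Cref{2-infinity selmer decomp} gives, for each $\chi \in \mathcal{C}(K)$, the identity
\[
(-1)^{\textup{rk}_2(A^\chi/K)} \;=\; (-1)^{\textup{rk}_2(A/K)} \prod_{v \in \Sigma} \Omega_v(\chi_v),
\]
where $\Omega_v$ is as in \Cref{delta for 2-infinity}. The sum in \Cref{2-infinity selmer decomp} is restricted to places non-split in $L/K$, but at a split place $\chi_v$ is trivial, so $\mathfrak{g}(A/K_v,\lambda_v,\chi_v) = 0$ by \Cref{local terms in sha computation}(i) and the norm cokernel is trivial; thus $\Omega_v(\mathbbm{1}_{K_v}) = 1$ and one may harmlessly sum over all of $\Sigma$. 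In particular, the parity of $\textup{rk}_2(A^\chi/K)$ depends on $\chi$ only through its image in $\Gamma := \prod_{v \in \Sigma} \mathcal{C}(K_v)$.

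Next, by \Cref{surjectivity of restriction} there exists $X_0$ such that for all $X \geq X_0$ the restriction homomorphism $\mathcal{C}(K,X) \to \Gamma$, $\chi \mapsto (\chi_v)_{v \in \Sigma}$, is surjective. As it is a group homomorphism, all its (non-empty) fibres have the same cardinality, so
\[
\frac{|\{\chi \in \mathcal{C}(K,X) : \textup{rk}_2(A^\chi/K) \text{ is even}\}|}{|\mathcal{C}(K,X)|}
\;=\;
\frac{|\{\gamma \in \Gamma : \prod_{v \in \Sigma} \Omega_v(\gamma_v) = (-1)^{\textup{rk}_2(A/K)}\}|}{|\Gamma|}.
\]
Finally, writing $N$ for the cardinality of the set $\{\gamma \in \Gamma : \prod_v \Omega_v(\gamma_v) = 1\}$, the standard sign-counting trick yields
\[
2N - |\Gamma| \;=\; \sum_{\gamma \in \Gamma} \prod_{v \in \Sigma} \Omega_v(\gamma_v) \;=\; \prod_{v \in \Sigma} \sum_{\chi \in \mathcal{C}(K_v)} \Omega_v(\chi) \;=\; |\Gamma| \cdot \kappa,
\]
giving $N/|\Gamma| = (1+\kappa)/2$ and the analogous formula for the opposite sign, from which the theorem follows at once.

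The substantive work has already been done: all the difficulty is packaged into \Cref{2-infinity selmer decomp} (which rests on \Cref{main sha decomp theorem} and \Cref{difference of selmer groups for abelian varieties}), so there is no serious obstacle at this stage—the only subtleties are verifying that split places contribute trivially (so that the formula factors over all of $\Sigma$) and invoking Grunwald--Wang in the appropriate form.
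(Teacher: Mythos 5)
Your proposal is correct and follows essentially the same route as the paper: reduce to the identity $(-1)^{\mathrm{rk}_2(A^\chi/K)}=(-1)^{\mathrm{rk}_2(A/K)}\prod_{v\in\Sigma}\Omega_v(\chi_v)$ via \Cref{2-infinity selmer decomp} and the additivity of $\mathrm{rk}_2$ over quadratic extensions, then run the Grunwald--Wang surjectivity and sign-counting argument exactly as in the proof of \Cref{epsilon homomorphism thm}. The paper leaves these last steps (and the verification that split places contribute $\Omega_v=1$, so the product may be taken over all of $\Sigma$) as a pointer to \Cref{epsilon homomorphism thm}, whereas you have helpfully spelled them out.
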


\begin{proof}
As noted previously, for any $\chi \in \mathcal{C}(K)$, corresponding to the quadratic extension $L/K$, we have
\[\textup{rk}_2(A/L)=\textup{rk}_2(A/K)+\textup{rk}_2(A^\chi/K).\]
Thus for each $\chi \in \mathcal{C}(K)$, \Cref{2-infinity selmer decomp} gives
\[(-1)^{\textup{rk}_2(A^\chi/K)}=(-1)^{\text{rk}_2(A/K)}\prod_{v\in \Sigma}\Omega(\chi_v)\]
with $\Omega(\chi_v)\in \{\pm 1\}$ depending only on the restriction of $\chi$ to $K_v$. The argument is now identical to that in the proof of \Cref{epsilon homomorphism thm}. As is the case there, `sufficiently large $X>0$' means that we require only that $X$ is large enough that the restriction homomorphism from $\mathcal{C}(K,X)$ to $\prod_{v\in \Sigma}\mathcal{C}(K_v)$ is surjective. 
\end{proof}

The following example shows that the proportion of twists having even $2$-Selmer rank can differ from the proportion having even $2^\infty$-Selmer rank.

\begin{example} \label{main example of parities}
Consider the genus $2$ hyperelliptic curve $C:y^2=x^6+x^4+x+3$ over $\mathbb{Q}$. The polynomial $f(x)=x^6+x^4+x+3$ has Galois group $S_6$. By \Cref{main selmer ranks explicit theorem} (see also \Cref{hyperelliptic curves example}) the $2$-Selmer ranks are distributed 1/2-1/2 amongst even/odd in the quadratic twist family of the Jacobian $J/K$ of $C$. 

On the other hand, we claim that $\kappa=3/16$ so that $19/32$ of the twists of $J$ have even $2^{\infty}$-Selmer rank whilst $13/32$ have odd $2^{\infty}$-Selmer rank. The discriminant of $f(x)$ is $-5\cdot 2670719$, so $J/K$ has good reduction away from $2,5$ and $2670719$. Thus we have $\Sigma=\{2,5,2670719,\infty\}$. Using the computer algebra package \textsc{magma} (\cite{MR1484478}), one computes that $\textup{rk}_2(J/K)$ is odd. By \Cref{archimedean places evaluation of delta}, $\kappa_\infty=1$. To compute $\kappa_2$, $\kappa_5$ and $\kappa_{2670719}$, one may use the following trick. By \Cref{2-infinity selmer decomp} and the above discussion, for a quadratic character $\chi$ of $\mathbb{Q}$ corresponding to the extension $L/\mathbb{Q}$, one has
\smallskip
\begin{equation} \label{trick} (-1)^{\textup{rk}_2(J^\chi/\mathbb{Q})}=-~~~~~~\phantom{hi}\prod_{\mathclap{\substack{v\in \{2,5,2670719\} \\v~\textup{ non-split in }L }}}\phantom{hi}~~~~~~\Omega_v(\chi_v).
\end{equation}

Now for $0\neq n\in \mathbb{Z}$, the quadratic twist of $J$ by $\mathbb{Q}(\sqrt{n})/\mathbb{Q}$ is the Jacobian of the hyperelliptic curve $y^2=nf(x)$. Thus one may use  \textsc{magma} to compute $(-1)^{\textup{rk}_2(J^\chi/\mathbb{Q})}$ for various (finitely many) quadratic characters $\chi$, from which one may then determine all the  $\Omega_v(\chi_v)$ by \cref{trick}. Upon doing this one obtains $\kappa_2=3/4, \kappa_5=-1/2$ and $\kappa_{2670719}=1/2$ and the claim follows. 
\end{example}

\begin{remark} \label{independent variables remark}
Since by \Cref{2-infinity selmer decomp} the parity of $\textup{rk}_2(A^\chi/K)$ depends only on the restriction of $\chi$ to the archimedean places, the places of bad reduction for $A$, and the places over $2$, it follows from \Cref{independence theorem} (i)  (along with \Cref{equivalence of selmer group definitions}) that when $\epsilon$ fails to be a homomorphism we in fact have 
\[\frac{|\{\chi \in \mathcal{C}(K,X)~:~\textup{dim}_{\mathbb{F}_2}\textup{Sel}^2(A^\chi/K)~~\textup{is even and }\textup{rk}_2(A^\chi/K)~~\textup{is even}\}|}{|\{\chi \in \mathcal{C}(K,X)~:~\textup{rk}_2(A^\chi/K)~~\textup{is even}\}|}=1/2\]
for all sufficiently large $X$, assuming the denominator is non-zero, and that the same holds when we condition on $\textup{rk}_2(A^\chi/K)$ being odd also. Thus when $\epsilon$ fails to be a homomoprhism the parities of Selmer ranks and the parities of $2$-infinity Selmer ranks behave `independently'. 
\end{remark}

\subsection{The proportion of twists having non-square Shafarevich--Tate group}

We end this section by giving an analogue of \Cref{theorem} for $\textup{dim}_{\mathbb{F}_2}\Sha_{\textup{nd}}(A/K)[2]$ rather than for $\textup{dim}_{\mathbb{F}_2}\textup{Sel}^2(A/K)$. Since the Shafarevich--Tate group of a principally polarised abelian variety, if finite, has square order if and only if $\textup{dim}_{\mathbb{F}_2}\Sha_{\textup{nd}}(A/K)[2]$ is even  (see e.g. \cite[Theorem 8]{MR1740984}), this may be viewed as quantiftying the failure of the Shafarevich--Tate group to have square order in quadratic twist families. The proof of the theorem is identical to its analogue for $2$-Selmer ranks, so we only sketch the proof.

\begin{theorem}\label{sha statistics theorem}
Let $K$ be a number field, $(A/K,\lambda)$ a principally polarised abelian variety and  $\Sigma$ the set consisting of all archimedean places of $K$, all places of bad reduction for $A$, and all places dividing $2$. Define $\epsilon:\textup{Gal}(K(A[2])/K)\rightarrow \{\pm 1\}$ by $\sigma \mapsto (-1)^{\dim_{\mathbb{F}_2}A[2]^\sigma}$. 

\begin{itemize}
\item[(i)] If $\epsilon$ fails to be a homomorphism then for all sufficiently large $X$ \[\frac{|\{\chi \in \mathcal{C}(K,X)~:~\textup{dim}_{\mathbb{F}_2}\Sha_{\textup{nd}}(A^\chi/K)[2]~~\textup{is even}\}|}{|\mathcal{C}(K,X)|}=1/2.\]
\item[(ii)] If $\epsilon$ is a homomorphism, let $K(\sqrt{\Delta})/K$ be the fixed field of the kernel of $\epsilon$. For each $v\in \Sigma$ and quadratic character $\chi \in \mathcal{C}(K_v)$ write $L_\chi/K_v$ for the extension cut out by $\chi$ and define
\[\Upsilon_v(\chi):=\chi(\Delta)(-1)^{2~\textup{inv}_v\mathfrak{g}(A/K_v,\lambda_v,\chi_v)}.\] 
Finally, define
\[\rho_v:=\frac{1}{|\mathcal{C}(K_v)|}\sum_{\chi \in \mathcal{C}(K_v)}\Upsilon(\chi)\phantom{hello}\textup{and}\phantom{hello}\rho:=\prod_{v\in \Sigma}\rho_v.\]
Then for all sufficiently large $X$, 
\[\frac{|\{\chi \in \mathcal{C}(K,X)~:~\textup{dim}_{\mathbb{F}_2}\Sha_{\textup{nd}}(A^\chi/K)[2]~~\textup{is even}\}|}{|\mathcal{C}(K,X)|}=\frac{1+(-1)^{\textup{dim}_{\mathbb{F}_2}\Sha_{\textup{nd}}(A/K)[2]}\cdot\rho}{2}.\]
\end{itemize}
\end{theorem}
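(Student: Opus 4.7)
The plan is to follow the strategy used to deduce \Cref{main selmer ranks explicit theorem} from \Cref{main twisting theorem combined}, but substituting \Cref{main sha decomp theorem} for \Cref{difference of selmer groups for abelian varieties}. Specifically, combining \Cref{main sha decomp theorem} with \Cref{order of pairing remark} yields, for every quadratic character $\chi\in\mathcal{C}(K)$, the identity
\[(-1)^{\dim_{\mathbb{F}_2}\Sha_{\textup{nd}}(A/K)[2]+\dim_{\mathbb{F}_2}\Sha_{\textup{nd}}(A^\chi/K)[2]} = \prod_{v\in M_K}(-1)^{2\,\textup{inv}_v\,\mathfrak{g}(A/K_v,\lambda_v,\chi_v)}.\]
Using \Cref{local terms in sha computation}(iii)-(iv), the factors at places $v\notin\Sigma$ vanish when $\chi_v$ is unramified, while when $\chi_v$ is ramified and $v\notin\Sigma$ (so $v\nmid 2$ and $A$ has good reduction at $v$) the contribution equals $(-1)^{\dim_{\mathbb{F}_2}A(K_v)[2]}=\epsilon(\textup{Frob}_v)$, where $\textup{Frob}_v$ denotes Frobenius at $v$ in $\textup{Gal}(K(A[2])/K)$. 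Writing $\Upsilon_v$ as in the statement, this gives the key formula
\[(-1)^{\dim_{\mathbb{F}_2}\Sha_{\textup{nd}}(A^\chi/K)[2]} = (-1)^{\dim_{\mathbb{F}_2}\Sha_{\textup{nd}}(A/K)[2]}\cdot w(\chi)\cdot\prod_{v\in\Sigma}(-1)^{2\,\textup{inv}_v\,\mathfrak{g}(A/K_v,\lambda_v,\chi_v)},\]
where $w(\chi)=\prod_{v\notin\Sigma,\,\chi_v\textup{ ram}}\epsilon(\textup{Frob}_v)$ is the function of \Cref{definition of w} applied to $T=A[2]$.

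For part (i), suppose $\epsilon$ is not a homomorphism. The right-hand side above depends only on the restriction $\chi|_\Gamma$ and on $w(\chi)$, so the parity of $\dim_{\mathbb{F}_2}\Sha_{\textup{nd}}(A^\chi/K)[2]$ is determined by this pair. \Cref{independence theorem}(i), applied as in the proof of \Cref{epsilon non-trivial twisting theorem}, then shows that for all sufficiently large $X$ and every $\gamma\in\Gamma$, exactly half of those $\chi\in\mathcal{C}(K,X)$ with $\chi|_\Gamma=\gamma$ satisfy $w(\chi)=1$; averaging over $\gamma$ (noting that the fibres of restriction to $\Gamma$ have equal size once $X$ is large enough, by \Cref{surjectivity of restriction}) gives the claimed proportion of $1/2$.

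For part (ii), suppose $\epsilon$ is a homomorphism and view it as the quadratic character cutting out $K(\sqrt{\Delta})/K$. Global class field theory (exactly as in the proof of \Cref{first parity lemma}) gives $\prod_{v\in M_K}\chi_v(\Delta)=1$, and at each $v\notin\Sigma$ the factor $\chi_v(\Delta)$ equals $\epsilon(\textup{Frob}_v)$ when $\chi_v$ is ramified and $1$ otherwise (using \Cref{properties of local symbol}(ii) and that both $\epsilon$ and $\chi_v$ are unramified outside $\Sigma\cup\{v\}$ in the ramified case). Consequently $w(\chi)=\prod_{v\in\Sigma}\chi_v(\Delta)$, and the displayed identity becomes
\[(-1)^{\dim_{\mathbb{F}_2}\Sha_{\textup{nd}}(A^\chi/K)[2]} = (-1)^{\dim_{\mathbb{F}_2}\Sha_{\textup{nd}}(A/K)[2]}\prod_{v\in\Sigma}\Upsilon_v(\chi_v).\]
Since the right-hand side is determined by $\chi|_\Gamma$, and $\mathcal{C}(K,X)$ surjects onto $\Gamma$ with equal fibres once $X$ is large (\Cref{surjectivity of restriction}), the averaging argument of \Cref{epsilon homomorphism thm} applies verbatim, yielding the claimed proportion $\tfrac{1}{2}(1+(-1)^{\dim_{\mathbb{F}_2}\Sha_{\textup{nd}}(A/K)[2]}\rho)$ with $\rho=\prod_{v\in\Sigma}\rho_v$.

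The arguments are thus essentially formal once \Cref{main sha decomp theorem} is available; the main technical input—expressing the difference of parities of $\dim_{\mathbb{F}_2}\Sha_{\textup{nd}}$ across a quadratic twist as a sum of local terms—has already been carried out in \S5, and the principal work here reduces to the bookkeeping described above.
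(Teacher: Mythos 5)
Your proof is correct and follows essentially the same route as the paper: combine \Cref{main sha decomp theorem} with \Cref{local terms in sha computation} to obtain the parity identity displayed in your key formula, then feed it into the averaging arguments of \Cref{epsilon homomorphism thm} (part (ii)) and \Cref{independence theorem}/\Cref{epsilon non-trivial twisting theorem} (part (i)). One small omission: for part (i) you should note, as the paper does, that $\Sigma$ may need to be enlarged so that \Cref{assumptions on places} holds before \Cref{independence theorem} can be invoked --- this is harmless since the identity persists under enlargement (by \Cref{local terms in sha computation}(iii)) and the conclusion $1/2$ is independent of $\Sigma$.
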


\begin{proof}
Fix a quadratic character $\chi$ of $K$. As in \Cref{definition of w}, set \[w(\chi):=\prod_{v\notin \Sigma,~\chi_v~\text{ram}}(-1)^{\dim_{\mathbb{F}_p}A(K_v)[2]}=\prod_{v\notin \Sigma,~\chi_v~\text{ram}} \epsilon(\textup{Frob}_v).\]

Combining \Cref{main sha decomp theorem} with \Cref{local terms in sha computation} we obtain 
\begin{equation} \label{random eq}
(-1)^{\textup{dim}_{\mathbb{F}_2}\Sha_{\textup{nd}}(A^\chi/K)[2]}=w(\chi)(-1)^{\textup{dim}_{\mathbb{F}_2}\Sha_{\textup{nd}}(A/K)[2]}\prod_{v\in \Sigma}(-1)^{2~\textup{inv}_v\mathfrak{g}(A/K_v,\lambda_v,\chi_v)}.
\end{equation}

If $\epsilon$ is a homomorphism then, as in the proof of \Cref{first parity lemma}, we have $w(\chi)=\prod_{v\in\Sigma}\chi_v(\Delta)$, whence \[(-1)^{\textup{dim}_{\mathbb{F}_2}\Sha_{\textup{nd}}(A^\chi/K)[2]}=(-1)^{\textup{dim}_{\mathbb{F}_2}\Sha_{\textup{nd}}(A/K)[2]}\prod_{v\in \Sigma}\Upsilon_v(\chi)\]
and the same argument as in the proof of \Cref{epsilon homomorphism thm} gives the result.

On the other hand, suppose that $\epsilon$ is a homomorphism and enlarge $\Sigma$ if neccesary so that \Cref{assumptions on places} holds, noting that \cref{random eq} still remains true. The result now follows from \Cref{independence theorem} (cf. proof of \Cref{epsilon non-trivial twisting theorem}).
\end{proof}

\section{Twisting data for abelian varieties (p>2)} \label{odd p twisting data}

As in the previous section, let $K$ be a number field and $(A/K,\lambda)$ a principally polarised abelian variety. This time we take $p$ to be an odd prime and $T=A[p]$. As with $A[2]$ in the previous section, we endow $T$ with a canonical global metabolic structure and twisting data so that the resulting Selmer groups have a classical interpretation. For elliptic curves this is done by Klagsbrun--Mazur--Rubin in \cite[Section 5]{MR3043582}. This time, the case of an arbitrary principally polarised abelian variety is almost identical to that of loc. cit. though to fix notation we repeat the relevant material. 

\subsection{The global metabolic structure on $A[p]$}

As with the case $p=2$, the polarisation $\lambda$ along with the Weil-pairing
\[(~,~)_{e_p}:A[p]\times A^{\vee}[p]\]
provides the desired (non-degenerate, alternating, $G_K$-equivariant) bilinear pairing
\[(~,~)_\lambda:T\times T\rightarrow \boldsymbol \mu_p\]
(defined by setting $(x,y)_\lambda=(x,\lambda(y))_{e_p}$ for $x,y\in T$). We take $\Sigma$ to be a finite set of places of $K$ containing all archimedean places, all primes over $p$, and all primes at which $A$ has bad reduction. Then $T$ is unramified outside $\Sigma$. 


Since $p$ is odd, the quadratic forms $q_v=\frac{1}{2}\left \langle ~,~\right \rangle_v$ (here $v$ a place of $K$ and $\left \langle ~,~\right \rangle_v$ denotes the local Tate pairing associated to $(~,~)_\lambda$) are Tate quadratic forms which endow $T$ with a global metabolic structure $\textbf{q}$ (cf. \Cref{local tate pairing subsection}). 

\subsection{Twisting data associated to $A[p]$} 

Here we  associate canonical twisting data to $(A[p],\Sigma,\textbf{q})$.

\begin{defi} \label{definition of the odd p twist}
Let $\chi\in \mathcal{C}(K)$ be non-trivial and let $L$ denote the associated cyclic $p$-extension $L=\bar{K}^{\textup{ker}(\chi)}$ of $K$. We write $A^\chi$ for the abelian variety denoted $A_L$ in \cite[Definition 5.1]{MR2331769}, so that $A^\chi/K$ is an abelian variety of dimension $(p-1)\textup{dim}A$ which may be defined as the kernel of the `norm' homomorphism $\textup{Res}_{L/K}A\rightarrow A$ (here $\textup{Res}_{L/K}A$ denotes the restriction of scalars of $A$ from $L$ to $K$). 

By \cite[Theorem 5.5 (iv)]{MR2331769}, $\chi$ induces an inclusion of $\mathbb{Z}[\boldsymbol \mu_p]$ into $\textup{End}_K(A^\chi)$. Moreover, by Theorem 2.2 (iii) of op. cit. we have a canonical isomorphism $\psi:A[p]\stackrel{\sim}{\longrightarrow}A^\chi[\mathfrak{p}]$ where $\mathfrak{p}$  denotes the unique prime of $\mathbb{Z}[\boldsymbol \mu_p]$ lying over $p$.

If $\mathbbm{1}_{K_v}\neq \chi\in \mathcal{C}(K_v)$ for some place of $K$ then we define $A^{\chi}/K_v$ similarly. 
\end{defi}

\begin{remark} \label{the pi selmer groups}
Fix $\chi \in \mathcal{C}(K)$ non-trivial, and let $\pi$ be a generator of the prime $\mathfrak{p}$ of $\mathbb{Z}[\boldsymbol \mu_p]$ lying over $p$. View $\pi$ inside $\textup{End}_K(A^\chi)$ as above. Then $\pi$ is an isogeny and we have an associated $\pi$-Selmer group
\[\textup{Sel}^\pi(A^\chi/K)=\{\mathfrak{a}\in H^1(K,A^\chi[\mathfrak{p}])~~:~~\mathfrak{a}_v\in \textup{im}(\delta_v)~~\forall v\in M_K\},\]
where here for each place $v$ of $K$, $\delta_v:A^\chi(K_v)/\pi A^\chi(K_v)\hookrightarrow H^1(K_v,A^\chi[\mathfrak{p}])$ is the connecting homomorphism associated to the multiplication-by-$\pi$ Kummer sequence for $A^\chi/K_v$. 

One checks that $\textup{Sel}^\pi(A^\chi/K)$ does not depend on the choice of generator $\pi$ for $\mathfrak{p}$.
\end{remark}

We now define the twisting data. 

\begin{defi}
Let $v$ be a place of $K$ and $\chi \in \mathcal{C}(K_v)$. Define $\alpha_v(\chi)\subseteq H^1(K_v,A[p])$ as follows:
\begin{itemize}
\item[(i)]  If $\chi$ is trivial, define $\alpha_v(\chi)$ to be the image of $A(K)/pA(K)$ under the connecting homomorphism assoicated to the multiplication-by-$p$ Kummer sequence for $A/K_v$,
\item[(ii)] If $\chi$ is non-trivial, let $\pi$ be a generator of the prime $\mathfrak{p}$ of $\mathbb{Z}[\boldsymbol \mu_p]$ lying over $p$. Then we define $\alpha_v(\chi)$ to be the image of $A^\chi(K_v)/\pi A^\chi(K_v)$ under the composition
\[A^\chi(K_v)/\pi A^\chi(K_v)\stackrel{\delta_v}{\longrightarrow}H^1(K_v,A^\chi[\mathfrak{p}])\stackrel{\sim}{\longrightarrow}H^1(K_v,A[p]),\]
where  the rightmost map is induced by the isomorphism $\psi:A[p]\stackrel{\sim}{\longrightarrow}A[\mathfrak{p}]$ of \Cref{definition of the odd p twist}.  One sees easily that $\alpha_v(\chi)$ does not depend on the choice of $\pi$, and depends only on the extension cut out by $\chi$. 
\end{itemize}
\end{defi}

As usual, for $v$ a place of $K$ and $\chi_1,\chi_2\in \mathcal{C}(K_v)$, write 
\[h_v(\chi_1,\chi_2)=\dim_{\mathbb{F}_p}\left(\alpha_v(\chi_1)/(\alpha_v(\chi_1)\cap \alpha_v(\chi_2))\right).\]
 As in the case $p=2$, we have.

\begin{lemma} \label{norm cokernel for odd p}
Let $v$ be a place of $K$, $\chi \in \mathcal{C}(K_v)$ and $L_\chi$ the extension of $K_v$ cut out by $\chi$. Then 
\[h_v(\mathbbm{1}_{K_v},\chi)=\dim_{\mathbb{F}_p}A(K_v)/N_{L_\chi/K_v}A(L_\chi)\]
where $N_{L_\chi/K_v}:A(L_\chi)\rightarrow A(K_v)$ is the norm map. 

Moreover, if $v\nmid p$ is a nonarchimedean place of $K$ at which $A$ has good reduction then
\begin{itemize}
\item[(i)] if $\chi$ is unramified, we have 
\[h_v(\mathbbm{1}_{K_v},\chi)=\dim_{\mathbb{F}_p}A(K_v)/N_{L_\chi/K_v}A(L_\chi)=0,\]
\item[(ii)] if $\chi$ is ramified, we have 
\[h_v(\mathbbm{1}_{K_v},\chi)=\dim_{\mathbb{F}_p}A(K_v)/N_{L_\chi/K_v}A(L_\chi)=\dim_{\mathbb{F}_p}A(K_v)[p].\]
\end{itemize}
\end{lemma}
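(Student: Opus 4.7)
The plan is to closely parallel the arguments from the $p = 2$ case given in \Cref{intersection is norm} and \Cref{first cases of norm}. For the main identity
\[h_v(\mathbbm{1}_{K_v}, \chi) = \dim_{\mathbb{F}_p} A(K_v)/N_{L_\chi/K_v}A(L_\chi),\]
I would adapt the proof of \cite[Proposition 5.2]{MR2373150}, which treats the case of elliptic curves and $p$-cyclic twists with $p$ odd. That argument is purely cohomological: it uses the defining short exact sequence $0 \to A^\chi \to \textup{Res}_{L_\chi/K_v} A \to A \to 0$ together with Shapiro's lemma, plus inflation--restriction for the cyclic degree-$p$ extension $L_\chi/K_v$, to compare $\alpha_v(\chi) \cap \alpha_v(\mathbbm{1}_{K_v})$ (via the explicit description using the isomorphism $\psi$ of \Cref{definition of the odd p twist}) with the image of $N_{L_\chi/K_v}A(L_\chi)$ inside $A(K_v)/pA(K_v)$. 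Crucially, nowhere does the argument use that the abelian variety is one-dimensional, so it carries over verbatim to general principally polarised $A$.

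For part (i), suppose $\chi$ is unramified. A theorem of Mazur (\cite[Corollary 4.4]{MR0444670}) applied to our setting---abelian variety with good reduction, nonarchimedean place $v \nmid p$, unramified extension $L_\chi/K_v$---shows that the norm map $N_{L_\chi/K_v}: A(L_\chi) \to A(K_v)$ is surjective, so both quantities vanish.

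For part (ii), suppose $\chi$ is ramified, so that $L_\chi/K_v$ is totally ramified of degree $p$. The argument is analogous to the ramified case treated in \Cref{first cases of norm}(i). The inclusion $pA(K_v) \subseteq N_{L_\chi/K_v} A(L_\chi)$ is automatic, since $N_{L_\chi/K_v}(x) = px$ for $x \in A(K_v)$. For the reverse inclusion, I would decompose $A(L_\chi) = A(K_v) + \hat{A}(\mathfrak{m}_{L_\chi})$, which is valid because $L_\chi/K_v$ being totally ramified forces $k_{L_\chi} = k_v$. Since $v \nmid p$, multiplication by $p$ is an automorphism of the formal group $\hat{A}(\mathfrak{m}_{L_\chi})$; thus every $z \in \hat{A}(\mathfrak{m}_{L_\chi})$ is of the form $py$ with $y \in \hat{A}(\mathfrak{m}_{L_\chi})$, and $N_{L_\chi/K_v}(py) = p N_{L_\chi/K_v}(y) \in pA(K_v)$. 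Combined with the previous observation, this yields $N_{L_\chi/K_v} A(L_\chi) = p A(K_v)$. The standard identification $\dim_{\mathbb{F}_p} A(K_v)/p A(K_v) = \dim_{\mathbb{F}_p} A(K_v)[p]$ (valid because good reduction and $v \nmid p$ force the formal group part to be uniquely $p$-divisible, reducing the computation to the finite group $\widetilde{A}(k_v)$) then completes the argument.

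The main obstacle will be verifying that the general identity of the first paragraph truly does go through without dimension hypotheses; once in hand, the good-reduction cases reduce to routine formal group manipulations that directly mirror the $p=2$ treatment already given in \Cref{first cases of norm}.
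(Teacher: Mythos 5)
Your proposal is correct and takes essentially the same route as the paper: the paper likewise reduces the first claim to \cite[Proposition 5.2]{MR2373150} (noting the argument is dimension-free), cites Mazur \cite[Corollary 4.4]{MR0444670} for (i), and invokes \cite[Lemma 5.5(ii)]{MR2373150} for (ii). The only difference is that you unpack the formal group argument underlying (ii) explicitly, whereas the paper simply cites the reference and asserts the argument carries over.
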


\begin{proof}
As in the case $p=2$, the first claim is shown for elliptic curves in \cite[Proposition 5.2]{MR2373150} and the argument is identical. The evaluation of the cokernel of the local norm map is \cite[Corollary 4.4]{MR0444670} for $\chi$ unramified, and for $\chi$ ramified the case where $A$ is an elliptic curve is \cite[Lemma 5.5 (ii)]{MR2373150} and  the same agument works in the general case.
\end{proof}

\begin{proposition} \label{p odd canonical twisting data}
The maps $\boldsymbol{\alpha}=(\alpha_v)_v$ define twisting data for $T=(A[p],\textbf{q},\Sigma)$ and the associated Selmer groups $\textup{Sel}(A[p],\chi)$ satisfy
\[\textup{Sel}(A[p],\chi)\cong\textup{Sel}^\pi(A^\chi/K).\] 
\end{proposition}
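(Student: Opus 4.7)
The plan is to follow closely the template of \Cref{equivalence of selmer group definitions} from the $p=2$ case, splitting the task into three subclaims: (a) each $\alpha_v(\chi_v)$ is a Lagrangian subspace of $(H^1(K_v,A[p]),q_v)$; (b) for $v\notin\Sigma$ with $\boldsymbol\mu_p\subseteq K_v$ and $\chi_v$ ramified, $\alpha_v(\chi_v)\cap H^1_{\textup{ur}}(K_v,A[p])=0$; and (c) under $\psi$, the Selmer group $\textup{Sel}(A[p],\chi)$ defined by $\boldsymbol\alpha$ matches $\textup{Sel}^\pi(A^\chi/K)$.

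For (a), when $\chi_v=\mathbbm{1}_{K_v}$ the subspace $\alpha_v(\mathbbm{1}_{K_v})$ is the image of the Kummer map for $A/K_v$, which is a classical Lagrangian for the Tate pairing arising from $(~,~)_\lambda$ (and since $p$ is odd the corresponding Tate quadratic form $q_v=\tfrac12\langle~,~\rangle_v$ vanishes on it automatically). For $\chi_v$ non-trivial, the principal polarisation $\lambda$ on $A$ transfers through $\psi$ to a compatible pairing on $A^\chi[\mathfrak{p}]$ (essentially the $\mathfrak{p}$-Weil pairing associated to the induced polarisation on $A^\chi$, up to a unit in $\mathbb{F}_p^\times$), so that the isomorphism $\psi_*:H^1(K_v,A[p])\xrightarrow{\sim}H^1(K_v,A^\chi[\mathfrak{p}])$ identifies Tate pairings, and the image of the $\pi$-Kummer map for $A^\chi/K_v$ is again Lagrangian by the standard local duality theorem applied to the isogeny $\pi:A^\chi\to A^\chi$. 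This extends the elliptic curve argument of \cite[Lemma~5.2]{MR3043582} verbatim, the only input being that $\psi$ intertwines Weil pairings, which follows from its construction in \cite[Theorem~2.2]{MR2331769}.

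For (b) and (c), the argument is purely dimension-theoretic and parallels the closing paragraphs of \Cref{equivalence of selmer group definitions}. Since any two Lagrangians of $(H^1(K_v,A[p]),q_v)$ have the same $\mathbb{F}_p$-dimension, and at $v\notin\Sigma$ we have $\alpha_v(\mathbbm{1}_{K_v})=H^1_{\textup{ur}}(K_v,A[p])$ of dimension $\dim_{\mathbb{F}_p}A(K_v)[p]$, \Cref{norm cokernel for odd p}(ii) gives
\[\dim_{\mathbb{F}_p}\!\bigl(\alpha_v(\mathbbm{1}_{K_v})/\alpha_v(\mathbbm{1}_{K_v})\cap\alpha_v(\chi_v)\bigr)=h_v(\mathbbm{1}_{K_v},\chi_v)=\dim_{\mathbb{F}_p}A(K_v)[p]=\dim_{\mathbb{F}_p}\alpha_v(\chi_v),\]
forcing $\alpha_v(\chi_v)\cap H^1_{\textup{ur}}(K_v,A[p])=0$ and proving (b). For (c), at $v\notin\Sigma$ with $\chi_v$ unramified, \Cref{norm cokernel for odd p}(i) gives $h_v(\mathbbm{1}_{K_v},\chi_v)=0$, so dimension counting forces $\alpha_v(\chi_v)=\alpha_v(\mathbbm{1}_{K_v})=H^1_{\textup{ur}}(K_v,A[p])$. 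Thus the local conditions $H^1_{\mathcal{S}(\chi)}(K_v,A[p])$ cutting out $\textup{Sel}(A[p],\chi)$ agree with the images of the $\pi$-Kummer maps for $A^\chi$ at every place $v$, and transporting through $\psi_*:H^1(K,A[p])\xrightarrow{\sim}H^1(K,A^\chi[\mathfrak{p}])$ gives the claimed isomorphism with $\textup{Sel}^\pi(A^\chi/K)$ of \Cref{the pi selmer groups}.

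The main obstacle, as in the $p=2$ case, is verifying task (a) — specifically the Weil pairing compatibility underlying the identification of Tate pairings via $\psi$. Once that is in place the proof is almost formal; the remainder is bookkeeping that mirrors the elliptic curve treatment of Klagsbrun--Mazur--Rubin. One should also check that the choice of uniformiser $\pi$ of $\mathfrak{p}$ in the definition of $\textup{Sel}^\pi(A^\chi/K)$ is immaterial, but this is already noted in \Cref{the pi selmer groups}.
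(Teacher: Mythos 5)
Your three-part structure (a)/(b)/(c) matches the paper's proof, and your arguments for (b) and (c) are essentially what the paper does — dimension counting via \Cref{norm cokernel for odd p} followed by a local-condition comparison. But part (a), the case $\chi_v$ non-trivial, has a genuine gap, and it is precisely the point that the paper singles out as the subtlety distinguishing odd $p$ from $p=2$. You write that the pairing obtained by transport through $\psi$ is ``essentially the $\mathfrak{p}$-Weil pairing associated to the induced polarisation on $A^\chi$,'' and you conclude that the $\pi$-Kummer image is Lagrangian ``by the standard local duality theorem applied to the isogeny $\pi:A^\chi\to A^\chi$.'' The difficulty is that $A^\chi$ need not carry a principal polarisation at all (the paper cites \cite[Theorem 1.1]{MR1827021} for exactly this), so there is no ``induced polarisation on $A^\chi$'' and no $\mathfrak{p}$-Weil pairing for it. Moreover, local duality for the isogeny $\pi$ pairs $H^1(K_v,A^\chi[\pi])$ against $H^1(K_v,(A^\chi)^\vee[\pi^\vee])$, not against itself; without a self-dual identification of kernels you cannot read off self-orthogonality of the Kummer image.

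The paper's resolution, which you should use instead, sidesteps the twist entirely: it invokes \cite[Proposition A.7]{MR2373150}, whose hypotheses (via Definition A.5 of op.\ cit.) require only a suitable pairing on the $p$-adic Tate module $T_p(A)$ of the \emph{original} abelian variety $A$. The principal polarisation $\lambda$ on $A$ supplies exactly that pairing, and the argument of \cite[Proposition A.7]{MR2373150} then shows the image of the $\pi$-Kummer map is Lagrangian without ever needing a polarisation on $A^\chi$. Once you replace the ``induced polarisation on $A^\chi$'' step with this reference, the rest of your outline goes through as written.
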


\begin{proof}
We first claim that for each place $v$ of $K$ and $\chi \in \mathcal{C}(K_v)$, we have $\alpha_v(\chi)\in \mathcal{H}(q_v)$ (i.e. $\alpha_v(\chi)$ is Lagrangian). That is (since $p$ is odd), that $\alpha_v(\chi)\subseteq H^1(K_v,A[p])$ is its own orthogonal complement under the Tate pairing. For $\chi$ trivial, that (the image in $H^1(K_v,A[p])$ of) $A(K_v)/pA(K_v)$ is its own orthogonal complement is a well known consequence of Tate local duality, see e.g. \cite[I.3.4]{MR2261462}. For $\chi$ non-trivial this is shown for $A$ an elliptic curve in \cite[Proposition A.7]{MR2373150} and the argument for a general principally polarised abelian variety is identical (with the Weil pairing associated to the principal polarisation $\lambda$ providing the pairing on the $p$-adic Tate-module $T_p(A)$ required for Definition A.5 of op. cit.). We remark that in the above, unlike the case $p=2$, the twist $A^\chi$ need not possess a principal polarisation (see \cite[Theorem 1.1]{MR1827021})  so one cannot deduce the result by just applying Tate duality to $A^\chi/K_v$ as one does not have an approporiate Weil-pairing on $A[\mathfrak{p}]$.

 To show that $\boldsymbol{\alpha}$ defines twisting data, it remains to show that for each place $v\notin\Sigma$ with $\boldsymbol \mu_p\subseteq K_v$, we have $\alpha_v(\chi)\in \mathcal{H}_{\textup{ram}}(q_v)$. That is, that $\alpha_v(\chi)\cap H^1_{\textup{ur}}(K_v,A[p])=0$. Again, the agrument is the same as in the case $p=2$. Indeed,  for such places we have $\alpha_v(\mathbbm{1}_{K_v})=H^1_{\textup{ur}}(K_v,A[p])$ (again, see e.g. \cite[Proposition 4.12]{MR2833483} and the preceeding remark) and we conclude by \Cref{norm cokernel for odd p} (ii).
 
The isomorphism $\textup{Sel}(A[p],\chi)\cong\textup{Sel}^\pi(A^\chi/K)$ is also proven identically to the case $p=2$ by comparing the local conditions defining the two Selmer groups.
 \end{proof}

\begin{cor} \label{p-cyclic abelian variety cor}
Let $p$ be an odd prime, $K$ a number field, $A/K$ a principally polarised abelian variety and  $\Sigma$ the set consisting of all archimedean places of $K$, all places of bad reduction for $A$, and all places dividing $p$. Define $\epsilon:\textup{Gal}(K(A[p])/K)\rightarrow \{\pm 1\}$ by $\sigma \mapsto (-1)^{\dim_{\mathbb{F}_p}A[p]^\sigma}$. 
\begin{itemize}
\item[(i)] If $\epsilon$ is non-trival when restricted to $\textup{Gal}(K(A[p])/K(\boldsymbol \mu_p))$ then \[\lim_{X\rightarrow \infty}
\frac{|\{\chi \in \mathcal{C}(K,X)~:~\textup{dim}_{\mathbb{F}_p}\textup{Sel}^\pi(A^\chi/K)~~\textup{is even}\}|}{|\mathcal{C}(K,X)|}=1/2.\]
\item[(ii)] Suppose $\epsilon$ is trivial when restricted to $\textup{Gal}(K(A[p])/K(\boldsymbol \mu_p))$. For each $v\in \Sigma$ and  character $\chi \in \mathcal{C}(K_v)$, write $L_\chi/K_v$ for the extension cut out by $\chi$ and define
\[\omega_v(\chi):=(-1)^{\dim_{\mathbb{F}_p}A(L_\chi)/N_{L_\chi/K_v}A(L_\chi)}.\] 
Finally, define
\[\delta_v:=\frac{1}{|\mathcal{C}(K_v)|}\sum_{\chi \in \mathcal{C}(K_v)}\omega(\chi)\phantom{hello}\textup{and}\phantom{hello}\delta:=\prod_{v\in \Sigma}\delta_v.\]
Then for all sufficiently large $X$, 
\[\frac{|\{\chi \in \mathcal{C}(K,X)~:~\textup{dim}_{\mathbb{F}_p}\textup{Sel}^\pi(A^\chi/K)~~\textup{is even}\}|}{|\mathcal{C}(K,X)|}=\frac{1+(-1)^{\textup{dim}_{\mathbb{F}_p}\textup{Sel}^p(A/K)}\cdot\delta}{2}.\]
\end{itemize}
\end{cor}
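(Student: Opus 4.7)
The plan is to deduce \Cref{p-cyclic abelian variety cor} directly from the abstract disparity result \Cref{main twisting theorem combined}, applied to the Galois module $T = A[p]$ equipped with the Weil pairing $(~,~)_\lambda$, the canonical global metabolic structure $\mathbf{q}$, and the canonical twisting data $\boldsymbol{\alpha}$ constructed earlier in this section. The crucial identification, already established in \Cref{p odd canonical twisting data}, is that for every $\chi \in \mathcal{C}(K)$ one has a natural isomorphism $\textup{Sel}(A[p], \chi) \cong \textup{Sel}^\pi(A^\chi / K)$. Under this isomorphism, counting characters by the parity of $\dim_{\mathbb{F}_p} \textup{Sel}^\pi(A^\chi/K)$ is the same as counting by the parity of $\dim_{\mathbb{F}_p} \textup{Sel}(A[p], \chi)$, and the latter is exactly what \Cref{main twisting theorem combined} computes.

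With this reduction in hand, part (i) is immediate from \Cref{main twisting theorem combined}(i): when $\epsilon$ is non-trivial on $\textup{Gal}(K(A[p])/K(\boldsymbol \mu_p))$, the limit of the proportion of characters with even Selmer rank is $1/2$. For part (ii), \Cref{main twisting theorem combined}(ii) yields the same shape of formula as the corollary, but with the local terms $\delta_v$ defined via the abstract symbols $\omega_v(\chi) = (-1)^{h_v(\mathbbm{1}_{K_v}, \chi_v)}$ from the $p>2$ case recalled in Section 7.3. To match the formulation of the corollary, I would invoke \Cref{norm cokernel for odd p}, which identifies $h_v(\mathbbm{1}_{K_v}, \chi) = \dim_{\mathbb{F}_p} A(K_v) / N_{L_\chi/K_v} A(L_\chi)$ at every place $v$ and every $\chi \in \mathcal{C}(K_v)$. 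Substituting this identity into $\omega_v$ reproduces verbatim the $\omega_v$ appearing in the statement, and hence the $\delta_v$ and $\delta$ agree.

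Because both \Cref{p odd canonical twisting data} and the abstract \Cref{main twisting theorem combined} are already proved, there is no serious obstacle remaining: the proof is essentially a combination of these two results together with the translation of local indices provided by \Cref{norm cokernel for odd p}. The only points I would verify explicitly are that $\Sigma$ as specified — archimedean places, places of bad reduction, and places above $p$ — contains every place where $T = A[p]$ ramifies (standard via Néron--Ogg--Shafarevich) and is therefore a legitimate choice of ramification set for the twisting-data framework; and that the sum defining $\delta$ is indeed supported on $\Sigma$, which follows from \Cref{norm cokernel for odd p}(i) since $\omega_v(\chi) = 1$ whenever $v \notin \Sigma$ and $\chi_v$ is unramified.
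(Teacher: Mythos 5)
Your proposal is correct and matches the paper's own (very terse) proof, which reads simply ``Combine \Cref{main twisting theorem combined} with \Cref{p odd canonical twisting data} and \Cref{norm cokernel for odd p}.'' You have filled in exactly the right bookkeeping: the identification $\textup{Sel}(A[p],\chi)\cong\textup{Sel}^\pi(A^\chi/K)$ from \Cref{p odd canonical twisting data} transfers the abstract count to the classical Selmer groups, the translation $h_v(\mathbbm{1}_{K_v},\chi)=\dim_{\mathbb{F}_p}A(K_v)/N_{L_\chi/K_v}A(L_\chi)$ from \Cref{norm cokernel for odd p} matches the local terms, and the admissibility of $\Sigma$ follows from N\'eron--Ogg--Shafarevich as you note.

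(One small aside, not a fault of your argument: the stated corollary has a typographical slip, writing $A(L_\chi)/N_{L_\chi/K_v}A(L_\chi)$ where it should read $A(K_v)/N_{L_\chi/K_v}A(L_\chi)$; your proposal correctly uses the latter, which is what \Cref{norm cokernel for odd p} actually gives.)
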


\begin{proof}
Combine \Cref{main twisting theorem combined} with \Cref{p odd canonical twisting data} and \Cref{norm cokernel for odd p}.
\end{proof}

\begin{remark}
As observed by Klagsbrun--Mazur--Rubin immediately before the statement of \cite[Theorem 8.2]{MR3043582}, as each $|\mathcal{C}(K_v)|$ has odd sixe we cannot have $\delta=0$ in Case (ii) above.
\end{remark}

\bibliographystyle{plain}

\bibliography{references}

\end{document}